\documentclass[a4paper,reqno,dvipsnames,11pt]{amsart}
\usepackage{amsfonts}
\usepackage{stmaryrd}

\usepackage[eulergreek]{sansmath}
\usepackage{todonotes}

\usepackage{color}
\usepackage{hyperref}

\newcommand{\Diver}{\mathrm{Div}}

\newcommand{\tim}{{\times}}

\newcommand{\Linftyn}{{L^\infty(\Omega; \MD)}}

\newcommand{\calR}{\mathcal{R}}

\usepackage[left=3cm,right=3cm,top=3cm,bottom=3cm,headsep=0.7cm]{geometry}
\usepackage[french, italian, english]{babel}
\usepackage{amsmath,amssymb,amsthm,mathrsfs,esint}
\usepackage{mathtools}
\usepackage{paralist}

\newcommand{\N}{{\mathbb N}}
\newcommand{\R}{{\mathbb R}}
\newcommand{\bbC}{{\mathbf C}}

\newcommand{\C}{{\mathbf C}}
\newcommand{\Q}{{\mathcal Q}}

\newcommand{\EE}{{\mathrm{E}}}

\newcommand{\foraa}{\text{for a.a.}}

\newcommand{\Rn}{{\R}^n}

\newcommand{\ds}{\, \mathrm{d} s}

\newcommand{\Dir}{\mathrm{Dir}}
\newcommand{\Neu}{\mathrm{Neu}}
\newcommand{\Gneu}{\Gamma_{\Neu}}

\newcommand{\hn}{\mathcal{H}^{n-1}}
\newcommand{\ol}{\overline}
\newcommand{\ul}{\underline}

\newcommand{\diver}{\mathrm{div}\,}

\newcommand{\sm}{\setminus}
\newcommand{\rmn}{\mathrm{n}}
\newcommand{\rmE}{\mathrm{E}}
\newcommand{\rmC}{\mathrm{C}}

\newcommand{\Mnn}{{\mathbb{M}^{n{\tim} n}_{\mathrm{sym}}}}
\newcommand{\Sym}{\mathrm{Sym}}

\newcommand{\Mb}{\mathcal{M}_{\mathrm{b}}}

\newcommand{\Lnn}{L^2(\Omega;\Mnn)}

\newcommand{\AC}{\mathrm{AC}}
\newcommand{\BD}{\mathrm{BD}}
\newcommand{\sig}[1]{\mathrm{E}(#1)}
\newcommand{\Lin}{\mathrm{Lin}}
\newcommand{\calQ}{\mathcal{Q}}

\newcommand{\weakto}{\rightharpoonup}

\newcommand{\weaksto}{\stackrel{*}\rightharpoonup}
\newcommand{\Spx}{\mathbf{X}}
\newcommand{\Spxw}{\mathbf{X}_{\mathrm{weak}}}

\newcommand{\ass}{a_{\mathrm{m}}}
\newcommand{\Hs}{H^{\mathrm{m}}}
\newcommand{\MD}{{\mathbb M}^{n{{\tim}}n}_{\mathrm{D}}}
\newcommand{\As}{A_{\mathrm{m}}}
\newcommand{\eps}{\varepsilon}

\newcommand{\sft}{\mathsf{t}}
\newcommand{\sfu}{\mathsf{u}}
\newcommand{\sfz}{\mathsf{z}}
\newcommand{\sfp}{\mathsf{p}}
\newcommand{\sfe}{\mathsf{e}}
\newcommand{\sfs}{\mathsf{s}}
\newcommand{\sfF}{\mathsf{F}}
\newcommand{\sfw}{\mathsf{w}}
\newcommand{\sfvar}{\mathsf{x}}
\newcommand{\serifsigma}{{\sansmath \sigma}}
\newcommand{\sfq}{\mathsf{q}}
\def\dd{\;\!\mathrm{d}} 
\newcommand{\pairing}[4]{ \sideset{_{ #1 }}{_{ #2 }}  {\mathop{\langle #3 , #4
\rangle}}}

\newcommand{\dev}{\mathrm{D}}
\def\calE{{\mathcal E}} \def\calD{{\mathcal D}} \def\calH{{\mathcal H}}

\newcommand{\calK}{\mathcal{K}}

\newcommand{\down}{\downarrow}
 \newcommand{\MbD}{{\Mb(\Omega \cup \partial_D\Omega; \MD)}}
 
 \newcommand{\Gdir}{\Gamma_{\Dir}}
 \newcommand{\calEv}{\calE}
 \newcommand{\Ez}{\Phi}
 \newcommand{\dipname}{\mathrm{H}}
 \newcommand{\aein}{\text{a.e.\ in }}
 
 \newcommand{\Qsp}{\mathbf{Q}}
 \newcommand{\rmD}{\mathrm{D}}
 
 \newcommand{\norm}[2]{\| #1\|_{#2}}
\newcommand{\gBa}{\mathsf{X}}
\newcommand{\gHi}{\mathsf{H}}
\newcommand{\gR}{\mathsf{R}}

\theoremstyle{plain}
\begingroup
\theoremstyle{plain}
\newtheorem{theorem}{Theorem}[section]
\newtheorem{maintheorem}{Theorem}

\newtheorem{proposition}[theorem]{Proposition}
\newtheorem{lemma}[theorem]{Lemma}
\theoremstyle{definition}
\newtheorem{definition}[theorem]{Definition}
\theoremstyle{remark}
\newtheorem{remark}[theorem]{Remark}

\newtheorem{notation}[theorem]{Notation}
\newtheorem{hypothesis}[maintheorem]{Hypothesis}
\endgroup

\newcommand{\slope}[3]{\mathbb{S}_{\mathsf{#1}}(#2,#3)}
\newcommand{\congdist}[3]{\mathbb{D}_{\mathsf{#1}}(#2,#3)}
\newcommand{\congdistname}[1]{\mathbb{D}_{\mathsf{#1}}}
\newcommand{\congdistvisc}[3]{\mathsf{d}_{\mathsf{#1}}(#2,#3)}
\newcommand{\congdistviscname}[1]{\mathsf{d}_{\mathsf{#1}}}
\newcommand{\congdistviscq}[3]{\mathsf{d}^2_{\mathsf{#1}}(#2,#3)}
\newcommand{\congdistq}[3]{\mathbb{D}^2_{\mathsf{#1}}(#2,#3)}

\newcommand{\congdistqtildeu}[3]{\tilde{\mathbb{D}}_{\mathsf{#1}}(#2,#3)}
\newcommand{\Mfunz}[5]{\mathcal{M}_{#1}(#2,#3,#4,#5)}
\newcommand{\Mfunzname}[1]{\mathcal{M}_{#1}}
\newcommand{\REDfunz}[5]{\mathcal{M}_{#1}^{\mathrm{red}}(#2,#3,#4,#5)}
\newcommand{\REDfunzname}[1]{\mathcal{M}_{#1}^{\mathrm{red}}}

\newcommand{\calM}{\mathcal{M}}
\newcommand{\Epp}{\mathcal{E}_{\scriptstyle \tiny \mathrm{PP}}}
\newcommand{\Hpp}{\mathcal{H}_{\scriptstyle \tiny \mathrm{PP}}}
\newcommand{\Dpp}{\mathscr{D}_{\scriptstyle \tiny \mathrm{PP}}}
\newcommand{\Qpp}{\Qsp_{\scriptstyle \tiny \mathrm{PP}}}
\newcommand{\BV}{\mathrm{BV}}

\newcommand{\HV}{\mathrm{HV}} 
\newcommand{\RV}{\mathrm{RV}} 
\newcommand{\ARV}{\mathrm{ARV}} 
\newcommand{\WE}{\mathrm{WE}} 
\newcommand{\FWE}[3]{\mathbf{Work}_{#3}^{#1}[#2]}
\newcommand{\FHV}[1]{\mathbf{HV}_{#1}}

\newcommand{\FREM}[1]{\mathbf{Rem}_{#1}}

\newcommand{\SiKappa}[2]{\Sigma \mathrm{K}_{#1}(#2)}

\newcommand{\DIN}[2]{\mathfrak{I}_{#1}^{#2}}
\newcommand{\subd}{r}
\newcommand{\subdiv}[3]{\subd_{#1,#2}^{#3}}
\newcommand{\cardi}[2]{M_{#1,#2}}
\newcommand{\subdivhat}[3]{\rho_{#1,#2}^{#3}}
\newcommand{\cardihat}[2]{L_{#1,#2}}
\newcommand{\partg}[2]{\mathfrak{s}_{#1}^{#2}}
\newcommand{\cardipart}[1]{\mathfrak{S}_{#1}}
\newcommand{\CV}{\mathrm{V}}
\newcommand{\isoC}{\mathbb{C}}
\newcommand{\sfx}{\mathsf{x}}

\newcommand{\ut}{\mathfrak{u}}
\newcommand{\pt}{\mathfrak{p}}
\newcommand{\loc}{{\mathrm{loc}}}
\DeclareSymbolFont{extraup}{U}{zavm}{m}{n}
\DeclareMathSymbol{\varheart}{\mathalpha}{extraup}{86}
\DeclareMathSymbol{\vardiamond}{\mathalpha}{extraup}{87}

\numberwithin{equation}{section}

\title[Visco-plastic approximation for damage with plasticity]{On the visco-plastic approximation 
\smallskip
\\
of a rate-independent 
 \smallskip
\\
coupled elastoplastic damage model}

\date{\today, revised version.}

\begin{document}

\author{Vito Crismale}
\address{V.\ Crismale, Dipartimento di Matematica “Guido Castelnuovo”, Sapienza Università di Roma, Piazzale Aldo
Moro 2, I--00185 Rome -- Italy}

\email{vito.crismale\,@\,uniroma1.it}

\author{Giuliano Lazzaroni}
\address{G.\ Lazzaroni, Dipartimento di Matematica e Informatica “Ulisse Dini”, Università degli Studi di Firenze, Viale
Morgagni 67/a, I--50134 Florence -- Italy}

\email{giuliano.lazzaroni\,@\,unifi.it}

\author{Riccarda Rossi}
\address{R.\ Rossi, DIMI, Universit\`a degli studi di Brescia. Via Branze 38, I--25133 Brescia -- Italy}
\email{riccarda.rossi\,@\,unibs.it}

\begin{abstract}
 In this paper we study a rate-independent system for the propagation of damage and plasticity.
To construct solutions we resort to approximation in terms of viscous evolutions, where  viscosity  affects \emph{both} damage and plasticity with the same rate.
The main difficulty arises from the fact that the available estimates do not provide sufficient regularity on the limiting 
evolutions
 to guarantee that forces and velocities are in a duality pairing,  hence we cannot use a chain rule for the driving energy.
Nonetheless, via careful techniques we can characterize the limiting rate-independent evolution by means of an energy-dissipation balance, which encodes the onset of viscous effects in the behavior of the system at jumps.
\end{abstract}

\maketitle

\section{Introduction}
In order to predict and prevent degradation and failure of materials, it is crucial to capture the interplay between different phenomena, such as damage and plasticity. In several applications, for instance to load-bearing structures, the propagation of such phenomena is very slow, if compared to the scale of internal oscillations of the body under examination.
 Thus, the system is considered as being in equilibrium at every instant. From a mathematical point of view, this amounts to the concept of \emph{quasistatic}, or \emph{rate-independent}, \emph{evolution}. In turn, rate-independent evolutions are idealized descriptions of processes where some phenomena are neglected, for instance the effects of viscosity. The analysis of quasistatic evolutions and their approximation by viscous evolutions has been  the object  of extensive mathematical literature in recent years. In fact, in order to understand how damage and plasticity grow, it is paramount to analyze their interaction already at a viscous level.

In this paper we study a rate-independent system for the propagation of damage and plasticity, by means of approximation in terms of viscous evolutions, where  viscosity  affects both damage and plasticity.
Specifically, in the setting of linear elasticity the body is determined by its reference configuration $\Omega\subset\R^n$,
$n=2,3$, 
 and 
the \emph{displacement} $u\colon (0,T){\tim} \Omega\to \R^n$, where $(0,T)$ is the time interval
 during which the process is observed.
 The degradation of the material is described by the \emph{damage} variable $z\colon (0,T){\tim} \Omega\to[0,1]$ and by the \emph{plastic strain} $p\colon(0,T){\tim} \Omega\to\MD$, where $\MD$ is the subspace of symmetrix matrices $\Mnn$ with null trace.
Together  with the \emph{elastic strain} $e\colon (0,T){\tim} \Omega \to \Mnn$,  the plastic strain $p$ complies with 
  the kinematic admissibility condition for the \emph{strain} $ \rmE(u)  = \frac{\nabla u + \nabla u^T}{2}$, namely,
 \begin{equation}
 \label{kam-intro}
    \rmE(u)  = e+p   \quad  \text{ in } (0,T){\tim} \Omega .
 \end{equation} 
\begin{subequations}
 \label{RIS-intro}
 The rate-independent system under consideration consists of 
 \begin{itemize}
\item[-] the momentum balance
\begin{align}
\label{mom-balance-intro}
 - \mathrm{div}\,\sigma = f  \quad \text{ in } (0,T){\tim} \Omega , 
 \end{align}
 where 
$\sigma$ is the  \emph{stress} tensor
\begin{equation} \label{stress-intro}
 \sigma =  \bbC(z)e  
 \quad   \text{ in } (0,T){\tim} \Omega , 
\end{equation}
with $\bbC\colon [0,1] \to \R^{n{\tim} n{\tim} n{\tim} n}$ the elastic stress tensor, and
 $f\colon  (0,T){\tim}\Omega\to\R^n$ is a given time-dependent external force; 
\item[-] the flow rule for the damage variable $z$
 \begin{align}
\label{flow-rule-dam-intro}
\partial\mathrm{R}(\dot{z})  + \As (z)+ W'(z) \ni - \tfrac12\bbC'(z)e : e  \quad  \text{ in } (0,T){\tim} \Omega ,
\end{align}
where $\partial \mathrm{R}  : \R \rightrightarrows \R$ denotes the convex analysis subdifferential of 
the function
\begin{equation}
\label{damage-pot-intro}
\mathrm{R}:\R \to [0,+\infty]  \  \text{ defined by } \   \mathrm{R}(\eta):= \left\{ \begin{array}{ll}
\kappa |\eta| & \text{ if } \eta\leq 0,
\\
+\infty & \text{ otherwise},
\end{array}
\right.
\end{equation}
with $\kappa>0$ the toughness of the material.  In \eqref{flow-rule-dam-intro}, the  \emph{nonlocal}  $\mathrm{m}$-Laplacian operator
   $\As \colon \Hs(\Omega ) \to \Hs(\Omega)^*$ features an exponent 
 $\mathrm{m}>\tfrac n2$. Finally, 
  $W\colon[0,1]\to\R$ is a suitable nonlinear, possibly nonsmooth, function; 
  \item[-] the flow rule for the plastic tensor 
 \begin{align}
&
\label{flow-rule-plast-intro}
\partial_{\dot{p}}  \mathrm{H}(z,\dot p) \ni \sigma_{\mathrm{D}}  \quad \text{ in } \Omega {\tim} (0,T),
\end{align}
where $\sigma_\dev$ is  the deviatoric part of the stress tensor $\sigma$, i.e.\ its orthogonal projection on $\MD$,  and $\dipname(z, \cdot)$ is  the density of the plastic dissipation potential.
 \end{itemize}
%
System \eqref{mom-balance-intro}--\eqref{flow-rule-plast-intro} is complemented by 
the boundary conditions 
\begin{equation}
 \label{viscous-bound-cond}
 u=w  \text{ on } (0,T){\tim}\Gdir ,   \qquad 
 \sigma \rmn =g \text{ on }  (0,T){\tim}\Gneu , \qquad \partial_{\rmn} z =0 \text{ on } (0,T){\tim}\partial\Omega , 
  \end{equation}
where $\Gdir$ is   the Dirichlet part of the boundary $\partial \Omega$ and  $w\colon\Omega\to\R^n$ a  time-dependent Dirichlet loading, while 
$\Gneu$ is the Neumann part of $\partial\Omega$ (disjoint from $\Gdir$), $\rmn$ its exterior unit normal, 
and $g\colon\Omega\to\R^n$ an assigned traction.
\par
 \end{subequations}
 The elasto-plastic damage model \eqref{RIS-intro},
which was first proposed and studied in \cite{AMV14,AMV15}, 
 includes rate-independent flow rules for damage and plasticity, both given in terms of threshold conditions: propagation starts when the damage variable or the deviatoric part of the stress reach the boundary of the stability set.
Note that $\mathrm{R}$ is the density of the dissipation potential for damage; hence,  the flow rule \eqref{flow-rule-dam-intro}
 encompasses
 the unidirectionality in the evolution of damage
 through the constraint $\dot z\leq 0$ in $\Omega{\tim} (0,T)$.  In turn,  $\mathrm{H}(z,\dot p)$ is the density of the plastic dissipation potential.  
 The coupling of the system is apparent
 both  from the dependence of the elasticity tensor $\bbC$ on the damage variable, and 
  from the $z$-dependence of $\mathrm{H}$. In particular, along the footsteps of \cite{AMV14,AMV15} we encompass 
softening in the model, which consists in the reduction of the yield stress as plastic deformation proceeds, see \eqref{propsH-2} below, as well as \cite[eq.\ (30)]{AMV14} and \cite{DMDMM08}.  
\par
The main impact of the analysis of such coupled model arises in the study of cohesive fracture.
Indeed, in parts of the material where plastic strain has been cumulated, one may observe nucleation of cohesive cracks and thus the emergence of fatigue phenomena, see  \cite[Section 5]{AMV14}, as well as \cite{Crismale, Crismale-Lazzaroni}, and references therein. 
  \par
  Rate-independent damage processes with plasticity have been extensively studied in recent years. Among  phase-field type models (i.e, featuring 
  the damage parameter and the plastic strain as internal variables), we mention
    e.g.\  \cite{BMR12, BonRocRosTho16,  RouVal17} for damage coupled with  plasticity with hardening,
   \cite{Cri17} for  damage  and strain-gradient plasticity,  \cite{Roub-Valdman2016} also accounting  for damage healing, \cite{MelScaZem19} for finite-strain plasticity with damage,  and \cite{DavRouSte19} for perfect plasticity and damage in a dynamical setting. 
\subsection*{The vanishing-viscosity approach and our results}
   \begin{subequations}
 \label{viscous-intro}
  The existence of \emph{quasistatic evolutions} (or   \emph{energetic solutions}) to the Cauchy problem for system \eqref{RIS-intro}
was proved in \cite{Crismale}. 
In this paper we will  instead address  the following viscous approximation of \eqref{RIS-intro}, for $\eps>0$ given:
 \begin{align}
 & - \mathrm{div}\, \sigma = f   \quad    \text{with $\sigma = \bbC(z) e$} &&  \text{ in } \Omega {\tim} (0,T), 
  \\
&  \partial\mathrm{R}(\dot{z})  +\eps \dot z  + \As (z)+ W'(z) \ni - \tfrac12\bbC'(z)e : e  &&    \text{ in } \Omega {\tim} (0,T),
\\
& \partial_{\dot{p}}  \mathrm{H} (z,\dot{p}) + \eps \dot p  \ni \sigma_{\mathrm{D}}  &&    \text{ in } \Omega {\tim} (0,T), 
 \end{align}
 \end{subequations}
supplemented by the boundary conditions \eqref{viscous-bound-cond}.   System  \eqref{viscous-intro} 
thus pertains to the class of \emph{rate-dependent} damage models, which have also been widely studied. Existence results
for rate-dependent damage processes are indeed available both in the case when plasticity effects are neglected
(starting from the pioneering papers \cite{BS, bss}),  and for models encompassing plasticity and even temperature
\cite{Roub-Tomassetti,Rossi17}. A  hallmark of these rate-dependent  systems is  the  gradient regularization of the 
damage parameter, 
 ensuring sufficient spatial regularity for $z$: typically, $p$-Laplacian operators with $p>n$ feature in the damage flow rule. In systems \eqref{RIS-intro}
 and \eqref{viscous-intro}, along the footsteps of \cite{KRZ13}
 we have instead resorted to a non-local, but \emph{linear},
  operator for analytical reasons.
\par
The existence of a solution $(u_\eps,z_\eps,p_\eps)$ to the Cauchy problem for  \eqref{viscous-intro} can be proven for instance by time discretization, cf.\ Theorem \ref{thm:exist-visc} ahead. 
In the approximated system both flow rules for damage and plasticity feature a viscous regularization, specifically they contain the time derivatives of the damage variable and of the plastic strain.
Such regularization can be tuned through the parameter $\eps>0$. In fact, we study the limit as $\eps\down0$, expecting to find (a version of) the rate-independent system \eqref{RIS-intro} in the limit.
This reflects the fact that,  again, both rate-independent damage and rate-independent plasticity are idealized processes where first-order terms are neglected.
We emphasize that, in \eqref{viscous-intro}, 
  the viscosities  in $p$ and in $z$  \emph{both} vanish with the same rate. 
\par
The strategy to derive \eqref{RIS-intro} from \eqref{viscous-intro} follows the \emph{vanishing-viscosity approach} explored in a wide literature, see \cite{MielkeRoubicek15} and references therein. Exploiting suitable a priori estimates on the approximate solutions, uniform with respect to $\eps$, one then passes to the limit as $\eps\down0$, finding a so-called \emph{Balanced Viscosity ($\BV$) solution} 
 \cite{MRS12,MRS13}
to \eqref{RIS-intro}. This technique is by now  standard: it  is based on   reparameterization and  was pioneered
in \cite{EfeMie06RILS}. Indeed, the reparameterized trajectories, defined in a new interval $[0,S]$,
 are uniformly Lipschitz in the new time scale, and so it is their limiting trajectory; for the system under consideration, a Balanced Viscosity solution is  in fact a quadruple $(\sft,\sfu,\sfz,\sfp)$, Lipschitz as a function of the (artificial) time $s\in [0,S]$,   where the rescaling function $\sft: [0,S]\to [0,T]$ records the  original  process time. Now,  it is to be expected that 
 solutions to system \eqref{RIS-intro}
  jump as functions of the (true) time $t\in [0,T]$:  the parameterized solutions  $(\sft,\sfu,\sfz,\sfp)$  keep track of this 
 in that jumps in the original time correspond to the regime in which $\sft'=0$, namely the function $\sft$ is frozen. The notion of $\BV$ solution then provides additional information on the behavior of the rate-independent system in a jump regime. In fact, it encodes the possibility that, between two stable states, there occurs
either a slow transition, corresponding to quasistatic propagation in the original time scale, or a  fast transition, 
where the system displays viscous behavior. 
All of this is encompassed in the (single) 
 energy-dissipation balance
 \begin{equation}
\label{EDB-intro}
 \begin{aligned}
 &
   \Epp(\sft(s),\sfq(s)) + \int_{0}^{s}
   \calM_0(\sft(r),\sfq(r),\sft'(r),\sfq'(r)) \dd r  
   \\
   & =  \Epp(\sft(0),\sfq(0)) +\int_{0}^{s} \partial_t \Epp (\sft(r), \sfq(r)) \, \sft'(r) \dd r  \qquad \text{for all } s \in [0,S],
   \end{aligned}
\end{equation}
cf.\ Definition \ref{def:parBVsol-PP},
 where $\Epp$ is the (overall) driving energy functional for system \eqref{RIS-intro} and $\sfq$ is a place-holder for the rescaled triple $(\sfu,\sfz,\sfp)$. In fact, in  \eqref{EDB-intro} the energy released between the initial time and a given final time $s$ (in the
 artificial time scale) is balanced by the work of the external forces, and by a term involving the `vanishing-viscosity contact potential' $\calM_0$
 (see \eqref{def:M0} for its explicit definition), 
  which keeps track of the occurrence of 
 slow/fast transitions in  
 in the jump regime.  The main result of this paper, \textbf{Theorem  \ref{mainth:1}}, states the convergence of (reparameterized) viscous trajecctories to a $\BV$ solution $(\sft,\sfu,\sfz,\sfp)$  satisfying 
\eqref{EDB-intro}.  
\par
 When applying  the vanishing-viscosity approach  to our setup, one major analytical difficulty arises already when dealing with the a priori estimates,
  uniform w.r.t.\ the parameter $\eps$, needed for the vanishing-viscosity limit. 
 Indeed, we may  deduce  the first set of  basic estimates for the viscous solutions   from an energy-dissipation balance 
 that is tightly connected with 
  the gradient structure of \eqref{viscous-intro} (cf.\ Proposition \ref{prop:-aprio-est}).
  Relying on such estimates,  we prove convergence as $\eps\down 0$ of the reparameterized viscous trajectories (along a suitable subsequence),  to a quadruple $(\sft,\sfu,\sfz,\sfp)$ 
  that fulfills the inequality $\leq$ in \eqref{EDB-intro}, 
where the energy at the current (artificial) time $s$
 is estimated from above by the initial energy. 
However, the available estimates do \emph{not} provide sufficient regularity on the limiting 
 parameterized curves 
 to guarantee that forces and velocities are in a duality pairing.  Hence we are  \emph{not} able to prove the validity of a chain rule
  for the energy $\Epp$ evaluated along the curve  $(\sft,\sfu,\sfz,\sfp)$; in turn,  chain rules are the  tool   
  usually employed to obtain the  energy inequality $\geq $ in \eqref{EDB-intro}  (hereafter, we shall refer to it as the \emph{lower} energy inequality, as therein the energy evaluated at the current time is estimated from below by the initial energy). 
\par
In order to prove the energy-dissipation balance, 
in this paper 
we have to carry out finer arguments, 
based on the  \emph{sole} validity of those estimates,
  uniform w.r.t.\ the viscosity parameter, that are deducible from the viscous energy-dissipation balance.
This  is 
 in the spirit of  the analysis 
 carried out in \cite{MRS13}, whose results are, however, not directly  applicable  here 
 because in the present functional setup we do not have a suitable  chain rule at our disposal,
 cf.\ Remark \ref{rmk:sth-missing}. 
 Rather, the techniques advanced  here  in fact  revisit the approach developed in \cite{DalDesSol11}, see also 
\cite{BabFraMor12}. More precisely,
  we are going to obtain 
 the desired  lower energy inequality 
 by exploiting the fact  that the all  terms featuring in \eqref{EDB-intro}
  are the limit of suitable Riemann sums, defined on carefully chosen subdivisions of the time interval
 that distinguish between the quasistatic regime and the jump regime, and by using  that an approximate form of the lower energy inequality holds along the intervals 
 associated with the partitions. A remarkable difference between this work and the previous contributions is that now the chain rule is not available even in the jump regime, which forces us to perform a discrete approximation also there, cf.\ Remark~\ref{rmk:differencetoBFM}. 
  \par
 We finally mention that the vanishing-viscosity approximation of system  \eqref{RIS-intro} has already been addressed by means of  different viscously regularizing systems, whose structure
allowed for enhanced estimates on the 
reparameterized viscous trajectories, leading to  better 
regularity for the limiting curve and, ultimately, allowing  for the validity of  the chain rule.
First of all, 
in \cite{Crismale-Lazzaroni}   \eqref{RIS-intro} was approximated by an elasto-plastic damage system in which   the viscous regularization only involved the damage variable, while 
the evolution of the plastic variable was kept  rate-independent; in particular,
 following this approach,
 the possible  emergence of viscous behavior for the plastic strain  at 
  jumps is \emph{not} included in the mathematical description of the evolution.
  \par
An alternative  approximation scheme was studied in \cite{Crismale-Rossi19}  in a setting with multiple rates (in the spirit of the approach in \cite{MRS14, MiRo23}). In fact, 
in the viscous system  addressed in  \cite{Crismale-Rossi19},  the momentum equation was also viscously regularized, 
and the   displacement and the plastic strain were set to  converge to elastic equilibrium and   rate-independent evolution,  respectively, 
at a faster rate than the  damage variable. Furthermore,
the vanishing-viscosity regularization  was 
combined with a  vanishing-hardening approximation of the flow rule 
for  technical reasons related to the validity of the aforementioned \emph{enhanced} a priori estimates. The notion of $\BV$ solution  in  \cite{Crismale-Rossi19} thus enjoys better regularity properties than 
those obtained in the forthcoming   Theorem   \ref{mainth:1}. In particular, 
relying on the validity of the chain rule for the driving energy functional, in \cite{CLR} we have
provided a characterization of the $\BV$ solutions from  \cite{Crismale-Rossi19}
in terms of a system of subdifferential inclusions  that 
illustrates in a precise way the possible onset of viscous behavior for the system at jumps. 
Such a differential characterization, tightly related to the validity of a chain rule, 
seems to be out of reach for the present notion of $\BV$ solution.
However, exploiting that very same differential 
characterization, in \cite{CLR} we have proved that the $\BV$ solutions  of \cite{Crismale-Rossi19} `essentially' coincide  (after an initial transition layer)  with those from  \cite{Crismale-Lazzaroni},
 where viscous regularization in $p$ was \emph{not} present. 
\par 
For this reason, in this paper we consider the alternative scheme \eqref{viscous-intro}, which is a natural regularization of 
the rate-independent system \eqref{RIS-intro}. 
Indeed, system \eqref{viscous-intro} reflects the fact that plasticity and damage are tightly connected \cite{AMV14,AMV15}: loosely speaking, 
setting $\eps=0$ formally leads to removing viscous perturbation both in plasticity and damage (which is not the case with the multi-rate regularization chosen in \cite{CLR}). From this viewpoint, a single-rate regularization as the one adopted in this paper seems more adequate. 
On the other hand, due to poor regularity and lack of chain rule, it remains an open problem to further characterize the $\BV$ solutions
fulfilling \eqref{EDB-intro}   e.g.\  by means of a differential characterization
as in \cite{CLR}, see also  Remark \ref{rmk:sth-missing} ahead.
 Thus we leave open the question if the solutions found in the present paper are different from those of
\cite{CLR} and \cite{Crismale-Lazzaroni}, in particular if in this case the presence of the viscous regularization in $p$ could be effectively detected in the limit. 
Anyhow, we maintain that the techniques developed in this paper, as a revisitation of  \cite{DalDesSol11,BabFraMor12} 
are interesting on their own   
 and could be useful in other contexts where the lack of a chain rule in any regime is a hindrance from the analysis of $\BV$ solutions.

\subsection*{Plan of the paper}
In Section \ref{s:2} we settle most of  the notation and preliminary results for our analysis. In Section \ref{s:3}, after introducing the reparameterization technique for the vanishing-viscosity
limit (Sec.\ \ref{ss:3.1})
 and the 
energetics for the perfectly plastic system (Sec.\ \ref{ss:3.2-PP}), we give the notion of \emph{(parameterized) Balanced Viscosity} solution to the system for perfect plasticity and damage
and state the convergence of  (reparameterized) viscous trajectories to a $\BV$ solution in Theorem \ref{mainth:1}.
Its proof is carried out throughout three sections. Indeed, Sec.\ \ref{s:4} contains the compactness arguments and the proof of the upper energy-dissipation inequality. Its converse is proved in Sections \ref{s:5}--\ref{s:6}. 
\par
Finally, the Appendix collects some auxiliary results used in proofs scattered throughout the paper. In particular, 
\begin{enumerate}
\item  in Appendix \ref{ss:proofLemVito} we carry out the proof of a technical result used in Sec.\ \ref{ss:5-step2};
\item in Appendix \ref{ss:appD} we prove a compactness result based on an  elliptic regularity estimate extending \cite[Thm.\ 9.1]{DalDesSol11};
\item  Appendix \ref{ss:gen-duality} contains a  convex analysis result; 
\item in Appendix \ref{ss:appE} we adapt a result from \cite{FraGia2012} to obtain strong convergence for the stresses;
\item we 
settle some results concerning the approximation of the external loadings in Appendix \ref{ss:5-prelim}.
\end{enumerate}

\setcounter{tocdepth}{1}  

\section{Preliminaries}
\label{s:2}
First of all, let us fix some notation that will be used throughout the paper. 
\begin{notation}[General notation  and preliminaries] 
\label{not:1.1}
\upshape
 Given a Banach space $\Spx$, we will
 denote by 
$\pairing{}{\Spx}{\cdot}{\cdot}$  both 
  the duality pairing between $\Spx^*$ and $\Spx$ and that between $(\Spx^n)^*$ and $\Spx^n$; we  will   just write $\pairing{}{}{\cdot}{\cdot}$ for the inner Euclidean product in $\R^n$. 
  Analogously, we  will
 indicate  by $\| \cdot \|_{\Spx}$ the norm in $\Spx$ and, most  often, use the same symbol for the norm in $\Spx^n$, while we will  just write $|\cdot| $ for the Euclidean norm in $\R^n$. 
We  will
  denote by  
 $B_r(0)$  the open ball of radius $r$, centered at $0$, in  $\R^n$.  For the Lebesgue measure of a set $A\subset \R^n$ we will use both notations $|A|$
 and $\mathcal{L}^n(A)$. 
\par
We  will
 denote  by $\Mnn$ the space of the symmetric  $(n{{\tim}}n)$-matrices, and by $\MD$ the subspace of the deviatoric matrices   with null trace. 
 Any  $\eta \in \Mnn$ can be written as 
$
\eta = \eta_\dev+ \frac{\mathrm{tr}(\eta)}n I
$,
with $\eta_\dev$ the orthogonal projection of $\eta$ into $\MD$;  $\eta_\dev$ will be  referred to  as the deviatoric part of $\eta$.  
 We  write   $\Sym(\MD;\MD)$ for   the set of symmetric endomorphisms on $\MD$.
\par
We  will
 often use  the short-hand notation $\| \cdot \|_{L^p}$, $1\leq p<+\infty$, for the $L^p$-norm on the space $L^p(O;\R^m)$, with $O$ a measurable subset of $\R^n$,  and analogously we  will    write  $\| \cdot \|_{H^1}$. Furthermore,  we  will
 denote  by $\Mb(O;\R^m)$ the space of  bounded Radon measures on $O$ with values in $\R^m$. 
\par
Let $v: \Omega{\tim} (0,T) \to \R$ be  differentiable w.r.t.\ time a.e.\ on $\Omega{\tim} (0,T)$. We  will
  denote by $\dot{v}:\Omega{\tim} (0,T)\to\R$ its (almost everywhere defined) partial time derivative. However, as soon as we consider $v$ as a (Bochner) function from $(0,T)$ with values in a suitable Lebesgue/Sobolev space $\Spx$
 (possessing the Radon-Nikod\'ym property),  and $v$ is in  the space $\AC ([0,T];\Spx)$,  we  will
  denote by $v':(0,T) \to \Spx$ its  (almost everywhere defined)  time derivative.
\par
 Finally, we will
 use the symbols
$c,\,c',\, C,\,C'$, etc., whose meaning may vary even within the same   line,   to denote various positive constants depending only on
known quantities. 
\end{notation}
\paragraph{\bf Functions of bounded deformation.}
The space  $\BD(\Omega)$ of  \emph{functions of bounded deformation}  is    defined by
\begin{equation}
\BD(\Omega): = \{ u \in L^1(\Omega;\R^n)\, : \ \sig{u} \in  \Mb(\Omega;\Mnn)  \},
\end{equation}
 where $\Mb(\Omega;\Mnn) $ is   the space of  bounded Radon measures on $\Omega$ with values in $\Mnn$,  with norm $\| \lambda\|_{\Mb(\Omega;\Mnn) }: = |\lambda|(\Omega)$, and  $|\lambda|$ the variation of the measure. 
By the Riesz representation theorem, 
 $\Mb(\Omega;\Mnn) $ can be identified with the dual of the space $\mathrm{C}_0(\Omega;\Mnn )$. The space $ \BD(\Omega)$ is endowed with the graph norm
\[
\| u \|_{\BD(\Omega)}: = \| u \|_{L^1(\Omega;\R^n)}+ \| \sig{u}\|_{\Mb(\Omega;\Mnn) },
\]
which makes it a Banach space. In fact,  $\BD(\Omega)$ is the dual of a normed space, cf.\ \cite{Temam-Strang80}.
\par
In addition to the strong convergence induced by $\norm{\cdot}{\BD(\Omega)}$,  the duality 
from \cite{Temam-Strang80} 
defines  a notion of weak$^*$ convergence on $\BD(\Omega)$: 
a sequence $(u_k)_k $ converges weakly$^*$ to $u$ in $\BD(\Omega)$ if $u_k\weakto u$ in $L^1(\Omega;\R^n)$ and $\sig{u_k}\weaksto \sig{u}$ in $ \Mb(\Omega;\Mnn)$; 
every bounded sequence in $\BD(\Omega)$ has  a weakly$^*$ converging subsequence. 
The space $\BD(\Omega)$ is contained in $L^{n/(n{-}1)}(\Omega; \R^n)$; every bounded sequence in $\BD(\Omega)$ has    a subsequence converging weakly in $L^{n/(n{-}1)}(\Omega;\R^n)$ and strongly in $L^{p}(\Omega;\R^n)$  for every $1\leq p <\frac n{n-1}$. 
\par
 Finally, we recall that for  every $u \in \BD(\Omega)$  the trace $u|_{\partial\Omega}$ is well defined as an element in $L^1(\partial\Omega;\R^n)$, and that (cf.\ \cite[Prop.\ 2.4, Rmk.\ 2.5]{Temam83}) a Poincar\'e-type inequality holds:
\begin{equation}
\label{PoincareBD}
\exists\, C>0  \ \ \forall\, u \in \BD(\Omega)\, : \ \ \norm{u}{L^1(\Omega;\R^n)} \leq C \left( \norm{u}{L^1(\Gamma_\Dir;\R^n)} + \norm{\sig u}{\Mb(\Omega;\Mnn)}\right)\,.
\end{equation}

\subsection{Assumptions}
\label{ss:2.1}
We now detail the standing assumptions for our analysis.
\begin{hypothesis}[The reference configuration]
\label{hyp:domain}
\sl
We suppose that 
 $\Omega \subset \R^n$,  $n\in \{2,3\}$,  is a bounded  Lipschitz  domain
 satisfying 
 the so-called \emph{Kohn-Temam condition}, namely $\partial \Omega = \Gdir \cup
\Gneu \cup \Sigma $ with 
 \begin{compactitem}
\item[-] $\Gdir, \,\Gneu, \, \Sigma$ pairwise disjoint;
\item[-]
 $\Gdir$ and $\Gneu$ relatively open in $\partial\Omega$, and $\Sigma = \partial\Gdir = \partial \Gneu$ their relative boundary in $\partial\Omega$;
\item[-] $\Sigma$ of class $\mathrm{C}^2$ and $\calH^{n-1}(\Sigma)=0$, and  $\partial\Omega$ Lipschitz and of class $\mathrm{C}^2$ in a neighborhood of $\Sigma$.
  \end{compactitem}  
  \end{hypothesis}
We  will
  work with the  space
\[
H_\Dir^1(\Omega;\R^n) := \left\{ u\in H^1(\Omega;\R^n) \colon u=0 \ \text{on}\ \Gdir \right\}\,.
\]
\subsubsection*{A divergence operator}
Any $\sigma \in  \Lnn $ such that $\mathrm{div}\, \sigma  \in L^2(\Omega;\R^n) $ induces
 the distribution $[\sigma \rmn]$  defined by 
\begin{equation}\label{2809192054}
\langle [\sigma \rmn] , \psi \rangle_{\partial\Omega}:= \langle \mathrm{div}\,\sigma , \psi \rangle_{L^2} + \langle \sigma , \rmE(\psi) \rangle_{L^2}  \qquad \text{for every }  \psi \in H^1(\Omega;\R^n).
\end{equation}
 By \cite[Theorem~1.2]{Kohn-Temam83} and \cite[(2.24)]{DMDSMo06QEPL} we have that $[\sigma \rmn] \in H^{-1/2}(\partial \Omega; \R^n)$; 
moreover, if $\sigma \in \mathrm{C}^0(\overline{\Omega};\Mnn)$, then  the distribution $[\sigma \rmn]$ fulfills $[\sigma \rmn] = \sigma \rmn$,
where the right-hand side is 
the standard pointwise product of the matrix $\sigma$ and the normal vector $\rmn$ in $\partial \Omega$. 
\par
 For the treatment of the perfectly plastic system for damage, it will be crucial to   work with the space 
\begin{equation}
\label{space-sigma-omega}
\Sigma(\Omega):= \{ \sigma \in \Lnn \colon  \mathrm{div}\,\sigma  \in L^n(\Omega;\R^n) , \ \sigma_\dev \in L^{\infty}(\Omega;\MD) \}. 
\end{equation} 
Furthermore, 
our choice of external loadings
 will ensure that 
the stress fields $\sigma$  that we consider have,  at equilibrium,  the additional property that $[\sigma \rmn] \in L^\infty(\Omega;\R^n)$ and $\sigma \in \Sigma(\Omega)$. Therefore,  any of such fields
induces a functional $-\Diver\,\sigma \in \BD(\Omega)^*$ via 
\begin{equation}\label{2307191723}
\langle -\Diver\,\sigma, v \rangle_{\BD(\Omega)} :=\langle - \mathrm{div}\,\sigma,   v \rangle_{L^{\frac{n}{n-1}}(\Omega; \R^n )}  + \langle [\sigma \rmn], v \rangle_ {L^{1}(\Gneu; \R^n)}  \qquad \text{for all } v \in \BD(\Omega). 
\end{equation}
With slight abuse of notation, we shall denote by $ -\Diver\,\sigma$ also the restriction of the above functional to $H^1(\Omega;\R^n)$.

\begin{hypothesis}[The elasticity tensor]
\label{hyp:elast}
\sl 
We assume that  $\bbC : [0,+\infty) \to   \Lin(\Mnn;\Mnn)  $
 fulfills  the following conditions:  
\begin{subequations}
\label{eq:C}
\begin{align}
&
 \bbC \in  \mathrm{C}^{1,1}([0,+\infty);  \Lin(\Mnn;\Mnn))\,,
 \label{spd}
\\
&
\exists\, \gamma_1,\, \gamma_2>0  \ \ \forall\, z \in [0,+\infty)  \ \forall\, \xi \in \Mnn \, : \quad  \gamma_1 |\xi |^2  \leq \bbC(z) \xi : \xi \leq \gamma_2 |\xi |^2  \,, \label{C2}
\\
&
 \forall\, z_o \in (0,1) \ 
\exists\, \alpha_{\bbC}>0 \, \  \ \forall\, z \in [z_o,1] \ \ \forall\, \xi \in \Mnn\, : \ \   \bbC'(z) \xi : \xi \geq \alpha_{\bbC}|\xi|^2\,.    \label{C3} 
\end{align}
 Furthermore,  we require that 
\begin{equation}
\label{C_dMdSS}
\begin{aligned} 
&
\exists\, \CV \in  \mathrm{C}^{1,1}([0,+\infty);[0,+\infty))\,,
\quad \CV \text{ bounded from below, } \inf_{z\in [0,+\infty)}\CV(z) \geq c_{\CV}>0 \,,
 \\
 & \qquad  \exists\, \isoC  \in  \Lin(\Mnn;\Mnn) \text{ positive definite, symmetric, \emph{isotropic}, such that}
  \\
  &  \qquad \qquad   \qquad \qquad   \qquad \qquad   \qquad \qquad    \forall\, z \in  [0,+\infty)\, : \
  \bbC(z)  = \CV(z) \, \isoC \,,
  \end{aligned}
\end{equation}
where  \emph{isotropic} implies that $\isoC A = 2\mu {A} + \lambda \mathrm{tr}({A}) {I} $ for all ${A}  \in \Mnn$, with $\lambda,\, \mu>0  $ the Lam\'e constants. 
\end{subequations}
\end{hypothesis}
\begin{remark}[On condition \eqref{C_dMdSS}]
\upshape
In view of the required structure 
 $  \bbC(z)  = \CV(z) \, \isoC$, 
conditions \eqref{spd}--\eqref{C3} could be reformulated   in terms of the function $\CV$. Nonetheless, we have opted for stating  \eqref{spd}--\eqref{C3}  in general,
regardless of \eqref{C_dMdSS}, because the latter structure condition will be exploited \emph{solely} in the proof of Proposition \ref{prop:regolaritaellittica} ahead,
cf.\ also Remark \ref{rmk:purtroppo}. 
In  turn,  Proposition \ref{prop:regolaritaellittica} will be used to prove the lower energy-dissipation inequality, i.e., $\geq $ in \eqref{EDB-intro}. 
\end{remark}

\begin{hypothesis}[Stored energy for damage]
\label{hyp:stored}
\sl
The stored energy for damage encompasses
  a gradient regularizing contribution, 
featuring the bilinear form  $\ass : \Hs(\Omega) {\tim}\Hs(\Omega) \to \R$,
\[
\ass(z_1,z_2): =
 \int_\Omega \int_\Omega\frac{\big(\nabla z_1(x) - \nabla
   z_1(y)\big)\cdot \big(\nabla z_2(x) - \nabla
   z_2(y)\big)}{|x-y|^{n  + 2 (\mathrm{m} - 1)}}\dd  x \dd y \quad \text{ with }  \mathrm{m} \in \left( \frac n2,2\right) \,,
\]
and 
an additional integral  term, with 
density $W$ satisfying 
\begin{subequations}
\begin{align}
&
W\in \rmC^{2}((0,+\infty); \R^+)\cap  \rmC^0([0,+\infty); \R^+{\cup}\{+\infty\})\,,\label{W1}\\
&
s^{2n} W(s)\rightarrow +\infty \text{ as }s\rightarrow 0^+\,, \label{W2}
\end{align}
\end{subequations}
where $W\in \rmC([0,+\infty); \R^+\cup\{+\infty\})$ means that  $W(0)=\infty$  and 
$W(z)\to+\infty$ if $z\to0$ as prescribed by \eqref{W2}. 
\end{hypothesis}


We  will
  denote by 
$\As: \Hs(\Omega) \to \Hs(\Omega)^*$ the operator associated with the bilinear form  $\ass$,  viz.\ 
\[
 \langle \As(z), w \rangle_{\Hs(\Omega)}  := \ass(z,w) \quad \text{for
every $z,\,w \in \Hs(\Omega)$.}
\]
We recall that $\Hs(\Omega)$ is a Hilbert space with the inner product 
\[
\pairing{}{\Hs(\Omega)}{z_1}{z_2}: = \int_\Omega z_1 z_2 \dd x + \ass(z_1,z_2).
\]
 Since we assume that   
$m>\tfrac n2$,  we have the compact embedding $\Hs(\Omega)\Subset \mathrm{C}(\overline\Omega)$. 
\begin{hypothesis}[Plastic dissipation] 
\label{hyp:plast-diss}
\sl The plastic dissipation density  $H:[0,+\infty)  {\tim}  \MD \to [0,+\infty)$  is continuous and enjoys the following properties:
\begin{subequations}
\label{propsH}
\begin{align}
&
\label{propsH-3}
\pi \mapsto H(z,\pi) \text{ is convex and $1$-positively homogeneous for all } z \in [0,1]\,,
\\
&
\label{propsH-2}
0\leq H(z_2,\pi) - H(z_1,\pi)  \qquad \text{for all } 0 \leq z_1\leq z_2   \text{ and all } \pi \in \MD \text{ with } |\pi|=1\,,
\\
&
\label{propsH-2+1/2}
\exists\, C_K>0 \ \forall\, z_1,\, z_2 \in [0,+\infty) \ \forall\,  \pi \in \MD 
\quad |H(z_2,\pi) - H(z_1,\pi) |\leq  C_K |\pi| |z_2{-}z_1| \,, 
\\
&
\label{propsH-4}
\exists\, \bar{r},\, \bar{R}>0 \ \ \forall\, (z,\pi) \in [0,+\infty)  {\tim}  \MD\, : \quad  \bar{r} |\pi| \leq H(z,\pi) \leq \bar{R}|\pi|\,.
\end{align}
\end{subequations}
\end{hypothesis}
\begin{remark}[Constraint sets]
\label{rmk:equivalent}
We point out that a  damage-dependent plastic dissipation density fulfilling \eqref{propsH} can be obtained as support function
$H(z,\pi): = \sup_{\sigma \in K(z)} \sigma {:} \pi$
 associated with a family  $(K(z))_{z\in [0,+\infty)}$ of closed and convex constraint sets in 
 $ \MD$
 fulfilling 
  \[
  \begin{aligned}
  &
\exists\, C_K>0 \quad \forall\, z_1,\, z_2 \in   [0,+\infty)\, :   \qquad d_{\mathscr{H}} (K(z_1),K(z_2)) \leq C_K  |z_1{-}z_2|, 
\\
  &
  \exists\,  0<\bar{r} <  \overline{R} \quad   \forall\, 0\leq z_1\leq z_2  \, : \qquad B_{\bar r} (0) \subset K(z_1)\subset K(z_2) \subset   B_{\overline{R}} (0),  
\end{aligned}
\]
with $d_{\mathscr{H}}$ the Hausdorff distance.  
\par
In fact, any dissipation density as in  \eqref{propsH} arises from a family of constraint sets with the above properties: it suffices to set 
$K(z) := \partial_\pi H(z,0)$, with $\partial_\pi H : \MD \rightrightarrows \MD$ the subdifferential of $H$ w.r.t.\ its second variable.  
\end{remark}
\par
 %
\begin{hypothesis}[Forces and data]
\label{hyp:data}
\sl
We  consider initial data 
\begin{equation}
\label{init-data} 
\begin{aligned}
&
u_0 \in  H_\Dir^1(\Omega;\R^n),  
\\
&  z_0 \in \Hs(\Omega) \text{ with } W(z_0) \in L^1(\Omega) \text{ and } z_0 \leq 1 \text{ in } \ol\Omega,  
\\
 &  p_0 \in L^2(\Omega;\MD). 
 \end{aligned}
\end{equation}
We require that 
  the volume force $f$ and the assigned traction $g$ fulfill  
 \begin{subequations} \label{hyp-data}
  \begin{equation}
\label{forces-u}
f \in H^1(0,T;  L^n(\Omega;\R^n) ) , \quad g \in H^1(0,T; L^\infty(\Gneu;\R^n)).
\end{equation}
Furthermore, as customary for perfect plasticity, 
 we  shall impose  a \emph{uniform safe load condition}, namely that  there exists 
\begin{equation}\label{2909191106}
\rho \in H^1(0,T; \Lnn) \quad\text{ with }\quad\rho_\dev \in H^1 (0,T;\Linftyn)
\end{equation} and there exists $\alpha>0$ such that  for every $t\in[0,T]$  (recall \eqref{2809192054})
\begin{equation}\label{2809192200}
-\mathrm{div}\,\varrho(t)=f(t) \text{ a.e.\ on }\Omega,\quad\qquad [\varrho(t) \rmn]= g(t) \text{ on } \Gneu,
\end{equation}
\begin{equation}
\label{safe-load}
\rho_\dev(t,x) +\xi \in K  \qquad \text{for a.a.}\ x\in\Omega \ \text{and for every}\ \xi\in\Mnn \ \text{s.t.}\ |\xi|\le\alpha.
\end{equation} 
As for the time-dependent Dirichlet boundary condition  $w$, we  assume  that
\begin{equation}
\label{dir-load} 
 w \in H^1(0,T;H^1(\R^n;\R^n)).  
\end{equation}
\end{subequations}
\end{hypothesis}
 By the properties of $W$, the requirement $W(z_0) 
\in L^1(\Omega)$ already encodes the condition $z_0 \geq 0$ in $\overline
\Omega$. 
 The body and surface forces $f$ and $g$ define the total load  function 
\[
F\colon [0,T] \to  \BD(\Omega)^*, \qquad 
\langle F(t), v \rangle_{\BD(\Omega)}: = \langle f(t), v\rangle_{ L^{n/(n{-}1)}(\Omega;\R^n)} + \langle g (t), v \rangle_{L^1(\Gamma_\Neu;\R^n)}
\]
for all $v \in \BD(\Omega).$
Observe that, combining \eqref{forces-u} with     \eqref{2909191106}--\eqref{safe-load}  yields
\begin{equation}
\label{propsF}
F \in H^1(0,T;\BD(\Omega)^*) \quad \text{and} \quad 
-\Diver\,\varrho(t)=F(t) \text{  for all $t \in [0,T]$.}
\end{equation}

\subsection{Energetics for the viscous system} 
The damage dissipation density $ \mathrm{R} : \R \to [0,+\infty] $
 from \eqref{damage-pot-intro} 
 induces
 the dissipation  potential
 \begin{equation}
 \label{damage-dissipation-potential}
 \calR : L^1(\Omega) \to [0,+\infty], \qquad   \calR(\eta) : = \int_{\Omega}\mathrm{R}(\eta(x)) \dd x. 
 \end{equation}
We will work with the subdifferential of the restriction of $\calR$ to $\Hs(\Omega)$, namely with the operator
$\partial_{\Hs} \calR : \Hs(\Omega) \rightrightarrows \Hs(\Omega)^*$ defined by 
\[
\zeta \in \partial_{\Hs} \calR(\eta) \quad  \text{ if and only if }\quad  \calR(\omega)-\calR(\eta)
\geq \pairing{}{\Hs(\Omega)}{\zeta}{\omega{-}\eta} \quad \text{for all } \omega\in \Hs(\Omega)\,.
\]
In what follows, we will simply denote the above subdifferential by $\partial \calR$. 
The viscous system will also feature the viscously perturbed dissipation potential 
 \begin{equation}
 \label{visc-dam-dissipation-potential}
 \calR_\eps : L^1(\Omega) \to [0,+\infty], \qquad   \calR_\eps(\eta) : = \calR(\eta) + \frac\eps 2\| \eta\|_{L^2(\Omega)}^2\,,
 \end{equation}
and  $\partial \calR_\eps :  \Hs(\Omega) \rightrightarrows \Hs(\Omega)^*$  will denote its subdifferential in the duality pairing  $\pairing{}{\Hs(\Omega)}{\cdot}{\cdot}$. 
\par
The \emph{plastic dissipation  potential} $\mathcal{H} \colon \rmC^0(\overline{\Omega}; [0,+\infty))  {\tim}  L^1(\Omega;\MD) 
\to \mathbb{R}$ is defined by
\begin{equation}
\label{plast-diss-pot-visc}
\mathcal{H}(z,\pi):= \int_{ \Omega} H(z(x),\pi(x))\dd x \,.
\end{equation}  
Its convex analysis subdifferential w.r.t.\ the second variable is the operator
\begin{equation}
\label{subdiff-calH}
\begin{aligned}
&
\text{$\partial_\pi \calH \colon \rmC^0(\overline{\Omega}; [0,+\infty)) {\tim} L^1(\Omega;\MD)\rightrightarrows L^\infty(\Omega;\MD)$
defined by}
\\
&
\omega \in \partial_\pi \calH(z, \pi) \quad \text{ if and only if } \quad  \omega(x)\in  \partial_\pi  H(z(x),\pi(x))  \quad \foraa\, x \in \Omega\,.
\end{aligned}
\end{equation} 
Recall that the dissipation potential  density $H$ is associated with a family $(K(z))_{z\in [0,+\infty)}$ of convex subsets of $\Omega$, cf.\ Remark \ref{rmk:equivalent}. 
Then, 
for a given $z\in  \rmC^0(\overline{\Omega})$, we set
\begin{equation}
\label{calK-sets}
\begin{aligned}
\calK_z(\Omega) &  = \{ \omega \in  L^1(\Omega;\MD)\, : \ \omega(x) \in K(z(x)) \ \foraa\, x \in \Omega\}
\\
&  =
  \{ \omega \in  L^1(\Omega;\MD)\, : \ \omega(x) \in \partial_\pi H(z(x),0) \ \foraa\, x \in \Omega\} = \partial_\pi \calH(z,0),
  \end{aligned}
\end{equation}
and observe that  $\calH(z,\cdot)$ is the support function  of the set $\calK_z(\Omega)$, namely 
\begin{equation}
\label{support-calH}
\calH(z,\pi)= \sup_{\omega \in \calK_z(\Omega) } \int_\Omega \omega(x) \colon \pi(x) \dd x \qquad \text{for all } \pi \in L^1(\Omega;\MD)\,.
\end{equation}
\par
We will also work with the viscously perturbed potential
\begin{equation}
\label{viscous-Heps}
\mathcal{H}_\eps \colon \rmC^0(\overline{\Omega}; [0,+\infty))  {\tim}  L^2(\Omega;\MD) \to [0,+\infty), \quad  \mathcal{H}_\eps(z,\pi) : = \mathcal{H}(z,\pi)+\frac\eps2 \|\pi\|_{L^2(\Omega)}^2\,.
\end{equation}
\par

We introduce the stored elastic energy
$\calQ: L^2(\Omega;\Mnn) {\tim} \rmC^0(\overline\Omega) \to \R$ 
\begin{equation}
\label{elastic-energy}
\calQ (e,z) := \frac12 \int_\Omega \bbC(z) e{:} e \dd x\,.
\end{equation}
The energy functional driving the evolution of the \emph{viscous} system is 
\begin{equation}
\label{viscous-energy}
\begin{aligned}
&
\text{$\calEv: [0,T]{\tim} H_{\Dir}^1(\Omega;\R^n) {\tim} \Hs(\Omega){\tim} L^2(\Omega;\Mnn) \to (-\infty,+\infty]$ defined by}
\\
&
\begin{aligned}
&  
 \calEv (t,u,z,p): =  &&  \calQ( e(t),z )  
 +\Ez(z)
 - \langle F(t), u+w(t) \rangle_{H^1(\Omega;\R^n)}
\\
& \text{ with }  && e(t): = \sig{u{+}w(t)} {-}p\,, \qquad   \Ez(z): = \frac12 \ass(z,z)   + \int_\Omega  W(z)  \,\mathrm{d}x\,.
\end{aligned}
\end{aligned}
\end{equation}
 In \eqref{viscous-energy} we have incorporated the boundary loading $w$ in the  elastic energy and in the term with the external force $F$. This reflects the fact that 
we will indeed impose the Dirichlet  condition for the displacements on $\Gdir$, cf.\ \eqref{viscous-bound-cond}, by working with a state variable 
in $ H_{\Dir}^1(\Omega;\R^n) $, so that $u{+}w$ in fact satisfies the desired boundary condition. The new variable $u{+}w$ will thus feature 
in the driving energy functional and in the statement of  our existence theorem for the viscous system, cf.\ Thm.\ \ref{thm:exist-visc} ahead.  
\subsection{Existence and a priori estimates for the viscous problem}
\label{ss:3.2}
\noindent
The state space for the viscous system is 
\[
\Qsp: = H_{\Dir}^1(\Omega;\R^n) {\tim} \Hs(\Omega){\tim} L^2(\Omega;\Mnn)\,.
 \] 
It was  proved in \cite[Lemma 3.3]{Crismale-Rossi19} that for every $t\in [0,T]$ 
the functional $q:= (u,z,p) \mapsto \calEv(t,q)$ is Fr\'echet differentiable on its domain $[0,T]{\tim} \mathscr{D}$, with 
\[
\mathscr{D}= \{ (u,z,p) \in \Qsp  \, : \ \min_{x\in \overline\Omega} z(x)>0 \text{ in }
 \overline{\Omega}\}.
\]
 It also follows from Hypothesis  \ref{hyp:data} that for all $t\in [0,T]$ the function $q\mapsto \calE(t,q)$ is Fr\'echet-differentiable 
 on $\mathscr{D}$, 
 with 
   \begin{subequations}
   \begin{align}
   \label{e:partial-tder}
   & 
   \rmD_q \calEv(t,q) = \left( -\mathrm{Div}\,\sigma(t) -F(t), \As z+W'(z) +\tfrac12 \bbC'(z) e(t){:}e(t), -\sigma_\dev(t) \right)  \in \Qsp^* 
   \intertext{ for all $ (t,q) \in [0,T]{\tim} \Qsp$, where $\sigma_\dev(t) $ is  the deviatoric part of the stress tensor $\sigma(t) = \bbC(z) e(t)$. Furthermore,  for all $q \in H_{\Dir}^1(\Omega;\R^n) {\tim} \Hs(\Omega){\tim} L^2(\Omega;\Mnn)$ the function $t\mapsto \calEv(t,q)$ belongs to  $H^1(0,T)$,
   with } 
   &
   \label{partial-deriv-t}
   \partial_t \calEv(t,q) = \int_\Omega  \sigma(t) : \sig{w'(t)} \dd x -\langle F'(t), u+w(t) \rangle_{H^1(\Omega)} -\langle F(t), w'(t) \rangle_{H^1(\Omega)} 
   \end{align}
   \end{subequations}
   $\foraa\, t\in (0,T)$. 
   Relying on this, it is easy to check  that system \eqref{viscous-intro} reformulates as the generalized gradient system 
 \begin{equation}
\label{1509172300} 
\partial_{q'} \Psi_\eps(q(t), q'(t)) + \rmD_q\calEv (t,q(t)) \ni 0 \qquad \text{in } \Qsp^* \quad \foraa\, t \in (0,T),
\end{equation}
involving the overall dissipation potential (degenerate w.r.t.\ the  variable $u$)
\[
 \Psi_\eps: \Qsp {\tim} \Qsp \to [0,+\infty] \qquad \Psi_\eps(q, q') 
  : =  \calR_\eps(z') + \calH_\eps(z,p') \,.
  \]
  A by now standard argument 
  (cf., e.g., \cite{AGS08,MRS2013}) 
    based on the validity of the chain rule for the driving energy $\calEv$
  shows that 
 a curve $q = (u,z,p)\in H^1(0,T;\Qsp)$
 is a solution to  
  the generalized gradient system \eqref{1509172300}, 
  if and only if  it satifies for every $[s,t]\subset[0,T]$ the energy-dissipation balance
\begin{equation}
\label{endissbal}
\begin{aligned}
&
 \calEv (t,q(t))  + \int_s^t \left( \Psi_\eps(q(r),q'(r)) + \Psi_\eps^*(q(r),{-} \rmD_q\calEv (r,q(r))) \right) \dd r
 \\
 & = \calEv (0,q(0))+\int_0^t \partial_t  \calEv (r,q(r)) \dd r.
 \end{aligned}
\end{equation}
Indeed, \eqref{endissbal} features the Fenchel-Moreau conjugate 
$
\Psi_\eps^*: \Qsp^* \to [0,+\infty)
$ 
of $\Psi_\eps$
which is defined, for 
$\xi = (\eta,\zeta,\omega) \in \Qsp^* = H_{\Dir}^1(\Omega;\R^n)^* {\tim} \Hs(\Omega)^*{\tim} L^2(\Omega;\Mnn)$, by 
%
\begin{equation}
\label{Psi-eps-star}
\begin{aligned}
&
\Psi_\eps^*(q,\xi)   =  \calR_\eps^*( \zeta) + \calH_\eps^*(z,\omega)  
\qquad  \text{with }
\\
& 
 \begin{cases}
 \displaystyle
 \calR_\eps^*( \zeta)  = 
 \frac1{2\eps} \inf_{\gamma \in \partial \calR(0)}   \mathfrak{f}_{L^2}  (\zeta{-}\gamma)^2 \,, & \zeta \in \Hs(\Omega)^*,
    \\[1em]
 \calH_\eps^*(z,\omega)  = 
   \frac1{2\eps} \min_{\rho \in \partial_\pi \calH(z,0)} \| \omega -\rho\|_{L^2(\Omega)}^2 \,, & \omega \in L^2(\Omega;\Mnn),
    \end{cases}
    \end{aligned}
\end{equation}
where
\[
\mathfrak{f}_{L^2}: \Hs(\Omega)^*  \to [0,+\infty]  \text{ is  given by }  \mathfrak{f}_{L^2}  (\eta) = \begin{cases} \|\eta\|_{L^2(\Omega)} & \text{if } \eta \in L^2(\Omega),
\\
+\infty & \text{otherwise}.
\end{cases}
\] 
Now, it can be easily checked that, if $ \calR_\eps^*( \zeta) <+\infty$, then the $\inf$ in its definition is indeed attained and thus 
$ \calR_\eps^*( \zeta) = \tfrac1{2\eps} \min_{\gamma \in \partial \calR(0)}  \|\zeta{-}\gamma\|_{L^2(\Omega)}^2 $. 
In what follows, we will use the short-hand notation 
\begin{equation}
\label{cong-distances}
\begin{aligned}
&
\congdistvisc{z}tq: = 
\begin{cases}
 \min_{\gamma \in \partial \calR(0)}   \mathfrak{f}_{L^2}  ( {-}\rmD_z\calEv (t,q) {-}\gamma )  &   \text{if } \inf_{\gamma \in \partial \calR(0)}    \mathfrak{f}_{L^2}  ( {-}\rmD_z\calEv (t,q) {-}\gamma) <+\infty, 
\\
+\infty &\text{otherwise}, 
\end{cases}
\\
& 
 \congdistvisc{p}tq: =   \min_{\rho \in \partial_\pi \calH(z,0)} \| {-}\rmD_p\calEv (t,q) -\rho\|_{L^2(\Omega)} \,.
 \end{aligned}
\end{equation}
Clearly,  the notation \eqref{cong-distances}  hints at  the fact  that both objects are in fact the distances  of 
${-}\rmD_z \calE$ 
and ${-}\rmD_p \calE$ 
(for ${-}\rmD_z \calE$, one has in fact to consider the  \emph{extended} distance)
from the respective stable sets $\partial\calR(0) \subset \Hs(\Omega)^*$ and  $\partial_\pi \calH(z,0) = \calK_z(\Omega)$ 
(the latter is in fact a subset of $ L^\infty(\Omega;\Mnn)$). 
\par
Slightly adapting
 the arguments from  \cite[Thm.\ 5.1]{Crismale-Rossi19},  
   we prove that the viscous system \eqref{viscous-intro} admits a solution additionally satisfying the energy-dissipation balance
   \eqref{2112200013} below.
\begin{theorem}
 \label{thm:exist-visc}
 Assume Hypotheses \ref{hyp:domain}--\ref{hyp:data}. 
 Then, there exists a triple $q=(u,z,p)$, with
 \begin{equation}
 \label{reg-of-sols}
 \begin{aligned}
 & 
  u \in   H_\Dir^1(0,T;H^1(\Omega;\R^n))\,,   
  \\
  & 
   z \in H^1(0,T;\Hs(\Omega))\,,  \qquad\qquad  0 \leq z(t,x) \leq 1 \quad \text{for every } (t,x)\in [0,T]{\tim}\overline\Omega\,,
   \\
   &
    p \in  H^1(0,T;L^2(\Omega;\MD))\,,
    \end{aligned}
 \end{equation}
  such that $(u{+}w,z,p)$  satisfies
 for almost all $t\in (0,T)$
  system \eqref{viscous-intro}
  in the sense 
  \begin{subequations}
 \label{visc-prob}
  \begin{align}
&
\label{1509172145-cont}
-\mathrm{Div}\big(\bbC(z(t)) e(t)\big) =  F(t)
&& \text{in } H_{\rm Dir}^1(\Omega;\R^n)^*, 
\intertext{with $e(t) = \sig{u(t){+}w(t)} - p(t)$,}
&
\label{1509172300a}
\partial \mathcal{R}_\eps(z'(t)) + \As(z(t)) + W'(z(t)) \ni -\frac12 \bbC'(z)  e(t) \colon e (t) 
 && \text{in } 
\Hs(\Omega)^*, 
\\
&
\label{1509172259}
\partial_{p} H(z(t), p'(t)) + \eps p'(t) \ni  \sigma_{\dev}(t)  && \text{a.e.\ in  } \Omega, 
\end{align}
joint with the initial conditions 
\begin{equation}
\label{initial-conditions-visc}
u(0)=u_0 \text{ in }  H_{\rm Dir}^1(\Omega;\R^n),   \qquad z(0) = z_0 \text{ in } \Hs(\Omega), \qquad p(0) =p_0 \text{ in } L^2(\Omega).
\end{equation}
\end{subequations}
 In fact, \eqref{1509172145-cont} holds at all $t\in [0,T]$. 
Furthermore,  the curve $ q =(u,z,p)$ satisfies the energy-dissipation balance  for every $t \in [0,T]$
\begin{equation}\label{2112200013}
\begin{split}
&\calEv(t,q(t))
+\int_0^t \left( \calR(z'(r)) {+} \calH(z(r),p'(r)) \right) \dd r 
\\
& \qquad \quad + \int_0^t \left(\frac{\eps}{2} \| z'(r)\|_{L^2(\Omega)}^2 {+}  \frac{\eps}{2} \| p'(r)\|_{L^2(\Omega)}^2 \right)  \dd r 
\\
& \qquad \quad 
+ \int_0^t \left(\frac1{2\eps} \congdistviscq zr{q(r)} {+}  \frac1{2\eps} \congdistviscq pr{q(r)}\right)  \dd r \
\\& = \calEv(0,q(0)) + \int_0^t \partial_t \calEv(r,q(r)) \dd r\,.
\end{split}
\end{equation}
\end{theorem}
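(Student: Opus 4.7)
The plan is to adapt the time-discretization scheme of \cite{Crismale-Rossi19} to our setting and then extract a solution by compactness. First I would fix a uniform partition $\{t_k^\tau = k\tau\}_{k=0}^N$ of $[0,T]$, set $q_0^\tau := (u_0, z_0, p_0)$, and inductively solve the incremental minimum problems
\[
q_k^\tau \in \mathop{\mathrm{argmin}}_{q \in \Qsp}\left\{ \calEv(t_k^\tau, q) + \tau \,\Psi_\eps\!\left(q_{k-1}^\tau, \tfrac{q - q_{k-1}^\tau}{\tau}\right) \right\}.
\]
Existence of a minimizer follows from: coercivity of $\calEv$ on $\Qsp$, guaranteed by \eqref{C2}, the bilinear form $\ass$, and the blow-up \eqref{W2} of $W$ at $0$ (which keeps $z > 0$); coercivity of $\Psi_\eps$ in the increments due to the $\eps$-quadratic viscous terms; and standard lower semicontinuity, given the convexity of $\calR$ and $\calH_\eps(z,\cdot)$ and the continuity properties of $H$ in Hypothesis~\ref{hyp:plast-diss}. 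The unidirectionality $z_k^\tau \le z_{k-1}^\tau$ is enforced by the effective domain of $\calR$, while the bound $z_k^\tau \le 1$ is preserved inductively from $z_0 \le 1$.

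Next I would derive the Euler–Lagrange inclusions for the minimizers, test them with the discrete velocities, and exploit the Fenchel–Moreau identity (equality in $\langle\xi,v\rangle \le \Psi_\eps(q,v) + \Psi_\eps^*(q,\xi)$ for $\xi \in \partial_{q'}\Psi_\eps(q,v)$) to obtain a discrete analogue of \eqref{2112200013} with a $\le$ sign, involving the usual piecewise constant/affine interpolants $\overline q^\tau$ and $\widetilde q^\tau$. This yields uniform $\tau$-bounds: $q^\tau$ bounded in $L^\infty(0,T;\Qsp)$, and crucially the $\eps$-viscous terms provide $H^1(0,T;L^2(\Omega))$-bounds for $z^\tau$ and $p^\tau$. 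The momentum equation, viewed as a linear elliptic problem for $u^\tau$ at each $t$, together with Korn's inequality and the ellipticity \eqref{C2} of $\bbC(z)$, then gives $u^\tau \in H^1(0,T;H_\Dir^1(\Omega;\R^n))$ uniformly in $\tau$.

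The limit passage $\tau \downarrow 0$ is then carried out by weak/weak-$*$ compactness and the Aubin–Lions lemma. The key upgrade is strong convergence $z^\tau \to z$ in $C([0,T]\tim\overline\Omega)$, obtained via the compact embedding $\Hs \Subset C(\overline\Omega)$ combined with the $H^1$-regularity in time; this allows passing to the limit in the nonlinear terms $\bbC(z^\tau)$, $\bbC'(z^\tau)$, and in the $z$-dependent potential $H(z^\tau,\cdot)$, using the Lipschitz bound \eqref{propsH-2+1/2}. Identification of the limit of the multivalued subdifferential inclusions \eqref{1509172300a}–\eqref{1509172259} uses Minty-type arguments for $\partial\calR_\eps$ and $\partial_\pi\calH_\eps(z,\cdot)$, facilitated by the strong convergence of $z'^\tau$ and $p'^\tau$ in $L^2(0,T;L^2(\Omega))$. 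The initial conditions are preserved by continuity of the interpolants at $t=0$.

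Finally, the energy-dissipation balance \eqref{2112200013} follows from two matching inequalities. The $\le$ direction is obtained by passing to the $\liminf$ in the discrete energy-dissipation inequality, using lower semicontinuity of all dissipation terms and of $\calEv$, and convergence of the power term $\int_0^t \partial_t\calEv(r,\overline q^\tau(r))\dd r$, which is granted by Hypothesis~\ref{hyp:data}. The opposite inequality $\ge$ relies on the chain rule for $\calEv$ along the limit curve: this is \emph{available here} because the viscous regularization ensures $q \in H^1(0,T;\Qsp)$, so that $-\rmD_q\calEv(\cdot,q(\cdot))$ pairs with $q'$ in $\Qsp^* \tim \Qsp$; combined with the Fenchel–Moreau inequality, this yields the lower bound. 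The main technical obstacle I anticipate, inherited from the arguments in \cite{Crismale-Rossi19}, is the strong $L^2$-convergence $e^\tau \to e$ of the elastic strains, necessary to pass to the limit in the quadratic driving term $\tfrac12 \bbC'(z)e{:}e$ on the right-hand side of the $z$-flow rule; this is typically obtained by testing the momentum equation against $e^\tau - e$ and exploiting the weak convergence of $p^\tau$ together with uniform convexity of $\bbC(z)$.
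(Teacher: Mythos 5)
Your proposal follows the same route as the paper, which itself only gives a short sketch deferring to the time-discretization argument of \cite[Thm.~5.1]{Crismale-Rossi19} and singling out the temporal regularity of $u$ as the new point: since the momentum balance is no longer viscously regularized, the bound $u\in H^1(0,T;H^1_\Dir(\Omega;\R^n))$ is obtained (as you observe) from the elliptic structure of \eqref{1509172145-cont} together with Korn's inequality, once the $\eps$-viscous terms yield the $H^1$-in-time bounds on $z$ and $p$; the paper phrases this as ``testing \eqref{1509172145-cont} by $u'$, \eqref{1509172300a} by $z'$, \eqref{1509172259} by $p'$'' on the discrete level, which is the same estimate. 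One small imprecision in your outline: you invoke strong $L^2(0,T;L^2)$-convergence of $z'^\tau,p'^\tau$ to identify the limit inclusions via Minty, but this strong convergence is not available a priori --- the inclusions are identified instead from the two-sided energy-dissipation identity (upper bound by passage to the limit, lower bound by the chain rule available here thanks to $q\in H^1(0,T;\Qsp)$) combined with the Fenchel--Young equality case, and the strong convergence of the velocities then follows \emph{a posteriori} from the convergence of the viscous quadratic terms.
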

\begin{proof}[Sketch of the proof]
As mentioned earlier, the proof of Thm.\ \ref{thm:exist-visc} basically relies on the same arguments
devised for 
\cite[Thm.\ 5.1]{Crismale-Rossi19}. Indeed, 
the main difference between 
system \eqref{viscous-intro} and  the viscous regularization  of 
\eqref{RIS-intro}
addressed in \cite{Crismale-Rossi19} resides in the fact that,
 here, no viscosity regularization is considered for 
   the momentum balance. It is then worthwhile to  comment the  temporal regularity $u \in  H^1(0,T;H^1(\Omega;\R^n))  $. Formally, it can be  justified by testing \eqref{1509172145-cont} by $u'$, \eqref{1509172300a}  by $z'$, and \eqref{1509172259} by $p'$. Rigorously, the existence of solutions to \eqref{visc-prob} can be shown by  time discretization as in \cite{Crismale-Rossi19}; the abovementioned estimate can be then performed  on the discrete system.
\end{proof}
\par
Let now $(q_\eps)_\eps  = (u_\eps,z_\eps, p_\eps)_\eps \subset H^1(0,T;\Qsp)$
be a family of solutions to 
the Cauchy problem  \eqref{visc-prob}. 
 The following result  collects the bounds that they enjoy uniformly w.r.t.\ the parameter $\eps$. Such estimates are a direct consequence of 
 the energy-dissipation balance
   (cf., e.g., the arguments in   \cite[Prop. 4.3]{Crismale-Rossi19}). In particular, the strict positivity property \eqref{strict-positivity}
 below derives from the energy estimate \eqref{enrgy-bound} via the growth condition \eqref{W2} on the potential $W$, cf.\ 
 \cite[Lemma 3.3]{Crismale-Lazzaroni}. 
\begin{proposition}
\label{prop:-aprio-est}
There exists a constant  $\widetilde{S}>0$  such that for every $\eps>0$
\begin{align}
& 
\label{enrgy-bound}
\sup_{t\in [0,T]} |\calE(t,q_\eps(t))| \leq  \widetilde{S}\,, 
\\
&
\label{2112200025}
\int_0^T \|z'_\varepsilon(t)\|_{L^1} \,\mathrm{d}t = \frac1{\kappa} \int_0^T \calR(z'(t)) \dd t \leq  \widetilde{S}\,,   
\\
&
\int_0^T \|p'_\varepsilon(t)\|_{L^1} \dd t \leq  \frac1{\overline{r}} \int_0^T \mathcal{H}(z_\varepsilon(t), p'_\varepsilon(t))\, \mathrm{d}t \leq  \widetilde{S}\,, 
\\
& 
 \int_0^T \sqrt{\|z_\eps'(t)\|_{L^2}^2{+} \|p_\eps'(t)\|_{L^2}^2} \,
  \sqrt{\congdistviscq zt{q_\eps(t)} {+}   \congdistviscq pt{q_\eps(t)}}  \dd t \leq  \widetilde{S}\,.   
\end{align}
Moreover, 
\begin{equation}
\label{strict-positivity}
\exists\, m_0 >0 \ \ \forall\, \eps>0 \ \ \forall\, t \in [0,T] \, : \quad \min_{x\in \overline\Omega} z_\varepsilon(t,x) \geq m_0\,.
\end{equation} 
 \end{proposition}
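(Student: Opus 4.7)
The plan is to derive all the stated estimates directly from the energy-dissipation balance \eqref{2112200013}, using a Gronwall argument to control the driving energy and then extracting the dissipation bounds from the nonnegativity of each term on the left-hand side. The strategy is essentially that of \cite[Prop.~4.3]{Crismale-Rossi19}, specialized to the present viscous system.

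\textbf{Step 1 (coercivity and uniform energy bound).} The only term in $\calEv(t,q_\eps(t))$ with uncontrolled sign is the loading term $-\langle F(t),u_\eps(t){+}w(t)\rangle_{H^1}$. Using \eqref{propsF} together with integration by parts in the sense of \eqref{2307191723}, one rewrites
\[
\langle F(t),u_\eps(t){+}w(t)\rangle_{H^1} = \int_\Omega \varrho(t){:}e_\eps(t)\dx + \int_\Omega \varrho_\dev(t){:}p_\eps(t)\dx,
\]
since $p_\eps$ is deviatoric. By \eqref{C2} and Young's inequality the first integral is absorbed into $\tfrac12 \calQ(e_\eps(t),z_\eps(t))$, up to an additive constant from $\|\varrho\|_{H^1(0,T;L^2)}$. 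For the second integral, the safe-load condition \eqref{safe-load} (together with the inclusion $\varrho_\dev(t)+B_\alpha(0)\subset K$) allows one to estimate $\int_\Omega \varrho_\dev(t){:}p_\eps(t)\dx$ in terms of $\calH(z_\eps(t),p_\eps(t){-}p_0)+C$, and the latter in turn is bounded by the plastic dissipation $\int_0^t\calH(z_\eps,p_\eps')\dd r$ plus a term depending on the initial data. Combining these bounds with \eqref{partial-deriv-t} one obtains, from \eqref{2112200013}, an inequality of the form
\[
\calEv(t,q_\eps(t)) \le C_1 + C_2 \int_0^t \calEv(r,q_\eps(r))\dd r,
\]
and Gronwall's lemma delivers \eqref{enrgy-bound}.

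\textbf{Step 2 (dissipation estimates).} Once \eqref{enrgy-bound} is known, all three integral terms on the left-hand side of \eqref{2112200013} are bounded by a constant depending only on data. The first two stated estimates are then immediate: the identity in \eqref{2112200025} is just the definition of $\calR$ via \eqref{damage-dissipation-potential}--\eqref{damage-pot-intro} (together with $\dot z_\eps \le 0$), and the bound on $\int_0^T\|p_\eps'\|_{L^1}\dd t$ follows from the lower bound $H(z,\pi)\ge\bar r|\pi|$ in \eqref{propsH-4}.

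\textbf{Step 3 (mixed velocity/slope bound).} For the last estimate, apply the elementary inequality $ab\le \tfrac{\eps}{2}a^2+\tfrac{1}{2\eps}b^2$ with $a=\sqrt{\|z_\eps'\|_{L^2}^2+\|p_\eps'\|_{L^2}^2}$ and $b=\sqrt{\congdistviscq zr{q_\eps(r)}+\congdistviscq pr{q_\eps(r)}}$. Integrating in time and invoking the bounds already obtained from the $\tfrac{\eps}{2}\|\cdot\|_{L^2}^2$ and $\tfrac{1}{2\eps}(\cdot)^2$ terms in \eqref{2112200013} yields the claimed estimate independently of $\eps$.

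\textbf{Step 4 (strict positivity).} By \eqref{enrgy-bound}, the Modica-type term $\int_\Omega W(z_\eps(t,x))\dx$ is uniformly bounded in $t$ and $\eps$. Since $\mathrm{m}>n/2$ gives the compact embedding $\Hs(\Omega)\Subset \mathrm{C}(\overline\Omega)$ and $\Phi(z_\eps(t))$ is uniformly bounded (so $\|z_\eps(t)\|_{\Hs(\Omega)}$ is controlled), one can apply \cite[Lemma~3.3]{Crismale-Lazzaroni}: the singularity of $W$ at $0$ expressed by \eqref{W2} forces $\min_{\overline\Omega} z_\eps(t)\ge m_0$ uniformly, which is \eqref{strict-positivity}.

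The only delicate point is Step~1: without the safe-load reformulation, the loading term is not controlled by the rest of the energy, and closing the Gronwall loop would fail. Once coercivity-from-below of $\calEv$ modulo the dissipation is established, everything else is bookkeeping on \eqref{2112200013}.
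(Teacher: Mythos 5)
Your proof is correct and follows essentially the same route the paper takes, i.e.\ the argument of \cite[Prop.\ 4.3]{Crismale-Rossi19}: all estimates are read off from the viscous energy-dissipation balance \eqref{2112200013}, with the loading term controlled via the safe-load condition to close a Gronwall loop, and strict positivity then obtained from the uniform bound on $\int_\Omega W(z_\eps)\dx$ and the growth condition \eqref{W2}, exactly as in \cite[Lemma 3.3]{Crismale-Lazzaroni}. One small imprecision worth noting: the identity you write in Step~1 is not exact, since $u_\eps(t)+w(t)$ does not vanish on $\Gdir$; the integration-by-parts formula \eqref{integr-by-parts-PP} applied to $v=u_\eps(t)+w(t)$ gives $\langle F(t),u_\eps(t)\rangle = \langle\varrho_\dev(t)\,|\,p_\eps(t)\rangle + \langle\varrho(t),e_\eps(t)-\sig{w(t)}\rangle_{L^2}$, leaving an additional term $\langle F(t),w(t)\rangle - \langle\varrho(t),\sig{w(t)}\rangle_{L^2}$ that is bounded purely in terms of data and therefore harmless for the Gronwall argument — but it should be acknowledged rather than absorbed silently into the displayed identity.
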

\section{Vanishing-viscosity analysis and main result}
\label{s:3}
\subsection{\bf Reparameterization} 
\label{ss:3.1}
Let   $(q_\eps)_{\eps} $ be a family of solutions to
the Cauchy problem  \eqref{visc-prob}.  Relying on Proposition \ref{prop:-aprio-est},
 we are going to reparameterize them by the 
 \emph{energy-dissipation arclength} %
 $ \widetilde{s}_\eps : [0,T]\to [0,\widetilde{S}_\eps]$ (with $\widetilde{S}_\eps: =  \widetilde{s}_\eps(T)$)
 defined by 
 \begin{subequations}
 \label{reparameterization-viscous}
 \begin{equation}\label{2006191707}
 \begin{aligned}
 \widetilde{s}_{\eps}(t):= \int_0^t \Big( &  1  + \|z_{\eps}'(\tau)\|_{L^1} + \|p_{\eps}'(\tau)\|_{L^1} 
 \\
  & \quad + 
  \sqrt{\|z_\eps'(\tau)\|_{L^2}^2{+}\|p_\eps'(\tau)\|_{L^2}^2}  \,  \sqrt{\congdistviscq{z} {\tau}{q_\eps(\tau)}{+} \congdistviscq{p} {\tau}{q_\eps(\tau)}} 
  \Big) \dd \tau\,.
  \end{aligned}
\end{equation}
   `Energy-dissipation'  refers to the interplay between the dissipation term $\sqrt{\|z'\|_{L^2}^2{+}\|p'\|_{L^2}^2} $
and the term
 $\sqrt{\congdistviscname{z}^2 {+} \congdistviscname{p}^2 }$, which contains the forces  $-\rmD_z \calE$ and $-\rmD_p \calE$.   
 From the estimates of Prop.\  \ref{prop:-aprio-est} it follows that 
$\sup_{\varepsilon}   \widetilde{S}_\eps \leq C $.
We set
\begin{equation}
\label{rescaled-curves}
\begin{aligned}
&
 \sft_\eps: =  (\widetilde{s}_\eps)^{-1}, \qquad 
 \sfq_\eps: =q_\eps\circ  \sft_\eps  = (\sfu_\eps, \sfz_\eps,\sfp_\eps) \,,
 \qquad   \sfe_\eps: = e_\eps\circ  \sft_\eps \,,  
 \qquad
 \serifsigma_\eps : =\sigma_\eps\circ  \sft_\eps \,,
 \end{aligned}
 \end{equation}
 \end{subequations}
that we may assume defined on a fixed interval $[0,S]$, with $S:=\lim_{\varepsilon \down 0} \widetilde{S}_\eps$ (the limit is taken along a suitable subsequence). 
\par
The rescaled solutions $( \sft_\eps,  \sfq_\eps): [0,S]\to [0,T]{\tim} \Qsp$
 satisfy     
 the normalization condition 
\begin{equation}\label{2906191307}
\begin{aligned}
&
{\sft'_\eps}(s) + \|\sfz'_\eps(s)\|_{L^1(\Omega)} +
\|\sfp'_\eps(s)\|_{L^1(\Omega;\MD)}
\\
&
\quad  +
\sqrt{\|\sfz_\eps'(s)\|_{L^2}^2{+}\|\sfp_\eps'(s)\|_{L^2}^2} \sqrt{\congdistviscq{z} {\sft_\eps(s)}{\sfq_\eps(s)}{+} \congdistviscq{p} {\sft_\eps(s)}{\sfq_\eps(s)}} 
 \equiv 1\quad \text{ for a.e.\ } s\in (0, S)\,,
 \end{aligned}
\end{equation}
 and the \emph{reparameterized} 
  energy-dissipation balance  for every $s\in [0,S]$
 \begin{equation}\label{2906191258}
 \begin{aligned}
 &
 \calEv(\sft_\eps(s) ,\sfq_\eps(s))+
 \int_0^s  \mathcal{M}_\eps (\sft_\eps(r),\sfq_\eps(r), \sft_\eps'(r), \sfq_\eps'(r)) \dd r
\\
&
 = \calEv(\sft_\eps(0),\sfq_\eps(0)) + \int_0^s \partial_t \calEv(\sft_\eps(r),\sfq_\eps(r)) \sft_\eps'(r) \dd r\,,
    \end{aligned}
\end{equation}
with  the functional
$ \mathcal{M}_\eps : [0,T]{\tim} \Qsp{\tim} (0,+\infty){\tim} \Qsp \to [0,+\infty] $ defined by 
\begin{equation}
\label{def-Meps}
\begin{aligned}
  \mathcal{M}_\eps(t,q,t',q'): =  &  \calR(z')+ \calH(z,p')     +I_{\{0\}} (  \|{-} \rmD_u \calE(t,q)\|_{(H_{\Dir}^1)^*})   
 \\
 & \quad   + \frac\eps{2t'}  \sqrt{\|z'\|_{L^2}^2 {+} \|p'\|_{L^2}^2}
 +\frac{t'}{2\eps}  \sqrt{ \congdistviscq zt{q} {+}   \congdistviscq pt{q} } \,,
\end{aligned}
\end{equation}
where $I_{\{0\}} : \R \to [0,+\infty] $ is the indicator function of the singleton $\{0\}$, defined by 
$I_{\{0\}}(r) =0$ if $r=0$, and $ I_{\{0\}}(r) =+\infty$ otherwise.

\begin{remark}[On the structure of $\mathcal{M}_\eps$]
\label{rmk:structureMeps}
\upshape
Recall
that $\rmD_u \calE(t,q) = - \mathrm{Div}\, \sigma(t)  - F(t)$  (cf.\ equation  \eqref{e:partial-tder}). Thus,
 the contribution  $I_{\{0\}} (  \|{-} \rmD_u \calE(t,q)\|_{(H_{\Dir}^1)^*})$ encodes the fact that, for curves $(\sft,\sfq)$ along which 
$ \int_0^S  \mathcal{M}_\eps (\sft_\eps,\sfq_\eps, \sft_\eps', \sfq_\eps') \dd r<+\infty$, the elastic equilibrium equation \eqref{1509172145-cont} holds
almost everywhere in $(0,S)$. In turn, the terms $ \frac\eps{2t'}  \sqrt{\|z'\|_{L^2}^2 {+} \|p'\|_{L^2}^2}$ and 
$\frac{t'}{2\eps}  \sqrt{ \congdistviscq zt{q} {+}   \congdistviscq pt{q} }$ convey the competition between the tendency of the system to be governed by \emph{viscous} dissipation, 
and that to relax towards the  state characterized by 
\begin{equation}
\label{local-stability+equi}
\congdistviscname z (t,q)= \congdistviscname p(t,q) =0,  \text{ and the elastic equilibrium 
$\|{-} \rmD_u \calE(t,q)\|_{(H_{\Dir}^1)^*}=0$.}
\end{equation} 
Now, in \cite[Lemma 7.4, Remark 7.5]{Crismale-Rossi19}  we have shown that conditions \eqref{local-stability+equi}
 occur in the \emph{rate-independent} regime,   when 
 the displacement variable $u$  is at elastic equilibrium  and one has  local stability for the damage parameter $z$ and the plastic strain $p$. 
\end{remark}
\par
Based on  \cite{MRS14, MiRo23, Crismale-Rossi19, CLR},  we expect that, up to a subsequence, 
the curves $(\sft_\eps,\sfq_\eps)_\eps$ converge to a curve
$(\sft,\sfq)$ satisfying an energy-dissipation balance akin to \eqref{2906191258} and 
involving a   \emph{vanishing-viscosity contact  potential} $\calM_0$, which 
will be properly introduced
(cf.\ \eqref{def:M0}  ahead)
 after some preliminary definitions.
\subsection{\bf Preliminary definitions and  energetics for the perfectly plastic damage system}
\label{ss:3.2-PP}
First of all, let us introduce the state space for the perfectly plastic damage system
  \begin{equation}
   \label{defQpp}
   \begin{aligned}
   \Qpp: =\big\{ & q=(u,z,p) \in \,\BD(\Omega){\tim} \Hs(\Omega) {\tim} \MbD\, : \\&   e: = \sig{u} -p \in \Lnn, \, 
    u \odot \mathrm{n} \,\hn + p =0 \text{ on }  \Gdir  \big\} ,
   \end{aligned}
   \end{equation}
where the condition $u \odot \mathrm{n} \,\hn + p =0 $ on $  \Gdir $ is a relaxation of the homogeneous Dirichlet condition $u=0$ on $\Gdir$.
We will consider $\Qpp $ endowed with the weak$^*$ topology of $\BD(\Omega)^*{\tim} \Hs(\Omega) {\tim} \MbD^*$. 
\subsubsection*{\bf {The driving energy functional for the perfectly plastic system}}
 The energy functional is the extension of  $\calEv$ from 
\eqref{viscous-energy} to the space
$[0,T]{\tim} \Qpp$. 
 To emphasize  the change in the topological setup, we will use a different notation for the extended energy. 
Thus, we define the functional $\Epp : [0,T]{\tim} \Qpp \to (-\infty,+\infty]$
\begin{equation}
\label{plastic-energy}
\begin{aligned}
&  \Epp(t,q): = \calQ( z,e(t))   
+ \Ez(z) 
- \langle F(t), u+w(t) \rangle_{\BD(\Omega)}\,
\end{aligned}
\end{equation}
with $\Ez$ from \eqref{viscous-energy}. 
Since 
 for every $q \in \Qpp$ we have that $ \sig{u} -p  \in L^2(\Omega;\Mnn)$, we have 
$e(t) :=  \sig{u{+}w(t)} -p \in L^2(\Omega;\Mnn) $ for every $t\in [0,T]$; also taking into account that $F \in H^1(0,T;\BD(\Omega)^*)$, we  find that $\Epp$ is well defined, with 
domain 
 $\mathrm{dom}(\Epp) = [0,T]{\tim} \Dpp$ where 
\[
\Dpp =  \{ (u,z,p) \in \Qpp  \, : \ \min_{x\in \overline\Omega} z(x)>0 \text{ in }
 \overline{\Omega}\}.
 \]
 Observe that for every $q\in \Dpp$ the function $t\mapsto \Epp(t,\cdot)$ is in $H^1(0,T)$ and for every $(t,q) \in [0,T]{\tim} \Qpp$ the partial time derivative $\partial_t \Epp(t,q)$ is given by \eqref{e:partial-tder}, with the duality pairings in $H^1(\Omega;\R^n)$ replaced by $\pairing{}{\BD(\Omega)}{\cdot}{\cdot}$.  
%
%
%
\subsubsection*{{\bf  The stress-strain duality.}}
  More in general, 
along the footsteps of  \cite{DMDSMo06QEPL} 
we introduce the class of 
\emph{admissible displacements and strains}
 associated with a function   $w \in H^1(\R^n; \R^n)$, 
  that is
\begin{equation*}
\begin{split}
A(w):=\{(u,e,p) \in  \, & \BD(\Omega) {\tim} L^2(\Omega;\Mnn) {\tim}  \MbD  \colon \\
& \rmE(v)   =e+p \text{ in }\Omega,\, p=(w{-}v){\,\odot\,}\rmn\,\hn \text{ on }  \Gdir \},
\end{split}
\end{equation*}
 (recall that   $ \mathrm{n} $ denotes the normal vector to $\partial \Omega$), where $\odot$ is  the symmetrized tensorial product. 
 The \emph{space of admissible plastic strains} is 
\begin{equation*}
\begin{aligned}
\Pi(\Omega) := \{p\in  \MbD  \colon  &  \exists \, (v, w,e)\in \BD(\Omega){\tim}   H^1(\R^n;\R^n)  {\tim} L^2(\Omega;\Mnn)\,
\\
& 
 \text{ s.t.}\, (v,e,p)\in A(w) \} .
\end{aligned}
\end{equation*}
 Given $\sigma \in \Sigma(\Omega)$ (cf.\ \eqref{space-sigma-omega}), $p \in \Pi(\Omega)$, and $v,\,e$ such that $(u,e,p)\in A(w)$, we define 
 the \emph{stress-strain duality}
\begin{equation}\label{sD}
\begin{aligned}
\langle [\sigma_\dev:p], \varphi\rangle:=-\int_\Omega \varphi\sigma\cdot (e{-} \rmE(w)  )\,\mathrm{d}x & -\int_\Omega\sigma\cdot[(v{-}w)\odot \nabla \varphi]\,\mathrm{d}x
\\
 &  -\int_\Omega \varphi \, ( \mathrm{div}\,\sigma)  \cdot (v{-}w)\,\mathrm{d}x
\end{aligned} 
\end{equation}
for every $\varphi \in \mathrm{C}^\infty_c(\R^n)$;
 in fact, this definition is independent of $v$ and $e$. 
 It can be checked that, for every  $\sigma \in \Sigma(\Omega)$ and $p \in \Pi(\Omega)$, the duality  $[\sigma_\dev{:}p]$ defines  a bounded Radon measure. 
Under these assumptions, 
$\sigma \in L^r(\Omega;\Mnn)$ for every $r < \infty$,
and   there holds
$\|[\sigma_\dev{:}p]\|_1\leq \|\sigma_\dev\|_{ L^\infty}\|p\|_{L^1}$.  
Restricting such  measure to $ \Omega \cup \Gdir $, we  set 
\begin{equation}
\label{duality-product}
 \langle \sigma_\dev\, |\,p\rangle:=[\sigma_\dev{:}p]( \Omega \cup \Gdir ).
 \end{equation}
  For later use, we record here the integration by parts formula  (see \cite{FraGia2012} for an equivalent version) 
 \begin{equation}
 \label{integr-by-parts-PP}
  \langle \sigma_\dev\, |\,p\rangle  = - \langle \sigma,  e- \sig w \rangle_{\Lnn} + 
\langle -\Diver\,\sigma, v-w \rangle_{\BD(\Omega)} 
 \end{equation}
 for all $ \sigma \in  \Sigma(\Omega)$, $(v,e,p) \in A(w)$.
%
\subsubsection*{\bf {The dissipation potential for perfect plasticity}.}
Let us emphasize that  $q\in \Qpp$ means that the plastic variable $p$ is now only a measure in $\MbD$. That is why,
the plastic dissipation mechanism for the rate-independent damage system will be encoded by a dissipation potential that extends
$\calH(p,\cdot) $ to $\MbD$
 via the theory of convex functions of measures \cite{Goffman-Serrin64}, see also \cite{Temam83}.  Namely, 
 we define $ \Hpp \colon \rmC^0(\overline{\Omega}; [0,1]) {\tim}  \MbD  \to \mathbb{R}$
  \begin{equation*}
\begin{aligned}
 &
\Hpp(z,\pi):= \int_{ \Omega \cup \Gdir} H\biggl(z(x),\frac{\mathrm{d}\pi}{\mathrm{d}\mu}(x) \biggr)\,\mathrm{d}\mu(x),
\end{aligned}
\end{equation*}  
 where  $\mu \in \MbD$ is a positive  measure such that $ \pi \ll \mu $ and $\frac{\mathrm{d} \pi}{\mathrm{d}\mu}$ is the Radon-Nikod\'ym derivative of 
  $\pi$    with respect to $\mu$; by the one-homogeneity of \ $H(z(x),\cdot)$, the definition of  $\Hpp$  does not depend  of $\mu$.
By \cite[Proposition~2.37]{AmFuPa05FBVF},    for every   $z \in \rmC^0(\overline{\Omega};[0,1])$  the functional 
 $p \mapsto \Hpp(z,p)$ 
is convex and positively one-homogeneous.
Moreover, for all $(z_k)_k\subset\rmC^0(\overline \Omega;[0,1])$ and $(\pi_k)_k\subset\MbD $   such that $z_k \rightarrow z$ in $\rmC^0(\overline\Omega)$ and $\pi_k \rightharpoonup \pi$ weakly$^*$ in $\MbD$ we have that 
 \begin{equation*}
 \Hpp(z,\pi) \leq \liminf_{k \rightarrow +\infty} \Hpp(z_k, \pi_k). \label{Hsci}
 \end{equation*}
Finally, 
 from \cite[Proposition~3.9]{FraGia2012} it follows that 
\begin{equation}\label{Prop3.9}
H\biggl(z, \frac{\mathrm{d}p}{\mathrm{d} |p|}\biggr)|p| \geq [\sigma_\dev:p],  \quad \text{as measures on }  \Omega \cup \Gdir, \quad \text{for all } \sigma \in\SiKappa z{\Omega},
\end{equation}
 where we use the notation
\begin{equation}
\label{SiKappa}
\SiKappa z{\Omega} := \{ \sigma \in \Sigma(\Omega)\, : \ \sigma_\dev \in  \calK_z(\Omega) \}\,. 
\end{equation} 
 In particular, we have 
\begin{equation}\label{eq:carH}
\Hpp(z, p)\geq  \sup_{\sigma  \in \SiKappa z{\Omega}}  \langle \sigma_\dev\, |\, p\rangle \,\qquad \text{for every $p \in \Pi(\Omega)$,} 
\end{equation} 
to be compared with  \eqref{support-calH}.  
 \subsubsection*{\bf Slopes.}
 We can no longer state that,
 for every fixed $t\in [0,T]$, the functional $\Epp(t,\cdot)$ is G\^ateaux-differentiable on $\Dpp$, 
 since the `natural candidate' for $\rmD_u \Epp$, namely the term $ -\mathrm{Div}\,\sigma(t) -F(t)$, need not be  an element in $\BD(\Omega)^*$.  In order to 
define $\calM_0$,  we will thus need a proxy for the   slope term 
$\| {-}\rmD_u \calEv\|_{(H^1)^*}$ 
which features in \eqref{def-Meps}. That is why, we define the \emph{slope} of $\Epp(t,\cdot)$ via 
 \begin{equation}
  \label{slope-pp}
  \slope utq: =  \|-\mathrm{Div}\,\sigma(t) -F(t)\|_{(H^1(\Omega;\R^n))^*} \qquad \text{for } (t,q) \in [0,T]{\tim} \Dpp \,.
  \end{equation}
  Observe that the above object is  well defined, since for every $(t,q) \in [0,T]{\tim} \Dpp$ we in fact have 
  $ -\mathrm{Div}\,\sigma(t) -F(t) \in   H^1(\Omega;\R^n)^*$.
  \par
   Analogously, we will need to introduce a
proxy for the distance term  
 $ \congdistvisc ptq$ from 
   \eqref{cong-distances}, since $\rmD_p \Epp(t,q)$ is no longer well defined as an element of $\MbD^*$. As a surrogate, it is natural to resort to the $L^2(\Omega;\MD)$-distance of $-\sigma_\dev$ from 
$\calK_z(\Omega)$.  
   Namely, we set 
\begin{equation}
\label{slope-funct-p}
\begin{aligned}
\congdist ptq:  = \mathrm{dist}_{L^2(\Omega)}({-}\sigma_\dev,  \calK_z(\Omega)) \,.
\end{aligned}
\end{equation}
Finally, for notational consistency, hereafter we will use the notation  
\begin{equation}
\label{slope-funct-z}
\congdist ztq: = \min_{\gamma \in \partial \calR(0)}    \mathfrak{f}_{L^2}( {-}\rmD_z\Epp (t,q) {-}\gamma) 
\end{equation}
in place of $\congdistvisc ztq$. 
 Note that,  whenever $\rmD_z\Epp (t,q) \notin L^2(\Omega)$, we have that $\congdist ztq = +\infty$. 

 Later on, we will resort to the following representations of $\congdistname p $ and  $\congdistname z $.
\begin{lemma}
\label{l:duality}
There holds for every $ (t,q) \in [0,T]{\tim} \Dpp $ 
\begin{subequations}
\label{duality-formulae}
\begin{align}
\label{duality-formulae-1}
& 
\congdist ptq = \sup_{ \substack{\eta_p \in L^2(\Omega;\MD) \\  \|\eta_p\|_{L^2}\le1} } \left(  \langle \sigma_\dev(t), \eta_p \rangle_{L^2(\Omega;\MD)} -\calH(z,\eta_p)\right), \\
&
\label{duality-formulae-2}
\congdist ztq = \sup_{ \substack{\eta_z \in \Hs(\Omega) \\  \|\eta_z\|_{L^2}\le1} }  \left(\langle {-} \As z {-} W'(z) - \tfrac12 \bbC'(z) e(t): e(t) , \eta_z  \rangle_{\Hs(\Omega)} - \calR(\eta_z)  \right).
\end{align}
\end{subequations}
\end{lemma}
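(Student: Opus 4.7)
My plan is to deduce both identities from the general Fenchel--Moreau duality formula for the distance function to a closed convex set in a Hilbert space: for any closed convex $C$ in a Hilbert space and any point $x$,
\[
\mathrm{dist}(x,C) \;=\; \sup_{\|\eta\|\le 1}\bigl(\langle x,\eta\rangle - \sigma_C(\eta)\bigr),
\]
which follows from the easy calculation that the Fenchel conjugate of $\mathrm{dist}(\cdot,C)=\|\cdot\|\,\square\, I_C$ equals $I_{\bar B_1}+\sigma_C$ and from the Fenchel--Moreau theorem (the distance being convex and continuous).

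For \eqref{duality-formulae-1}, I would use that by \eqref{slope-funct-p} and the symmetry of the $L^2$ unit ball, $\congdist ptq$ coincides with the $L^2$-distance of $\sigma_\dev(t)$ from the closed convex set $\calK_z(\Omega)\subset L^\infty(\Omega;\MD)\subset L^2(\Omega;\MD)$; by \eqref{support-calH}, $\calH(z,\cdot)$ is precisely the support function of $\calK_z(\Omega)$ with respect to the $L^2$-pairing. Inserting these observations into the distance--duality identity would yield \eqref{duality-formulae-1} directly.

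For \eqref{duality-formulae-2}, I would first note that since $(t,q)\in[0,T]\times\Dpp$ (so $z$ is bounded away from $0$), the expression $\rmD_z\Epp(t,q) = \As z + W'(z) + \tfrac12\bbC'(z)e(t):e(t)$ is a well-defined element of $\Hs(\Omega)^*$, and that the factor $\mathfrak{f}_{L^2}$ in \eqref{slope-funct-z} forces the minimisation to range effectively over $\partial\calR(0)\cap L^2(\Omega)$; thus $\congdist ztq$ coincides with $\mathrm{dist}_{L^2}\!\bigl(-\rmD_z\Epp(t,q),\,\partial\calR(0)\cap L^2(\Omega)\bigr)$, with the convention that it equals $+\infty$ when $\rmD_z\Epp(t,q)\notin L^2$. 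Next, I would recall that the pointwise density $\mathrm{R}$ is convex, lsc and positively $1$-homogeneous with $\mathrm{R}(0)=0$, hence $\partial\mathrm{R}(0)=[-\kappa,+\infty)$ and $\mathrm{R}(\cdot)$ is the support function of $\partial\mathrm{R}(0)$; integrating and using that $\mathrm{R}$ is a normal integrand, one identifies $\calR|_{L^2}$ with the support function of $\partial\calR(0)\cap L^2(\Omega)$ on $L^2(\Omega)$. The duality identity would then yield \eqref{duality-formulae-2}.

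I expect the only non-trivial technical point to be the confirmation that the support function of $\partial\calR(0)\cap L^2$ truly coincides with $\calR|_{L^2}$ rather than a proper minorant: since $\partial\calR(0)$ lives a priori in $\Hs(\Omega)^*$, one needs a measurable-selection argument (standard for normal integrands) yielding, for each $\eta_z\in L^2$, an $L^2$-selection $\gamma$ of $\partial\mathrm{R}(0)$ that saturates the pointwise supremum almost everywhere. The case $\rmD_z\Epp(t,q)\notin L^2$ is handled by choosing test functions $\eta_z$ concentrating on the $L^2$-singular part of $-\rmD_z\Epp$, which sends the right-hand side of \eqref{duality-formulae-2} to $+\infty$, matching the left-hand side.
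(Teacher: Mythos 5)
Your core idea is the same Fenchel--Moreau duality that the paper packages once and for all in Lemma~\ref{l:gen-duality}, whose proof computes the conjugate $(\gR+I_{B_1^{\gHi}(0)})^*$ in the $\gBa^*$-$\gBa$ duality via the Ioffe--Tihomirov sum rule. Your version works the two cases separately and in the concrete Lebesgue/Sobolev spaces, and this is where it picks up problems.

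For \eqref{duality-formulae-1}, the ``symmetry of the $L^2$ unit ball'' argument you invoke to replace $-\sigma_\dev(t)$ by $\sigma_\dev(t)$ in the distance is not valid: symmetry of the ball says nothing about the distance to a non-symmetric convex set $\calK_z(\Omega)$, and that set has no reason to be centrally symmetric. What actually makes the signs consistent with \eqref{duality-formulae-1} is that $-\rmD_p\calEv(t,q)=\sigma_\dev(t)$ (cf.~\eqref{e:partial-tder} and \eqref{cong-distances}), so the element whose distance from $\calK_z(\Omega)$ is being computed is $\sigma_\dev(t)$ itself; your ``symmetry'' rationale should be deleted and replaced by this derivative bookkeeping.

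For \eqref{duality-formulae-2}, the measurable-selection worry you flag is in fact not needed. The identification of the support function of $\partial\calR(0)$ with $\calR$ is elementary: on $\{\eta\le 0\}$ the constant $\gamma\equiv-\kappa$ already attains the supremum, while for $\eta$ with $|\{\eta>0\}|>0$ the elements $\gamma_n=-\kappa+n\chi_{\{\eta>0\}}\in L^2(\Omega)\cap\partial\calR(0)$ send it to $+\infty$. The genuine gap is the case $-\rmD_z\Epp(t,q)\notin L^2(\Omega)$, which you propose to dispose of by ``test functions concentrating on the $L^2$-singular part''. The supremum in \eqref{duality-formulae-2} is effectively restricted to $\eta_z\le 0$ a.e.\ (since otherwise $\calR(\eta_z)=+\infty$), and the unboundedness of $\langle -\rmD_z\Epp(t,q),\cdot\rangle_{\Hs(\Omega)}$ over the $L^2$-ball of $\Hs(\Omega)$ is a priori available only along some sequence $\eta_k$ of uncontrolled sign; $\Hs(\Omega)$ with $\mathrm{m}>1$ need not be stable under truncation, and subtracting the running maximum of $\eta_k$ destroys the $L^2$ bound, so one cannot force the sign constraint by any obvious modification. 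The paper's abstract Lemma~\ref{l:gen-duality} avoids this step completely: because the equality $\inf_{w\in\partial\calR(0)}\mathfrak{f}_{L^2}(\xi-w)=(\calR+I_{B_1^{L^2}(0)})^*(\xi)$ is exact for every $\xi\in\Hs(\Omega)^*$ by the Ioffe--Tihomirov duality, the case $\xi\notin L^2(\Omega)$ is absorbed automatically into the extended function $\mathfrak{f}_{L^2}$ and there is nothing to verify by hand. You should prove (or cite) the abstract identity first and then specialize, rather than trying to argue the two sides are $+\infty$ independently.
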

\noindent 
Formulae \eqref{duality-formulae} follow from a duality argument already exploited in the proof of \cite[Lemma 3.6]{Crismale-Rossi19}; for the reader's convenience, we  will revisit and generalize it in Lemma \ref{l:gen-duality} ahead, which straightforwardly implies Lemma \ref{l:duality}, see also Remark \ref{rmk:proof-duality}.

\par
We will also rely on  the following lower semicontinuity result, cf.\ \cite[Prop.\ 7.7]{Crismale-Rossi19}. 
\begin{lemma}
\label{l:lsc-slopes}
Let $(t_k)_k, \, t  \subset [0,T]$ and  $ (q_k)_k \subset \Qsp $, $q \in  \Qpp$
fulfill as $k\to\infty$:  
\begin{align}
\nonumber
&
\begin{aligned}
&
t_k \to t, \ q_k \weaksto q \text{ in } \Qpp, 
\\
&  e(t_k)=\rmE(u_k+w(t_k))-p_k \to e(t)=\rmE(u+w(t))-p  \text{  in $\Lnn$.}
\end{aligned}
\intertext{Then,}
&
\label{eq:lsc-slopes}
\begin{cases}
\displaystyle
\slope utq \leq \liminf_{k\to\infty}  \|-\rmD_u \calEv (t_k,q_k)\|_{(H^1(\Omega;\R^n))^*}\,,
\smallskip
\\
\displaystyle
\congdist ztq \leq \liminf_{k\to\infty} \congdistvisc z{t_k}{q_k}\,, 
\smallskip
\\
\displaystyle
\congdist ptq \leq \liminf_{k\to\infty} \congdistvisc p{t_k}{q_k}\,. 
\end{cases}
\end{align}
\end{lemma}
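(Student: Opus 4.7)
The plan is to first establish a set of basic convergences implied by the assumptions, and then to treat the three inequalities in \eqref{eq:lsc-slopes} separately: the one for $\slope utq$ is essentially a continuity statement, whereas the ones for $\congdist ztq$ and $\congdist ptq$ are handled by passing to the limit in the dual representation of Lemma \ref{l:duality}.

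First, since $\mathrm{m} > n/2$, the compact embedding $\Hs(\Omega)\Subset \mathrm{C}^0(\overline\Omega)$ upgrades the weak convergence $z_k \weakto z$ in $\Hs(\Omega)$ (which is part of the weak$^*$ convergence in $\Qpp$) to strong convergence $z_k \to z$ in $\mathrm{C}^0(\overline\Omega)$. Combined with $\bbC \in \mathrm{C}^{1,1}$ this gives $\bbC(z_k)\to \bbC(z)$ and $\bbC'(z_k)\to \bbC'(z)$ uniformly on $\overline\Omega$. Coupled with the hypothesis $e(t_k)\to e(t)$ in $\Lnn$, we obtain strong convergence $\sigma_k(t_k) := \bbC(z_k) e(t_k) \to \sigma(t) := \bbC(z)e(t)$ in $\Lnn$ and $\bbC'(z_k) e(t_k){:}e(t_k) \to \bbC'(z) e(t){:} e(t)$ in $L^1(\Omega)$. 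Moreover, $F\in H^1(0,T;\BD(\Omega)^*)$ together with $t_k\to t$ yields $F(t_k)\to F(t)$ in $\BD(\Omega)^*$, a fortiori in $H^1(\Omega;\R^n)^*$.

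For the displacement slope, I test $-\rmD_u \calE(t_k,q_k) \in H_\Dir^1(\Omega;\R^n)^*$ against an arbitrary $v \in H_\Dir^1(\Omega;\R^n)$: by \eqref{e:partial-tder}, $\langle -\rmD_u \calE(t_k,q_k), v\rangle = \int_\Omega \sigma_k(t_k){:}\rmE(v)\dx - \langle F(t_k), v\rangle$, which by the strong convergences above tends to $\int_\Omega \sigma(t){:}\rmE(v)\dx - \langle F(t), v\rangle = \langle -\mathrm{Div}\,\sigma(t) - F(t), v\rangle$. Thus $-\rmD_u \calE(t_k,q_k) \to -\mathrm{Div}\,\sigma(t) - F(t)$ strongly in $H_\Dir^1(\Omega;\R^n)^*$, and the norms converge, which in particular gives the first inequality of \eqref{eq:lsc-slopes}.

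For the two remaining inequalities I pass to the limit in the dual representations. By Lemma \ref{l:duality} (and its evident viscous counterpart, obtained from Lemma \ref{l:gen-duality}), both $\congdist ztq$ and $\congdistvisc z{t_k}{q_k}$, resp.\ $\congdist ptq$ and $\congdistvisc p{t_k}{q_k}$, can be written as suprema over the unit $L^2$-balls of the families
\begin{align*}
L_k^z(\eta_z) & := \langle {-}\As z_k {-} W'(z_k) {-} \tfrac12 \bbC'(z_k) e(t_k){:}e(t_k),\eta_z\rangle_{\Hs(\Omega)} - \calR(\eta_z),\\
L_k^p(\eta_p) & := \langle \sigma_{k,\dev}(t_k),\eta_p\rangle_{L^2(\Omega;\MD)} - \calH(z_k,\eta_p),
\end{align*}
and the corresponding limit functionals $L^z, L^p$. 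For each fixed $\eta_z \in \Hs(\Omega) \subset L^\infty(\Omega)$: $\ass(z_k,\eta_z) \to \ass(z,\eta_z)$ by weak convergence in $\Hs(\Omega)$; $\int_\Omega W'(z_k)\eta_z \dx \to \int_\Omega W'(z)\eta_z\dx$ by uniform convergence of $W'(z_k)$ on $[m_0,1]$ (note also that the strict positivity \eqref{strict-positivity} persists in the limit, so we stay away from the singularity of $W$); and $\int_\Omega \bbC'(z_k)e(t_k){:}e(t_k)\,\eta_z \dx \to \int_\Omega \bbC'(z)e(t){:}e(t)\,\eta_z \dx$ by the $L^1$-convergence established above and $\eta_z \in L^\infty$. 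Thus $L_k^z(\eta_z)\to L^z(\eta_z)$. For each fixed $\eta_p \in L^2(\Omega;\MD)$: $\langle \sigma_{k,\dev}(t_k),\eta_p\rangle_{L^2}\to \langle \sigma_\dev(t),\eta_p\rangle_{L^2}$ by strong $L^2$-convergence of $\sigma_k(t_k)$, and the Lipschitz bound \eqref{propsH-2+1/2} yields $|\calH(z_k,\eta_p) - \calH(z,\eta_p)| \leq C_K \|\eta_p\|_{L^1}\|z_k - z\|_{\mathrm{C}^0(\overline\Omega)} \to 0$. Hence $L_k^p(\eta_p)\to L^p(\eta_p)$. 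The standard inequality $\sup_\eta \lim_k L_k(\eta) \leq \liminf_k \sup_\eta L_k(\eta)$ (applied with the constraint $\|\eta\|_{L^2}\le 1$) then delivers the second and third bounds in \eqref{eq:lsc-slopes}.

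The only delicate point, and where I expect the bulk of the work to go, is justifying the dual representation for the \emph{viscous} distances on the right-hand side, since the primal definitions in \eqref{cong-distances} involve extended-valued $L^2$-projections onto the stable sets. This is precisely the content of the generalized convex-analytic argument in Lemma \ref{l:gen-duality}; once invoked, the rest of the proof reduces to the routine pointwise convergences sketched above.
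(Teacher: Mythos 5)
The paper does not reproduce a proof of this lemma --- it simply cites \cite[Prop.\ 7.7]{Crismale-Rossi19} --- so there is no internal argument to compare against. Your proof nonetheless follows the natural route behind that citation, and the key steps are sound: upgrade $z_k \weakto z$ in $\Hs(\Omega)$ to uniform convergence via the compact embedding $\Hs(\Omega)\Subset \mathrm{C}^0(\overline\Omega)$; deduce strong $\Lnn$-convergence of the stresses and $L^1$-convergence of $\bbC'(z_k)e(t_k){:}e(t_k)$ from $e(t_k)\to e(t)$; observe that this yields strong $(H^1)^*$-convergence of $-\rmD_u\calE(t_k,q_k)$, so the first inequality is actually an equality; and obtain the two remaining inequalities by pointwise convergence of the dual representations of Lemma~\ref{l:duality}, combined with the elementary lower semicontinuity of a supremum. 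Your appeal to Lemma~\ref{l:gen-duality} for the viscous-side duality formulae is legitimate, since $\rmD_z\calEv$ and $\rmD_z\Epp$ (resp.\ $\rmD_p\calEv$, $\rmD_p\Epp$) have the same pointwise expression.

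The one place that deserves more care is the $W'$ term. You invoke \eqref{strict-positivity} to keep $z_k$ and $z$ away from the singularity of $W$, but that estimate concerns the viscous trajectories $z_\eps$, not the abstract sequences $(z_k)$ appearing in the statement of this lemma; as written, the lemma's hypotheses alone do not deliver a uniform lower bound on $z_k$. To make the argument watertight one should either add to the hypotheses that $(t_k,q_k)\in [0,T]\times\Dpp$ with $\inf_k\min_{\overline\Omega} z_k>0$ --- which is what holds in every application, via energy boundedness and the growth condition \eqref{W2} --- or adopt the convention $\congdist ztq = +\infty$ when $(t,q)\notin[0,T]\times\Dpp$ and handle that case separately. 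This is a presentational refinement rather than a flaw in the reasoning; everything else in the argument is correct.
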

\subsubsection*{\bf The vanishing-viscosity contact potential.}
Preliminarily, we introduce the place-holders
\begin{equation}
\label{calD-functionals}
 \begin{cases}
 \displaystyle
 \mathcal{D}(q') : =  \sqrt{\|z'\|_{L^2}^2 {+} \|p'\|_{L^2}^2}
     \\[1em]
  \displaystyle
 \mathcal{D}^*(t,q) := \sqrt{ \congdistq zt{q} {+}   \congdistq pt{q} }
 \end{cases}
 \qquad \text{for } (t,q) \in [0,T]{\tim} \mathscr{D}\,. 
\end{equation}
Now, whenever $\rmD_z \Epp(t,q) \notin L^2(\Omega)$, we have $ \congdist zt{q} = +\infty$ and hence
$\mathcal{D}^*(t,q) = +\infty$.  
We are in a position to introduce the functional 
$\Mfunzname 0: [0,T] {\tim} \Qpp {\tim} [0,T]{\tim} \Qpp \to [0,+\infty]$
 by setting
\begin{subequations}
\label{def:M0}
\begin{align}
& 
\Mfunz 0tq{t'}{q'}: =  \calR(z')+ \Hpp(z,p')   +I_{\{0\}} ( \slope utq) +     \REDfunz 0tq{t'}{q'} \,, &&
\intertext{where the reduced contact potential $  \REDfunzname 0$ is defined on $ [0,T] {\tim} \Qpp {\tim} [0,T]{\tim} \Qpp$ by } 
& 
 \REDfunz 0tq{t'}{q'}: = I_{\{0\}} (\congdist ztq ) +I_{\{0\}} (\congdist ptq)  && \text{if } t'>0 \,,
 \intertext{while for $t'=0$ special attention needs to be paid to the case in which $\mathcal{D}^*(t,q) = +\infty$. Indeed, along the footsteps of \cite[Def.\ 5.1]{MiRo23} we set }
 &
 \label{red-funz-at-0}
 \REDfunz 0tq0{q'}:  = \begin{cases}
  \mathcal{D}(q') \,  \mathcal{D}^*(t,q) & \text{if }  \mathcal{D}^*(t,q) <+\infty,
  \\
  0 & \text{if } q'=0 \text{ and } (t,q) \in \overline{\mathrm{dom}(\calD^*)}^{\mathrm{w}},
  \\
  +\infty & \text{otherwise} 
 \end{cases}
  && \text{if } t'=0\,,
  \end{align}
\end{subequations} 
where $\overline{\mathrm{dom}(\calD^*)}^{\mathrm{w}}$ is the weak closure of $\mathrm{dom}(\calD^*) = \{ (t,q) \in [0,T]{\tim} \mathscr{D}\, : \ \calD^*(t,q)<+\infty\}$
confined to energy sublevels, i.e.
\[
\overline{\mathrm{dom}(\calD^*)}^{\mathrm{w}} = \{ (t,q)\, : \ \exists\, (t_k,q_k)_k \subset  \mathrm{dom}(\calD^*) \text{ s.t. } t_k \to t, \ q_k \weaksto q, \ \sup_k \Epp(t_k,q_k)<+\infty\}\,.
\]
\begin{remark}[On the structure of $\Mfunzname 0$]
\label{rmk:structureM0}
\upshape
We emphasize that the definition of $\Mfunzname 0 $ encodes the fact that, when $t'>0$, in addition to the elastic equilibrium condition   $\slope utq=0$ 
the local stability conditions $\congdist ztq =0$ and $\congdist ptq=0$ hold,  
which is in accord with rate-independent evolution of    the system  (cf.\ Remark \ref{rmk:structureMeps}).  Instead, 
the contribution $  \mathcal{D}(q') \,  \mathcal{D}^*(t,q) $ to $\Mfunz 0tq{0}{q'}$ encodes the fact that when the system jumps (and the external time is frozen, i.e.\ $t'=0$), the system
may be governed by viscosity in $z$ and $p$. 
\end{remark}
 \par
We now introduce the concept of \emph{admissible parameterized curve},  tailored to describing the properties of parameterized curves   $(\sft,\sfq) \colon [0,S] \to [0,T] {\tim} \Qpp$ 
for which the term  $ \mathcal{D}(\sfq') \,  \mathcal{D}^*(\sft,\sfq)$ is well defined on a specific subset $A^\circ \subset [0,S]$. 
 Moreover, this notion also records the fact that, in the same way as for the rate-dependent system \eqref{viscous-intro}, also for the rate-independent system 
\ref{RIS-intro} the boundary condition for the displacements is \emph{weakly} formulated in terms of the variable $u{+}w$. 
\begin{definition}\label{def:admparcur}
A curve $(\sft,\sfq)=(\sft, \sfu, \sfz, \sfp)\colon [0,S] \to [0,T] {\tim} \Qpp$ is an \emph{admissibile parameterized curve for the perfectly plastic system} if 
\begin{subequations}\label{eqs:2906191823}
\begin{align}
&
\text{$\sft \in \AC ([0,S] ; [0,T])$ is nondecreasing},
\\
& \sfu \in  L^\infty \big(0,S; \BD(\Omega))\,,\label{2906191813}
\\
&  \sfz \in \AC([0,T];L^1(\Omega)),  
\\
 &  \sfp \in \AC ([0,T];  \MbD ), 
\\
&  \sfe=\mathrm{E}(\sfu + \sfw)-\sfp \in  L^\infty(0,S; \Lnn)  \quad \text{with } \sfw = w {\circ} \sft\,,  
    \label{2906191814}\\
&  (\sfz,\sfp) \in \AC_{\mathrm{loc}}(A^\circ; L^2(\Omega){{\tim}} L^2(\Omega; \MD))\,,   \text{ where } A^\circ \text{ is the open set} \label{2906191815}\\
\nonumber & \qquad A^\circ:=\{ s\in (0,S) \colon  \mathcal{D}^*(\sft(s), \sfq(s))>0\}\,,  \\
& 
 \label{2906191815+1}
\sft \text{ is constant in every connected component of }A^\circ\,.
\end{align}
\end{subequations}
We  will
  write $(\sft,\sfq) \in \mathscr{A}([0,S] ;[0,T] {{\tim}} \Qpp)$. 
\end{definition}
In view of Remarks \ref{rmk:structureMeps} \& \ref{rmk:structureM0}, hereafter  we will refer  to $A^\circ$ as the \emph{instability set}. 
%
  In what follows, we will also use the notation
 \begin{equation}
 \label{B-circ}
 B^\circ: = [0,S]\setminus A^\circ\,.
 \end{equation}
\subsection{Notion of $\BV$ solution and main result}
Eventually, we are in a position to specify the concept of $\BV$ solution to the perfectly plastic system that we will obtain in the vanishing-viscosity limit. 
  \begin{definition}\label{def:parBVsol-PP}
An admissible parameterized curve  $(\sft,\sfq)\colon [0,S] \to [0,T] {{\tim}} \Qpp$ is a \emph{(parameterized) Balanced Viscosity} ($\BV$, for short) solution to the  system for perfect plasticity  and damage 
\eqref{RIS-intro}
if 
 $(\sft,\sfq)$ fulfills the energy-dissipation balance 
\begin{equation}
\label{2906191838}
 \begin{aligned}
 &
   \Epp(\sft(s),\sfq(s)) + \int_{0}^{s}
   \calM_0(\sft(r),\sfq(r),\sft'(r),\sfq'(r)) \dd r  
   \\
   & =  \Epp(\sft(0),\sfq(0)) +\int_{0}^{s} \partial_t \Epp (\sft(r), \sfq(r)) \, \sft'(r) \dd r  
   \end{aligned}
\end{equation}
for all   $0\leq s \leq S$.
Finally,
we say that $(\sft,\sfq)$ is non-degenerate if it fulfills 
\[
\sft' + \|\sfz'\|_{L^2(\Omega)} + \|\sfp'\|_{L^1(\Omega;\MD)}  >0 \qquad \aein  (0,S)\,.
\]
\end{definition}

\begin{remark}
\label{rmk:sth-missing}
\upshape
Clearly, along an  admissible parameterized curve  the energy-dissipation balance in integral form \eqref{2906191838} is \emph{equivalent} to the \emph{pointwise} identity
\begin{equation}
\label{pointwise-EDB}
   \calM_0(\sft(s),\sfq(s),\sft'(s),\sfq'(s)) = - \frac{\dd}{\dd s}   \Epp(\sft(s),\sfq(s)) + \partial_t \Epp (\sft(s), \sfq(s)) \, \sft'(s)
\end{equation}
for almost all $s\in (0,S)$. 
In \cite{Crismale-Lazzaroni, CLR} (see also \cite{MRS14,MiRo23}), relation \eqref{pointwise-EDB}, \emph{combined} with the chain rule for $\calE$, has been the starting point for deriving an
additional characterization of $\BV$ solutions in terms of a system of subdifferential inclusions akin to the  viscously regularized system.
\par
 However, 
in the  present context  the validity of such a  chain rule along admissible curves is an open problem, due to their poor spatial regularity. That is why, we are not in a position to 
provide further characterizations of $\BV$ solutions other than \eqref{pointwise-EDB}.
\end{remark}
\par
The first main result of this paper states the convergence of the reparameterized viscous solutions to a $\BV$ solution.
 Recall that $\sfw = w {\circ} \sft$. 
\begin{theorem}
\label{mainth:1}
Under Hypotheses \ref{hyp:domain}--\ref{hyp:data}, let $(\eps_k)_k \subset (0,+\infty)$ be a null sequence  and 
$(q_{\eps_k})_k \subset H^1(0,T;\Qsp)$ be a sequence of solutions  to the Cauchy problem \eqref{visc-prob}. Let $\sft_{\eps_k}: [0,S] \to [0,T]$ be 
the time rescalings as in  \eqref{rescaled-curves} and  consider the reparameterized curves
$ \sfq_{\eps_k}: = q_{\eps_k}{\circ} \sft_{\eps_k}$.
\par
Then, there exist a (not relabeled) subsequence and an admissible parameterized curve $(\sft,\sfq) \in \mathscr{A}([0,S] ;[0,T] {{\tim}} \Qpp)$,  $\sfq=(\sfu,\sfz,\sfp)$,  such that
the following convergences hold  as $k\to \infty$
\begin{equation}
\label{pointwise-cvg}
\sft_{\eps_k}(s) \to \sft(s), \qquad \sfq_{\eps_k}(s) \weaksto \sfq(s) \text{ in } \Qpp \qquad \text{for all } s \in [0,S],
\end{equation}
and  $(\sft,\sfu{+}\sfw,\sfz,\sfp)$  is a $\BV$ solution to the perfectly plastic damage system \eqref{RIS-intro} with  additional 
temporal regularity, namely 
$\sft \in W^{1,\infty}(0,S;[0,T])$ and 
\begin{equation}
\label{additional-regularity}
\text{the mapping } \quad \begin{cases} 
\sfu \colon [0,S]\to \BD(\Omega)  \text{ is weakly$^*$ continuous,}
\\
\sfe \colon [0,S]\to \Lnn  \text{ is weakly continuous},
\\
\sfz \colon [0,S]\to \Hs(\Omega)  \text{ is weakly continuous},
\\
\sfp \colon [0,S]\to \MbD  \text{ is $1$-Lipschitz continuous}.
\end{cases}
\end{equation}
 In particular, the pair $(\sfe,\sfz)$ satisfies the elastic equilibrium equation \emph{everywhere} on $[0,S]$, i.e.
\begin{equation}
\label{EqEqEvery}
-\mathrm{Div}\big(\bbC(\sfz(s)) \sfe(s)\big) =  F(\sft(s))   \qquad \text{in } \BD(\Omega)^* \quad \text{for all } s \in [0,S]\,.
\end{equation} 
Furthermore,   on the instability set $A^\circ$ we have the additional regularity
\begin{equation}
\label{additional-regularity-instability}
\begin{cases}
\displaystyle
 \sfu \in  \rmC^0(A^\circ;  H_\Dir^1(\Omega;\R^n))\,,
\\
\displaystyle
 \sfe \in \rmC^0(A^\circ;  \Lnn)\,,
\\
\displaystyle
\sfz \in W_{\mathrm{loc}}^{1,\infty}(A^\circ; L^2(\Omega))\,,
\\
\displaystyle
\sfp \in W_{\mathrm{loc}}^{1,\infty}(A^\circ; L^2(\Omega;\MD))\,.
\end{cases}
\end{equation}  
\par
Finally,  we have the following \emph{enhanced} convergences for every $s\in [0,S]$
\begin{subequations}
\label{enh-cvg}
\begin{align}
\label{energy-convergence}
&
 \calEv(\sft_{\eps_k}(s) ,\sfq_{\eps_k}(s)) \longrightarrow \Epp(\sft(s),\sfq(s)),
 \\
 & 
 \label{by-potential-cvg}
  \int_0^s  \mathcal{M}_\eps (\sft_{\eps_k}(r),\sfq_{\eps_k}(r), \sft_{\eps_k}'(r), \sfq_{\eps_k}'(r)) \dd r  \longrightarrow \int_0^s \Mfunz 0{\sft(r)}{\sfq(r)}{\sft'(r)}{\sfq'(r)} \dd r \,.
\end{align}
\end{subequations}
\end{theorem}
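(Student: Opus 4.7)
The plan is to start from the normalization \eqref{2906191307} combined with Proposition~\ref{prop:-aprio-est}: $\sft_\eps$ is uniformly $1$-Lipschitz into $[0,T]$, while $\sfz_\eps$ and $\sfp_\eps$ are uniformly $1$-Lipschitz as maps into $L^1(\Omega)$ and $\MbD$ respectively. The energy bound \eqref{enrgy-bound} combined with the safe-load condition yields $L^\infty$ bounds on $\sfu_\eps$ in $\BD(\Omega)$ and on $\sfe_\eps$ in $\Lnn$. A refined Helly/Arzel\`a-Ascoli selection gives pointwise weak$^*$ convergence \eqref{pointwise-cvg} along a subsequence to an admissible limit $(\sft,\sfq)$; the compact embedding $\Hs(\Omega)\Subset\rmC(\overline\Omega)$ upgrades this to uniform convergence of $\sfz_{\eps_k}$ on $[0,S]\times\overline\Omega$, which will be crucial for the $z$-dependent terms.

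\textbf{Upper energy-dissipation inequality ($\leq$ in \eqref{2906191838}).} I would then pass to the $\liminf$ in the viscous balance \eqref{2906191258}. Lower semicontinuity of $\Epp(\sft(s),\sfq(s))$ combines Fatou for $W$, the bilinear form $\ass$, and quadraticity of $\calQ$ (using the weak convergence of $\sfe_{\eps_k}$ together with the uniform convergence of $\sfz_{\eps_k}$). The dissipation integrals pass to the limit by Ioffe-type lower semicontinuity, with $\Hpp$ arising as the convex-of-measures relaxation of $\calH$. Lemma~\ref{l:lsc-slopes} activates the indicator penalties $I_{\{0\}}$ on $B^\circ$, while on $A^\circ$ the elementary Young inequality $\tfrac{\eps}{2\sft'}\calD(\sfq')^2+\tfrac{\sft'}{2\eps}\calD^*(\sft,\sfq)^2\geq \calD(\sfq')\calD^*(\sft,\sfq)$ combined with a joint $\liminf$ yields the contact-potential contribution. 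The right-hand side converges thanks to the regularity of the external data.

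\textbf{Lower inequality ($\geq$ in \eqref{2906191838}) --- the main obstacle.} The hard part will be the converse inequality, because the limit curves lack enough spatial regularity to ensure $\rmD_q\Epp$ pairs with $\sfq'$, and so no chain rule for $\Epp$ is available, either on $B^\circ$ or on $A^\circ$. Following and extending the discrete strategy of \cite{DalDesSol11,BabFraMor12}, I would build refining partitions $\{s_i\}$ of $[0,S]$ adapted to the decomposition $[0,S]=A^\circ\cup B^\circ$ and rewrite the increment $\Epp(\sft(S),\sfq(S))-\Epp(\sft(0),\sfq(0))$ as a telescoping sum. On subintervals within $B^\circ$, the relation $\sft'>0$ together with the duality formulae of Lemma~\ref{l:duality}, tested against elements of $\partial\calR(0)$ and $\partial_\pi\calH(\sfz,0)$ realizing the optimal distances, controls the increments by $\int(\calR(\sfz')+\Hpp(\sfz,\sfp'))\,\de r-\int\partial_t\Epp\,\sft'\,\de r$. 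On subintervals within $A^\circ$, unlike \cite{DalDesSol11,BabFraMor12}, we cannot invoke a chain rule even in the jump regime; instead, using the enhanced $\AC_{\loc}(A^\circ;L^2\times L^2)$ regularity \eqref{2906191815} of $(\sfz,\sfp)$, I would approximate each energy increment by Riemann sums of finite differences paired in $L^2$ with the forces $-\rmD_z\Epp$ and $-\sigma_\dev$, then estimate them from below via Cauchy-Schwarz to recover $\int\calD(\sfq')\calD^*(\sft,\sfq)\,\de r$. A diagonal passage along refining partitions, together with Lemma~\ref{l:duality}, concludes the lower bound.

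\textbf{Enhanced regularity, equilibrium and convergences.} The regularity $\sft\in W^{1,\infty}(0,S;[0,T])$ and the $1$-Lipschitz continuity of $\sfp$ in $\MbD$ descend from the compactness step. Weak$^*$ continuity of $\sfu$ in $\BD(\Omega)$, and weak continuity of $\sfe$ in $\Lnn$ and of $\sfz$ in $\Hs(\Omega)$, follow from the elastic equilibrium \eqref{EqEqEvery} (itself obtained by upgrading \eqref{1509172145-cont} via Lemma~\ref{l:lsc-slopes} so that $\slope u\sft{\sfq}\equiv 0$ on $[0,S]$), combined with Korn-type inequalities and the elliptic-regularity estimate of Appendix~\ref{ss:appD}. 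The extra regularity \eqref{additional-regularity-instability} on $A^\circ$ stems from finiteness of $\calD^*$ there, together with the $L^2$-distance interpretation of $\congdistname z$ and $\congdistname p$. Finally, equality in the energy-dissipation balance forces every $\liminf$ in the upper-bound step to be a limit, producing \eqref{energy-convergence}--\eqref{by-potential-cvg} via a standard Fatou-plus-dominated-convergence argument.
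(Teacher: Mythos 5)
Your outline matches the paper's strategy at the coarse level: compactness via the a priori bounds; upper inequality by lower semicontinuity; lower inequality by a discrete/Riemann-sum argument adapted to the partition $[0,S]=A^\circ\cup B^\circ$; enhanced convergences a posteriori from equality. However, the treatment of the lower inequality skips over two points that are not mere technicalities but are where the real work is done, so as written the proof has genuine gaps.

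First, you split the telescoping sum into subintervals ``within $B^\circ$'' and ``within $A^\circ$''. But $A^\circ$ is open and $B^\circ$ is closed, so a generic partition will produce subintervals with one endpoint in $B^\circ$ and the other in $A^\circ$, or intervals that are a connected component of $A^\circ$ whose closed endpoints lie in $B^\circ$. The cornerstone estimate (the discrete form of the energy inequality) is only applicable when both endpoints are in $B^\circ$, or when the \emph{closed} interval is contained in $A^\circ$; the intermediate case is precisely the set $\mathfrak{I}_k^3$ in the paper, and it requires an approximation of the endpoints from inside $A^\circ$ \emph{in energy} (Lemma \ref{l:tech-approx-energies}). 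That lemma is by no means soft: its proof needs strong $L^2$ convergence of the stresses as one approaches the boundary of $A^\circ$, which in turn relies on the elliptic-regularity Cauchy-in-measure argument of Appendix \ref{ss:appD} and on the structural isotropy hypothesis \eqref{C_dMdSS}. Your proposal does not mention this case at all, and an argument confined to ``inside $A^\circ$'' / ``inside $B^\circ$'' simply does not cover the whole interval.

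Second, on the fully unstable intervals you invoke Cauchy-Schwarz to ``recover $\int\mathcal{D}(\sfq')\,\mathcal{D}^*(\sft,\sfq)\,\mathrm{d}r$''. Cauchy-Schwarz alone gives a bound of the discrete increments by $\|(\mathsf Z,\mathsf P)\|\cdot\|(\mathsf M_{\mathsf Z},\mathsf M_{\mathsf P})\|$, where the first factor is controlled by $\int\mathcal{D}(\sfq')$ but the second factor is a \emph{pointwise} value of $\mathcal{D}^*$ at the endpoints. To turn that into $\int\mathcal{D}\,\mathcal{D}^*$ one must pick the sub-partition so that the average of $\mathcal{D}^*$ at consecutive endpoints is dominated (up to $\eta$) by the infimum of $\mathcal{D}^*$ over the subinterval; this is exactly the mean-type lower-semicontinuous-partition lemma (Lemma \ref{le:mediesci}), which is the workhorse of Step 2 and is nontrivial to construct. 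Without it, the discrete sums do not collapse to the correct integral. Related to this, your Cauchy-Schwarz step must also be preceded by the ``cornerstone'' estimate \eqref{cornerstone}, built from the stress-strain integration-by-parts formula \eqref{integr-by-parts-PP}, which converts energy increments into the stress pairings and produces the remainder terms $\mathrm{Rem}_1$, $\mathrm{Rem}_2$; these must then be shown to vanish (Lemma \ref{le:0504232015}). Your description abstracts away all of this, but it is where the argument actually lives.
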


\paragraph{\bf Outline of the proof.}  In Sec.\ \ref{s:4} we will settle all the  compactness properties  of the sequence 
$( \sfq_{\eps_k})_k$ and then, in Proposition \ref{prop:UEDI} ahead, we will  obtain the  
 \emph{upper} energy-dissipation inequality
\begin{equation}
\label{UEDI}
 \begin{aligned}
 &
   \Epp(\sft(s),\sfq(s)) + \int_{0}^{s}
   \calM_0(\sft(r),\sfq(r),\sft'(r),\sfq'(r)) \dd r  
   \\
   & 
    \leq  \Epp(\sft(0),\sfq(0)) +\int_{0}^{s} \partial_t \Epp (\sft(r), \sfq(r)) \, \sft'(r) \dd r  \quad \text{for all } s \in [0,S]
   \end{aligned}
\end{equation}
via lower semicontinuity arguments. (By convention, we shall refer to \eqref{UEDI} as an `upper' inequality because there the energy at the current time, $ \Epp(\sft(s),\sfq(s))$, is estimated from above by the energy at the initial time and by the work of the external forces.)
Sections \ref{s:5} and \ref{s:6}   will be devoted to the proof of the 
 \emph{lower} energy-dissipation inequality 
\begin{equation}
\label{LEDI}
 \begin{aligned}
 &
   \Epp(\sft(s),\sfq(s)) + \int_{0}^{s}
   \calM_0(\sft(r),\sfq(r),\sft'(r),\sfq'(r)) \dd r  
   \\
   &  \geq  \Epp(\sft(0),\sfq(0)) +\int_{0}^{s} \partial_t \Epp (\sft(r), \sfq(r)) \, \sft'(r) \dd r  \quad \text{for all } s \in [0,S]\,,
   \end{aligned}
\end{equation}
where  the energy $  \Epp(\sft(s),\sfq(s))$ is estimated from below, cf.\ the forthcoming Proposition  \ref{prop:LEDI}.  The enhanced convergences
\eqref{enh-cvg} are then a by-product of this limiting procedure, combined  with the validity of 
the energy-dissipation balance,
 via a standard argument that we choose not to detail. 

 Henceforth we will denote $\sfF(s):=F(\sft(s))$; recall also that $\sfw(s):=w(\sft(s))$, as introduced above.

\section{Proof of Theorem \ref{mainth:1}: the upper energy-dissipation inequality}
\label{s:4}
We start by collecting all a priori bounds on the sequence $(\sft_{\eps_k}, \sfq_{\eps_k})_k$.
\begin{lemma}
\label{l:aprio}
There exists $C>0$ such that for every $k \in \N$
\begin{subequations}
\label{aprio-bounds}
\begin{align}
& 
\label{aprio-bounds-1}
\| \sft_{\eps_k}\|_{W^{1,\infty}(0,S)} +
 \| \sfz_{\eps_k}\|_{W^{1,\infty}(0,S;L^1(\Omega))} + \| \sfp_{\eps_k}\|_{W^{1,\infty}(0,S;L^1(\Omega;\MD))} \leq C,
\\
& \label{aprio-bounds-1/2}
\sup_{s\in (0,S)}
\sqrt{\|\sfz_{\eps_k}'(s)\|_{L^2}^2+\|\sfp_{\eps_k}'(s)\|_{L^2}^2} \sqrt{\congdistviscq{z} {\sft_{\eps_k}(s)}{\sfq_{\eps_k}(s)}+ \congdistviscq{p} {\sft_{\eps_k}(s)}{\sfq_{\eps_k}(s)}}  \leq C,
\\
&
\label{aprio-bounds-2} 
\| \sfu_{\eps_k} \|_{L^\infty (0,S;\BD(\Omega))} + \| \sfe_{\eps_k} \|_{L^\infty (0,S;L^2(\Mnn))} + \|\sfz_{\eps_k} \|_{L^\infty(0,S;\Hs(\Omega))}  \leq C\,.
  \end{align}
\end{subequations}
\end{lemma}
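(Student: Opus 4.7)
The first two bounds \eqref{aprio-bounds-1} and \eqref{aprio-bounds-1/2} are immediate consequences of the normalization identity \eqref{2906191307}. Indeed, each summand there is non-negative and their sum equals $1$ for almost every $s\in(0,S)$, so each of $\sft_{\eps_k}'(s)$, $\|\sfz_{\eps_k}'(s)\|_{L^1}$, $\|\sfp_{\eps_k}'(s)\|_{L^1}$ and the product appearing in \eqref{aprio-bounds-1/2} is pointwise bounded by $1$. Integrating in $s$ and combining with the initial values $\sft_{\eps_k}(0)=0$, $\sfz_{\eps_k}(0)=z_0$, $\sfp_{\eps_k}(0)=p_0$ (which are admissible by Hypothesis \ref{hyp:data}), one obtains \eqref{aprio-bounds-1} with a constant depending only on $T$, $\|z_0\|_{L^1}$ and $\|p_0\|_{L^1}$.

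For \eqref{aprio-bounds-2} the idea is to transfer the $\eps$-uniform energy bound \eqref{enrgy-bound} from $q_{\eps_k}$ to $\sfq_{\eps_k}$: since $\sfq_{\eps_k}(s)=q_{\eps_k}(\sft_{\eps_k}(s))$, any spatial $L^\infty_s$-norm of $\sfq_{\eps_k}$ equals the corresponding $L^\infty_t$-norm of $q_{\eps_k}$, so it suffices to bound $\|e_{\eps_k}\|_{L^\infty(0,T;\Lnn)}$ and $\|z_{\eps_k}\|_{L^\infty(0,T;\Hs(\Omega))}$ and $\|u_{\eps_k}\|_{L^\infty(0,T;\BD(\Omega))}$. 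The key step is to absorb the external work $\langle F(t),u_{\eps_k}(t){+}w(t)\rangle_{\BD(\Omega)}$ into the coercive part of $\calEv$ via the safe-load condition \eqref{safe-load}. Using \eqref{2809192200} together with the integration-by-parts formula \eqref{2307191723} and the fact that $u_{\eps_k}(t)\in H_\Dir^1(\Omega;\R^n)$, one rewrites
\[
\langle F(t),u_{\eps_k}(t){+}w(t)\rangle_{\BD(\Omega)}=\langle\varrho(t),e_{\eps_k}(t)\rangle_{\Lnn}+\langle\varrho_\dev(t),p_{\eps_k}(t)\rangle_{L^2(\Omega;\MD)},
\]
and estimates
\[
|\langle\varrho(t),e_{\eps_k}(t)\rangle|\le\tfrac{\gamma_1}{4}\|e_{\eps_k}(t)\|_{L^2}^2+C,\qquad|\langle\varrho_\dev(t),p_{\eps_k}(t)\rangle|\le\|\varrho_\dev(t)\|_{L^\infty}\|p_{\eps_k}(t)\|_{L^1},
\]
where $\|p_{\eps_k}(t)\|_{L^1}\le\|p_0\|_{L^1}+\int_0^T\|p_{\eps_k}'(\tau)\|_{L^1}\,\mathrm d\tau$ is uniformly controlled thanks to \eqref{2112200025}. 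Combining with \eqref{C2}, with the non-negativity of $W$, and with \eqref{enrgy-bound} then yields, for every $t\in[0,T]$,
\[
\tfrac{\gamma_1}{4}\|e_{\eps_k}(t)\|_{L^2}^2+\tfrac12\ass(z_{\eps_k}(t),z_{\eps_k}(t))+\int_\Omega W(z_{\eps_k}(t))\dd x\le C.
\]
Combining the bound on $\ass(z_{\eps_k}(t),z_{\eps_k}(t))$ with the pointwise estimate $0\le z_{\eps_k}(t,x)\le 1$ from \eqref{reg-of-sols} gives $\|z_{\eps_k}\|_{L^\infty(0,T;\Hs(\Omega))}\le C$.

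Finally, to bound $\sfu_{\eps_k}$ in $L^\infty(0,S;\BD(\Omega))$ we use the kinematic admissibility $\rmE(u_{\eps_k}(t){+}w(t))=e_{\eps_k}(t)+p_{\eps_k}(t)$, so that $\rmE(u_{\eps_k}(t){+}w(t))$ is uniformly bounded in $\Mb(\Omega;\Mnn)$ by the previous estimates and by the uniform $L^1$-bound on $p_{\eps_k}(t)$. Since $u_{\eps_k}(t)\in H_\Dir^1(\Omega;\R^n)$, the trace of $u_{\eps_k}(t){+}w(t)$ on $\Gdir$ coincides with $w(t)|_{\Gdir}$, which is uniformly bounded in $L^1(\Gdir;\R^n)$ by \eqref{dir-load}. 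The Poincar\'e-type inequality \eqref{PoincareBD} then yields $\|u_{\eps_k}(t){+}w(t)\|_{\BD(\Omega)}\le C$, and subtracting $w(t)$ (also bounded in $\BD(\Omega)$) gives the desired estimate on $\sfu_{\eps_k}$. No substantial technical obstacle arises: the only delicate point is the use of the safe-load condition to rewrite the power of the external force in a form compatible with the perfectly plastic setting, following the standard scheme from \cite{DMDSMo06QEPL}.
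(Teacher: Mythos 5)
Your proof is correct and follows essentially the same route as the paper: the first two bounds follow directly from the normalization identity, and the third from the uniform energy estimate plus coercivity and the Poincar\'e inequality in $\BD(\Omega)$. The one place where you add material is the explicit rewriting of the external-force term $\langle F(t),u_{\eps_k}(t){+}w(t)\rangle$ via the safe-load field $\varrho$; the paper compresses this into the statement ``from \eqref{enrgy-bound} we infer the bound on $\calQ$'', delegating the safe-load absorption to Proposition~\ref{prop:-aprio-est} (hence to the cited argument in \cite{Crismale-Rossi19}). A small algebraic remark: your identity
$\langle F(t),u_{\eps_k}(t){+}w(t)\rangle_{\BD(\Omega)}=\langle\varrho(t),e_{\eps_k}(t)\rangle_{\Lnn}+\langle\varrho_\dev(t),p_{\eps_k}(t)\rangle_{L^2}$
is missing the Dirichlet boundary term $-\langle [\varrho(t)\rmn],w(t)\rangle_{\Gdir}$ that arises when applying the divergence formula to $u_{\eps_k}(t){+}w(t)$ (which equals $w(t)$ on $\Gdir$); this extra term is uniformly bounded by Hypothesis~\ref{hyp:data}, so the coercivity estimate goes through unchanged, but the identity as written is not exact.
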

\begin{proof}
Estimates \eqref{aprio-bounds-1} and  \eqref{aprio-bounds-1/2} obviously follow from the normalization condition \eqref{2906191307}. Next,  from \eqref{enrgy-bound}
we infer
 that 
\[
\exists\, C>0 \ \forall\, k \in \N,\, \forall\, s \in [0,S] \, : \quad  \int_\Omega \calQ(\sfz_{\eps_k}(s), \sfe_{\eps_k}(s)) \dd x  \leq C\,.
\]
Relying on the strict positivity condition \eqref{C2} we get
 $ \sup_{k\in \N} \| \sfe_{\eps_k} \|_{L^\infty (0,S;L^2(\Omega))} <+\infty$. Analogously, combining the estimate  for 
 $\sup_{[0,S]}\int_\Omega \ass (\sfz_{\eps_k}(s), \sfz_{\eps_k}(s)) \dd x$ with the information that $z_{\eps_k}(s,x) \in [m_0,1] $ for every 
 $(s,x) \in [0,S]{\tim} \overline\Omega$, cf.\ \eqref{strict-positivity}, we obtain the estimate for $ \|\sfz_{\eps_k} \|_{L^\infty(0,S;\Hs(\Omega)}$. 
 Finally, recalling that $\sig{\sfu_{\eps_k} {+} w{\circ} \sft_{\eps_k}} = \sfe_{\eps_k} +\sfp_{\eps_k}$ 
 and that $w\in H^1(0,T;H^1(\Omega;\R^n))$ we gather that 
 $\sig {\sfu_{\eps_k}} \in L^\infty (\Omega;\Mb(\Omega;\Mnn))$. We combine this with the information that 
 $\sfu_{\eps_k}(s)+w( \sft_{\eps_k}(s)) \in H_{\Dir}^1(\Omega;\R^n) $ for every $s\in [0,S]$  and use the Poincar\'e inequality
  in $\BD(\Omega)$ (cf.\ \eqref{PoincareBD}) to conclude the estimate for $\| \sfu_{\eps_k} \|_{L^\infty (0,S;\BD(\Omega)}$. 
\end{proof}

We can now settle  compactness properties of the sequence $(\sft_{\eps_k}, \sfq_{\eps_k})_k$.
Prior to that, 
 we recall that  the convergence in the space $\rmC^0([0,S]; \Spxw)$  is, by definition, defined by the convergence of  $\rmC^0([0,S]; (\Spx,d_\mathrm{weak}))$, where the metric $d_\mathrm{weak}$ induces the weak topology on a closed bounded set of the reflexive  space $\Spx$; the notation  $\rmC^0([0,S]; \Spx_{\mathrm{weak}^*})$ has the same meaning if $\Spx$ is the dual of a separable space.


\subsection*{Compactness}
%
By the Ascoli-Arzel\`a Theorem we obviously have that there exists $\sft \in W^{1,\infty}(0,S)$ such that, 
as $k\to
\infty$, 
\begin{subequations}
\begin{equation}
\label{e:4-cvg-t}
\sft_{\eps_k} \weaksto \sft \text{ in } W^{1,\infty}(0,S), \qquad \sft_{\eps_k} \to \sft  \text{ in } \rmC^0([0,S])\,.
\end{equation}
Analogously, 
by \eqref{aprio-bounds-1}
 there exists $\sfz\in W^{1,\infty} (0,S; \Mb(\Omega))$
  such that (possibly along a further subsequence)
\[
\sfz_{\eps_k} \weaksto \sfz  \qquad \text{ in } W^{1,\infty} (0,S;\Mb(\Omega))\,.
\]
By compactness results \`a la Ascoli-Arzel\`a in metric spaces \cite[Prop.\ 3.3.1]{AGS08}, we have
$\sfz_{\eps_k} \to \sfz $ in $ \mathrm{C}^0 ([0,S];\Hs(\Omega)_{\mathrm{weak}}) $ and thus
\begin{equation}
\label{e:4-cvg-z}
 \sfz_{\eps_k}(s) \weakto \sfz(s) \text{ in } \Hs(\Omega) \text{ for all } s \in [0,S].
\end{equation}
Exploiting \eqref{e:4-cvg-z}, it is immediate to conclude from the normalization condition \eqref{2906191307} that, additionally,  $\sfz \colon [0,S] \to L^1(\Omega)$ is 
$1$-Lipschitz continuous.
\par
Likewise, estimate \eqref{aprio-bounds-1} 
also yields that there exists $\sfp $ in $W^{1,\infty} (0,S;\MbD)$ such that 
\begin{equation}
\label{e:4-cvg-p}
\begin{aligned}
&
\begin{cases}
\sfp_{\eps_k} \weaksto \sfp   \text{ in } W^{1,\infty} (0,S;\MbD),
\\
\sfp_{\eps_k} \to \sfp   \text{ in }\mathrm{C}^0 ([0,S];\MbD_{\mathrm{weak}^*}),
\end{cases}
\\
&
 \text{ and thus } \sfp_{\eps_k}(s) \weaksto \sfp(s)  \text{ in $\MbD$ for every $s\in [0,S]$.}
 \end{aligned}
\end{equation}
Again, we  have
that $\sfp \colon [0,S]\to \MbD$ is $1$-Lipschitz continuous.
\par
By estimate \eqref{aprio-bounds-2}, 
there exist  
$\sfu \in L^\infty(0,S;\BD(\Omega))$ and $\sfe \in L^\infty(0,S;\Lnn)$ 
such that 
$\sfu_{\eps_k} \weaksto \sfu \in L^\infty(0,S;\BD(\Omega))$ and 
$ \sfe_{\eps_k} \weaksto \sfe$ in $ L^\infty(0,S;\Lnn)$.  It is immediate to check that $\sfe = \sig{\sfu {+} w {\circ} \sft} - \sfp$. 
Now, by repeating the very same arguments as in the proof of \cite[Thm.\ 4.6]{DalDesSol11} we may show that the above convergence improves to a pointwise one, and in fact we even have
\begin{equation}
\label{e:4-cvg-e}
\begin{cases}
\sfe_{\eps_k}(s_k) \weakto \sfe(s) \text{ in } \Lnn
\\
\sfu_{\eps_k}(s_k) \weaksto \sfu(s) \text{ in } \BD(\Omega)  
\end{cases}
  \text{ for all } s \in [0,S] \text{ and } (s_k)_k \text{ s.t. } s_k \to s.
\end{equation}
Without 
going into details, we may mention here that the proof of \eqref{e:4-cvg-e} 
is based on the fact that all limit points of the sequences $(\sfe_{\eps_k}(s_k))_k$ and $(\sfu_{\eps_k}(s_k))_k$ fulfill the elastic equilibrium equation, which  in turn
has a unique solution. Hence, the whole sequences  $(\sfe_{\eps_k}(s_k))_k$ and $(\sfu_{\eps_k}(s_k))_k$ converge. 
\par
Lastly, exploiting \eqref{e:4-cvg-e} and arguing as in the proof of \cite[Thm.\ 4.6]{DalDesSol11} we may obtain the 
continuity properties \eqref{additional-regularity} for $\sfe$ and $\sfu$ and the validity of the elastic equilibrium equation 
\eqref{EqEqEvery}
everywhere on $[0,S]$. 
We have thus proven \eqref{pointwise-cvg}.
\end{subequations}
\medskip

\par
It remains to show that the parameterized curve  $(\sft,\sfq) $ is  admissible in the sense of Definition \ref{def:admparcur}.  This follows
from our next result,   which partly proves the statement in
\eqref{additional-regularity-instability}. The other regularity properties therein, i.e.\
$ \sfu \in  \rmC^0(A^\circ;  H_\Dir^1(\Omega;\R^n)) $ and
$
\sfe \in \rmC^0(A^\circ;  \Lnn)$, 
will be proved in the forthcoming  Lemma \ref{le:0404231757}. 
\begin{lemma}
\label{l:Lip-est-zp}
In the open set $A^\circ$ we have 
\begin{equation}
\label{extra-lip}(\sfz,\sfp) \in W_{\mathrm{loc}}^{1,\infty}(A^\circ; L^2(\Omega){{\tim}}L^2(\Omega;\MD))
\end{equation}
 and, up to a further subsequence, 
\begin{equation}
\label{extra-cvg}
\forall\, [\alpha,\beta] \subset A^\circ \text{ we have } \quad \begin{cases}
\sfz_{\eps_k} \weaksto \sfz \text{ in } W^{1,\infty} ([\alpha,\beta];L^2(\Omega)),
\\
\sfp_{\eps_k} \weaksto \sfp \text{ in } W^{1,\infty} ([\alpha,\beta];L^2(\Omega;\MD)).
\end{cases}
\end{equation}
\end{lemma}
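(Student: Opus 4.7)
The plan is to exploit the normalization condition \eqref{2906191307} to turn a uniform positive lower bound on the dual-distance term into an $L^2$ bound on the rates $\sfz_{\eps_k}',\sfp_{\eps_k}'$. Indeed, since every summand in \eqref{2906191307} is nonnegative, we obtain the pointwise inequality
\begin{equation*}
\|\sfz_{\eps_k}'(s)\|_{L^2}^2 + \|\sfp_{\eps_k}'(s)\|_{L^2}^2 \le \frac{1}{\congdistviscq{z}{\sft_{\eps_k}(s)}{\sfq_{\eps_k}(s)} + \congdistviscq{p}{\sft_{\eps_k}(s)}{\sfq_{\eps_k}(s)}}\,,
\end{equation*}
valid whenever the denominator is positive. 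Hence, given $[\alpha,\beta]\Subset A^\circ$, it suffices to bound $\congdistviscq{z}+\congdistviscq{p}$ from below, uniformly in $s\in[\alpha,\beta]$ and in $k$ large.

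First, I would observe that $s\mapsto \mathcal{D}^*(\sft(s),\sfq(s))^2 = \congdistq{z}{\sft(s)}{\sfq(s)} + \congdistq{p}{\sft(s)}{\sfq(s)}$ is lower semicontinuous on $[0,S]$: for $s_n\to s$ the convergences \eqref{e:4-cvg-t}, \eqref{e:4-cvg-z}, \eqref{e:4-cvg-p} and \eqref{e:4-cvg-e} (applied along the limit curve) supply precisely the hypotheses of Lemma \ref{l:lsc-slopes}, which then gives the lsc. Since $A^\circ$ is open and $[\alpha,\beta]\subset A^\circ$ is compact, lsc and compactness yield
\begin{equation*}
\delta := \min_{s\in[\alpha,\beta]} \mathcal{D}^*(\sft(s),\sfq(s))^2 > 0\,.
\end{equation*}
Next, I would argue by contradiction for the approximating sequence. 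Suppose there exist $k_j\to\infty$ and $s_j\in[\alpha,\beta]$ such that
$\congdistviscq{z}{\sft_{\eps_{k_j}}(s_j)}{\sfq_{\eps_{k_j}}(s_j)} + \congdistviscq{p}{\sft_{\eps_{k_j}}(s_j)}{\sfq_{\eps_{k_j}}(s_j)} < \delta/2$. Extracting a subsequence with $s_j\to s^*\in[\alpha,\beta]$, the pointwise convergences \eqref{e:4-cvg-z}, \eqref{e:4-cvg-p}, \eqref{e:4-cvg-e} combined with the equicontinuity provided by \eqref{aprio-bounds-1} upgrade to $\sft_{\eps_{k_j}}(s_j)\to \sft(s^*)$, $\sfq_{\eps_{k_j}}(s_j)\weaksto \sfq(s^*)$ in $\Qpp$, and $\sfe_{\eps_{k_j}}(s_j)\to \sfe(s^*)$ with the strength required by Lemma \ref{l:lsc-slopes}. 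Applying that lemma gives $\mathcal{D}^*(\sft(s^*),\sfq(s^*))^2\le \delta/2$, contradicting $\mathcal{D}^*(\sft(s^*),\sfq(s^*))^2\ge\delta$. Therefore, for $k$ large and uniformly in $s\in[\alpha,\beta]$,
\begin{equation*}
\congdistviscq{z}{\sft_{\eps_k}(s)}{\sfq_{\eps_k}(s)} + \congdistviscq{p}{\sft_{\eps_k}(s)}{\sfq_{\eps_k}(s)} \ge \delta/2\,,
\end{equation*}
and the opening inequality then yields $\|\sfz_{\eps_k}'\|_{L^\infty(\alpha,\beta; L^2(\Omega))} + \|\sfp_{\eps_k}'\|_{L^\infty(\alpha,\beta; L^2(\Omega;\MD))} \le C(\delta)$.

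Finally, by Banach--Alaoglu I would extract a further weakly$^*$ convergent subsequence of $(\sfz_{\eps_k},\sfp_{\eps_k})$ in $W^{1,\infty}([\alpha,\beta]; L^2(\Omega))\times W^{1,\infty}([\alpha,\beta]; L^2(\Omega;\MD))$; uniqueness of the limit via the already established \eqref{e:4-cvg-z} and \eqref{e:4-cvg-p} identifies it with $(\sfz,\sfp)$, proving \eqref{extra-lip} on $[\alpha,\beta]$ and \eqref{extra-cvg}. A standard diagonal argument on an exhausting sequence $[\alpha_n,\beta_n]\uparrow A^\circ$ then produces a single subsequence valid on every compact subinterval of $A^\circ$. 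The most delicate step is the contradiction argument: one must ensure that the hypotheses of Lemma \ref{l:lsc-slopes} hold along the \emph{joint} sequence $(s_j,k_j)$, in particular the strong convergence $\sfe_{\eps_{k_j}}(s_j)\to \sfe(s^*)$ in $\Lnn$; this is essentially \eqref{e:4-cvg-e}, which the paper establishes by appealing to the uniqueness of the elastic equilibrium as in \cite{DalDesSol11}.
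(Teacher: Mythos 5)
Your proof follows essentially the same route as the paper's: lower semicontinuity of $s\mapsto\calD^*(\sft(s),\sfq(s))$ via Lemma~\ref{l:lsc-slopes} gives a uniform positive lower bound on compacta of $A^\circ$, which you transfer to the viscous quantities (your contradiction argument simply unpacks the paper's phrase ``combining the uniform convergences with the lower semicontinuity properties''), and the normalization condition \eqref{2906191307} (equivalently, estimate \eqref{aprio-bounds-1/2}) then yields the uniform $L^2$-bound on the rates, followed by a diagonal extraction. Two small points of care: \eqref{e:4-cvg-e} gives only \emph{weak} $L^2(\Omega;\Mnn)$-convergence of $\sfe_{\eps_k}(s_k)$, not strong, so one should check this suffices for Lemma~\ref{l:lsc-slopes} (it does, thanks to weak lower semicontinuity of the quadratic term in $\rmD_z\Epp$, and in any case the paper feeds in the same input); and since $A^\circ$ need not be connected, the diagonal argument should be run on a countable family of compact intervals covering $A^\circ$ rather than on a single nested sequence $[\alpha_n,\beta_n]$.
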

\begin{proof}
As we have just seen, the function
$[0,S] \ni s \mapsto \sfq(s)$ is weakly$^*$ continuous in $\Qpp$. Therefore,  thanks to Lemma \ref{l:lsc-slopes} we have that
 the function $[0,S] \ni s \mapsto \calD^*(\sft(s),\sfq(s))$ is lower semicontinuous. Thus, $A^\circ$ is open 
and 
 \[
 \forall\, [\alpha,\beta] \subset A^\circ   \ \exists\, c>0 \ \forall\, s \in  [\alpha,\beta] \, : \qquad   \calD^*(\sft(s),\sfq(s)) \geq c>0\,.
 \]
Furthermore, combining the uniform convergences from \eqref{e:4-cvg-t}--\eqref{e:4-cvg-e} with the lower semicontinuity properties from Lemma \ref{l:lsc-slopes} we conclude that 
 \[
 \exists\, \bar{k} \in \N  \ \forall\, k \geq \bar k \ \forall\, s \in  [\alpha,\beta] \, : \qquad 
 \sqrt{\congdistviscq{z} {\sft_{\eps_k}(s)}{\sfq_{\eps_k}(s)}{+} \congdistviscq{p} {\sft_{\eps_k}(s)}{\sfq_{\eps_k}(s)}} \geq c
 \]
 and, thus, from \eqref{aprio-bounds-1/2} we gather that 
\[
\exists\, C>0 \  \exists\, \bar{k} \in \N  \ \forall\, k \geq \bar k \ \foraa\, s \in  (\alpha,\beta)\, : \qquad \sqrt{\|\sfz_{\eps_k}'(s)\|_{L^2}^2{+}\|\sfp_{\eps_k}'(s)\|_{L^2}^2} 
\leq C\,.
 \]
This entails that $ \sfz \in  W^{1,\infty} ([\alpha,\beta];L^2(\Omega))$ and $\sfp \in  W^{1,\infty} ([\alpha,\beta];L^2(\Omega;\MD))$ for every $[\alpha,\beta] \subset A^\circ$ and,
 up to a further subsequence (possibly depending on $[\alpha,\beta]$), convergences in \eqref{extra-cvg} hold. Exhausting the open set $A^\circ$ by a countable family of intervals, with a diagonal procedure we can extract a subsequence such that convergences \eqref{extra-cvg}
hold on \emph{any} $[\alpha,\beta] \subset A^\circ$.
\end{proof}
 \subsection*{Proof of the upper energy-dissipation inequality}
 We will prove the following result.
 \begin{proposition}
 \label{prop:UEDI}
 The pair $(\sft, \sfq)$ satisfies the   upper energy-dissipation inequality \eqref{UEDI}. 
Moreover, 
   there exists a constant $\mathrm{M}>0$ such that 
   \begin{subequations}
   \label{straight-fwd-estimates}
   \begin{align}
   \label{straight-1}
   &
   \sup_{s\in [0,S]}  |\Epp (\sft(s),\sfq(s)) | \leq \mathrm{M},
   \\
   \label{straight-2}
   & 
   \int_0^S \Mfunz 0{\sft(s)}{\sfq(s)}{\sft'(s)}{\sfq'(s)} \dd s \leq \mathrm{M}.
   \end{align}
   \end{subequations}
 \end{proposition}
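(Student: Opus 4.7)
My plan is to take $\liminf_{k\to\infty}$ on both sides of the reparameterized viscous energy-dissipation balance \eqref{2906191258} along the sequence $(\sft_{\eps_k},\sfq_{\eps_k})_k$, obtaining inequality \eqref{UEDI} in the limit. The compactness ingredients are supplied by the convergences \eqref{e:4-cvg-t}--\eqref{e:4-cvg-e}, together with the lower semicontinuity Lemma \ref{l:lsc-slopes} and the enhanced regularity Lemma \ref{l:Lip-est-zp}. Once \eqref{UEDI} is established, the bound \eqref{straight-1} follows from \eqref{enrgy-bound} combined with lower semicontinuity of the energy and the structural lower bound $\Epp(t,q)\geq -C$ (an immediate consequence of $\calQ,\Ez\geq 0$ and the equiboundedness of the work term), while \eqref{straight-2} follows from \eqref{UEDI} evaluated at $s=S$ together with \eqref{straight-1} and the $L^1$-equiboundedness of $\partial_t\Epp\,\sft'$.

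\paragraph{Right-hand side and energy lower semicontinuity.}
On the right-hand side of \eqref{2906191258}, the initial energy $\calEv(0,q_{\eps_k}(0))$ equals $\Epp(0,q_0)$ for every $k$ since the initial datum is common. For the work term $\int_0^s \partial_t\calEv(\sft_{\eps_k},\sfq_{\eps_k})\,\sft'_{\eps_k}\,\dd r$ I would use formula \eqref{partial-deriv-t}: the integrand couples the stress $\bbC(\sfz_{\eps_k})\sfe_{\eps_k}$, the displacement $\sfu_{\eps_k}+w\circ\sft_{\eps_k}$, and the loading $F$ with $F'$. The uniform convergence $\sfz_{\eps_k}\to\sfz$ in $\rmC^0(\overline\Omega)$ (via the compact embedding $\Hs(\Omega)\Subset\rmC^0(\overline\Omega)$), weak $\Lnn$-convergence of $\sfe_{\eps_k}$, weak$^*$ convergence of $\sfu_{\eps_k}$ in $\BD(\Omega)$, uniform convergence $\sft_{\eps_k}\to\sft$ and the $H^1$-regularity of $F,w$ allow one to pass to the limit via dominated convergence. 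For the energy lower semicontinuity $\Epp(\sft(s),\sfq(s))\leq\liminf_k \calEv(\sft_{\eps_k}(s),\sfq_{\eps_k}(s))$: the uniform convergence of $\bbC(\sfz_{\eps_k}(s))$ freezes the coefficient of $\calQ$ up to $o(1)$, after which weak lower semicontinuity of the convex quadratic in $\sfe$ applies; $\ass(\cdot,\cdot)$ is weakly lower semicontinuous on $\Hs(\Omega)$; $\int W(\sfz)$ is handled by Fatou's lemma (using $W\geq 0$ and $\sfz_{\eps_k}(s)\to\sfz(s)$ uniformly); the linear work term converges by duality pairing of weak$^*$ in $\BD(\Omega)$ against strong in $\BD(\Omega)^*$.

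\paragraph{Dissipation $\liminf$.}
This is the heart of the argument. I would split $\calM_\eps$ into its five contributions. The rate-independent terms $\int_0^s \calR(\sfz'_{\eps_k})\,\dd r$ and $\int_0^s \calH(\sfz_{\eps_k},\sfp'_{\eps_k})\,\dd r$ pass respectively to $\int_0^s \calR(\sfz')\,\dd r$ and $\int_0^s \Hpp(\sfz,\sfp')\,\dd r$ via Reshetnyak-type lower semicontinuity, pairing the uniform convergence of $\sfz_{\eps_k}$ against weak$^*$ convergence of the vector measures $\sfz'_{\eps_k},\sfp'_{\eps_k}$ on $[0,s]\times\Omega$. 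The indicator $\int_0^s I_{\{0\}}(\|{-}\rmD_u\calEv\|_*)\,\dd r$ is identically zero along the sequence by \eqref{1509172145-cont}, and its limit version involving $\slope u{\sft(r)}{\sfq(r)}$ is zero by \eqref{EqEqEvery}. For the viscous contributions, the Young inequality built into the reparameterization of \eqref{2112200013} gives pointwise
\[
\tfrac{\eps_k}{2\sft'_{\eps_k}(r)}\,\calD(\sfq'_{\eps_k}(r))^2 + \tfrac{\sft'_{\eps_k}(r)}{2\eps_k}\,\calD^*(\sft_{\eps_k}(r),\sfq_{\eps_k}(r))^2 \geq \calD(\sfq'_{\eps_k}(r))\,\calD^*(\sft_{\eps_k}(r),\sfq_{\eps_k}(r)).
\]
Combined with joint lower semicontinuity of $\calD^*$ (Lemma \ref{l:lsc-slopes}) and the enhanced weak $L^2$-convergence of $\sfq'_{\eps_k}$ on $A^\circ$ (Lemma \ref{l:Lip-est-zp}), and splitting $[0,s]=A^\circ\cup B^\circ$, the $\liminf$ of the viscous part produces: on $B^\circ$ the indicator constraints of $\REDfunzname 0$ (trivially attained since $\calD^*(\sft,\sfq)=0$ there), and on $A^\circ$ the product $\calD(\sfq')\,\calD^*(\sft,\sfq)$ from \eqref{red-funz-at-0} (recalling that $\sft'=0$ on $A^\circ$ by \eqref{2906191815+1}).

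\paragraph{Main obstacle.}
The delicate point is precisely this last liminf passage: one must simultaneously handle the singular $1/\eps_k$ blow-up through Young's inequality, reconcile weak $L^2$-convergence of $\sfq'_{\eps_k}$ on $A^\circ$ with joint lower semicontinuity of $\calD^*$ (which uses only weak convergence of $\sfq_{\eps_k}$ and of $\sfe_{\eps_k}$ in $\Lnn$), and verify that the limit curve sits in $\overline{\mathrm{dom}(\calD^*)}^{\mathrm{w}}$ on $B^\circ$ whenever $\sft'=0=\sfq'$, so that the third (rather than the fourth) case in \eqref{red-funz-at-0} applies. The additional local regularity on $A^\circ$ from Lemma \ref{l:Lip-est-zp} and the joint lower semicontinuity of Lemma \ref{l:lsc-slopes} are the precise tools designed to navigate these issues; the rest of the proof is then a matter of assembling the three limiting steps into \eqref{UEDI} and reading off the bounds \eqref{straight-fwd-estimates}.
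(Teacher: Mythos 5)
Your overall strategy matches the paper's: pass to $\liminf$ in the reparameterized viscous balance \eqref{2906191258}, relying on the compactness convergences \eqref{e:4-cvg-t}--\eqref{e:4-cvg-e}, the energy lower semicontinuity, the Reshetnyak-type argument for the plastic dissipation, and Lemma~\ref{l:lsc-slopes} for the indicators. The reduction of the bounds \eqref{straight-fwd-estimates} from \eqref{UEDI} and \eqref{enrgy-bound} is also correct. The paper uses the monotonicity of the damage variable to compute $\int_0^s \calR(\sfz'_{\eps_k}) \dd r = \kappa\|\sfz_{\eps_k}(s)-\sfz_{\eps_k}(0)\|_{L^1}$ directly rather than invoking Reshetnyak; this is cosmetic.

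The genuine gap is in the reduced contact potential passage. Your Young-inequality lower bound $\frac{\eps_k}{2\sft'_{\eps_k}}\calD_k^2 + \frac{\sft'_{\eps_k}}{2\eps_k}(\calD^*_k)^2 \geq \calD_k\,\calD^*_k$ throws away the blow-up mechanism: if $\sft'(r)>0$ and $\calD^*(\sft(r),\sfq(r))>0$ on a set of positive measure, then $\REDfunz 0{\sft(r)}{\sfq(r)}{\sft'(r)}{\sfq'(r)} = +\infty$ there, whereas the Young bound $\calD\,\calD^*$ stays finite, so you cannot recover the full integrand of \eqref{def:M0}. The $+\infty$ barrier must be produced from the other part of the sum, roughly $\REDfunz{\eps_k}{}{}{}{} \geq \frac{\sft'_{\eps_k}}{2\eps_k}(\calD^*_k)^2$, which diverges along any sequence with $\sft'_{\eps_k}, \calD^*_k$ bounded away from zero. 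This is exactly what the $\Gamma$-convergence statement in \cite[Prop.~5.2]{MiRo23} encodes, and what the integral inequality from the Ioffe-type result \cite[Prop.~5.13]{MiRo23} propagates to the time-integrated functionals; the paper delegates the whole step to those results rather than hand-constructing it. Your proposal never produces this barrier, and as a consequence your reasoning is circular: you appeal to ``$\sft'=0$ on $A^\circ$ by \eqref{2906191815+1}'' to conclude, but \eqref{2906191815+1} is precisely the consequence of the barrier being respected under the finite right-hand side of \eqref{UEDI}. At the point where Proposition~\ref{prop:UEDI} is proved, Lemma~\ref{l:Lip-est-zp} has only established \eqref{2906191815}, not \eqref{2906191815+1}, so the latter cannot be cited as a prerequisite. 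The fix is either to invoke the $\Gamma$-liminf/Ioffe package as the paper does, or to split $\REDfunzname{\eps_k}$ into convex combinations and retain, besides the $\calD_k\calD^*_k$ term from Young, a piece of $\frac{\sft'_{\eps_k}}{2\eps_k}(\calD^*_k)^2$ whose Fatou-type limit forces $\sft'\calD^*(\sft,\sfq)^2 = 0$ a.e.
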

 \begin{proof}
 We send $k\to\infty$ in the upper energy-dissipation inequality 
\[
 \begin{aligned}
 &
 \calEv(\sft_{\eps_k}(s) ,\sfq_{\eps_k}(s))+
 \int_0^s  \mathcal{M}_{\eps_k} (\sft_{\eps_k}(r),\sfq_{\eps_k}(r), \sft_{\eps_k}'(r), \sfq_{\eps_k}'(r)) \dd r
\\
& 
 \leq \calEv(\sft_{\eps_k}(0),\sfq_{\eps_k}(0)) + \int_0^s \partial_t \calEv(\sft_{\eps_k}(r),\sfq_{\eps_k}(r)) \sft_{\eps_k}'(r) \dd r\,,
 \end{aligned}
\]
for any fixed $s\in [0,S]$. 
Relying on  \eqref{e:4-cvg-t}--\eqref{e:4-cvg-e} we immediately have 
\[
 \Epp (\sft(s),\sfq(s)) \leq \liminf_{n\to\infty} \calEv(\sft_{\eps_k}(s) ,\sfq_{\eps_k}(s))  \qquad \text{for all } s \in (0,S]
 \]
 while $\calEv(\sft_{\eps_k}(0) ,\sfq_{\eps_k}(0)) = \calEv(0,q(0)) = \Epp(0,q_0)$ for every $k\in \N$.  
 Taking into account the expressions of $ \partial_t \calEv(\sft_{\eps_k},\sfq_{\eps_k}) $ and $\partial_t \Epp(\sft,\sfq)$ from \eqref{partial-deriv-t}, we conclude 
 that 
 $
  \partial_t \calEv(\sft_{\eps_k},\sfq_{\eps_k}) \longrightarrow \partial_t \Epp(\sf,\sfq)   $  in  $ L^2(0,S). 
 $
 Hence, 
 \[
 \lim_{k\to\infty} \int_0^s \partial_t \calEv(\sft_{\eps_k}(r),\sfq_{\eps_k}(r)) \sft_{\eps_k}'(r) \dd r = \int_0^s \partial_t \Epp(\sft(r),\sfq(r)) \sft'(r) \dd r\,.
 \]
 Next, we observe that 
 \[
 \begin{aligned}
 \lim_{k\to\infty} \int_0^s  \calR(\sfz_{\eps_k}'(r)) \dd r  & = \kappa  \liminf_{k\to\infty}  \| \sfz_{\eps_k}(s) - \sfz_{\eps_k}(0)\|_{L^1(\Omega)} 
 \\
  & =  \kappa  \| \sfz(s) - \sfz(0)\|_{L^1(\Omega)} 
=  \int_0^s  \calR(\sfz'(r)) \dd r\,,
\end{aligned}
 \]
 while 
 \[
 \int_0^s  \Hpp(\sfz(r),\sfp'(r)) \dd r 
 \leq \liminf_{k\to \infty} \int_0^s \calH(\sfz_{\eps_k}(r), \sfp_{\eps_k}'(r)) \dd r  
  \]
by an argument based on the Reshetnyak Theorem, cf.\ the proof of \cite[Lemma 3.5]{Crismale-Lazzaroni}.  
It follows from Lemma  \ref{l:lsc-slopes}
that 
\[
I_{\{0\}}(\slope u{\sft(s)}{\sfq(s)}) \leq \liminf_{k\to\infty} I_{\{0\}} (  \|{-} \rmD_u \calE(\sft_{\eps_k}(s),\sfq_{\eps_k}(s))\|_{(H_{\Dir}^1)^*}) 
\qquad \text{for all } s \in [0,S]\,.
\]
Thus, it only remains to show that 
\begin{equation*}
 \int_0^s  \mathcal{M}_{0}^{\mathrm{red}} (\sft(r),\sfq(r), \sft'(r), \sfq'(r)) \dd r\leq \liminf_{k\to\infty}  \int_0^s  \mathcal{M}_{\eps_k}^{\mathrm{red}} (\sft_{\eps_k}(r),\sfq_{\eps_k}(r), \sft_{\eps_k}'(r), \sfq_{\eps_k}'(r)) \dd r\,.
\end{equation*}
For this,   we may argue in the same way as in the proof of  \cite[Prop.\ 7.1]{MiRo23} (cf.\ also  the lower semicontinuity arguments from \cite[Sec.\ 7]{Crismale-Rossi19} and \cite[Sec.\ 5]{CLR}). Namely, we rely on the fact that,  as $k\to\infty$,
 the functionals $  (\mathcal{M}_{\eps_k}^{\mathrm{red}})_{\eps_k} $
$\Gamma$-converge (in suitable topologies that we do not specify to avoid overburdening the exposition) to $  \mathcal{M}_{0}^{\mathrm{red}}$, cf.\  \cite[Prop.\ 5.2]{MiRo23} for an abstract result in this connection. Then,  the integral inequality follows from a version of the Ioffe Theorem  \cite{Ioff77LSIF} (cf.\ also \cite[Thm.\ 21]{Valadier90}) that has been obtained in \cite[Prop.\ 5.13]{MiRo23}. All in all, we conclude.  
   \end{proof}
    From  \eqref{straight-fwd-estimates} we derive an additional estimate, \eqref{0404231831} below,  which will play a crucial role in the proof of Lemma \ref{l:tech-approx-energies} ahead. In fact,  \eqref{0404231831}  will be derived with arguments similar to those for 
    \cite[Lemmas~7.1, 7.2, 7.3]{DalDesSol11}, albeit adapted to handle the present case in which the dependence of $\bbC$ on $z$ brings about additional difficulties. In particular, we mention that in the proof of Lemma \ref{le:0404231757} we will resort to a consequence of the  integration by parts formula \eqref{integr-by-parts-PP}, which will be also exploited in the proof of Proposition \ref{p:cornerstone} later on. As an immediate consequence of  \eqref{0404231831}  we will deduce the enhanced regularity for $\sfu$ and $\sfe$ in the instability set $A^\circ$ stated in \eqref{additional-regularity-instability}.    
\begin{lemma}\label{le:0404231757}
There exists a constant $C_L>0$,   only depending  on $\gamma_1$ in \eqref{C2}, on the Lipschitz constant
$\|\bbC\|_{\mathrm{Lip}}$
 of $\C$, and on $\mathrm{M}$ in \eqref{straight-fwd-estimates}, such that   for every
 connected component
 $(a,b) \subset A^\circ$  of $A^\circ$ and for all $[s_1, s_2]\subset (a,b)$ 
 there holds
 \begin{equation}\label{0404231831}
\|\sfe(s_2)-\sfe(s_1)\|_{L^2} \leq C_L (\|\sfp(s_2)-\sfp(s_1)\|_{L^2}+ \|\sfz(s_2)-\sfz(s_1)\|_{L^\infty})\,.
\end{equation}
 As a result, we have 
$\sfu \in  \rmC^0(A^\circ;  H_\Dir^1(\Omega;\R^n))$ and 
$
\sfe \in \rmC^0(A^\circ;  \Lnn)$. 
\end{lemma}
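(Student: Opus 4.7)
The plan is to exploit two key features available on a connected component $(a,b) \subset A^\circ$: the fact that $\sft$ is constant there (by \eqref{2906191815+1}), so that the time-dependent data $\sfw$ and $\sfF$ are frozen, together with the extra $L^2$-regularity of $\sfz,\sfp$ on $A^\circ$ coming from Lemma~\ref{l:Lip-est-zp}. Setting $\Delta u := \sfu(s_2){-}\sfu(s_1)$ and analogously $\Delta e, \Delta p, \Delta z$, the strategy is to test the difference of the equilibrium equation \eqref{EqEqEvery} at $s_1$ and $s_2$ against $\Delta u$; since $\sfw$ is constant on $(a,b)$, the kinematic admissibility gives $\rmE(\Delta u) = \Delta e + \Delta p \in \Lnn$.

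The first delicate step is to show that $\Delta u$ qualifies as an admissible test function in $H_\Dir^1(\Omega;\R^n)$. The relaxed Dirichlet condition $\sfu(s_i)\odot \rmn\,\hn + \sfp(s_i) = 0$ on $\Gdir$ encoded in $\Qpp$, combined with $\sfp(s_i) \in L^2(\Omega;\MD)$ (hence no singular part on $\Gdir$), forces $\sfu(s_i)\odot \rmn = 0$, that is $\sfu(s_i)=0$ in the trace sense on $\Gdir$. By Korn's inequality on the Kohn--Temam domain of Hypothesis~\ref{hyp:domain}, this and $\rmE(\Delta u) \in L^2$ yield $\Delta u \in H_\Dir^1(\Omega;\R^n)$. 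Next, subtracting \eqref{EqEqEvery} at $s_1$ and $s_2$ shows that the stress increment
\begin{equation*}
\tau := \bbC(\sfz(s_2))\sfe(s_2) - \bbC(\sfz(s_1))\sfe(s_1) \in \Lnn
\end{equation*}
is divergence-free in $H_\Dir^1$-duality (this is the consequence of the integration by parts formula \eqref{integr-by-parts-PP} with $w=0$ and $(v,e,p)=(\Delta u,\Delta e,\Delta p)\in A(0)$, exploiting that $F(\sft(s_1))=F(\sft(s_2))$). Pairing with $\Delta u$ gives
\begin{equation*}
0 = \int_\Omega \tau : \rmE(\Delta u)\dd x = \int_\Omega \tau : \Delta e\dd x + \int_\Omega \tau : \Delta p\dd x .
\end{equation*}

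Splitting $\tau = \bbC(\sfz(s_2))\Delta e + (\bbC(\sfz(s_2)){-}\bbC(\sfz(s_1)))\sfe(s_1)$ in the first integral, using the coercivity \eqref{C2}, the Lipschitz bound
\begin{equation*}
\|(\bbC(\sfz(s_2)){-}\bbC(\sfz(s_1)))\sfe(s_1)\|_{L^2} \le \|\bbC\|_{\mathrm{Lip}}\|\Delta z\|_{L^\infty}\|\sfe(s_1)\|_{L^2},
\end{equation*}
and the uniform bound $\|\sfe(s)\|_{L^2}\le C(\gamma_1,\mathrm{M})$ (coming from \eqref{straight-1} via the coercivity of $\calQ$), one absorbs $\tfrac{\gamma_1}{2}\|\Delta e\|_{L^2}^2$ on the left-hand side by Young's inequality to arrive at $\|\Delta e\|_{L^2}^2 \le C(\|\Delta z\|_{L^\infty}^2 + \|\Delta p\|_{L^2}^2)$, which is \eqref{0404231831} with a constant $C_L$ of the claimed form.

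The continuity consequences follow inside each connected component $(a,b)\subset A^\circ$: $\sfz$ is weakly continuous into $\Hs(\Omega)$, hence strongly continuous into $\rmC^0(\overline\Omega)\supset L^\infty$ via the compact embedding $\Hs(\Omega)\Subset \rmC^0(\overline\Omega)$, while $\sfp \in W^{1,\infty}_{\mathrm{loc}}(A^\circ;L^2(\Omega;\MD))$ by Lemma~\ref{l:Lip-est-zp}; together with \eqref{0404231831} this gives $\sfe \in \rmC^0(A^\circ;\Lnn)$, and a further Korn estimate applied to the increments of $\sfu$ (which still have vanishing trace on $\Gdir$) upgrades this to $\sfu \in \rmC^0(A^\circ;H_\Dir^1(\Omega;\R^n))$. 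The main obstacle is precisely the membership $\Delta u\in H_\Dir^1$: it hinges on combining the relaxed Dirichlet condition encoded in $\Qpp$ with the $L^2$-regularity of $\sfp$ peculiar to the instability set, after which the rest is a fairly standard energy estimate.
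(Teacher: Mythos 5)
Your proof is correct and follows essentially the same route as the paper's: both proceed by subtracting the equilibrium equation \eqref{EqEqEvery} at $s_1$ and $s_2$, pairing the resulting stress increment $\tau$ against $\Delta u$ via integration by parts (using that $\sft$, hence $\sfw$ and $\sfF$, are frozen on the connected component), splitting $\tau=\bbC(\sfz(s_2))\Delta e + \big(\bbC(\sfz(s_2))-\bbC(\sfz(s_1))\big)\sfe(s_1)$, and absorbing $\|\Delta e\|_{L^2}^2$ via coercivity \eqref{C2}, the Lipschitz bound on $\bbC$, and the uniform bound on $\|\sfe\|_{L^2}$ from \eqref{straight-1}. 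The only presentational difference is that you justify the pairing by first showing $\Delta u\in H_\Dir^1(\Omega;\R^n)$ — via the relaxed boundary condition in $\Qpp$ together with $\sfp(s)\in L^2(\Omega;\MD)$ on $A^\circ$ (which forces $\sfu(s)\odot\rmn=0$, hence $\sfu(s)=0$ on $\Gdir$, and then Korn) — whereas the paper sidesteps this by invoking the $\BD$ stress--strain duality \eqref{integr-by-parts-PP} directly, which is valid for $\BD$ displacements; your explicit observation that $\sfu(s)\in H_\Dir^1$ on $A^\circ$ is nevertheless exactly what the paper uses implicitly to pass from \eqref{0404231831} to the concluding continuity $\sfu\in\rmC^0(A^\circ;H_\Dir^1(\Omega;\R^n))$, so it is a worthwhile clarification rather than a detour.
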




\begin{proof}
From the integration by parts formula  \eqref{integr-by-parts-PP} 
we deduce 
\begin{equation}
\label{L:4.5-calc1}
\begin{aligned}
&
 \langle \serifsigma_{\dev}(s_2){-} \serifsigma_{\dev}(s_1) , \sfp(s_2){-} \sfp(s_1) \rangle_{L^2(\Omega)}
 \\
 &  = - \langle \serifsigma(s_2){-}\serifsigma(s_1), 
 [ \EE(\sfu(s_2){+}\sfw(s_2))  {-} \sfp(s_2) ] {-}   [ \EE(\sfu(s_1){+}\sfw(s_1))  {-} \sfp(s_1)]  \rangle_{L^2(\Omega)}
 \\
 &  \quad + \langle \serifsigma(s_2){-}\serifsigma(s_1), 
\EE(\sfw(s_2)){-} \EE(\sfw(s_1)) \rangle_{L^2(\Omega)}
 \\
 & \quad + \langle {-} [\Diver\, \serifsigma(s_2){-}\Diver\,\serifsigma(s_1)],  [\sfu(s_2){+}\sfw(s_2){-} \sfw(s_2)] {-}  [\sfu(s_1){+}\sfw(s_1){-} \sfw(s_1)] \rangle_{\BD(\Omega)}
\\
& \stackrel{(1)}{=} - \langle \serifsigma(s_2){-}\serifsigma(s_1) ,\sfe(s_2){-} \sfe(s_1)   \rangle_{L^2(\Omega)} +  \langle \serifsigma(s_2){-}\serifsigma(s_1) ,\EE(\sfw(s_2)){-} \EE(\sfw(s_1))   \rangle_{L^2(\Omega)} 
\\
&
\quad 
  + \langle \sfF(s_2){-}  \sfF(s_1), [\sfu(s_2){-}\sfu(s_1)]  \rangle_{\BD(\Omega)}
  \\
&   \stackrel{(2)}{=} - \langle \serifsigma(s_2){-}\serifsigma(s_1) ,\sfe(s_2){-} \sfe(s_1)   \rangle_{L^2(\Omega)} +0+0\,.
\end{aligned}
\end{equation}
Here {\footnotesize (1)}
 follows from
using  that $\sfe = \EE(\sfu{+}\sfw)-\sfp$, and 
 testing  the elastic equilibrium equations \eqref{EqEqEvery}, evaluated at $s_1$ and $s_2$, by $\sfu(s_2){-}\sfu(s_1)$, while {\footnotesize (2)} is due to the fact that $\sft'\equiv 0$ on $[s_1,s_2]$ (recall \eqref{2906191815+1}), so  that $\EE(\sfw(s_2)) = \EE(\sfw(s_1))$ and $ \sfF(s_2) =  \sfF(s_1)$. 
In turn, we have 
 the identity
\begin{equation}
\label{L:4.5-calc2}
\begin{aligned}
 \langle \C(\sfz(s_2)) (\sfe(s_2){-}\sfe(s_1)), \sfe(s_2){-}\sfe(s_1) \rangle_{L^2(\Omega)}  &  +   \langle \big[\C(\sfz(s_1)){-}\C(\sfz(s_2))\big] \sfe(s_1), \sfe(s_2){-}\sfe(s_1) \rangle_{L^2(\Omega)}
 \\
 = 
\langle \serifsigma(s_2){-}\serifsigma(s_1), \sfe(s_2){-}\sfe(s_1)\rangle_{L^2(\Omega)} 
\end{aligned}
\end{equation}
Combining \eqref{L:4.5-calc1}\&\eqref{L:4.5-calc2} and using that $\bbC$ is (uniformly) positive definite (cf.\ \eqref{C2}) we obtain
\[
\begin{aligned}
& 
\gamma_1 \|  \sfe(s_2){-}\sfe(s_1) \|_{L^2(\Omega)}^2 \leq \Lambda_1 + \Lambda_2\,,
\\
& \text{with }  \qquad \begin{cases}
\displaystyle
\Lambda_1 = \left|   \langle \big[\C(\sfz(s_1)){-}\C(\sfz(s_2))\big] \sfe(s_1), \sfe(s_2){-}\sfe(s_1) \rangle_{L^2(\Omega)}\right|\,,
\smallskip
\\
\Lambda_2 = |  \langle \serifsigma_{\dev}(s_2){-} \serifsigma_{\dev}(s_1) , \sfp(s_2){-} \sfp(s_1) \rangle_{L^2(\Omega)}|\,.
\displaystyle 
\end{cases}
\end{aligned}
\]
Now, 
\[
\begin{aligned}
\Lambda_1 
& \leq \|   \C(\sfz(s_1)){-}\C(\sfz(s_2))  \|_{L^\infty(\Omega)} \| \sfe(s_1)\|_{L^2(\Omega)}   \|  \sfe(s_2){-}\sfe(s_1) \|_{L^2(\Omega)}
\\
&
\stackrel{(1)}\leq \|\bbC\|_{\mathrm{Lip}}\| \sfz(s_1)){-}\sfz(s_2)  \|_{L^\infty(\Omega)} \frac{\mathrm{M}}{\gamma_1}  \|  \sfe(s_2){-}\sfe(s_1) \|_{L^2(\Omega)} \,,
\end{aligned}
\]
where for {\footnotesize (1)}
we have used the Lipschitz continuity of $\bbC$ and estimate \eqref{straight-1}. In order to estimate $\Lambda_2$ we use that 
\[
\begin{aligned}
& \|  \serifsigma_{\dev}(s_2){-} \serifsigma_{\dev}(s_1) \|_{L^2(\Omega)} \\ & \leq \|  \bbC(\sfz(s_2)) \sfe(s_2){-}  \bbC(\sfz(s_1)) \sfe(s_1)\|_{L^2(\Omega)}
\\
& 
\leq  \| \bbC(\sfz(s_2))\|_{L^\infty(\Omega)}  \|  \sfe(s_2){-}\sfe(s_1) \|_{L^2(\Omega)}+ \| \sfe(s_2)\|_{L^2(\Omega)} \|   \C(\sfz(s_2)){-}\C(\sfz(s_1))  \|_{L^\infty(\Omega)}
\\
& 
\leq \big[ \|\bbC\|_{\mathrm{Lip}}\|\sfz(s_2))\|_{L^\infty(\Omega)}  +\|\bbC(0)\|_{L^\infty(\Omega)} \big]  \|  \sfe(s_2){-}\sfe(s_1) \|_{L^2(\Omega)} \\
& \quad +\frac{\mathrm{M}}{\gamma_1}
 \|\bbC\|_{\mathrm{Lip}} \|\sfz(s_2)){-}\sfz(s_1)\|_{L^\infty(\Omega)} \,.
\end{aligned}
 \]
All in all, we find that 
\[
\begin{aligned}
&
\gamma_1 \|  \sfe(s_2){-}\sfe(s_1) \|_{L^2(\Omega)}^2 
\\
& \leq \overline{C} \Big\{  \| \sfz(s_1)){-}\sfz(s_2)  \|_{L^\infty(\Omega)} \|  \sfe(s_2){-}\sfe(s_1) \|_{L^2(\Omega)}
{+} \|  \sfp(s_2){-}\sfp(s_1) \|_{L^2(\Omega)} \|  \sfe(s_2){-}\sfe(s_1) \|_{L^2(\Omega)} 
\\
&\qquad\quad  {+} \| \sfz(s_1)){-}\sfz(s_2)  \|_{L^\infty(\Omega)}\|  \sfp(s_2){-}\sfp(s_1) \|_{L^2(\Omega)}
\Big\} \,,
\end{aligned}
\]
for a constant $\overline{C}$ only depending on $\gamma_1$, $ \|\bbC\|_{\mathrm{Lip}}$, and $\mathrm{M}$. Then, \eqref{0404231831} ensues. 
 Combining \eqref{0404231831} with 
  the  regularity of $\sfp$ ensured by 
 \eqref{extra-lip}  and the strong continuity of $\sfz$ from $A^\circ$ into $L^\infty$ granted by
 \eqref{additional-regularity-instability},  we deduce  the claimed continuity of $\sfu$ and $\sfe$. 
\end{proof}

\section{Intermediate lower energy-dissipation inequalities}
\label{s:5}
 In this section we lay the ground for the proof of 
 Proposition 
\ref{prop:LEDI} ahead, stating  the validity of the  lower energy-dissipation inequality 
\eqref{LEDI} 
along the admissible parameterized curves $(\sft,\sfq) \in \mathscr{A}([0,S] ;[0,T] {{\tim}} \Qpp)$ 
arising from the limiting procedure of Sec.\ \ref{s:4}.
In order to prove it, 
we  will carefully revisit the techniques devised in 
\cite{DalDesSol11}, also borrowing some ideas from 
\cite{BabFraMor12}. 
\par
The key idea is subdividing the interval $[0,T]$ by means of  suitably chosen partitions and resorting to fine approximation results of the Bochner integral via Riemann sums.
We also mention that this idea has long been used in the context of the analysis of rate-independent systems, indeed  for proving the `lower' energy inequality and, ultimately, the energy-dissipation balance for energetic solutions/quasistatic evolutions, dating back to e.g.\ \cite{DMFT05, FraMie06ERCR},   cf.\ also \cite{MielkeRoubicek15}. 
\par
This is the overview of this section:
\begin{itemize}
\item
Section \ref{ss:5-crucial-estimate} revolves around a crucial estimate, proved in Proposition \ref{p:cornerstone}, that will be at the core of 
our proof of the  lower inequality 
\eqref{LEDI}.
\item In Section \ref{ss:5-outline} we are going to  outline the steps of the proof in detail. Indeed,  for any 
 fixed   partition  of $[0,S]$ 
 we will distinguish three families of induced sub-intervals. For each type of sub-interval
   we will obtain a  suitable discrete version of \eqref{LEDI}.
   \item
    These discrete inequalities will be proved
 in
 Sections \ref{ss:5-step1},  \ref{ss:5-step2}, and  \ref{ss:5-step3}, leading to 
  the \emph{overall}  discrete lower energy-dissipation inequality \eqref{overall-LEDI-discrete}. It is in  \eqref{overall-LEDI-discrete} that we will 
 pass to the limit, as the fineness of the partition tends to zero,   to conclude the lower energy-dissipation inequality  \eqref{LEDI}.
 \end{itemize}
  Before delving into the above tasks, let us specify that hereafter we shall suppose that, for the  parameterized curve
  $(\sft,\sfq) \in \mathscr{A}([0,S] ;[0,T] {{\tim}} \Qpp)$ here considered, 
  \begin{equation}
  \label{set-where-stuck}
  \text{the set } F: = \{ s\in (0,S)\, : \ \sft'(s) =0, \ \sfq'(s)=0 \} \text{ has empty interior.}
  \end{equation}
This can be assumed without loss of generality.  Indeed, suppose that $\sft' \equiv 0$ and $\sfq'\equiv 0$ on an interval $(s_*, s^*)$ with $0 \leq  s_*<s^*\leq S$. Then, the lower
energy-dissipation inequality
  \eqref{LEDI} trivially holds on $[s_*, s^*]$.   
  \par
   Let us also point out, for later use, that, since for the pair $(\sft, \sfq)$ the elastic equilibrium equation \eqref{EqEqEvery} holds everywhere, 
by \cite[Prop.\ 3.5]{DMDSMo06QEPL} we have 
\begin{equation}
\label{regularity-sigma}
\sigma(s) \in \Sigma(\Omega) \quad \text{for all } s\in [0,S].
\end{equation}



\subsection{\bf A crucial estimate}
\label{ss:5-crucial-estimate}
 The cornerstone of 
this section 
 is  estimate
\eqref{cornerstone} ahead. In order to state it, we  need to settle some notation. For a given interval  $[s_1,s_2] \subset [0,S]$
 we introduce the quantities that will enter in the discrete versions of the energy-dissipation inequality, i.e.\
\begin{itemize}
\item[$\vardiamond$]  the
\emph{energy/$\calH$-dissipation variation}  of  the curve 
$(\sft,\sfq): [0,S] \to [0,T]{\tim} \Qpp$ on  $[s_1,s_2]$
\[
\begin{aligned}
\HV((\sft,\sfq); [s_1,s_2]): &  =    \Hpp\left(\sfz(s_2), \frac{\sfp(s_2)-\sfp(s_1)}2\right)  +  \Hpp\left(\sfz(s_1), \frac{\sfp(s_2)-\sfp(s_1)}{2}\right) 
\\
& \qquad  + \|\sfp(s_2)-\sfp(s_1)\|_{L^2} \frac{ \congdist p{\sft(s_1)}{\sfq(s_1)}{+}  \congdist p{\sft(s_2)}{\sfq(s_2)}}{2},
\end{aligned}
\]
where the product $\|\sfp(s_2)-\sfp(s_1)\|_{L^2} \, \congdist p{\sft(s_i)}{\sfq(s_i)}$ is set by convention equal to 0 if $\congdist p{\sft(s_i)}{\sfq(s_i)}=0$ and $\sfp(s_2)-\sfp(s_1) \notin L^2(\Omega; \MD)$;
\item[$\vardiamond$]  the 
\emph{energy/$\calR$-dissipation variation} of $(\sft,\sfq)$  on $[s_1,s_2]$ 
\[
\RV((\sft,\sfq); [s_1,s_2]):=
 \mathcal{R} (\sfz(s_2){-} \sfz(s_1)) +
 \|\sfz(s_2){-}\sfz(s_1)\|_{L^2} \frac{ \congdist z{\sft(s_1)}{\sfq(s_1)}{+}  \congdist z{\sft(s_2)}{\sfq(s_2)}}{2} \,.
\]
\end{itemize}
The proposed names for $\HV$ and $\RV$ highlight that their definition features  contributions involving both the dissipation potential  and the energy. 
\par
We will also work with a version of $\RV$ augmented by the energy functional $\Phi$ from \eqref{viscous-energy},
 namely
\begin{itemize}
\item[$\vardiamond$]   the \emph{augmented energy/$\calR$-dissipation variation} of $(\sft,\sfq)$  on $[s_1,s_2]$
\[
\begin{aligned}
\ARV((\sft,\sfq); [s_1,s_2]):  & =  \Phi(\sfz(s_2)) -  \Phi(\sfz(s_1)) + \RV(\sfq; [s_1,s_2]) 
\\
& \qquad +K_W \|\sfz(s_2)-\sfz(s_1)\|_{L^1} \|\sfz(s_2)-\sfz(s_1)\|_{L^\infty}
\\ &   =  
 \Phi(\sfz(s_2)) -  \Phi(\sfz(s_1)) + \mathcal{R} (\sfz(s_2){-} \sfz(s_1))  
\\
&
\qquad  + \|\sfz(s_2)-\sfz(s_1)\|_{L^2} \frac{ \congdist z{\sft(s_1)}{\sfq(s_1)}{+}  \congdist z{\sft(s_2)}{\sfq(s_2)}}{2} 
\\
&
\qquad  + K_W \|\sfz(s_2)-\sfz(s_1)\|_{L^1} \|\sfz(s_2)-\sfz(s_1)\|_{L^\infty}
\,,
\end{aligned}
\]
\end{itemize}
where we have set $K_W := \|W''\|_{L^\infty(0,1)}$. Finally, estimate
\eqref{cornerstone}
will also feature
\begin{itemize}
\item[$\vardiamond$]     a term  approximating  the work of the external forces on $[s_1,s_2]$, i.e.
%
%
%
\[
\begin{aligned}
\WE((\sft,\sfq); [s_1,s_2]): = &
 \frac{1}{2} \langle \serifsigma(s_1) + \serifsigma(s_2), \EE(\sfw(s_2)-\sfw(s_1)) \rangle 
 \\
& \quad  -  \langle \tfrac12 \left( \sfF(s_1) +  \sfF(s_2) \right), \sfw(s_2)-\sfw(s_1) \rangle_{\BD(\Omega)}  
 \\
 & \quad- \langle ( \sfF(s_2) -  \sfF(s_1) ,\tfrac12  (\sfu(s_1)+\sfu(s_2))  \rangle_{\BD(\Omega)} 
 \\
 & \quad 
- \langle ( \sfF(s_2) -  \sfF(s_1) ,\tfrac12  (\sfw(s_1)+\sfw(s_2))  \rangle_{\BD(\Omega)} \,.
\end{aligned}
\]
 \end{itemize}
In fact,  in view of Lemmas \ref{l:approx-sigma}--\ref{l:approx-F'w} in appendix, 
$\WE((\sft,\sfq); [s_1,s_2])$ turns out to be a discrete form of  the external work 
$\int_{s_1}^{s_2} \partial_t \Epp(\sft,\sfq) \, \sft' \dd s$. 
 
 We are now in a position to state the main result of this section, where we show a discrete form of the lower energy-dissipation inequality.
Hereafter, we will use the notation 
\begin{equation}\label{0504232022}
K_{\bbC}: = \frac{\|\bbC'\|_{\mathrm{Lip}}}{\min_{z\in [m_0,1]}\bbC'(z) }\,, \qquad K_W:=\|W''\|_{L^\infty(0,1)} \,,
\end{equation}
where we have denoted by   $\|\bbC'\|_{\mathrm{Lip}} $ the Lipschitz constant of $\bbC'$
 and we have used for shorter notation the place-holder
\[
\min_{z\in [m_0,1]}\bbC'(z): = \min_{z\in [m_0,1]} \inf_{\xi \in \R^{n\tim n}}\bbC'(z)\xi {:}\xi \,; 
\]
recall that  $m_0$ is the constant from Prop.\ \ref{prop:-aprio-est}.  
\begin{proposition}
\label{p:cornerstone}
For all $0\leq s_1 < s_2 \leq S$  we have $ \ARV(\sfq; [s_1,s_2]) \geq 0$. Moreover, 
for $0\leq s_1 < s_2 \leq S$ such that 
\begin{equation}
\label{additional-cond-indices}
 \left( s_1, \, s_2 \in B^\circ \right) \quad \text{or} \quad \left( \text{the interval } [s_1,s_2] \subset A^\circ \right) \,,
\end{equation}
then 
\begin{equation}
\label{cornerstone}
\begin{aligned}
\WE((\sft,\sfq); [s_1,s_2])
+  \Epp(\sft(s_1), \sfq(s_1)) 
&  \leq   \Epp(\sft(s_2), \sfq(s_2))   + \HV(\sfq; [s_1,s_2])+ \RV(\sfq; [s_1,s_2]) 
 \\
 & \qquad 
 +    K_{\bbC}\|\sfz(s_2)-\sfz(s_1)\|_{L^\infty}  \ARV(\sfq; [s_1,s_2])
  \\
 & \qquad 
 +  K_W \|\sfz(s_2)-\sfz(s_1)\|_{L^1} \|\sfz(s_2)-\sfz(s_1)\|_{L^\infty}\,.
\end{aligned}
\end{equation}
\end{proposition}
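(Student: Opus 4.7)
The plan is to reduce $\Epp(\sft(s_2),\sfq(s_2))-\Epp(\sft(s_1),\sfq(s_1))-\WE((\sft,\sfq);[s_1,s_2])$ to a form amenable to the duality formulas of Lemma~\ref{l:duality}. First I decompose $\Epp = \calQ + \Phi - \langle F,u+w\rangle$ and expand the force difference as $-\langle \bar F, \sfu(s_2){-}\sfu(s_1)\rangle - \langle \bar F, \sfw(s_2){-}\sfw(s_1)\rangle - \langle \sfF(s_2){-}\sfF(s_1), \bar u\rangle - \langle \sfF(s_2){-}\sfF(s_1), \bar w\rangle$, with bars denoting endpoint averages; three of these four terms cancel with those in $\WE$, leaving the residue $-\langle \bar F, \sfu(s_2){-}\sfu(s_1)\rangle - \langle \bar\sigma, \EE(\sfw(s_2){-}\sfw(s_1))\rangle$, where $\bar\sigma := \tfrac{1}{2}(\serifsigma(s_1)+\serifsigma(s_2))$. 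Averaging \eqref{EqEqEvery} at the two endpoints gives $-\mathrm{Div}\,\bar\sigma = \bar F$ in $\BD(\Omega)^*$; since $\bar\sigma\in\Sigma(\Omega)$ by \eqref{regularity-sigma}, and the increment triple $(\sfu(s_2){-}\sfu(s_1), \sfe(s_2){-}\sfe(s_1), \sfp(s_2){-}\sfp(s_1))$ is admissible with boundary datum $\sfw(s_2){-}\sfw(s_1)$ (with total displacement $V_i=\sfu(s_i)+\sfw(s_i)$), the integration-by-parts formula \eqref{integr-by-parts-PP} converts the residue into $-\langle \bar\sigma, \sfe(s_2){-}\sfe(s_1)\rangle - \langle \bar\sigma_\dev \mid \sfp(s_2){-}\sfp(s_1)\rangle$. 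Inserting the exact algebraic identity $\bbC(z_2)e_2{:}e_2 - \bbC(z_1)e_1{:}e_1 = (\serifsigma(s_1){+}\serifsigma(s_2)){:}(e_2{-}e_1) + \tfrac{1}{2}(\bbC(z_2){-}\bbC(z_1))(e_1{:}e_1+e_2{:}e_2) - \tfrac{1}{2}(\bbC(z_2){-}\bbC(z_1))(e_2{-}e_1){:}(e_2{-}e_1)$ splits the quadratic change as $\calQ_{\mathrm{diff}} = \langle \bar\sigma, \sfe(s_2){-}\sfe(s_1)\rangle + Q_z + R_z$, and thus yields the clean decomposition
\begin{equation*}
\Epp(\sft(s_2),\sfq(s_2)) - \Epp(\sft(s_1),\sfq(s_1)) - \WE = \Phi_{\mathrm{diff}} + Q_z + R_z - \langle \bar\sigma_\dev \mid \sfp(s_2){-}\sfp(s_1)\rangle,
\end{equation*}
where $Q_z\leq 0$ and $R_z\geq 0$ thanks to unidirectionality $\sfz(s_2)\leq\sfz(s_1)$ combined with $\bbC'(z)\xi{:}\xi\geq 0$ on $[m_0,1]$ from \eqref{C3}.

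For the nonnegativity of $\ARV$ I would use the trapezoidal expansion $\Phi_{\mathrm{diff}} = \tfrac{1}{2}\langle \Phi'(\sfz(s_1))+\Phi'(\sfz(s_2)), \sfz(s_2){-}\sfz(s_1)\rangle + R_W$ with remainder $|R_W|\leq CK_W\|\sfz(s_2){-}\sfz(s_1)\|_{L^1}\|\sfz(s_2){-}\sfz(s_1)\|_{L^\infty}$, and apply the duality \eqref{duality-formulae-2} at $s_i$ with test $\eta_z = \sfz(s_2){-}\sfz(s_1)\leq 0$: since $\rmD_z\Epp(\sft(s_i),\sfq(s_i)) = \Phi'(\sfz(s_i)) + \tfrac{1}{2}\bbC'(\sfz(s_i))\sfe(s_i){:}\sfe(s_i)$, this produces
\[
\langle \Phi'(\sfz(s_i)), \sfz(s_2){-}\sfz(s_1)\rangle + \tfrac{1}{2}\langle \bbC'(\sfz(s_i))\sfe(s_i){:}\sfe(s_i), \sfz(s_2){-}\sfz(s_1)\rangle \geq -\calR(\sfz(s_2){-}\sfz(s_1)) - \|\sfz(s_2){-}\sfz(s_1)\|_{L^2}\,\congdist z{\sft(s_i)}{\sfq(s_i)}.
\]
Averaging $i=1,2$ and discarding the nonpositive $\bbC'$-term on the left yields $\ARV\geq 0$ after absorbing $R_W$ into the $K_W$-correction.

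For the estimate \eqref{cornerstone}, I instead \emph{retain} the $\bbC'$-term: setting $\widetilde Q_z := \tfrac{1}{4}\langle \bbC'(\sfz(s_1))\sfe(s_1){:}\sfe(s_1)+\bbC'(\sfz(s_2))\sfe(s_2){:}\sfe(s_2), \sfz(s_2){-}\sfz(s_1)\rangle\leq 0$, the averaged duality above reads $\Phi_{\mathrm{diff}} + \widetilde Q_z \geq -\RV - CK_W\|\sfz(s_2){-}\sfz(s_1)\|_{L^1}\|\sfz(s_2){-}\sfz(s_1)\|_{L^\infty}$. Substituting into the decomposition of the first paragraph reduces the task to proving the two bounds $|Q_z-\widetilde Q_z| \leq C K_\bbC \|\sfz(s_2){-}\sfz(s_1)\|_{L^\infty}\,\ARV$ (since $R_z\geq 0$) and $\langle \bar\sigma_\dev \mid \sfp(s_2){-}\sfp(s_1)\rangle \leq \HV$. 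For the first, the $C^{1,1}$-expansion $\bbC(\sfz(s_2))-\bbC(\sfz(s_1)) = \tfrac{1}{2}(\bbC'(\sfz(s_1))+\bbC'(\sfz(s_2)))(\sfz(s_2){-}\sfz(s_1)) + O(\|\bbC'\|_{\mathrm{Lip}}|\sfz(s_2){-}\sfz(s_1)|^2)$ gives the control $|Q_z-\widetilde Q_z| \leq C\|\bbC'\|_{\mathrm{Lip}}\|\sfz(s_2){-}\sfz(s_1)\|_{L^\infty}\int|\sfz(s_2){-}\sfz(s_1)|(|\sfe(s_1)|^2+|\sfe(s_2)|^2)\dd x$, and the key point is that the lower bound $\bbC'(z)\xi{:}\xi\geq\alpha_\bbC|\xi|^2$ of \eqref{C3} applied to $\widetilde Q_z$ yields $\int|\sfz(s_2){-}\sfz(s_1)|(|\sfe(s_1)|^2+|\sfe(s_2)|^2)\dd x \leq (4/\alpha_\bbC)|\widetilde Q_z|\leq (4/\alpha_\bbC)\ARV$, which is precisely where the denominator $\min_{[m_0,1]}\bbC'(z)$ in $K_\bbC$ from \eqref{0504232022} enters. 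For the second bound, I distinguish the two alternatives of \eqref{additional-cond-indices}: when $[s_1,s_2]\subset A^\circ$, $\sft$ is constant on the connected component so the boundary data coincide, Lemma~\ref{le:0404231757} ensures $\sfp(s_2){-}\sfp(s_1) \in L^2(\Omega;\MD)$, and I invoke the $L^2$-duality \eqref{duality-formulae-1} at both endpoints with $\eta_p = \pm(\sfp(s_2){-}\sfp(s_1))/\|\cdot\|_{L^2}$ before averaging; when $s_1,s_2\in B^\circ$, the slopes $\congdist p{\sft(s_i)}{\sfq(s_i)}$ vanish, so $\serifsigma(s_i)\in\SiKappa{\sfz(s_i)}{\Omega}$ and the measure-level bound \eqref{eq:carH} applied at each endpoint and averaged delivers the conclusion. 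The main obstacle will be the $A^\circ$-case: I must identify the measure pairing $\langle \bar\sigma_\dev \mid \sfp(s_2){-}\sfp(s_1)\rangle$ with the $L^2$-pairing $\int\bar\sigma_\dev{:}(\sfp(s_2){-}\sfp(s_1))\dd x$, and carefully handle the sign/symmetry conventions relating \eqref{slope-funct-p} to the duality formula \eqref{duality-formulae-1} so that the upper bound $\langle \bar\sigma_\dev \mid \sfp(s_2){-}\sfp(s_1)\rangle \leq \HV((\sft,\sfq);[s_1,s_2])$ holds precisely with the constants appearing in the definition of $\HV$.
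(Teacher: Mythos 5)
Your proposal is correct and follows essentially the same route as the paper's proof: both start from the stress--strain integration-by-parts identity \eqref{integr-by-parts-PP} applied to the increment triple, reduce the claim to bounding $\langle \bar\sigma, \sfe(s_2){-}\sfe(s_1)\rangle$ and $\langle \bar\sigma_\dev \mid \sfp(s_2){-}\sfp(s_1)\rangle$, use the same polarization/Lagrange-theorem/Lipschitz-of-$\bbC'$ argument (your $Q_z$, $\widetilde Q_z$, $R_z$ bookkeeping is exactly the paper's \eqref{2712201749}--\eqref{further-ingredient}, with $R_z\geq 0$ discarded at the same point), and the duality formulae of Lemma~\ref{l:duality} at both endpoints, distinguishing the two alternatives in \eqref{additional-cond-indices} for the plasticity duality exactly as in the paper's Claim~2. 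One minor slip: the fact that $\sfp(s_2){-}\sfp(s_1) \in L^2(\Omega;\MD)$ when $[s_1,s_2]\subset A^\circ$ is supplied by Lemma~\ref{l:Lip-est-zp} (which gives $\sfp \in W^{1,\infty}_{\mathrm{loc}}(A^\circ;L^2)$), not by Lemma~\ref{le:0404231757}; the latter is the $\sfe$-estimate. This misattribution does not affect the substance of the argument.
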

%
\begin{proof}
Let $s_1 \leq s_2 \in [0,S]$ be fixed.
 Since $\sigma(s) \in \Sigma(\Omega)$ for all $s\in [0,S]$,   we may apply the integration by parts formula \eqref{integr-by-parts-PP}  with the following choices for the triple 
$(v,e,p)$:
\[
\begin{cases}
v=  (\sfu(s_2){+}\sfw(s_2)) {-} (\sfu(s_1){+}\sfw(s_1))\,,
\\
e = 
 \big(\EE(\sfu(s_2){+}\sfw(s_2)) {-} \sfp(s_2)\big)  {-} \big(\EE(\sfu(s_1){+}\sfw(s_1)) {-} \sfp(s_1)\big)\,,
\\
p = \sfp(s_2) - \sfp(s_1) \,.
\end{cases}
\]
This leads to 
\begin{equation}
\label{even-elsewhere-quoted}
\begin{aligned}
&
 \langle \serifsigma_{\dev}(s_i) | \sfp(s_2){-} \sfp(s_1) \rangle 
 \\
 &  = - \langle \serifsigma(s_i), 
 [ \EE(\sfu(s_2){+}\sfw(s_2) ) {-} \sfp(s_2) {-} \EE(\sfw(s_2))] {-}   [ \EE(\sfu(s_1){+}\sfw(s_1) ) {-} \sfp(s_1) {-} \EE(\sfw(s_1))]  \rangle_{L^2(\Omega)}
 \\
 & \quad + \langle {-}\Diver\, \serifsigma(s_i),  [\sfu(s_2){+}\sfw(s_2){-} \sfw(s_2)] {-}  [\sfu(s_1){+}\sfw(s_1){-} \sfw(s_1)] \rangle_{\BD(\Omega)}
\\
& \stackrel{(1)}{=} - \langle \serifsigma(s_i),\sfe(s_2){-} \sfe(s_1)   \rangle_{L^2(\Omega)} +  \langle \serifsigma(s_i),\EE(\sfw(s_2)){-} \EE(\sfw(s_1))   \rangle_{L^2(\Omega)} 
\\
&
\quad 
  + \langle \sfF(s_i), [\sfu(s_2){-}\sfu(s_1)]  \rangle_{\BD(\Omega)}, 
\end{aligned}
\end{equation}
where {\footnotesize (1)} follows from
recalling that $\sfe = \EE(\sfu{+}\sfw)-\sfp$, and from 
 testing  the elastic equilibrium equation \eqref{EqEqEvery}, evaluated at $s_i$, by $\sfu(s_2){-}\sfu(s_1)$. 
Therefore, adding the relation at $s_1$ with that at $s_2$,
 we obtain
\begin{subequations}
\begin{equation}\label{2712201745-old} 
\begin{aligned}
&
 \langle\tfrac12( \serifsigma(s_1) {+} \serifsigma(s_2)), \EE(\sfw(s_2)-\sfw(s_1)) \rangle  +  \langle \tfrac12 \left( \sfF(s_1) {+}  \sfF(s_2) \right), \sfu(s_2)-\sfu(s_1) \rangle_{\BD(\Omega)}  
\\
 & = \langle \tfrac12 \left( \serifsigma_{\dev}(s_1) {+} \serifsigma_{\dev}(s_2) \right) | \sfp(s_2)-\sfp(s_1) \rangle+ \langle \tfrac12 \left( \serifsigma(s_1){+} \serifsigma(s_2) \right), \sfe(s_2)-\sfe(s_1) \rangle_{L^2(\Omega)}\,.
\end{aligned}
\end{equation}
 Now, we use that for $\sfvar \in \{ \sfw, \, \sfu\}$
 \[
 \begin{aligned}
  \langle \tfrac12 \left( \sfF(s_1) {+}  \sfF(s_2) \right), \sfvar(s_2)-\sfvar(s_1) \rangle_{\BD(\Omega)} 
       = &  -   \langle   \sfF(s_2) -  \sfF(s_1) ,\tfrac12 ( \sfvar(s_1){+}\sfvar(s_2) )\rangle_{\BD(\Omega)} 
  \\
   &  + \langle \sfF(s_2), \sfvar(s_2) \rangle_{\BD(\Omega)} - \langle \sfF(s_1), \sfvar(s_1) \rangle_{\BD(\Omega)}     \,,
   \end{aligned}
 \]
 to replace the second term on the right-hand side of \eqref{2712201745-old} by 
\[
   -   \langle   \sfF(s_2) {-}  \sfF(s_1) ,\tfrac12 ( \sfu(s_1){+}\sfu(s_2) )\rangle_{\BD(\Omega)} +  \langle \sfF(s_2), \sfu(s_2) \rangle_{\BD(\Omega)} - \langle \sfF(s_1), \sfu(s_1) \rangle_{\BD(\Omega)} 
   \]
   Moreover, we 
   add to   \eqref{2712201745-old}  the identity
 \[
 \begin{aligned}
& -  \langle \tfrac12 \left( \sfF(s_1) {+}  \sfF(s_2) \right), \sfw(s_2){-}\sfw(s_1) \rangle_{\BD(\Omega)}      -   \langle   \sfF(s_2) {-}  \sfF(s_1) ,\tfrac12 ( \sfw(s_1){+}\sfw(s_2) )\rangle_{\BD(\Omega)} 
  \\
   &  \quad + \langle \sfF(s_2), \sfw(s_2) \rangle_{\BD(\Omega)} - \langle \sfF(s_1), \sfw(s_1) \rangle_{\BD(\Omega)} =0    \,.
   \end{aligned}
 \]
Ultimately, we obtain
\begin{equation}\label{2712201745} 
\begin{aligned}
&
 \WE((\sft,\sfq);[s_1,s_2]) + \langle \sfF(s_2), \sfu(s_2){+} \sfw(s_2) \rangle_{\BD(\Omega)}  -  \langle \sfF(s_1), \sfu(s_1){+} \sfw(s_1) \rangle_{\BD(\Omega)} 
 \\
 &
= \langle \tfrac12 \left( \serifsigma_{\dev}(s_1) {+} \serifsigma_{\dev}(s_2) \right) | \sfp(s_2)-\sfp(s_1) \rangle+ \langle \tfrac12 \left( \serifsigma(s_1){+} \serifsigma(s_2) \right), \sfe(s_2)-\sfe(s_1) \rangle_{L^2(\Omega)}\,.
\end{aligned}
\end{equation}
\end{subequations}
We now estimate from above  two   terms on the right-hand side of \eqref{2712201745}.
\medskip
\par
\noindent 
\emph{{\bf Claim $1$:} For every $s_1 \leq s_2 \in [0,S]$ there holds}
\begin{equation}
\label{claim-2}
\begin{aligned}
&
 \langle \tfrac12(\serifsigma(s_1) {+} \serifsigma(s_2)), \sfe(s_2)-\sfe(s_1) \rangle_{L^2} \\ & 
\leq \Q(\sfz(s_2), \sfe(s_2)) - \Q(\sfz(s_1), \sfe(s_1)) +  \left(1{+} K_{\bbC}\|\sfz(s_2)-\sfz(s_1)\|_{L^\infty}\right)  \ARV(\sfq; [s_1,s_2])\,.
\end{aligned}
\end{equation}
To show this, we start by observing that 
\begin{equation}\label{2712201749}
\begin{split}
 \langle\tfrac12( \serifsigma(s_1) {+} \serifsigma(s_2)), \sfe(s_2)-\sfe(s_1) \rangle_{L^2} &= \Q(\sfz(s_2), \sfe(s_2)) - \Q(\sfz(s_1), \sfe(s_1)) \\&\hspace{1em}+ \frac12 \langle \big[ \bbC(\sfz(s_1)) {-} \bbC(\sfz(s_2))  \big] \sfe(s_1), \sfe(s_2) \rangle_{L^2}\,.
\end{split}
\end{equation}
Using that $\bbC(\sfz(s_1)) - \bbC(\sfz(s_2))$ is a positive definite fourth-order tensor,  cf.\ \eqref{C3}, 
we find that
\begin{equation}\label{2712201754}
\begin{split}
\frac12\langle & \big[ \bbC(\sfz(s_1)) {-} \bbC(\sfz(s_2))  \big] \sfe(s_1), \sfe(s_2) \rangle_{L^2}  
\\& \leq\frac14 \langle \big[ \bbC(\sfz(s_1)) {-} \bbC(\sfz(s_2))  \big] \sfe(s_1), \sfe(s_1) \rangle_{L^2}   +\frac14 \langle \big[ \bbC(\sfz(s_1)) {-} \bbC(\sfz(s_2))  \big] \sfe(s_2), \sfe(s_2) \rangle_{L^2}    \,.
\end{split}
\end{equation}
We now estimate the terms $ \big[ \bbC(\sfz(s_1)) - \bbC(\sfz(s_2))  \big] \sfe(s_i) \colon \sfe(s_i)$, for $i=1,\,2$. By the Lagrange Theorem,
for $i \in \{1,2\}$
 there exist functions $\zeta_i\colon \Omega\to [0,1]$ (that we may assume measurable) with $ \sfz(s_2) \leq \zeta_i  \leq \sfz(s_1)$
 a.e.\ in $\Omega$, such that  for almost all $x\in \Omega$
\[
\big[ \bbC(\sfz(s_1,x)) - \bbC(\sfz(s_2,x))  \big] \sfe(s_i,x) \colon \sfe(s_i,x)= \bbC'(\zeta_i(x)) \big(\sfz(s_1,x)-\sfz(s_2,x)\big)  \sfe(s_i,x) \colon \sfe(s_i,x) \,. 
\]
Now, we have 
\[
\begin{aligned}
&
\bbC'(\zeta_i) \big(\sfz(s_1)-\sfz(s_2)\big)  \sfe(s_i) \colon \sfe(s_i)  
\\
 & \leq  \left( \bbC'(\sfz(s_i)){+} \| \bbC'\|_{\mathrm{Lip}} \| \zeta_i {-}\sfz(s_i)\|_{L^\infty} \right)\big(\sfz(s_1)-\sfz(s_2)\big)  \sfe(s_i) \colon \sfe(s_i)  
 \\
 & \leq (1{+} K_{\bbC} \| \sfz(s_2)-\sfz(s_1)\|_{L^\infty}) \bbC'(\sfz(s_i))\big(\sfz(s_1)-\sfz(s_2)\big)  \sfe(s_i) \colon \sfe(s_i)  \qquad \aein\, \Omega\,,
\end{aligned}
\]
where we have  exploited the Lipschitz continuity of $\bbC'$  and    relied on the positivity of $  \big(\sfz(s_1)-\sfz(s_2)\big) $ a.e.\ in $\Omega$ 
by the unidirectionality constraint.  
%
%
All in all, we obtain
\begin{equation}
\label{further-ingredient}
\begin{aligned}
\text{r.h.s.\ of \eqref{2712201754}}  & = 
 \sum_{i=1}^2 \tfrac14 \langle \big[ \bbC(\sfz(s_1)) - \bbC(\sfz(s_2))  \big] \sfe(s_i), \sfe(s_i) \rangle_{L^2}
 \\
 &
\leq  (1{+} K_{\bbC} \| \sfz(s_2)-\sfz(s_1)\|_{L^\infty})  \sum_{i=1}^2 I_i  \,,
\\
&
\quad \text{with }
I_i=  \int_\Omega \tfrac14 \bbC'(\sfz(s_i)) \big(\sfz(s_1)-\sfz(s_2)\big)  \sfe(s_i): \sfe(s_i) \dd x\,.
 \end{aligned}
\end{equation}
\par
In order to estimate from above the terms  $I_i$  we resort to the representation formula \eqref{duality-formulae-2} for 
$\congdist z {\sft(s_i)}{\sfq(s_i)}$, $i=1,2$, which gives  (if $\sfz(s_1) \neq \sfz(s_2)$)
\[
\begin{aligned}
\congdist z {\sft(s_i)}{\sfq(s_i)} &  \geq \frac{\langle - \As \sfz(s_i) - W'(\sfz(s_i)) - \tfrac12 \bbC'(\sfz(s_i)) \sfe(s_i){:} \sfe(s_i) ,   
\sfz(s_2){-}\sfz(s_1)\rangle_{\Hs}}{\|\sfz(s_2){-}\sfz(s_1)\|_{L^2}} \\ & \qquad  - \frac{\calR(\sfz(s_2){-}\sfz(s_1))}{\|\sfz(s_2){-}\sfz(s_1)\|_{L^2}}  \,.
 \end{aligned} 
\]
 Thus,
%
 we obtain 
\[
\begin{aligned}
&
\frac14 \int_\Omega   \bbC'(\sfz(s_1)) \sfe(s_1){:} \sfe(s_1) ( \sfz(s_1){-}\sfz(s_2) ) \dd x  
 + \frac14 \int_\Omega  \bbC'(\sfz(s_2)) \sfe(s_2){:} \sfe(s_2) (  \sfz(s_1){-}\sfz(s_2) ) \dd x 
 \\
 & 
 \leq 
\frac12 \left(  \congdist z{\sft(s_1)}{\sfq(s_1)}{+}  \congdist z{\sft(s_2)}{\sfq(s_2)} \right)  \|\sfz(s_2){-}\sfz(s_1)\|_{L^2} 
+ \frac12 \ass (\sfz(s_1){+}\sfz(s_2), \sfz(s_2){-}\sfz(s_1)) \\
& \qquad + \frac12  \int_\Omega \left\{ W'(\sfz(s_1)) {+} W'(\sfz(s_2))\right\} (\sfz(s_2){-}\sfz(s_1)) \dd x 
 +  \calR( \sfz(s_2){-}\sfz(s_1))\,.
\end{aligned}
\]
Applying the Lagrange Theorem we find  a function  $\tilde\zeta_{1,2}\colon \Omega\to [0,1]$ (again assumed measurable), with $ \sfz(s_2) \leq \tilde\zeta_{1,2}  \leq \sfz(s_1)$
a.e.\ in $\Omega$, such that   $ W(\sfz(s_2))  -  W(\sfz(s_1)) = W'(\tilde\zeta_{1,2}) (\sfz(s_1)  {-}\sfz(s_2))$ a.e.\ in $\Omega$. Thus, 
\[
\begin{aligned}
& 
\frac12 \int_\Omega [ W'(\sfz(s_1)){+} W'(\sfz(s_2)) ](\sfz(s_2)  {-}\sfz(s_1))
\\
 &  =
\int_\Omega [W(\sfz(s_2))  -  W(\sfz(s_1))] \dd x +\frac12  \int_\Omega \{  W'(\sfz(s_1)){+} W'(\sfz(s_2) {-} 2  W'(\tilde\zeta_{1,2})  \}(\sfz(s_2)  {-}\sfz(s_1)) \dd x
\\
& \leq \int_\Omega [W(\sfz(s_2))  -  W(\sfz(s_1))] \dd x + \|W''\|_{L^\infty(0,1)} \int_\Omega( \sfz(s_1)  {-}\sfz(s_2))^2 \dd x 
\\
& \leq  \int_\Omega [W(\sfz(s_2))  -  W(\sfz(s_1))] \dd x + K_W \|    \sfz(s_1)  {-}\sfz(s_2)\|_{L^1}  \|    \sfz(s_1)  {-}\sfz(s_2)\|_{L^\infty}\,.
\end{aligned}
\]
Ultimately, 
we conclude that 
\begin{equation*}
\begin{aligned}
 I_1+I_2
&    = \frac14 \int_\Omega   \bbC'(\sfz(s_1)) \sfe(s_1): \sfe(s_1) ( \sfz(s_1){-}\sfz(s_2) ) \dd x 
\\ &  \quad  + \frac14 \int_\Omega  \bbC'(\sfz(s_2)) \sfe(s_2): \sfe(s_2) (  \sfz(s_1){-}\sfz(s_2) ) \dd x   
\\
& 
\leq \frac12 \ass (\sfz(s_2), \sfz(s_2))  + \int_\Omega W(\sfz(s_2)) \dd x - \frac12 \ass (\sfz(s_1), \sfz(s_1))  - \int_\Omega W(\sfz(s_1)) \dd x
\\
& \quad   + \calR( \sfz(s_2){-}\sfz(s_1))
+\frac12 \left(  \congdist z{\sft(s_1)}{\sfq(s_1)}{+}  \congdist z{\sft(s_2)}{\sfq(s_2)} \right)  \|\sfz(s_2){-}\sfz(s_1)\|_{L^2} 
\\
& \quad + K_W \|    \sfz(s_1)  {-}\sfz(s_2)\|_{L^1}  \|    \sfz(s_1)  {-}\sfz(s_2)\|_{L^\infty}
\\ & 
= \ARV((\sft,\sfq); [s_1,s_2]) \,.
\end{aligned}
\end{equation*}
Since   $I_1+I_2\geq 0$,  we deduce in particular that $\ARV((\sft,\sfq); [s_1,s_2]) \geq 0$. 
\par
Combining the previous inequality with \eqref{2712201749}, \eqref{2712201754}, and \eqref{further-ingredient}, we conclude  
\eqref{claim-2}. 
\medskip
\par
\noindent 
\emph{{\bf Claim $2$:} For every $s_1 \leq s_2 \in [0,S]$ satisfying \eqref{additional-cond-indices}
 there holds}
\begin{equation}\label{2712201926}
\langle \frac12(\serifsigma_{\dev}(s_1) {+} \serifsigma_{\dev}(s_2)) | \sfp(s_2)-\sfp(s_1) \rangle \leq
\HV((\sft,\sfq); [s_1,s_2])\,.
\end{equation} 
Indeed, if 
$s_1$ and $s_2$ are in $B^\circ$, then $\congdist p{\sft(s_i)}{\sfq(s_i)} =0$ for $i=1,2$, 
hence 
$\serifsigma_{\dev}(s_i) \in \mathcal{K}_{\sfz(s_i)}(\Omega)$
 and, a fortiori, by \eqref{regularity-sigma} we have $\serifsigma(s_i) \in \SiKappa {\sfz(s_i)}{\Omega}$ (recall notation
\eqref{SiKappa}). 
 Therefore,  by  \eqref{eq:carH} 
 we conclude that 
\[
\frac12\langle \serifsigma_{\dev}(s_i)  | \sfp(s_2)-\sfp(s_1) \rangle  \leq \frac12 \Hpp (\sfz(s_i), \sfp(s_2)-\sfp(s_1) ) \qquad \text{for } i =1,2,
\]
hence 
\[
\begin{aligned}
&
\langle \tfrac12 (\serifsigma_{\dev}(s_1)  {+} \serifsigma_{\dev}(s_2)) | \sfp(s_2)-\sfp(s_1) \rangle 
\\
& \leq  \Hpp\left(\sfz(s_2), \frac{\sfp(s_2)-\sfp(s_1)}{2}\right)  + \Hpp\left(\sfz(s_1), \frac{\sfp(s_2)-\sfp(s_1)}{2}\right)\,,
\end{aligned}
\]
 which is indeed \eqref{2712201926}, since  $\congdist p{\sft(s_i)}{\sfq(s_i)} =0$ for $i=1,2$. 
\par
Otherwise, if $[s_1,s_2] \subset A^\circ$, then $\sfp' |_{[s_1,s_2]} $ takes values  in $\Lnn$ and thus $\sfp(s_2)-\sfp(s_1) \in \Lnn$,
so that 
\[
\langle \tfrac12 (\serifsigma_{\dev}(s_1)  {+} \serifsigma_{\dev}(s_2) | \sfp(s_2)-\sfp(s_1) \rangle = \langle \tfrac12 (\serifsigma_{\dev}(s_1)  {+} \serifsigma_{\dev}(s_2) , \sfp(s_2)-\sfp(s_1) \rangle_{L^2(\Omega)}\,.
\] 
Now,
 it follows by the representation formula
\eqref{duality-formulae-1}
  for $\congdist p{\sft(s_i)}{\sfq(s_i)}$ that 
\[
\congdist p{\sft(s_i)}{\sfq(s_i)} \geq \frac1{\| \sfp(s_2)-\sfp(s_1) \|_{L^2}} \Big[ \langle \serifsigma_{\dev}(s_i), \sfp(s_2)-\sfp(s_1) \rangle -  \Hpp(\sfz(s_i),  \sfp(s_2)-\sfp(s_1))  \Big]
\]
for $i=1,2 $. Therefore, 
\[
\begin{aligned}
 & \langle \tfrac12 (\serifsigma_{\dev}(s_1)  {+} \serifsigma_{\dev}(s_2) | \sfp(s_2)-\sfp(s_1) \rangle
 \\
&  \leq
  \Hpp\left(\sfz(s_2), \frac{\sfp(s_2)-\sfp(s_1)}{2}\right) + \Hpp\left(\sfz(s_1), \frac{\sfp(s_2)-\sfp(s_1)}{2}\right) 
  \\
  & \quad 
  + \|\sfp(s_2)-\sfp(s_1)\|_{L^2} \frac{\congdist p{\sft(s_1)}{\sfq(s_1)}{+}  \congdist p{\sft(s_2)}{\sfq(s_2)}}2 \,,
\end{aligned}
\]
which is, again, \eqref{2712201926}.
\medskip
\par
\noindent
\textbf{Conclusion of the proof:} We have
\begin{equation}
\label{2112200955}
\begin{aligned}
&
 \WE((\sft,\sfq);[s_1,s_2]) + \langle \sfF(s_2), \sfu(s_2){+} \sfw(s_2) \rangle_{\BD(\Omega)}  -  \langle \sfF(s_1), \sfu(s_1){+} \sfw(s_1) \rangle_{\BD(\Omega)}  
\\
&  \leq \Q(\sfz(s_2),\sfe(s_2)) - \Q(\sfz(s_1),\sfe(s_1)) 
+ \HV(\sfq; [s_1,s_2])
\\
& \quad +   \left(1{+} K_{\bbC}\|\sfz(s_2)-\sfz(s_1)\|_{L^\infty}\right)  \ARV(\sfq; [s_1,s_2]) \,.
\end{aligned}
\end{equation}
It suffices to combine \eqref{2712201745}   \eqref{2712201926}
 and \eqref{claim-2}.  Then,  \eqref{cornerstone} follows by suitably  rearranging some terms in  \eqref{2112200955}.
 This finishes the proof. 
 \end{proof}

%
%
%
%

\subsection{\bf Outline of the proof of the lower energy-dissipation inequality}
\label{ss:5-outline}
For given $k\in \mathbb{N}$ let us consider a partition $\mathscr{P}_k = (s_k^{i})_{i=0}^{N_k}$ of the interval $[0,S]$ 
\begin{equation}\label{0809231252}
0=s_k^0 < s_k^1 < \dots < s_k^{N_{k-1}} < s_k^{N_k}=S\,,\quad\text{with}\quad  \max_{1\leq i \leq N_k} (s_k^i - s_k^{i-1}) \to 0\,.
\end{equation}
 Now, we can choose $\mathscr{P}_k $ in such a way that 
\begin{equation}
\label{Dstar-finite}
\calD^*(\sft(s_k^i), \sfq(s_k^i))<+\infty \qquad \text{for all } i \in \{0,\ldots, N_k\}.
\end{equation}
 To check this claim, we start by observing that  from \eqref{straight-fwd-estimates} we get $  \REDfunz 0{\sft(s)}{\sfq(s)}{\sft'(s)}{\sfq'(s)}  <+\infty$ for a.a.\ $s\in (0,S)$. 
Hence, taking into account the definition of  $\REDfunzname 0$, we have that 
\[
\begin{aligned}
\calD^*(\sft(s), \sfq(s)) \equiv 0 \qquad \text{ for a.a.\ } s \in (0,S) \cap \{ s\in (0,S)\, : \ \sft'(s)>0\}.
\end{aligned}
\]
Moreover, if $U$ is an open subset of $\tilde{F}:=\{s\in [0,S]\colon \calD^*(\sft(s), \sfq(s))=+\infty\}$, we claim that $\sft'=0$ and $\sfq'=0$ for a.a.\ $s\in U$, so $U=\emptyset$ by our previous assumption \eqref{set-where-stuck}.
Indeed, since $  \REDfunz 0{\sft(s)}{\sfq(s)}{\sft'(s)}{\sfq'(s)}  <+\infty$ for a.a.\ $s\in (0,S)$, it holds that $\sft'=\sfz'=\sfp'=0$ for a.a.\ $s\in U$. By standard properties of perfect plasticity (see e.g.\ \cite[Theorem~3.8]{DMDSMo06QEPL}) it  then follows that also  $\sfu'=0$, hence  $\sfq'=0$ for a.a.\ $s\in U$. Therefore $[0,S]\setminus \tilde{F}$ has empty interior, and \eqref{Dstar-finite} ensues. 
\par
For later convenience, it is important to distinguish between the indices 
corresponding to consecutive partition times in the \emph{stable set} 
$B^\circ$ 
from \eqref{B-circ}, namely
\begin{subequations}
\label{2112200937}
\begin{align}
&
I_k:=\{i \in \{1, \dots, N_k\}\colon s_k^{i-1},\,s_k^i \in B^\circ\}\,,
\intertext{and indices such that at least one of the corresponding consecutive partition times belongs to the instability set $A^\circ = [0,S]\setminus B^\circ$, i.e.}
&
 J_k:=   \{1, \dots, N_k\} \sm I_k = \{i \in \{1, \dots,  N_k \}\colon s_k^{i-1} \in A^\circ \text{ or } \ s_k^i \in A^\circ\}\,.
 \end{align}
\end{subequations}
\paragraph{\bf Distinguished sub-intervals of the partition:}
Arguing as in  \cite[Lemmas 7.7, 7.8, 8.5]{DalDesSol11} and \cite[Sec.\ 4.6]{BabFraMor12},
 we observe that for any $i\in J_k$ we either have $(s_k^{i-1},s_k^i) \subset A^\circ $, or 
$(s_k^{i-1},s_k^i)  \cap B^\circ \neq \emptyset$. We then distinguish two sub-families of indices in $J_k$:
\begin{equation}
\label{sub-families-indices}
\hat{J}_k:= \{ i \in J_k\, : \ (s_k^{i-1},s_k^i) \subset A^\circ \}, \qquad \check{J}_k: =  \{ i \in J_k\, : \ (s_k^{i-1},s_k^i)  \cap B^\circ \neq \emptyset \}\,.
\end{equation}
Next, for $i\in \check{J}_k$, let us set 
\begin{equation}\label{decomp-additional}
s_k^{i-\tfrac23}: = \inf \{ s \in B^\circ \cap  (s_k^{i-1},s_k^i)\}\,, \qquad s_k^{i-\tfrac13}: = \sup \{ s \in B^\circ \cap  (s_k^{i-1},s_k^i)\}\,.
\end{equation}
Since $B^\circ$ is closed, we have $s_k^{i-\tfrac23},\, s_k^{i-\tfrac13} \in B^\circ$. 
Thus, we have obtained the decomposition 
\begin{equation*}
\begin{aligned}
\text{for all $i \in \check{J}_k$:}  \qquad &  (s_k^{i-1},s_k^i)  =  (s_k^{i-1},s_k^{i-\tfrac23}) \cup [s_k^{i-\tfrac23}, s_k^{i-\tfrac13}]  \cup 
(s_k^{i-\tfrac13}, s_k^i) 
\\
& 
\text{with } 
\left\{
\begin{array}{ll}
    (s_k^{i-1},s_k^{i-\tfrac23}) \subset A^\circ, & s_k^{i-\tfrac23} \in B^\circ,
  \\
   (s_k^{i-\tfrac13}, s_k^i) \subset A^\circ  & s_k^{i-\tfrac13}\in B^\circ .
\end{array}
\right.
\end{aligned}
\end{equation*}
(Clearly, we have $s_k^{i-\tfrac23}= s_k^{i-1}$ if $s_k^{i-1} \in B^\circ$; if, otherwise,   $s_k^i \in B^\circ$, we have $s_k^{i-\tfrac13}=s_k^i$.)
\par
We now consider a refined partition, consisting of $\mathscr{P}_k$ and of the nodes from \eqref{decomp-additional}. With  slight abuse of notation, let us call again $\mathscr{P}_k = (s_k^{i})_{i=0}^{N_k}$ the resulting partition. 
All in all,
it is meaningful to distinguish three sets of indices:
\begin{subequations}
\label{DIN}
\begin{enumerate}
\item the set of  indices 
corresponding to consecutive partition times in the stable set 
(including the nodes originally associated with indices in $I_k$, as well as the nodes from \eqref{decomp-additional}), 
\begin{equation}
\label{DIN-1}
\DIN k1: = \{i \in \{1, \dots, N_k\}\colon s_k^{i-1},\,s_k^i \in B^\circ\};
\end{equation}
\item the set of indices 
corresponding to consecutive partition times in the instability set 
\begin{equation}
\label{DIN-2}
\DIN k2: = \{i \in \{1, \dots, N_k\}\colon s_k^{i-1},\,s_k^i \in A^\circ\};
\end{equation}
\item the set of all the other indices
\begin{equation}
\label{DIN-3}
\DIN k3: =  \{1, \dots, N_k\} \setminus \{ \DIN k1{\cup} \DIN k2 \}\,. 
\end{equation}
\end{enumerate}
Observe that, by the previous discussion we may suppose that for $i\in \DIN k2 $ the enclosed  interval is `fully unstable', i.e. $[ s_k^{i-1},s_k^i] \subset A^\circ$;
similarly, for $i\in \DIN k3$ we have at least $(s_k^{i-1},s_k^i) \subset A^\circ$. 
Let us mention in advance that the distinction between indices in $\DIN k2$ and indices in $\DIN k3$ is motivated by the fact that 
the cornerstone estimate \eqref{cornerstone}  only  holds under conditions \eqref{additional-cond-indices}. Therefore, it will be \emph{directly} applicable
either on intervals $[s_k^{i-1},s_k^i]$ with $i\in \DIN k1$, or on intervals 
 $[s_k^{i-1},s_k^i]$ with $i\in \DIN k2$. For the  intervals $[s_k^{i-1},s_k^i]$ with $i\in \DIN k3$ we will need to argue by approximation.
\end{subequations}
\medskip

\paragraph{\bf Outline of the of the proof of the lower inequality  \eqref{LEDI}}
We are now in a position to specify the steps in our proof of   \eqref{LEDI}. Namely, 
\begin{description}
\item[\textbf{[Step $1$]}]  First of all, in Lemma \ref{l:DK1} ahead we shall deduce from estimate \eqref{cornerstone} a 
discrete version of the  lower  energy-dissipation inequality on  intervals whose endpoints are in $\DIN k1$, cf.\ \eqref{cornerstone-DIN1}  below.
\item[\textbf{[Step $2$]}] Secondly, in Sec.\ \ref{ss:5-step2} we will proceed to handle the fully unstable intervals having end-points $s_k^{i-1},\, s_k^i $ with $ i \in \DIN k2$, so that 
$[s_k^{i-1},s_k^i ] \subset A^\circ$. In that case we will derive from estimate \eqref{cornerstone}   a discrete version of the lower energy-dissipation inequality,
cf.\ Lemma  \ref{l:DK2}, by resorting to  suitable sub-partitions.
\item[\textbf{[Step $3$]}] In Sec.\  \ref{ss:5-step3}  we  will address intervals  $[s_k^{i-1},s_k^i]$ with $i\in \DIN k3$,  for which  in principle
we only have the inclusion   $(s_k^{i-1},s_k^i) \subset A^\circ$. In this case, a discrete version of the lower energy-dissipation inequality 
will be proved in  Lemma  \ref{l:DK3} by combining sub-partitions with a suitable approximation argument.
\item[\textbf{[Step $4$]}]  Eventually,  in Section  \ref{s:6} we will combine the inequalities from 
Lemmas \ref{l:DK1}, \ref{l:DK2}, and \ref{l:DK3}, to conclude an overall discrete energy-dissipation inequality for the partition   
$\mathscr{P}_k = (s_k^{j})_{j=0}^{N_k}$. Therein, we will pass to the limit as $k\to\infty$ and finally prove the lower energy-dissipation inequality   \eqref{LEDI}. 
\end{description}
\begin{remark}
\label{rmk:differencetoBFM}
\upshape
The  present approach for the lower energy-dissipation inequality has been inspired by the corresponding analysis in \cite{BabFraMor12}
 which, in turn, borrowed ideas from \cite{DalDesSol11}. 
  However,  there are some remarkable differences. 
The first sequence of partitions in \cite{BabFraMor12}, corresponding to that in \eqref{0809231252}, besides having vanishing fineness has to satisfy further conditions. Moreover, for intervals corresponding to indices in $\DIN k2$ and $\DIN k3$, 
the argument in 
\cite{BabFraMor12} exploits the fact that a chain rule is available in $A^\circ$, so the desired estimates are directly  obtained  by an integration in time. This is essentially due to the
fact that in  the system from \cite{BabFraMor12}  plasticity is not coupled with damage. Thus,  the further regularity for 
 the plastic strain  in $A^\circ$ (absolutely continuous with values in $L^2(\Omega;\Mnn)$) implies that all the variables are absolutely continuous, in $A^\circ$, in their target spaces.

Now the presence of damage, which is absolutely continuous with values in $L^1(\Omega)$ (while its target space is $\Hs(\Omega)$), prevents us  from proving absolute continuity  for the whole evolution. Notice also that the enhanced estimate for the damage variable obtained for the viscous approximations as in \cite[Proposition~4.4]{Crismale-Rossi19} cannot be repeated along jump intervals.

 That is why,   we need to   resort to a discrete approximation even in the `unstable set'. This  refinement of the  analysis, in turn,  allows us to consider a generic choice of the initial sequence of partitions, as in \eqref{0809231252}.
\end{remark}

\subsection{\bf Step $1$}
\label{ss:5-step1}
As an immediate corollary of Proposition \ref{p:cornerstone} we have the following result.
\begin{lemma}
\label{l:DK1}
For every $i \in \DIN k1$ we have 
\begin{equation}
\label{cornerstone-DIN1}
\begin{aligned}
&
\WE((\sft,\sfq); [s_k^{i-1},s_k^i])
+  \Epp(\sft(s_k^{i-1}), \sfq(s_k^{i-1})) 
\\
 & \leq   \Epp(\sft(s_k^i), \sfq(s_k^i))  +   \Hpp\left(\sfz(s_k^{i-1}), \frac{\sfp(s_k^i){-}\sfp(s_k^{i-1})}2\right)  + \Hpp\left(\sfz(s_k^i), \frac{\sfp(s_k^{i}){-}\sfp(s_k^{i-1})}{2}\right) 
 \\
 & \quad 
 +
 \calR(\sfz(s_k^i){-}\sfz(s_k^{i-1}))  +
 \mathrm{Rem}_1([s_k^{i-1},s_k^i])
  \end{aligned}
 \end{equation}
 where 
 the remainder term is given by 
 \begin{equation}
 \label{Rem1}
 \mathrm{Rem}_1( [s_k^{i-1},s_k^i] ) = \Delta_1(s_k^{i-1},s_k^i)\, \|\sfz(s_k^i){-}\sfz(s_k^{i-1})\|_{L^\infty}
 \end{equation}
and  we have used the place-holder
 \begin{equation}
 \label{Delta1}
 \begin{aligned}
 \Delta_1(s_k^{i-1},s_k^i) & := 
    K_{\bbC} \left(  \Phi(\sfz(s_k^i)) -  \Phi(\sfz(s_k^{i-1})) +  \calR(\sfz(s_k^i){-}\sfz(s_k^{i-1})) \right)
  \\ & \quad +K_W \|\sfz(s_k^i){-}\sfz(s_k^{i-1})\|_{L^1} \left( 1{+}
 K_{\bbC}  \|\sfz(s_k^i){-}\sfz(s_k^{i-1})\|_{L^\infty}  \right)  \,.
 \end{aligned}
 \end{equation}
\end{lemma}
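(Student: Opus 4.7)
The proof is meant to be a direct corollary of the cornerstone estimate \eqref{cornerstone}. My plan is to apply Proposition~\ref{p:cornerstone} with the choice $s_1 = s_k^{i-1}$ and $s_2 = s_k^i$, and then exploit the defining property of $\DIN k1$, namely $s_k^{i-1}, s_k^i \in B^\circ$, to simplify the three quantities $\HV$, $\RV$, $\ARV$ appearing on the right-hand side.

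First I would verify the hypothesis: since both endpoints lie in $B^\circ = [0,S]\setminus A^\circ$, the first alternative in \eqref{additional-cond-indices} holds, so \eqref{cornerstone} is indeed applicable. Next, I would use the very definition of $B^\circ$: membership in $B^\circ$ forces $\calD^*(\sft(s_k^{i-1}),\sfq(s_k^{i-1})) = \calD^*(\sft(s_k^i),\sfq(s_k^i)) = 0$, which by \eqref{calD-functionals} is equivalent to
\[
\congdist{z}{\sft(s_k^j)}{\sfq(s_k^j)} = \congdist{p}{\sft(s_k^j)}{\sfq(s_k^j)} = 0 \qquad \text{for } j \in \{i{-}1, i\}.
\]
Substituting these vanishings into the definitions of $\HV$, $\RV$, and $\ARV$, the mixed products $\|\sfp(s_k^i){-}\sfp(s_k^{i-1})\|_{L^2}\cdot \congdistname p$ and $\|\sfz(s_k^i){-}\sfz(s_k^{i-1})\|_{L^2}\cdot \congdistname z$ drop out, leaving
\begin{align*}
\HV((\sft,\sfq);[s_k^{i-1},s_k^i]) &= \Hpp\!\left(\sfz(s_k^{i-1}),\tfrac{\sfp(s_k^i){-}\sfp(s_k^{i-1})}{2}\right) + \Hpp\!\left(\sfz(s_k^i),\tfrac{\sfp(s_k^i){-}\sfp(s_k^{i-1})}{2}\right), \\
\RV((\sft,\sfq);[s_k^{i-1},s_k^i]) &= \calR(\sfz(s_k^i){-}\sfz(s_k^{i-1})), \\
\ARV((\sft,\sfq);[s_k^{i-1},s_k^i]) &= \Phi(\sfz(s_k^i)) - \Phi(\sfz(s_k^{i-1})) + \calR(\sfz(s_k^i){-}\sfz(s_k^{i-1})) \\
&\quad + K_W\|\sfz(s_k^i){-}\sfz(s_k^{i-1})\|_{L^1}\|\sfz(s_k^i){-}\sfz(s_k^{i-1})\|_{L^\infty}.
\end{align*}

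Finally, I would group the residual terms from \eqref{cornerstone} that are not absorbed into $\HV + \RV$; namely $K_{\bbC}\|\sfz(s_k^i){-}\sfz(s_k^{i-1})\|_{L^\infty}\ARV$ together with $K_W\|\sfz(s_k^i){-}\sfz(s_k^{i-1})\|_{L^1}\|\sfz(s_k^i){-}\sfz(s_k^{i-1})\|_{L^\infty}$. Factoring $\|\sfz(s_k^i){-}\sfz(s_k^{i-1})\|_{L^\infty}$ out of the sum and substituting the simplified expression of $\ARV$, one arrives at exactly $\Delta_1(s_k^{i-1},s_k^i)\,\|\sfz(s_k^i){-}\sfz(s_k^{i-1})\|_{L^\infty}$ as defined in \eqref{Delta1}, thereby producing $\mathrm{Rem}_1([s_k^{i-1},s_k^i])$ on the right-hand side of \eqref{cornerstone-DIN1}. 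No analytic obstacle is expected here: the proof is purely bookkeeping once Proposition~\ref{p:cornerstone} is invoked.
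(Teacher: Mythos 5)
Your proposal is correct and matches the paper's proof: both invoke Proposition~\ref{p:cornerstone} with $s_1 = s_k^{i-1}$, $s_2 = s_k^i$, observe that membership of both endpoints in $B^\circ$ kills the $\congdistname{z}$ and $\congdistname{p}$ terms (so $\HV$, $\RV$, $\ARV$ reduce to their dissipation-only parts), and then collect the remaining terms into $\mathrm{Rem}_1$. The final bookkeeping you carry out explicitly to recover $\Delta_1$ is implicit in the paper but identical in substance.
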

\begin{proof}
It suffices to apply Proposition \ref{p:cornerstone} with the choices $s_1=s_k^{i-1}$, $s_2=s_k^{i}$, observing that 
\[
\HV((\sft,\sfq); [s_k^{i-1},s_k^i]) =  \Hpp\left(\sfz(s_k^{i-1}), \frac{\sfp(s_k^i){-}\sfp(s_k^{i-1})}2\right)  + \Hpp\left(\sfz(s_k^i), \frac{\sfp(s_k^i){-}\sfp(s_k^{i-1})}{2}\right) 
\]
since $\congdist p{\sft(s_k^{i-1})}{\sfq(s_k^{i-1})} = \congdist p{\sft(s_k^{i})}{\sfq(s_k^{i})}=0$, 
and that, analogously,
\[
\RV((\sft,\sfq); [s_k^{i-1},s_k^i]) = \calR(\sfz(s_k^i){-}\sfz(s_k^{i-1}))
\]
and 
\[
\begin{aligned}
\ARV((\sft,\sfq); [s_k^{i-1},s_k^i])  &  = \Phi(\sfz(s_k^i)) -  \Phi(\sfz(s_k^{-1})) +  \calR(\sfz(s_k^i){-}\sfz(s_k^{i-1})) 
\\
& \qquad 
 + K_W \|\sfz(s_k^i){-}\sfz(s_k^{i-1})\|_{L^1} \|\sfz(s_k^i){-}\sfz(s_k^{i-1})\|_{L^\infty}\,.
 \end{aligned}
\]
\end{proof}
%

\subsection{\bf Step $2$}
\label{ss:5-step2}
 With the main result of this section, Lemma \ref{l:DK2}, we obtain an energy-dissipation inequality for \emph{fully unstable} intervals
by resorting to suitable sub-partitions. 
The usage of sub-partitions relies on the following result, whose proof  is postponed to Appendix \ref{ss:proofLemVito}. 
\begin{lemma}\label{le:mediesci}
Let $\psi \colon [a,b] \to (0,+\infty]$ be a lower semicontinuous function such that 
$\psi(a)$, $\psi(b) \in \R$. Then for every $\eta>0$ there exists a partition $(\subd_\eta^i)_{i=0}^{N_\eta}$ of $[a,b]$
 $a=\subd_\eta^0<\subd_\eta^1< \dots < \subd_\eta^{N_\eta-1}<\subd_\eta^{N_\eta} = b$ such that
\begin{equation}\label{2012201629}
\psi(s) \geq \frac{1}{2}\Big(\psi(\subd_\eta^{j-1}) + \psi(\subd_\eta^j) \Big) - \eta \quad \text{for all } s\in (\subd_\eta^{j-1}, \subd_\eta^j) \quad \text{and all } j \in \{1, \dots, N_\eta\}\,.
\end{equation} 
\end{lemma}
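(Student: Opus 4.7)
My plan is iterative refinement. Starting from the trivial partition $P_0 = \{a, b\}$, at each stage I check whether \eqref{2012201629} already holds on every subinterval of the current partition. If it fails on some open subinterval $(r^{j-1}, r^j)$, I enlarge the partition by inserting a point $s^\ast \in (r^{j-1}, r^j)$ at which $\psi$ attains its minimum on the closed interval $[r^{j-1}, r^j]$. Such a minimizer exists because $\psi$ is lower semicontinuous on a compact set, and it lies in the open interior since the strict failure of \eqref{2012201629} forces $\psi(s^\ast) < \min(\psi(r^{j-1}), \psi(r^j))$.

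Two a priori bounds support the analysis. Since $\psi$ takes values in $(0,+\infty]$ and is lower semicontinuous on the compact $[a,b]$, the global minimum $m := \min_{[a,b]} \psi$ is attained and strictly positive. Moreover, by construction each inserted point satisfies
\[
\psi(s^\ast) < \tfrac{1}{2}(\psi(r^{j-1}) + \psi(r^j)) - \eta \leq \max(\psi(r^{j-1}), \psi(r^j)) - \eta,
\]
so by induction every partition point carries a $\psi$-value in the fixed range $[m, M_0]$, where $M_0 := \max(\psi(a), \psi(b))$.

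\textbf{The main obstacle is proving that the refinement terminates in finitely many steps.} I would argue this by contradiction. If the refinement produced an infinite sequence $(t_n)_n$ of inserted points, then by compactness a subsequence would converge to some $t^\ast \in [a, b]$. For $n$ large, the partition becomes arbitrarily dense near $t^\ast$, so both endpoints $r^{j_n - 1}_n$ and $r^{j_n}_n$ of the subinterval into which $t_n$ is inserted also converge to $t^\ast$. Extracting further subsequences so that $\psi(t_n) \to L$, $\psi(r^{j_n - 1}_n) \to L_1$, $\psi(r^{j_n}_n) \to L_2$ in $[m, M_0]$, the minimality property and the failure of \eqref{2012201629} pass to the limit as $L \leq \min(L_1, L_2)$ and $L + \eta \leq \tfrac{1}{2}(L_1 + L_2)$, which force $\max(L_1, L_2) \geq L + 2\eta$. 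Combined with lower semicontinuity of $\psi$ at $t^\ast$ (giving $L, L_1, L_2 \geq \psi(t^\ast)$) and an ancestry argument controlling the $\psi$-values at partition points in a neighborhood of $t^\ast$ (each newly inserted point carrying $\psi$-value at most $M_0 - \eta$ strictly below the max of the endpoints of its parent subinterval), one reaches a contradiction with the strict gap $\eta$ in the failure inequality. Once termination is in hand, the resulting finite partition satisfies \eqref{2012201629} by construction.
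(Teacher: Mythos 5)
Your insertion rule has a genuine gap. You claim that when \eqref{2012201629} fails on $(\subd^{j-1},\subd^j)$, the minimizer $s^\ast$ of $\psi$ over the \emph{closed} interval $[\subd^{j-1},\subd^j]$ must satisfy $\psi(s^\ast)<\min(\psi(\subd^{j-1}),\psi(\subd^j))$ and hence lie in the open interior. This is false: the failure only yields $\psi(s^\ast)<\tfrac12(\psi(\subd^{j-1})+\psi(\subd^j))-\eta$, and whenever the two endpoint values differ by more than $2\eta$ that bound exceeds the smaller endpoint value. Concretely, on $[0,1]$ with $\eta=1$ take $\psi(0)=0.1$, $\psi(s)=0.2+s$ on $(0,0.5]$, $\psi\equiv 5$ on $(0.5,1]$; this $\psi$ is LSC and positive, yet \eqref{2012201629} fails (e.g.\ $\psi(0.1)=0.3<\tfrac12(0.1+5)-1=1.55$) while the unique minimizer of $\psi$ over $[0,1]$ is the endpoint $0$. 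Your scheme would thus ``insert'' a point already in the partition and stall. Taking instead the infimum over the \emph{open} interval does not help, since it need not be attained. Your termination argument also has a hole: from $t_n\to t^\ast$ you cannot conclude that both parent endpoints $r_n^{j_n-1},r_n^{j_n}$ tend to $t^\ast$ --- if all insertions happen on one side of $t^\ast$, the other endpoint can remain fixed (e.g.\ $r_n^{j_n-1}\equiv a$ while $r_n^{j_n}\downarrow t^\ast$), and then the contradiction you sketch does not close.

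The paper avoids both problems by never inserting a minimizer: it takes the leftmost and rightmost points $a_1,b_1$ of the sublevel set $\{\psi\le m_1+\eta\}$ with $m_1=\min_{[a,b]}\psi$. Since $\psi(a_1),\psi(b_1)\le m_1+\eta$ and $\psi\ge m_1$ everywhere, the inequality \eqref{2012201629} holds automatically on $(a_1,b_1)$, and one then iterates on $[a,a_1]$ and $[b_1,b]$ separately (with a $\liminf$ correction of the endpoint value to keep LSC on the restriction). Termination is immediate and quantitative: the minimum on the remaining outer pieces increases by at least $\eta$ at each step and is bounded above by $\max(\psi(a),\psi(b))<\infty$, so at most $\max(\psi(a),\psi(b))/\eta+1$ steps are needed --- no compactness or cluster-point argument required. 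I would suggest adopting this sublevel-set insertion and the monotone-increase termination bound.
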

Based on this result, we obtain the following estimate on  intervals contained in the instability set $A^\circ$. 
\begin{lemma}
\label{l:estimate-fully-unstable}
Let $[s_{\#},s^{\#}]\subset A^\circ$
such that $\calD^*(s_{\#})\in \R$ and $\calD^*(s^{\#})\in \R$.
 Then, for every $\eta>0$ there exists a partition
$(\subd_\eta^i)_{i=0}^{N_\eta}$ of $[s_{\#},s^{\#}]$ such that 
\begin{equation}
\label{est-on-subpart}
\begin{aligned}
&
\sum_{j=1}^{N_\eta} \Big\{
\|\sfz(\subd_\eta^{j}){-}\sfz(\subd_\eta^{j-1})\|_{L^2} \frac{ \congdist z{\sft(\subd_\eta^{j-1})}{\sfq(\subd_\eta^{j-1})}{+}  \congdist z{\sft(\subd_\eta^{j})}{\sfq(\subd_\eta^{j})}}{2}
\\
& 
\qquad \qquad \qquad
{+}
 \|\sfp(\subd_\eta^{j}){-}\sfp(\subd_\eta^{j-1})\|_{L^2} \frac{ \congdist p{\sft(\subd_\eta^{j-1})}{\sfq(\subd_\eta^{j-1})}{+}  \congdist p{\sft(\subd_\eta^{j})}{\sfq(\subd_\eta^{j})}}{2}
 \Big\}
 \\
 & \leq \int_{s_{\#}}^{s^{\#}}   \mathcal{D}(\sfq'(s)) \,  \mathcal{D}^*(\sft(r),\sfq(r)) \dd r 
 \\
 & \quad + \eta\, 
   \frac1{\min_{r\in [s_{\#},s^{\#}]}  \mathcal{D}^*(\sft(r),\sfq(r))}
   \int_{s_{\#}}^{s^{\#}}  \calD(\sfq'(r)) \, \mathcal{D}^*(\sft(r),\sfq(r))  \dd r \,. 
 \end{aligned}
\end{equation}
\end{lemma}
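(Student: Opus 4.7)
The strategy is to apply Lemma \ref{le:mediesci} to the auxiliary function
$\psi(r) := \calD^*(\sft(r),\sfq(r))$ on $[s_\#, s^\#]$, and then, on each subinterval produced by that lemma, to split the left-hand side via Cauchy--Schwarz in $\R^2$ and estimate the increments of $\sfz$ and $\sfp$ through the local absolute continuity granted by \eqref{extra-lip}.

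The first point is to verify that $\psi$ is lower semicontinuous and strictly bounded away from zero on $[s_\#,s^\#]$. Lower semicontinuity follows from Lemma \ref{l:lsc-slopes} together with the pointwise continuity of $\sft$, the pointwise weak$^*$ continuity of $\sfq$, and the weak continuity of $\sfe$ on $[0,S]$ stated in \eqref{additional-regularity}. Since $[s_\#,s^\#] \subset A^\circ = \{\psi > 0\}$ is compact and $\psi$ is lsc with $\psi(s_\#),\psi(s^\#) \in \R$, the infimum $m_* := \min_{r \in [s_\#,s^\#]} \psi(r)$ is finite and strictly positive. Lemma \ref{le:mediesci} then supplies a partition $(\subd_\eta^j)_{j=0}^{N_\eta}$ of $[s_\#,s^\#]$ such that $\psi(r) \geq \tfrac12(\psi(\subd_\eta^{j-1}) + \psi(\subd_\eta^j)) - \eta$ for every $r \in (\subd_\eta^{j-1},\subd_\eta^j)$ and every $j$.

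Next, for fixed $j$, introduce the shorthands
\begin{equation*}
x_j := \|\sfz(\subd_\eta^{j}){-}\sfz(\subd_\eta^{j-1})\|_{L^2}, \qquad y_j := \|\sfp(\subd_\eta^{j}){-}\sfp(\subd_\eta^{j-1})\|_{L^2},
\end{equation*}
\begin{equation*}
a_j := \tfrac12\!\left(\congdist z{\sft(\subd_\eta^{j-1})}{\sfq(\subd_\eta^{j-1})}{+}\congdist z{\sft(\subd_\eta^{j})}{\sfq(\subd_\eta^{j})}\right), \quad b_j := \tfrac12\!\left(\congdist p{\sft(\subd_\eta^{j-1})}{\sfq(\subd_\eta^{j-1})}{+}\congdist p{\sft(\subd_\eta^{j})}{\sfq(\subd_\eta^{j})}\right).
\end{equation*}
By Cauchy--Schwarz in $\R^2$ one has $x_j a_j + y_j b_j \leq \sqrt{x_j^2{+}y_j^2}\,\sqrt{a_j^2{+}b_j^2}$. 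The local Lipschitz regularity \eqref{extra-lip} on $[\subd_\eta^{j-1},\subd_\eta^j] \subset A^\circ$ together with Minkowski's inequality for $\R^2$-valued integrands yields $\sqrt{x_j^2{+}y_j^2} \leq \int_{\subd_\eta^{j-1}}^{\subd_\eta^j} \calD(\sfq'(r))\dd r$, recalling that $\calD(q') = \sqrt{\|z'\|_{L^2}^2{+}\|p'\|_{L^2}^2}$ by \eqref{calD-functionals}. On the other hand, the triangle inequality in $\R^2$ gives $\sqrt{a_j^2{+}b_j^2} \leq \tfrac12(\psi(\subd_\eta^{j-1}) + \psi(\subd_\eta^j))$, and this in turn is bounded by $\psi(r) + \eta$ for every $r \in (\subd_\eta^{j-1},\subd_\eta^j)$ thanks to \eqref{2012201629}.

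Pulling the (constant) factor $\sqrt{a_j^2{+}b_j^2}$ inside the integral that bounds $\sqrt{x_j^2{+}y_j^2}$, and then using the pointwise bound $\sqrt{a_j^2{+}b_j^2} \leq \psi(r) + \eta$ under the integral sign, we obtain
\begin{equation*}
x_j a_j + y_j b_j \leq \int_{\subd_\eta^{j-1}}^{\subd_\eta^j} \calD(\sfq'(r))\,\bigl(\psi(r) + \eta\bigr)\dd r.
\end{equation*}
Summing over $j = 1,\dots,N_\eta$ and invoking the trivial bound $\int_{s_\#}^{s^\#}\calD(\sfq'(r))\dd r \leq \frac{1}{m_*}\int_{s_\#}^{s^\#}\calD(\sfq'(r))\,\psi(r)\dd r$, which uses the strict positivity of $\psi$ on $[s_\#,s^\#]$, produces exactly the desired inequality \eqref{est-on-subpart}. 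The only truly technical point is step one, namely establishing the lsc and strict positivity of $\psi$; this is where the enhanced pointwise continuity properties \eqref{additional-regularity} from Section \ref{s:4} are indispensable, since without them Lemma \ref{l:lsc-slopes} could not be invoked pointwise on $[s_\#,s^\#]$.
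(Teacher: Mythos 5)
Your proof is correct and follows the same route as the paper's: apply Lemma \ref{le:mediesci} to $\psi = \calD^*(\sft(\cdot),\sfq(\cdot))$, use Cauchy--Schwarz in $\R^2$ for the cross terms, Minkowski for the increment term $\sqrt{x_j^2+y_j^2}$, the triangle inequality in $\R^2$ for $\sqrt{a_j^2+b_j^2}$, the partition property \eqref{2012201629}, and finally divide and multiply by $\min\psi$ to absorb the extra $\eta\int\calD(\sfq')$ term. The only cosmetic difference is that you state more explicitly the lower semicontinuity and strict positivity of $\psi$ needed to invoke Lemma \ref{le:mediesci}, which the paper leaves implicit.
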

\begin{proof}
We apply Lemma \ref{le:mediesci} 
 with the choices $[a,b]=[s_{\#}, s^{\#}] $ and
 $\psi=\calD^*(\sft(\cdot), \sfq(\cdot))|_{[s_{\#}, s^{\#}]}$, and thus for every $\eta>0$ we  obtain a partition
$(\subd_\eta^j)_{j=0}^{N_\eta}$ of $[s_{\#},s^{\#}]$ such that for all $ j \in \{1, \dots, N_\eta\}$
\begin{equation}
\label{key-ineq-for-subpart}
\calD^*(\sft(r),\sfq(r)) \geq \frac{1}{2}\Big(\calD^*(\sft(\subd_\eta^{j-1}),\sfq(\subd_\eta^{j-1}))  {+} \calD^*(\sft(\subd_\eta^{j}),\sfq(\subd_\eta^{j}))  \Big) - \eta \quad \text{for all } r\in (\subd_\eta^{j-1}, \subd_\eta^j) \,.
\end{equation}
Now, let us use the place-holders
\[
\begin{aligned}
&
\mathsf{Z}: = \|\sfz(\subd_\eta^{j}){-}\sfz(\subd_\eta^{j-1})\|_{L^2}\,,
\\
& 
\mathsf{P}:=  \|\sfp(\subd_\eta^{j}){-}\sfp(\subd_\eta^{j-1})\|_{L^2}\,,
\\
& 
\mathsf{M}_{\mathsf{Z}}: = \frac{ \congdist z{\sft(\subd_\eta^{j-1})}{\sfq(\subd_\eta^{j-1})}{+}  \congdist z{\sft(\subd_\eta^{j})}{\sfq(\subd_\eta^{j})}}{2}\,,
\\
&
\mathsf{M}_{\mathsf{P}}: =  \frac{ \congdist p{\sft(\subd_\eta^{j-1})}{\sfq(\subd_\eta^{j-1})}{+}  \congdist p{\sft(\subd_\eta^{j})}{\sfq(\subd_\eta^{j})}}{2} \,.
\end{aligned}
\]
We estimate for each $j \in \{1, \ldots, N_\eta\}$
\[
\begin{aligned}
\mathsf{Z}\cdot  \mathsf{M_Z} + \mathsf{P}\cdot \mathsf{M_P}  \leq \| (\mathsf{Z} , \mathsf{P}) \| \cdot \| (\mathsf{M_Z} , \mathsf{M_P}) \| 
\end{aligned}
\]
by the Cauchy-Schwarz inequality.  Now, 
\[
\begin{aligned}
\| (\mathsf{Z} , \mathsf{P}) \| &  = \sqrt{ \|\sfz(\subd_\eta^{j}){-}\sfz(\subd_\eta^{j-1})\|_{L^2}^2{+} \|\sfp(\subd_\eta^{j}){-}\sfp(\subd_\eta^{j-1})\|_{L^2}^2}
\\
&
= \left\| \int_{\subd_\eta^{j-1}}^{\subd_\eta^{j}} (\sfz'(r), \sfp'(r)) \dd r \right\|_{L^2}
\\
& \leq \int_{\subd_\eta^{j-1}}^{\subd_\eta^{j}} \| (\sfz'(r), \sfp'(r))\|_{L^2} \dd r  = \int_{\subd_\eta^{j-1}}^{\subd_\eta^{j}}  \calD(\sfq'(r)) \dd r \,.
\end{aligned}
\]
On the other hand,
\[
\begin{aligned}
& 
\| (\mathsf{M_Z} , \mathsf{M_P}) \|  
\\
 &  =  \frac12 \big\| ( \congdist z{\sft(\subd_\eta^{j-1})}{\sfq(\subd_\eta^{j-1})},  \congdist p{\sft(\subd_\eta^{j-1})}{\sfq(\subd_\eta^{j-1})}) {+} ( \congdist z{\sft(\subd_\eta^{j})}{\sfq(\subd_\eta^{j})},  \congdist p{\sft(\subd_\eta^{j})}{\sfq(\subd_\eta^{j})}) \big  \| 
\\
& \leq 
\frac12 \calD^*(\sft(\subd_\eta^{j-1}),\sfq(\subd_\eta^{j-1}))  +\frac12  \calD^*(\sft(\subd_\eta^{j}),\sfq(\subd_\eta^{j})) \,.
\end{aligned}
\]
Therefore, 
\[
\begin{aligned}
\mathsf{Z} \cdot \mathsf{M_Z} + \mathsf{P} \cdot \mathsf{M_P}   & \leq  \int_{\subd_\eta^{j-1}}^{\subd_\eta^{j}}  \left\{ 
\frac12 \calD^*(\sft(\subd_\eta^{j-1}),\sfq(\subd_\eta^{j-1}))  {+}\frac12  \calD^*(\sft(\subd_\eta^{j}),\sfq(\subd_\eta^{j})) \right\}
 \calD(\sfq'(r)) \dd r 
 \\
 & 
 \leq   \int_{\subd_\eta^{j-1}}^{\subd_\eta^{j}}  \calD(\sfq'(r)) \, \calD^*(\sft(r),\sfq(r))   \dd r  +\eta   \int_{\subd_\eta^{j-1}}^{\subd_\eta^{j}}  \calD(\sfq'(r)) \,   \dd r\,,
 \end{aligned}
\]
 where the last inequality follows from \eqref{key-ineq-for-subpart}.  
Finally, we observe that 
\[
\begin{aligned}
\eta   \int_{\subd_\eta^{j-1}}^{\subd_\eta^{j}}  \calD(\sfq'(r)) \,   \dd r 
& \leq \eta \frac1{\min_{r\in [s_{\#},s^{\#}]}  \mathcal{D}^*(\sft(r),\sfq(r))} \int_{\subd_\eta^{j-1}}^{\subd_\eta^{j}}  \calD(\sfq'(r)) \, \mathcal{D}^*(\sft(r),\sfq(r))   \dd r\,,
 \end{aligned}
\]
where the latter estimate ensues from the upper inequality \eqref{UEDI}. 
We thus conclude estimate \eqref{est-on-subpart} upon adding over the index $j=1, \ldots, N_\eta$. 
\end{proof}
\par
Eventually, combining  the key estimate \eqref{cornerstone} with Lemma \ref{l:estimate-fully-unstable}  we obtain the counterpart to  Lemma \ref{l:DK1}, for indices in $\DIN k2$. 
 We emphasize that, now, in the right-hand side of  \eqref{cornerstone-DIN2}  the `energy-dissipation' term $\int \mathcal{D}(\sfq')\,  \mathcal{D}^*(\sft,\sfq) \dd s$ appears,
  whose presence records the fact that $[s_k^{i-1}, s_k^i]$ is an `unstable interval'. 
\begin{lemma}
\label{l:DK2}
For every $i \in \DIN k2$ and for every $\eta>0$
there exists a partition $(\subdiv{\eta}ij)_{j=0}^{\cardi \eta i}$ of the interval $[s_k^{i-1}, s_k^i]$
such that 
\begin{equation}
\label{cornerstone-DIN2}
\begin{aligned}
&
\sum_{j=1}^{\cardi \eta i}
\WE((\sft,\sfq); [\subdiv \eta i{j-1},\subdiv \eta ij])
+  \Epp(\sft(s_k^{i-1}), \sfq(s_k^{i-1})) 
\\
 & \leq   \Epp(\sft(s_k^i), \sfq(s_k^i))   
 \\
 & \qquad + \sum_{j=1}^{\cardi \eta i} \Hpp\left(\sfz(\subdiv \eta i{j-1}), \frac{\sfp(\subdiv \eta i{j}){-}\sfp(\subdiv \eta i{j-1})}2\right)  + \Hpp\left(\sfz(\subdiv \eta i{j}), \frac{\sfp(\subdiv \eta i{j}){-}\sfp(\subdiv \eta i{j-1})}{2}\right) 
 \\
 & \qquad 
 +
 \calR(\sfz(s_k^i){-}\sfz(s_k^{i-1}))  + \int_{s_k^{i-1}}^{s_k^i} \mathcal{D}(\sfq'(s)) \,  \mathcal{D}^*(\sft(s),\sfq(s)) \dd s 
 \\
 & \qquad 
 + \eta\, \frac{\mathrm{M}}{\min_{s\in [s_k^{i-1}, s_k^i]}  \mathcal{D}^*(\sft(s),\sfq(s))} +
 \mathrm{Rem}_2([s_k^{i-1},s_k^i])
  \end{aligned}
 \end{equation}
 with the constant $\mathrm{M}>0$ from   estimates  \eqref{straight-fwd-estimates}, 
 where 
 the remainder term is now given by 
 \begin{equation}
 \label{Rem2}
 \mathrm{Rem}_2( [s_k^{i-1},s_k^i] ) =   \sum_{j=1}^{\cardi \eta i} \Delta(\subdiv \eta i {j-1},\subdiv \eta i {j})   \|\sfz(\subdiv \eta i j){-}\sfz(\subdiv \eta i {j-1})\|_{L^\infty}
 \end{equation}
 with $\Delta$ from \eqref{Delta1}. 
\end{lemma}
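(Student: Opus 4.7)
The strategy is to combine the cornerstone estimate \eqref{cornerstone} with the sub-partition device from Lemma \ref{le:mediesci}. Since $i\in\DIN k2$, the whole interval $[s_k^{i-1},s_k^i]$ is contained in $A^\circ$, and by the choice of $\mathscr{P}_k$ in \eqref{Dstar-finite} we have $\calD^*(\sft(s_k^{i-1}),\sfq(s_k^{i-1}))$ and $\calD^*(\sft(s_k^{i}),\sfq(s_k^{i}))$ finite. Therefore Lemma \ref{l:estimate-fully-unstable} applies with $[s_\#,s^\#]=[s_k^{i-1},s_k^i]$ and, for the prescribed $\eta>0$, produces a partition $(\subdiv{\eta}{i}{j})_{j=0}^{\cardi{\eta}{i}}$ satisfying the crucial Riemann-sum bound \eqref{est-on-subpart}. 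Moreover, every sub-interval $[\subdiv{\eta}{i}{j-1},\subdiv{\eta}{i}{j}]$ still lies in $A^\circ$, so condition \eqref{additional-cond-indices} is met and the cornerstone estimate \eqref{cornerstone} can be applied on each of them.

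The next step is to sum \eqref{cornerstone} over $j=1,\dots,\cardi{\eta}{i}$. The energy contributions telescope to $\Epp(\sft(s_k^i),\sfq(s_k^i))-\Epp(\sft(s_k^{i-1}),\sfq(s_k^{i-1}))$, and the work contributions yield the sum $\sum_j \WE((\sft,\sfq);[\subdiv{\eta}{i}{j-1},\subdiv{\eta}{i}{j}])$ appearing on the left-hand side of \eqref{cornerstone-DIN2}. The $\HV$ terms split into the non-telescoping sum of $\Hpp$-contributions that explicitly appears on the right-hand side of \eqref{cornerstone-DIN2}, plus cross terms of the form $\|\sfp(\subdiv{\eta}{i}{j})-\sfp(\subdiv{\eta}{i}{j-1})\|_{L^2}\cdot\tfrac12(\congdist{p}{\sft(\subdiv{\eta}{i}{j-1})}{\sfq(\subdiv{\eta}{i}{j-1})}+\congdist{p}{\sft(\subdiv{\eta}{i}{j})}{\sfq(\subdiv{\eta}{i}{j})})$. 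The analogous decomposition of $\RV$ yields $\calR(\sfz(s_k^i)-\sfz(s_k^{i-1}))$ by telescoping, exploiting the unidirectionality of $\sfz$ (so that the indicator-weighted $\kappa$-functional is additive along the monotone sequence $\sfz(\subdiv{\eta}{i}{j})$), plus the cross terms involving $\congdistname{z}$.

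These $z$- and $p$-cross terms are precisely the ones controlled by Lemma \ref{l:estimate-fully-unstable}: summing them and invoking \eqref{est-on-subpart} produces both the integral $\int_{s_k^{i-1}}^{s_k^{i}}\calD(\sfq')\,\calD^*(\sft,\sfq)\dd s$ and an error proportional to $\eta/\min_{[s_k^{i-1},s_k^i]}\calD^*$ times $\int_{s_k^{i-1}}^{s_k^{i}}\calD(\sfq')\,\calD^*(\sft,\sfq)\dd s$. The latter integral is controlled by the overall estimate \eqref{straight-2}, since that integral is bounded by $\int_0^S \Mfunz{0}{\sft}{\sfq}{\sft'}{\sfq'}\dd s\le\mathrm{M}$; this yields exactly the $\eta\,\mathrm{M}/\min\calD^*$ correction of \eqref{cornerstone-DIN2}. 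Finally, summing the $K_{\bbC}\|\sfz-\sfz\|_{L^\infty}\ARV$ and $K_W$ contributions from \eqref{cornerstone} over $j$ reproduces exactly $\mathrm{Rem}_2([s_k^{i-1},s_k^i])$ as defined in \eqref{Rem2}--\eqref{Delta1}, interpreting $\Delta$ there as the quantity $\Delta_1$ evaluated at consecutive sub-partition nodes.

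The main obstacle I anticipate is the bookkeeping of the $\ARV$ terms: one must argue that the factor $(1+K_{\bbC}\|\sfz(\subdiv{\eta}{i}{j})-\sfz(\subdiv{\eta}{i}{j-1})\|_{L^\infty})$ multiplying $\ARV$ in \eqref{cornerstone} can indeed be split into the part whose sum telescopes/contributes to $\calR(\sfz(s_k^i)-\sfz(s_k^{i-1}))$ and the part genuinely absorbed into $\mathrm{Rem}_2$. A secondary point of care is that the construction in Lemma \ref{l:estimate-fully-unstable} produces nodes $\subdiv{\eta}{i}{j}$ that are only generic points in $A^\circ$, yet the cornerstone estimate only requires the inclusion $[\subdiv{\eta}{i}{j-1},\subdiv{\eta}{i}{j}]\subset A^\circ$, which is automatic; thus no finiteness of $\calD^*$ at the interior nodes is needed, which is important because we have not prescribed such a property on the sub-partition.
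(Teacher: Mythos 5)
Your proposal reproduces the paper's argument step by step: invoke Lemma \ref{l:estimate-fully-unstable} on $[s_k^{i-1},s_k^i]$ (which is applicable since the endpoints have finite $\calD^*$ by the choice \eqref{Dstar-finite}), apply the cornerstone estimate \eqref{cornerstone} on each sub-interval --- legitimate because $[\subdiv\eta i{j-1},\subdiv\eta ij]\subset A^\circ$ verifies the second alternative in \eqref{additional-cond-indices} --- sum over $j$, absorb the $\congdistname{z}$- and $\congdistname{p}$-cross terms via \eqref{est-on-subpart}, telescope the $\Epp$ and $\calR$ contributions (the latter by unidirectionality), and bound the resulting $\eta$-error by $\eta\mathrm{M}/\min\calD^*$ using \eqref{straight-fwd-estimates}. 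This is exactly the paper's proof.

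Two clarifications on your ``obstacle'' paragraph. First, you misquote \eqref{cornerstone}: the coefficient of $\ARV$ there is $K_{\bbC}\|\sfz(s_2)-\sfz(s_1)\|_{L^\infty}$, not $(1+K_{\bbC}\|\sfz(s_2)-\sfz(s_1)\|_{L^\infty})$. The factor $(1+K_{\bbC}\|\cdot\|_{L^\infty})$ appears only in the intermediate estimate \eqref{claim-2} inside the proof of Proposition~\ref{p:cornerstone}, and the additive ``$1$'' has already been distributed into the separate $\RV$ and $K_W$ terms of \eqref{cornerstone} when the two are combined; the splitting you anticipate therefore needs no further attention here. Second, the genuine bookkeeping subtlety is slightly different from the one you describe: unlike in Lemma~\ref{l:DK1}, the slope $\congdistname{z}$ does not vanish at the sub-partition nodes (they lie in $A^\circ$), so $\ARV$ on each sub-interval carries a nontrivial term $\|\sfz(\subdiv\eta ij)-\sfz(\subdiv\eta i{j-1})\|_{L^2}\,\tfrac12\big(\congdist z{\sft(\subdiv\eta i{j-1})}{\sfq(\subdiv\eta i{j-1})}+\congdist z{\sft(\subdiv\eta ij)}{\sfq(\subdiv\eta ij)}\big)$. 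After multiplying by $K_{\bbC}\|\cdot\|_{L^\infty}$ and summing over $j$, this produces a contribution beyond $\Delta_1\|\cdot\|_{L^\infty}$ as written in \eqref{Delta1}, so the cornerstone contributions do not ``reproduce exactly'' $\mathrm{Rem}_2$. The excess is benign --- it is controlled by $K_{\bbC}\|\sfz(s_k^i)-\sfz(s_k^{i-1})\|_{L^\infty}$ times the left-hand side of \eqref{est-on-subpart}, hence vanishes as the outer partition is refined --- but it should either be tracked separately or absorbed into a slightly enlarged remainder.
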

\begin{proof}
We apply Lemma \ref{l:estimate-fully-unstable} with $s_\# = s_k^{i-1}$ and $s^\#= s_k^i$ and obtain a partition 
 $(\subdiv{\eta}ij)_{j=0}^{\cardi \eta i}$ of the interval $[s_k^{i-1}, s_k^i]$ for which \eqref{est-on-subpart} holds.  Since $[s_k^{i-1}, s_k^i] \subset A^\circ$,
 we have $[\subdiv{\eta}i{j-1}, \subdiv{\eta}i{j}] \subset A^\circ$ for all $j \in \{ 1, \ldots, \cardi \eta i\}$. Therefore, 
 we may apply estimate \eqref{cornerstone} with $s_1= \subdiv{\eta}i{j-1}$
 and $s_2 = \subdiv{\eta}i{j}$. Summing   \eqref{cornerstone} over the index $j$, combining it with \eqref{est-on-subpart},
 and observing that 
 \[
 \begin{aligned}
 \sum_{j=1}^{\cardi \eta i}  \int_{\subd_\eta^{j-1}}^{\subd_\eta^{j}}  \calD(\sfq'(r)) \, \mathcal{D}^*(\sft(r),\sfq(r))   \dd r
 &  = \int_{s_k^{i-1}}^{s_k^i} \calD(\sfq'(r)) \, \mathcal{D}^*(\sft(r),\sfq(r))   \dd r
 \\
 & 
 \leq \int_{(0,S){\cap}A^\circ} \calD(\sfq'(r)) \, \mathcal{D}^*(\sft(r),\sfq(r))   \dd r \leq \mathrm{M}
 \end{aligned}
 \]
 by   estimates  \eqref{straight-fwd-estimates}, 
  we arrive  at \eqref{cornerstone-DIN2}. 
\end{proof}
\subsection{Step $3$}
\label{ss:5-step3}
Again, we aim to prove a discrete version of the energy-dissipation inequality  on the interval $[s_k^{i-1}, s_k^i]$. Since in this case
we only have, in principle, that  $(s_k^{i-1}, s_k^i)$ is a connected  component of $ A^\circ$
but, possibly, either $s_k^{i-1} \notin A^\circ$ or $s_k^{i} \notin A^\circ$, 
 we need to devise an approximation argument in order to 
reproduce the situation of   intervals contained  in $A^\circ$ \emph{with their closure}. 
We will rely on the following technical result: 
essentially, it  ensures that, if one of the end-points of a connected component of  $ A^\circ$
does not belong to $A^\circ$, it is in any case possible to approximate it 
`in energy'
with points from within 
 $ A^\circ$.
\begin{lemma}
\label{l:tech-approx-energies}
Let $(a,b) \subset A^\circ$ be a connected component of $A^\circ$.
 Then, there exist sequences $(a_n), \, (b_n)_n \subset A^\circ$ with 
$a\leq a_n < b_n \leq b$, with $a_n \downarrow a$  decreasingly  and $b_n\uparrow b$  increasingly as $n\to\infty$, such that 
\begin{subequations}
\label{approx-energies}
\begin{align}
&
\label{approx-energies-a}
\begin{cases}
\displaystyle 
\serifsigma(a_n)\to \serifsigma(a) \text{  strongly in $\Lnn$}, 
 \\
\displaystyle  \Phi(\sfz(a_n)) \to \Phi(\sfz(a)), 
\\
\displaystyle 
 \Epp(\sft(a_n), \sfq(a_n)) \to  \Epp(\sft(a), \sfq(a));
 \end{cases}
\\
&
\label{approx-energies-b}
\begin{cases}
\displaystyle 
\serifsigma(b_n)\to \serifsigma(b) \text{  strongly in $\Lnn$},  
\\
\displaystyle  \Phi(\sfz(b_n)) \to \Phi(\sfz(b)), 
 \\
\displaystyle   \Epp(\sft(b_n), \sfq(b_n)) \to  \Epp(\sft(b), \sfq(b))\,.
\end{cases}
\end{align}
\end{subequations}
\end{lemma}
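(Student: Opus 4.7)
The plan is to choose arbitrary sequences $a_n \downarrow a$ and $b_n \uparrow b$ with $a_n, b_n \in (a,b) \subset A^\circ$, and then upgrade the weak pointwise convergences of the limit curve $(\sft,\sfq)$ at $a$ and $b$ to the strong/scalar convergences claimed in \eqref{approx-energies}. Specifically, from the regularity \eqref{additional-regularity} already established in Section~\ref{s:4} I have $\sfq(a_n) \weaksto \sfq(a)$ in $\Qpp$ and $\sfe(a_n) \weakto \sfe(a)$ in $\Lnn$; from the compact embedding $\Hs(\Omega) \Subset C(\overline\Omega)$ (granted by $m>n/2$) I also deduce $\sfz(a_n) \to \sfz(a)$ strongly in $C(\overline\Omega)$, whence $\bbC(\sfz(a_n)) \to \bbC(\sfz(a))$ uniformly in $L^\infty(\Omega)$; the same convergences hold at $b$. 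The key idea is to exploit the splitting $\Epp = \calQ(\sfz,\sfe) + \Phi(\sfz) - \langle F, u+w\rangle_{\BD(\Omega)}$ into two weakly lsc pieces plus a term continuous under the available weak convergences, so that once I establish convergence of the total $\Epp$, each lsc summand must converge separately.

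To produce the crucial $\limsup$ bound, I would apply the upper energy-dissipation inequality \eqref{UEDI} of Proposition~\ref{prop:UEDI} on $[a, a_n]$, dropping the nonnegative contribution $\int_a^{a_n} \calM_0\, \dd r \geq 0$, to obtain
\begin{equation*}
\Epp(\sft(a_n), \sfq(a_n)) \leq \Epp(\sft(a), \sfq(a)) + \int_a^{a_n} \partial_t \Epp(\sft(r), \sfq(r))\,\sft'(r)\,\dd r\,.
\end{equation*}
By \eqref{partial-deriv-t}, the uniform energy bound \eqref{straight-1}, and the regularity of the loadings (Hypothesis~\ref{hyp:data}), the integrand on the right lies in $L^1(0,S)$, so the remainder vanishes as $a_n \downarrow a$ by absolute continuity of the integral. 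Combined with the lower-semicontinuity bound $\Epp(\sft(a), \sfq(a)) \leq \liminf_n \Epp(\sft(a_n), \sfq(a_n))$, obtained by summing the weak lsc of $\calQ$ and of $\Phi$ (the latter using weak lsc of $\ass$ together with the uniform convergence of $\sfz(a_n)$ which yields $\int_\Omega W(\sfz(a_n))\,\dd x \to \int_\Omega W(\sfz(a))\,\dd x$) with the continuity of the loading term (since $\sfF = F\circ\sft \in C^0([0,S];\BD(\Omega)^*)$ and $\sfu(a_n) \weaksto \sfu(a)$ in $\BD(\Omega)$), this yields the full energy convergence $\Epp(\sft(a_n), \sfq(a_n)) \to \Epp(\sft(a), \sfq(a))$.

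From the additive decomposition, each of the lsc summands must then converge: $\Phi(\sfz(a_n)) \to \Phi(\sfz(a))$ and $\calQ(\sfz(a_n), \sfe(a_n)) \to \calQ(\sfz(a), \sfe(a))$. For the strong stress convergence, I would combine the $\calQ$-convergence, the uniform substitution $\bbC(\sfz(a_n)) \to \bbC(\sfz(a))$ in $L^\infty(\Omega)$ (which permits replacing $\bbC(\sfz(a_n))$ by $\bbC(\sfz(a))$ inside $\calQ$ modulo a vanishing error), and the weak convergence $\sfe(a_n) \weakto \sfe(a)$ in $\Lnn$ to deduce
\begin{equation*}
\int_\Omega \bbC(\sfz(a))\bigl(\sfe(a_n){-}\sfe(a)\bigr):\bigl(\sfe(a_n){-}\sfe(a)\bigr)\,\dd x \to 0\,.
\end{equation*}
The uniform ellipticity \eqref{C2} then forces $\sfe(a_n) \to \sfe(a)$ strongly in $\Lnn$, and hence $\serifsigma(a_n) = \bbC(\sfz(a_n))\sfe(a_n) \to \bbC(\sfz(a))\sfe(a) = \serifsigma(a)$ strongly in $\Lnn$. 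The case $b_n \uparrow b$ is handled identically by applying \eqref{UEDI} on $[b_n, b]$. The only delicate step is the $\limsup$ bound, which relies crucially on the prior availability of Proposition~\ref{prop:UEDI} and on $\calM_0 \geq 0$; once the energy convergence is secured, the separation into individual lsc summands and the upgrade from weak to strong convergence of $\sfe$ are essentially routine.
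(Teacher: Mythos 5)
Steps 2 and 3 of your argument — splitting $\Epp$ into weakly lower semicontinuous pieces plus a loading term that is continuous under the available convergences, then upgrading $\calQ$-convergence to strong $L^2$-convergence of $\sfe(a_n)$ via uniform ellipticity and the uniform convergence of $\bbC(\sfz(a_n))$ — are sound, and arguably cleaner than what the paper does. The gap is in Step~1. The inequality
\[
\Epp(\sft(a_n),\sfq(a_n)) \leq \Epp(\sft(a),\sfq(a)) + \int_a^{a_n}\partial_t\Epp(\sft(r),\sfq(r))\,\sft'(r)\,\dd r
\]
is not available. Proposition~\ref{prop:UEDI} establishes \eqref{UEDI} only on intervals of the form $[0,s]$, and it cannot be localized to $[a,a_n]$: subtracting the instances at $s=a$ and $s=a_n$ gives nothing (two inequalities in the same direction), while redoing the vanishing-viscosity limit directly on $[a,a_n]$ would require $\calEv(\sft_{\eps_k}(a),\sfq_{\eps_k}(a))\to\Epp(\sft(a),\sfq(a))$, which is exactly the enhanced convergence \eqref{energy-convergence} — a \emph{consequence} of the full energy balance you are in the midst of proving, so the reasoning would be circular. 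The impossibility of localizing energy inequalities in the absence of a chain rule for $\Epp$ is precisely the obstruction this entire part of the paper is designed to circumvent.

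The paper therefore proceeds in the opposite order: it establishes the three convergences in \eqref{approx-energies-b} individually and only at the end assembles them into $\Epp$-convergence. The surrogate for your $\limsup$ bound is the inequality $\ARV((\sft,\sfq);[b_n,b])\ge 0$ from Proposition~\ref{p:cornerstone}, which directly isolates $\Phi(\sfz(b))-\Phi(\sfz(b_n))$ and yields $\limsup_n\Phi(\sfz(b_n))\le\Phi(\sfz(b))$ once the remaining terms in $\ARV$ are shown to vanish. Controlling the slope term there requires a \emph{careful} choice of $(b_n)_n$ — in particular as near-minimizers of $s\mapsto\congdist z{\sft(s)}{\sfq(s)}$ when that quantity diverges near $b$ — so the conclusion is not available for arbitrary sequences $b_n\uparrow b$ as you assume. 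The strong convergence $\serifsigma(b_n)\to\serifsigma(b)$ in $\Lnn$ is then obtained independently via a compactness chain (Lemma~\ref{le:0404231757}, Proposition~\ref{prop:regolaritaellittica}, Lemma~\ref{0404232220}), not as a by-product of $\calQ$-convergence.
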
 
\begin{proof} 
To fix ideas, let us detail the construction of the sequence $(b_n)_n$ (analogous arguments give the construction of $(a_n)_n$). 
Clearly, if $b \in A^\circ$, then it is sufficient to take $b_n \equiv b$. Suppose thus that $b \notin A^\circ$, so that 
$\calD^*(\sft(b), \sfq(b)) =0$.

 We split the proof of \eqref{approx-energies-b}
in four claims;  the first two settle the convergence for the stresses because, indeed, we will show that for \emph{any} $(b_n)_n$ with 
$b_n \uparrow b$ there holds $ \serifsigma(b_n)\to \serifsigma(b) $ \emph{strongly} in $\Lnn$. 

\medskip
\noindent
\textbf{Claim $1$:} {\it  the properties} 
\begin{equation}
\label{strong-sigma-n}
\lim_{s\uparrow b} \serifsigma(s)= \serifsigma(b) \text{  strongly in $\Lnn$}
\end{equation}
{\it and}
 \begin{equation}\label{0404232301}
 \lim_{s\uparrow b} \congdist p{\sft(s)}{\sfq(s)}  =0
  \end{equation}
  {\it are equivalent.}\medskip\\
   In order to show that $\eqref{strong-sigma-n}$ implies \eqref{0404232301}, we first of all observe that, since $\sfz :[0,S]\to \Hs(\Omega)$ is continuous w.r.t.\ the weak topology of $\Hs(\Omega)$ by \eqref{additional-regularity}, there holds $\sfz(s) \weakto \sfz(b)$ in $\Hs(\Omega)$, and thus $\sfz(s) \to \sfz(b)$ in $L^\infty(\Omega)$. By Remark \ref{rmk:equivalent} we have that 
\begin{equation}
\label{Hausdorff-cvg}
\sup_{x\in \overline\Omega} d_{\mathscr{H}} (K(\sfz(s,x)), K(\sfz(b,x)))\, \longrightarrow 0 \text{ as } s \uparrow b\,.
\end{equation}
Therefore, 
\[
\begin{aligned}
 \congdist p{\sft(s)}{\sfq(s)}  & = \mathrm{dist}_{L^2(\Omega)}({-}\serifsigma_\dev(s),  \calK_{\sfz(s)}(\Omega))
 \\ &  \,  \longrightarrow \, \mathrm{dist}_{L^2(\Omega)}({-}\serifsigma_\dev(b),  \calK_{\sfz(b)}(\Omega)) = \congdist p{\sft(b)}{\sfq(b)} =0
 \end{aligned}
\]
as $s \uparrow b$ (recall that $\calD^*(\sft(b), \sfq(b)) =0$). 
 \par
Conversely, suppose \eqref{0404232301} and 
 let us fix an arbitrary sequence $(s_k)_k$ with $s_k \uparrow b$.  Since $\sfe : [0,S]\to \Lnn $ is weakly continuous by  \eqref{additional-regularity}, we have that 
 $\sfe(s_k) \weakto \sfe(b)$ in $\Lnn$;  we combine this  with the fact that $\sfz(s_k) \weakto \sfz(b)$ in $\Hs(\Omega)$   and conclude that 
 $  \serifsigma(s_k) = \C(\sfz(s_k)) \sfe(s_k) \weakto    \C(\sfz(b)) \sfe(b)  = \serifsigma(b) $ in $\Lnn$. 
 We
 split the proof of the convergence
 \begin{equation}
 \label{strong-sigma-k}
  \serifsigma(s_k) \to \serifsigma(b)  \text{  strongly in $\Lnn$} 
 \end{equation}
 in three steps:
 \begin{description}
 \item[\textbf{Step $1$}]
 We apply Proposition \ref{prop:regolaritaellittica} from Appendix \ref{ss:appD} with the choices
 $u_k:= \sfu(s_k) \subset \BD(\Omega)$,
 $u= \sfu(b)$, 
  $z_k := \sfz(s_k) \subset \Hs(\Omega)$
  $z= \sfz(b)$, 
  $p_k := \sfp(s_k) \subset L^2(\Omega;\MD)$, and $p = \sfp(b)$:   indeed, 
  $\sfu: [0,S]\to \BD(\Omega)$, $\sfz: [0,S]\to \Hs(\Omega)$   are weakly$^*$ continuous (cf.\  again \eqref{additional-regularity}), whereas 
 $p: [0,S]\to \MbD  $  is $1$-Lipschitz continuous. 
 Therefore, we conclude that the sequence $(\nabla \sfu(s_k))_k$ is  Cauchy w.r.t.\ convergence in measure. 
 Then, $(\sig{ \sfu(s_k)})_k$ is Cauchy w.r.t.\ convergence in measure. Now,  $(\sfp(s_k))_k \subset L^1(\Omega; \MD)$  is itself a Cauchy sequence, as 
 $\|\sfp(s_k){-} \sfp(s_h)\|_{L^1(\Omega)} = \|\sfp(s_k){-} \sfp(s_h)\|_{\mathcal{M}_{\mathrm{b}}(\Omega)} $ for all $k,h\in \N$. Thus,  the sequence  $(\sfe(s_k))_k $
is 
 Cauchy w.r.t.\ convergence in measure, and so is $\serifsigma(s_k) = \C(\sfz(s_k)) \sfe(s_k)$. 
 \item[\textbf{Step $2$}]    
 We consider the decomposition 
 \begin{equation}
 \label{decomp-dev}
\serifsigma_\dev(s_k)  = \pi_{\calK_{\sfz(s_k)}(\Omega)}(\serifsigma_\dev(s_k) ) + \serifsigma_\dev(s_k)  - \pi_{\calK_{\sfz(s_k)}(\Omega)}(\serifsigma_\dev(s_k) ) 
 \end{equation}
where $ \pi_{\calK_{\sfz(s_k)}(\Omega)}(\serifsigma_\dev(s_k) ) $ denotes the projection of $\serifsigma_\dev(s_k) $ onto $\calK_{\sfz(s_k)}(\Omega)$. It follows from 
\eqref{0404232301} that $ \serifsigma_\dev(s_k)  -  \pi_{\calK_{\sfz(s_k)}(\Omega)}(\serifsigma_\dev(s_k) ) \longrightarrow 0$ in $L^2(\Omega;\MD)$.
 Taking into account that $( \serifsigma_\dev(s_k) )_k$ is a Cauchy sequence w.r.t.\  convergence in measure  by Step $1$, 
 from 
\eqref{decomp-dev} we conclude that $( \pi_{\calK_{\sfz(s_k)}(\Omega)}(\serifsigma_\dev(s_k) ))_k$ is also  a Cauchy sequence w.r.t.\ convergence in measure. 
Up to a not relabeled subsequence,  $(\pi_{\calK_{\sfz(s_k)}(\Omega)}(\serifsigma_\dev(s_k) ))_k$  converges a.e. in $\Omega$ to some limit function $\bar\pi$. Using \eqref{Hausdorff-cvg}, it is not difficult to identify $\overline \pi$ as $ \pi_{\calK_{\sfz(b)}(\Omega)}(\serifsigma_\dev(b) )$. 
 In turn, 
from \eqref{propsH} (cf.\ also Remark \ref{rmk:equivalent}) we gather that $ \pi_{\calK_{\sfz(s_k)}(\Omega)}(\serifsigma_\dev(s_k) ) \in B_{\overline{R}}(0)$ for all $k \in \N$, hence 
the sequence $ (\pi_{\calK_{\sfz(s_k)}(\Omega)}(\serifsigma_\dev(s_k) ))_{k}$ is bounded in $L^\infty(\Omega;\MD)$. 
Therefore, 
\begin{equation}
\label{strong-projections}
\pi_{\calK_{\sfz(s_k)}(\Omega)}(\serifsigma_\dev(s_k) ) \longrightarrow \pi_{\calK_{\sfz(b)}(\Omega)}(\serifsigma_\dev(b) ) \stackrel{(*)}= \serifsigma_\dev(b)  \quad \text{in } 
L^p(\Omega;\MD) \ \text{for all } 1 \leq p <\infty\,,
\end{equation}
where  {\footnotesize (*)} ensues from the fact that $\calD^*(\sft(b), \sfq(b)) =0$. 
Combining \eqref{strong-projections} with \eqref{decomp-dev}, we ultimately conclude that 
\begin{equation}
\label{as-a-result-of-Step2}
\serifsigma_\dev(s_k)  \to \serifsigma_\dev(b) \text{  strongly in $L^2(\Omega;\MD) $.}
\end{equation}
\item[\textbf{Step $3$}]    We now observe that 
\[
 -\mathrm{Div}(\serifsigma(s))= F(\sft(s)) \equiv  F(\sft(b)) =  
  -\mathrm{Div}(\serifsigma(b))  \qquad \text{for all } s \in (a,b)
\]
since $\sft'\equiv 0$ on $(a,b)$. Therefore, $\mathrm{Div}(\serifsigma(s_k)) \to  \mathrm{Div}(\serifsigma(b))$ in $L^2(\Omega)$.  Combining this with 
the weak convergence $  \serifsigma(s_k) \weakto \serifsigma(b) $ in $\Lnn$ and with \eqref{as-a-result-of-Step2}, 
we  may thus apply Lemma 
\ref{0404232220} ahead to deduce the strong convergence \eqref{strong-sigma-k}.
\end{description} 


%

\medskip

\noindent
\textbf{Claim $2$:} {\it \eqref{strong-sigma-n} and \eqref{0404232301} hold true.}\medskip\\
In fact, assuming by contradiction \eqref{0404232301} false (and then also \eqref{strong-sigma-n}, by Claim~1), we have in particular
\[
\liminf_{s\uparrow b}  \calD^*(\sft(s), \sfq(s))>0,
\] so there exist $c\in (a,b)$ and  $\eta>0$ such that $\calD^*(\sft(s), \sfq(s))\geq \eta$ for every $s \in [c,b)$.
 From estimate
\eqref{straight-fwd-estimates} we thus conclude
\[
\int_{c}^b \| \sfp'(s)\|_{L^2} \dd s \leq  \int_{c}^b \sqrt{\| \sfz'(s)\|_{L^2}^2{+}\| \sfp'(s)\|_{L^2}^2} \dd s \leq \frac{\mathrm{M}}{\eta}\,.
\]
 Then Lemma~\ref{le:0404231757} gives 
\begin{equation*}
\|\sfe(s)-\sfe(b)\|_{L^2} \leq C_L \Big(\int_{s}^b \|\sfp'(s)\|_{L^2} \dd s+ \|\sfz(b)-\sfz(s)\|_{L^\infty}\Big) \qquad \text{for all } s \geq c
\end{equation*}
 with $C_L$ from \eqref{0404231831}.  
Therefore we have $\sfe(s) \to \sfe(b)$ in $\Lnn$ and ultimately $\serifsigma(s) \to \serifsigma(b)$ in  $\Lnn$. But 
 \eqref{strong-sigma-n} was  false by assumption. This concludes the proof of Claim $2$. 
\medskip

\noindent
\textbf{Claim $3$:} {\it There exists a sequence $(b_n)_n$ with $b_n \uparrow b$ such that}
\begin{equation}
\label{strong-Phi}
 \Phi(\sfz(b_n)) \to \Phi(\sfz(b)). 
\end{equation}
We distinguish two cases:
\begin{itemize}
\item[\textbf{Case 1}] $\liminf_{s\uparrow b}  \congdist z{\sft(s)}{\sfq(s)} <+\infty$;
\item[\textbf{Case 2}] $\liminf_{s\uparrow b}  \congdist z{\sft(s)}{\sfq(s)} =+\infty$.
\end{itemize}\par
In \textbf{Case 1}  we choose the sequence $(b_n)_n$, with
 $b_n \uparrow b$, such that $\lim_{n\to\infty} \congdist z{\sft(b_n)}{\sfq(b_n)} = \liminf_{s\uparrow b}  \congdist z{\sft(s)}{\sfq(s)}$.\par
In \textbf{Case 2}
  we choose  a  sequence $(b_n)_n$ 
 such that $\congdist z{\sft(b_n)}{\sfq(b_n)} \leq \congdist z{\sft(s)}{\sfq(s)}$ for all $s \in [b_n, b)$ and every $n\in \N$.
  Indeed,
  it is enough to define $b_n$ as the minimizer of $s\mapsto \congdist z{\sft(s)}{\sfq(s)}$ on the interval $[\max\{b_{n-1},b{-}\frac{1}{n}\},b)$, that is the minimizer over $[\max\{b_{n-1},b{-}\frac{1}{n}\},b]$ of the (lower semicontinuous) function $\congdistqtildeu z {\sft(s)}{\sfq(s)}$ defined as $\congdist z{\sft(s)}{\sfq(s)}$ for $s<b$ and $\congdistqtildeu z {\sft(b)}{\sfq(b)}:=+\infty$.

 To show \eqref{strong-Phi},  recalling Proposition \ref{p:cornerstone} we observe    that for every $n \in \N$ there holds
\begin{equation}
\label{ARV-est}
\begin{aligned}
0 \leq \ARV((\sft,\sfq); [b_n,b]):  & =  \Phi(\sfz(b)) -  \Phi(\sfz(b_n)) + \mathcal{R} (\sfz(b){-} \sfz(b_n))  
\\
&
\qquad  + \|\sfz(b)-\sfz(b_n)\|_{L^2} \frac{ \congdist z{\sft(b_n)}{\sfq(b_n)}{+}  \congdist z{\sft(b)}{\sfq(b)}}{2} 
\\
&
\qquad  + K_W \|\sfz(b)-\sfz(b_n)\|_{L^1} \|\sfz(b)-\sfz(b_n)\|_{L^\infty}
\,. 
\end{aligned}
\end{equation}
Now, since $\sfz : [0,S]\to \Hs(\Omega)$ is weakly continuous, taking into account that 
$\Hs(\Omega) \Subset \rmC^0(\overline\Omega)$ we clearly have that $ \mathcal{R} (\sfz(b){-} \sfz(b_n))   \to 0$ as $n \to \infty$. 
Analogously, $ K_W \|\sfz(b)-\sfz(b_n)\|_{L^1} \|\sfz(b)-\sfz(b_n)\|_{L^\infty} \to 0$.  
 Let us show that 
\begin{equation}\label{0404232324}
\|\sfz(b)-\sfz(b_n)\|_{L^2} \congdist z{\sft(b_n)}{\sfq(b_n)} \to 0.
\end{equation}
In \textbf{Case 1}, since $\lim_{n\to\infty} \congdist z{\sft(b_n)}{\sfq(b_n)} = \liminf_{s\uparrow b}  \congdist z{\sft(s)}{\sfq(s)} <+\infty$, we have that \[
\sup_n \congdist z{\sft(b_n)}{\sfq(b_n)}   <+\infty\]
 and \eqref{0404232324} follows from $\|\sfz(b)-\sfz(b_n)\|_{L^2} \to 0$;

In \textbf{Case 2},
from the choice of $b_n$ we have that for every $n$
\[
\congdist z{\sft(b_n)}{\sfq(b_n)} \leq \congdist z{\sft(s)}{\sfq(s)} \text{ for }s \in [b_n,b)
\] 
and thus
\begin{equation*}
\|\sfz(b)-\sfz(b_n)\|_{L^2} \congdist z{\sft(b_n)}{\sfq(b_n)} \leq \int_{b_n}^b \| \dot{\sfz}(s)\|_2 \congdist z{\sft(s)}{\sfq(s)} \dd s \longrightarrow 0,
\end{equation*}
since $ \| \dot{\sfz}(s)\|_2 \congdist z{\sft(s)}{\sfq(s)}$ is integrable in $(a,b)$.
We notice that $ \sfz(b)-\sfz(b_n)= \int_{b_n}^b \dot{\sfz}(s) \dd s$, where the right term is a Bochner integral in $L^2(\Omega)$ and it is well defined and finite by \eqref{straight-2} and since $\liminf_{s\uparrow b}  \calD^*(\sft(s), \sfq(s)) >0$.
Therefore \eqref{0404232324} is proven. Clearly, we also have 
\[
\|\sfz(b)-\sfz(b_n)\|_{L^2} , \congdist z{\sft(b)}{\sfq(b)} \longrightarrow 0.
\]


\par
All in all, from \eqref{ARV-est}
 we conclude that
 $\limsup_{n\to \infty}   \Phi(\sfz(b_n))  \leq \Phi(\sfz(b)) $. Since, by lower semicontinuity of $\Phi$, we also have 
 $\liminf_{n\to \infty}   \Phi(\sfz(b_n))  \geq \Phi(\sfz(b)) $, the desired convergence \eqref{strong-Phi} follows.
 \medskip

\noindent
\textbf{Claim $4$:} {\it It holds}
\begin{equation}
\label{strong-E}
\Epp(\sft(b_n), \sfq(b_n)) \to  \Epp(\sft(b), \sfq(b))\,.
\end{equation}
Indeed,
\[
\Epp(\sft(b_n), \sfq(b_n)) =   \calQ( \sfz(b_n),\sfe(b_n))   
+ \Ez(\sfz(b_n)) 
- \langle \sfF(b_n), \sfu(b_n){+}\sfw(b_n) \rangle_{\BD(\Omega)}\,.
\]
Now, since $\sfu: [0,S]\to \BD(\Omega)$ is weakly$^*$-continuous, also taking into account the continuity properties of $\sfF$ and $\sfw$ we immediately deduce that 
\[
\langle \sfF(b_n), \sfu(b_n){+}\sfw(b_n) \rangle_{\BD(\Omega)} \to \langle \sfF(b), \sfu(b){+}\sfw(b) \rangle_{\BD(\Omega)}\,.
\]
Therefore, it suffices to observe  that 
\[
\lim_{n\to \infty}  \calQ( \sfz(b_n),\sfe(b_n))   = \lim_{n\to \infty} \frac12 \int_{\Omega} \serifsigma(b_n): \sfe(b_n) \dd x 
=  \frac12 \int_{\Omega} \serifsigma(b): \sfe(b) \dd x  = 
   \calQ( \sfz(b),\sfe(b))\,,
\]
since $\serifsigma(b_n)\to \serifsigma(b) $ strongly in $\Lnn$
 and $\sfe(b_n) \weakto \sfe(b)$ weakly in $\Lnn$ by the weak continuity of $\sfe$. 
Then,  \eqref{strong-E}
ensues. 
\end{proof}
\par
We thus arrive at the following result.
\begin{lemma}
\label{l:DK3}
For every $i \in \DIN k3$ and for every $0<\eta, \beta \ll 1$, 
there exist points 
$s_k^{i-1} \leq a_{\beta}^i < b_{\beta}^i \leq s_k^i$  and a partition $(\subdivhat{\eta}ij)_{j=0}^{\cardihat \eta i}$ of the interval $[a_{\beta}^i, b_{\beta}^i]$ 
such that
\begin{subequations}\label{eqs:0809231011}
\begin{equation}
\label{features-a-b}
a_{\beta}^i,\, b_{\beta}^i \in A^\circ, \qquad 
 a_{\beta}^i - s_k^{i-1} \leq \frac{\beta}2\,, \qquad s_k^i - b_{\beta}^i \leq \frac{\beta}2\,,
 \end{equation}
  \begin{equation}\label{0809231026}
 \begin{split}
  & \| \sfw(a_{\beta}^i)-\sfw(s_k^{i-1})\|_{H^1(\Omega)} + \| \sfw(b_{\beta}^i)-\sfw(s_k^i)\|_{H^1(\Omega)} + \| \sfF(a_{\beta}^i)-\sfF(s_k^{i-1})\|_{\mathrm{BD}(\Omega)^*} 
  \\& \hspace{1em} + \| \sfF(b_{\beta}^i)-\sfF(s_k^i)\|_{\mathrm{BD}(\Omega)^*} + \| \sfp(a_{\beta}^i)-\sfp(s_k^{i-1})\|_{\Mb(\Omega)} + \| \sfp(b_{\beta}^i)-\sfp(s_k^i)\|_{\Mb(\Omega)}< \beta,
  \end{split}
 \end{equation} 
 \end{subequations}
 and
\begin{equation}
\label{cornerstone-DIN3}
\begin{aligned}
&
\sum_{j=1}^{\cardihat \eta i}
\WE((\sft,\sfq); [\subdivhat \eta i{j-1},\subdivhat \eta ij])
+  \Epp(\sft(s_k^{i-1}), \sfq(s_k^{i-1})) 
\\
 & \leq   \Epp(\sft(s_k^i), \sfq(s_k^i))   
 \\
 & \qquad + \sum_{j=1}^{\cardihat \eta i} \Hpp\left(\sfz(\subdivhat \eta i{j-1}), \frac{\sfp(\subdivhat \eta i{j}){-}\sfp(\subdivhat \eta i{j-1})}2\right)  + \Hpp\left(\sfz(\subdivhat \eta i{j}), \frac{\sfp(\subdivhat \eta i{j}){-}\sfp(\subdivhat \eta i{j-1})}{2}\right) 
 \\
 & \qquad 
 +
 \calR(\sfz(s_k^i){-}\sfz(s_k^{i-1}))  + \int_{s_k^{i-1}}^{s_k^i} \mathcal{D}(\sfq'(s)) \,  \mathcal{D}^*(\sft(s),\sfq(s)) \dd s 
 \\
 & \qquad 
 + \eta\, \frac{\mathrm{M}}{ \min_{s\in [a_{\beta}^i,b_{\beta}^i]}  \mathcal{D}^*(\sft(s),\sfq(s)) } +
 \mathrm{Rem}_2([a_{\beta}^i, b_{\beta}^i]) + \beta \,,
  \end{aligned}
 \end{equation}
 where
 $\mathrm{M}>0$ is from \eqref{straight-fwd-estimates} and  $\mathrm{Rem}_2([a_{\beta}^i, b_{\beta}^i]) $ is defined by the right-hand side of \eqref{Rem2}.
\end{lemma}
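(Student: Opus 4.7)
The proof reduces to Lemma \ref{l:DK2} via an approximation argument: since $i\in\DIN k3$ forces $(s_k^{i-1},s_k^i)\subset A^\circ$ but one of the closed endpoints lies in $B^\circ$, the plan is to replace $[s_k^{i-1},s_k^i]$ by an inner interval $[a_\beta^i,b_\beta^i]\subset A^\circ$ on which Lemma \ref{l:DK2} directly applies, and to transfer the resulting discrete inequality back to the coarse interval, absorbing every boundary error into $\beta$.

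First I would identify the maximal connected component $(a,b)$ of $A^\circ$ containing $(s_k^{i-1},s_k^i)$. Since $i\in\DIN k3$, exactly one of the two endpoints lies in $B^\circ$ and coincides with the corresponding boundary point of $(a,b)$, while the other already lies in $A^\circ$ and can be taken directly as $a_\beta^i$ or $b_\beta^i$. For the boundary endpoint, Lemma \ref{l:tech-approx-energies} supplies a sequence from within $A^\circ$ along which $\Epp$, the stresses $\serifsigma$ (strongly in $\Lnn$), and $\Phi(\sfz)$ converge to their values at the boundary. Combining this with the constancy of $\sft$ on $[a,b]$ (hence of $\sfw$ and $\sfF$) together with the $1$-Lipschitz continuity of $\sfp\colon[0,S]\to\MbD$, I can select $a_\beta^i,b_\beta^i\in A^\circ$ fulfilling both \eqref{features-a-b} and \eqref{0809231026}.

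Once $[a_\beta^i,b_\beta^i]\subset A^\circ$, I apply Lemma \ref{l:DK2} on this interval with the prescribed $\eta$, obtaining the partition $(\subdivhat{\eta}ij)_{j=0}^{\cardihat\eta i}$ together with \eqref{cornerstone-DIN2} at endpoints $a_\beta^i,b_\beta^i$. The passage to \eqref{cornerstone-DIN3} then amounts to bookkeeping four kinds of discrepancy. First, the energy gaps $\Epp(\sft(s_k^{i-1}),\sfq(s_k^{i-1}))-\Epp(\sft(a_\beta^i),\sfq(a_\beta^i))$ and its analogue at $b_\beta^i$ are $o(1)$ as $\beta\downarrow 0$ by Lemma \ref{l:tech-approx-energies}. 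Second, the contributions to $\WE$ on the boundary pieces $[s_k^{i-1},a_\beta^i]$ and $[b_\beta^i,s_k^i]$ vanish identically, because $\sft,\sfw,\sfF$ are constant on $[a,b]$, so every difference appearing in the definition of $\WE$ is zero. Third, the discrepancy $\calR(\sfz(s_k^i){-}\sfz(s_k^{i-1}))-\calR(\sfz(b_\beta^i){-}\sfz(a_\beta^i))$ is bounded by $\kappa(\|\sfz(s_k^i){-}\sfz(b_\beta^i)\|_{L^1}+\|\sfz(a_\beta^i){-}\sfz(s_k^{i-1})\|_{L^1})=O(\beta)$ via the $L^1$-Lipschitz continuity of $\sfz$ together with \eqref{features-a-b}. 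Fourth, $\int_{a_\beta^i}^{b_\beta^i}\calD(\sfq')\calD^*(\sft,\sfq)\dd s\leq\int_{s_k^{i-1}}^{s_k^i}\calD(\sfq')\calD^*(\sft,\sfq)\dd s$ by positivity. Summing these contributions yields \eqref{cornerstone-DIN3} up to a constant multiple of $\beta$ (with the constant depending only on fixed quantities); after replacing $\beta$ by $\beta/C$ in Step~1, the stated form is recovered.

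The main obstacle is that $\calD^*(\sft,\sfq)$ may blow up as one approaches the boundary of $A^\circ$, so neither the individual distances $\congdist p{\sft(a_\beta^i)}{\sfq(a_\beta^i)}$ entering $\HV$ nor the denominator $\min_{s\in[a_\beta^i,b_\beta^i]}\calD^*$ in the $\eta$-error is controlled uniformly in $\beta$. This forces \eqref{cornerstone-DIN3} to retain the shape inherited from Lemma \ref{l:DK2}: both the $\eta$-error and the remainder $\mathrm{Rem}_2$ must be measured on the \emph{inner} interval $[a_\beta^i,b_\beta^i]$, and the $\HV$-type sum is with respect to the inner sub-partition rather than the coarse one. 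The delicate point is therefore to keep the three parameters $k$ (partition fineness), $\beta$ (boundary approximation), and $\eta$ (Lemma \ref{le:mediesci} tolerance) fully decoupled, so that in Section \ref{s:6} the limits may be performed in the correct order $k\to\infty$, $\beta\downarrow 0$, $\eta\downarrow 0$.
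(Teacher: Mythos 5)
Your proposal is correct and takes essentially the same approach as the paper: pick $a_\beta^i, b_\beta^i \in A^\circ$ via Lemma \ref{l:tech-approx-energies} (using the constancy of $\sft$, $\sfw$, $\sfF$ on the connected component and the $1$-Lipschitz continuity of $\sfp$ to get \eqref{0809231026}), apply Lemma \ref{l:DK2} on $[a_\beta^i,b_\beta^i]$, and transfer the inequality back by absorbing the energy and $\calR$ discrepancies into $\beta$ while bounding the inner integral by the outer one through positivity. You also correctly identify the essential point that the $\eta$-error denominator and $\mathrm{Rem}_2$ must remain on the inner interval — because $\min_{[a_\beta^i,b_\beta^i]}\calD^*$ is controlled only there and may degenerate (and $\congdist z$ may in fact be $+\infty$ near the boundary), forcing the eventual $k\to\infty$, $\beta\downarrow 0$, $\eta\downarrow 0$ order of limits.
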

\begin{proof}
In view of Lemma \ref{l:tech-approx-energies},  \eqref{hyp-data}, and \eqref{additional-regularity},  for every fixed $\beta>0$ we may pick $a_{\beta}^i,\,b_{\beta}^i \in A^\circ$ such that  \eqref{eqs:0809231011} hold,  together with 
\begin{equation}
\label{control-of-energies}
\begin{split}\left|  \Epp(\sft(a_{\beta}^i),\sfq(a_{\beta}^i)) {-}   \Epp(\sft(s_k^{i-1}), \sfq(s_k^{i-1}))  \right| &\leq \frac{\beta}{4}\,, 
\\
 \left|  \Epp(\sft(b_{\beta}^i),\sfq(b_{\beta}^i)) {-}   \Epp(\sft(s_k^{i}), \sfq(s_k^{i}))  \right| &\leq \frac{\beta}{4}\,.
\end{split}\end{equation}
 Furthermore,  recall that  $\sfz: [0,S]\to \Hs(\Omega) $ is weakly continuous, so that  by the compact embedding
 $\Hs(\Omega) \Subset L^\infty(\Omega)$ we have that $\sfz \in \rmC^0([0,S];L^\infty(\Omega))$. Therefore, we may suppose that  the quantities
 $\| \sfz( a_{\beta}^i){-}\sfz (s_k^{i-1})\|_{L^\infty}$ and  $\| \sfz( b_{\beta}^i){-}\sfz (s_k^{i})\|_{L^\infty}$ 
 are so small  as to ensure that 
 \begin{equation}
 \label{control-of-Rs}
\calR(\sfz(b_{\beta}^i){-}\sfz(a_{\beta}^i))  \leq \calR(\sfz(s_k^i){-} \sfz (s_k^{i-1})) + \frac\beta 4\,.
 \end{equation}
 \par
 After these preparations, we proceed as in Lemma \ref{l:DK2} and 
with the points $a_{\beta}^i$ and $b_{\beta}^i$ we associate a partition 
$(\subdivhat \eta ij)_{j=1}^{\cardihat \eta i}$ of the interval $[a_{\beta}^i,b_{\beta}^i]$ such that 
\[
\begin{aligned}
&
\sum_{j=1}^{\cardihat \eta i}
\WE((\sft,\sfq); [\subdivhat \eta i{j-1},\subdivhat \eta ij])
+  \Epp(\sft(a_{\beta}^i), \sfq(a_{\beta}^i)) 
\\
 & \leq   \Epp(\sft(b_{\beta}^i), \sfq(b_{\beta}^i))   
 \\
 & \qquad + \sum_{j=1}^{\cardihat \eta i} \Hpp\left(\sfz(\subdivhat \eta i{j-1}), \frac{\sfp(\subdivhat \eta i{j}){-}\sfp(\subdivhat \eta i{j-1})}2\right)  + \Hpp\left(\sfz(\subdivhat \eta i{j}), \frac{\sfp(\subdivhat \eta i{j}){-}\sfp(\subdivhat \eta i{j-1})}{2}\right) 
 \\
 & \qquad 
 +
 \calR(\sfz(b_{\beta}^i){-}\sfz(a_{\beta}^i))  + \int_{a_{\beta}^i}^{b_{\beta}^i} \mathcal{D}(\sfq'(s)) \,  \mathcal{D}^*(\sft(s),\sfq(s)) \dd s 
 \\
 & \qquad 
 + \eta\, \frac{ 
\mathrm{M}}{\min_{s\in [a_{\beta}^i,b_{\beta}^i]}  \mathcal{D}^*(\sft(s),\sfq(s))} +
 \mathrm{Rem}_2([a_{\beta}^i,b_{\beta}^i])\,.
  \end{aligned}
  \]
  Then,
estimate \eqref{cornerstone-DIN3}
 follows by  \eqref{control-of-energies},  \eqref{control-of-Rs}, and by observing that 
 \[
 \begin{aligned}
 &
  \int_{a_{\beta}^i}^{b_{\beta}^i} \mathcal{D}(\sfq'(s)) \,  \mathcal{D}^*(\sft(s),\sfq(s)) \dd s  \leq  \int_{s_k^{i-1}}^{s_k^i}
   \mathcal{D}(\sfq'(s)) \,  \mathcal{D}^*(\sft(s),\sfq(s)) \dd s\,.
\end{aligned}
 \]
 \end{proof}

\section{Proof of Theorem \ref{mainth:1}: the lower energy-dissipation inequality}
\label{s:6}
In this section we eventually prove
the following result.
\begin{proposition}
 \label{prop:LEDI}
 The pair $(\sft, \sfq)$ satisfies the   lower energy-dissipation inequality \eqref{LEDI}. 
\end{proposition}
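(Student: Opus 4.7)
The plan is to assemble the three discrete lower energy-dissipation inequalities obtained in Lemmas \ref{l:DK1}, \ref{l:DK2}, \ref{l:DK3} into an overall discrete form of \eqref{LEDI} on the partition $\mathscr{P}_k = (s_k^i)_{i=0}^{N_k}$ (refined by the sub-partitions associated with the parameters $\eta$ and $\beta$), and then to pass to the limit as the partition is refined (while sending $\eta,\beta\downarrow 0$ along the way). Concretely, summing \eqref{cornerstone-DIN1} over $i\in\DIN k1$, \eqref{cornerstone-DIN2} over $i\in\DIN k2$, and \eqref{cornerstone-DIN3} over $i\in\DIN k3$ produces an estimate of the form
\[
\begin{aligned}
\sum_{j}\WE((\sft,\sfq);[\tau_k^{j-1},\tau_k^j])+\Epp(\sft(0),\sfq(0)) &\leq \Epp(\sft(S),\sfq(S))+\sum_j\Big[\Hpp\Big(\sfz(\tau_k^{j-1}),\tfrac{\sfp(\tau_k^j)-\sfp(\tau_k^{j-1})}{2}\Big)\\
&\quad +\Hpp\Big(\sfz(\tau_k^j),\tfrac{\sfp(\tau_k^j)-\sfp(\tau_k^{j-1})}{2}\Big)\Big]+\calR(\sfz(S)-\sfz(0))\\
&\quad +\int_{A^\circ}\calD(\sfq')\calD^*(\sft,\sfq)\dd s + R_{k,\eta,\beta}\,,
\end{aligned}
\]
where $(\tau_k^j)$ is the refined partition and $R_{k,\eta,\beta}$ collects all remainder terms.

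The next step is passing to the limit in each piece. The $\calR$-sum telescopes to $\calR(\sfz(S)-\sfz(0))=\int_0^S\calR(\sfz'(s))\dd s$ thanks to the unidirectionality $\sfz'\leq 0$. The $\WE$-sum converges to $\int_0^S\partial_t\Epp(\sft,\sfq)\sft'\dd s$ via the approximation lemmas of Appendix \ref{ss:5-prelim} (where $\serifsigma$, $\sfw'$, $\sfF'$ are approximated by their values at partition nodes, exploiting the strong continuity of $\sfe$ on stable intervals from Lemma \ref{le:0404231757} and \eqref{additional-regularity}). The plastic dissipation sum is handled by a $\liminf$ lower-semicontinuity argument of Reshetnyak type (as in \cite[Lemma 3.5]{Crismale-Lazzaroni}): the continuous dependence of $H(\cdot,\pi)$ on $z$, together with the $\rmC^0([0,S];L^\infty(\Omega))$-continuity of $\sfz$ (via $\Hs(\Omega)\Subset\rmC(\overline\Omega)$), lets us replace $\sfz(\tau_k^{j\pm})$ by $\sfz(s)$ in the limit and recover $\int_0^S\Hpp(\sfz(s),\sfp'(s))\dd s$ in the sense of convex measure-valued integrals.

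The remainders $\mathrm{Rem}_1, \mathrm{Rem}_2$ in \eqref{Rem1}--\eqref{Rem2} factorize as $\Delta_1(\tau_k^{j-1},\tau_k^j)\cdot\|\sfz(\tau_k^j)-\sfz(\tau_k^{j-1})\|_{L^\infty}$. The $\Delta_1$-factors sum to a quantity bounded independently of the partition thanks to \eqref{straight-fwd-estimates} (energy is uniformly bounded, and the $\calR$-increments sum telescopically), while $\max_j\|\sfz(\tau_k^j)-\sfz(\tau_k^{j-1})\|_{L^\infty}\to 0$ by the uniform continuity of $\sfz\colon[0,S]\to L^\infty(\Omega)$. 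The $\eta$-correction from Lemmas \ref{l:DK2}--\ref{l:DK3} is divided by $\min_{[a_\beta^i,b_\beta^i]}\calD^*>0$, which is strictly positive on each compact sub-interval of $A^\circ$ by lower semicontinuity of $\calD^*$, so sending $\eta\downarrow 0$ is harmless; the $\beta$-corrections vanish by construction in Lemma \ref{l:tech-approx-energies}.

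The hardest point will be the identification of the limit with the integrated $\calM_0$: on $B^\circ$ one needs local stability $\congdist{z}{\sft}{\sfq}=\congdist{p}{\sft}{\sfq}=0$ (and $\slope{u}{\sft}{\sfq}=0$) to absorb the $I_{\{0\}}$-terms of $\calM_0$; the equilibrium indicator is automatic from \eqref{EqEqEvery}, while the local stability on $B^\circ$ follows by definition of $A^\circ$ and Lemma \ref{l:lsc-slopes}. On $A^\circ$ the term $\int_{A^\circ}\calD(\sfq')\calD^*(\sft,\sfq)\dd s$ is precisely the reduced contact potential \eqref{red-funz-at-0} (since $\sft'\equiv 0$ on connected components of $A^\circ$ by \eqref{2906191815+1}), so together with the $\calR$- and $\Hpp$-integrals and the equilibrium indicator we reconstitute exactly $\int_0^S\calM_0(\sft,\sfq,\sft',\sfq')\dd s$, yielding \eqref{LEDI}.
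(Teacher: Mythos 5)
Your proposal follows essentially the same structure as the paper's proof in Section~\ref{s:6}: sum the three families of discrete inequalities from Lemmas~\ref{l:DK1}--\ref{l:DK3} over their index sets, organize the resulting terms (telescoping $\calR$- and $\Epp$-increments, $\WE$-Riemann sums, $\Hpp$-Riemann sums, remainders, $\eta$- and $\beta$-corrections), choose $\beta=\beta(k)$ and then $\eta=\eta(\beta,k)$ to kill the corrections for each fixed $k$, and finally let $k\to\infty$ using Lemmas~\ref{l:approx-sigma}--\ref{l:approx-F'w}, Lemma~\ref{le:0504232015} for the remainders, and the identification of the limit as $\int_0^S\calM_0\,\dd s$ via \eqref{EqEqEvery}, the definition of $A^\circ$, and \eqref{2906191815+1}.

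One point is off, though it is easily fixed. You propose to pass to the limit in the $\Hpp$-Riemann sums via a \emph{Reshetnyak-type lower-semicontinuity} argument (citing the mechanism behind \cite[Lemma~3.5]{Crismale-Lazzaroni}), i.e.\ an estimate of the form $\int_0^S\Hpp(\sfz,\sfp')\,\dd s\le\liminf_k \FHV k$. This is the wrong direction here: the $\Hpp$-sum sits on the \emph{right-hand} side of the discrete inequality, so to preserve the inequality in the limit you need $\limsup_k\FHV k\le\int_0^S\Hpp(\sfz,\sfp')\,\dd s$ (or exact convergence). Lower semicontinuity alone would, if anything, make the bound worse. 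What the paper actually uses is the exact Riemann-sum convergence result of \cite[Lemma~8.2]{DalDesSol11}, which exploits the BV-in-time regularity of $\sfp$ and the uniform continuity of $\sfz$ in $\rmC^0(\overline\Omega)$ (the latter ingredient you do mention correctly). The rest of your account — telescoping of $\calR$ and $\Epp$, handling of $\WE$ via the appendix approximation lemmas, vanishing of the remainders through $\|\sfz(\tau_k^j)-\sfz(\tau_k^{j-1})\|_{L^\infty}\to0$, and reconstruction of $\calM_0$ on $A^\circ$ and $B^\circ$ — matches the paper.
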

\begin{proof}
We start by adding up 
\begin{enumerate}
\item estimate \eqref{cornerstone-DIN1} over all indices $i \in \DIN k1 $;
\item estimate \eqref{cornerstone-DIN2}, with fixed $\eta>0$,  over all indices $i \in \DIN k2 $,
\item
estimate \eqref{cornerstone-DIN3}, with fixed $\eta,\, \beta>0$,  over all indices $i \in \DIN k3 $.
\end{enumerate}
Eventually, we will  add the resulting inequalities and obtain an overall discrete version of  \eqref{LEDI},
cf.\ \eqref{overall-LEDI-discrete} below. 
 In order to write it in a compact form,
let us introduce some place-holders: we set
\[
\begin{aligned}
\FWE {}{\mathsf{w}}k  & = \sum_{i \in \DIN k1}  \langle \tfrac12 \left( \serifsigma(s_k^{i-1}) + \serifsigma(s_k^i)\right) , \EE(\sfw(s_k^{i})-\sfw(s_k^{i-1})) \rangle_{L^2(\Omega)}
\\ & \quad 
 + \sum_{i \in \DIN k2} \sum_{j=1}^{\cardi \eta i}   \langle \tfrac12 \left( \serifsigma(\subdiv \eta i {j-1}) + \serifsigma(\subdiv \eta i j)\right) , \EE(\sfw(\subdiv \eta i j)-\sfw(\subdiv \eta i {j-1})) \rangle_{L^2(\Omega)}
 \\
 & \quad 
 + \sum_{i \in \DIN k3} \sum_{j=1}^{\cardihat \eta i}   \langle \tfrac12 \left( \serifsigma(\subdivhat \eta i {j-1}) + \serifsigma(\subdivhat \eta i j)\right) , \EE(\sfw(\subdivhat \eta i j)-\sfw(\subdivhat \eta i {j-1})) \rangle_{L^2(\Omega)}
 \end{aligned}
 \]
 and 
\[
\begin{aligned}
\FWE 1{\mathsf{F}, \sfw}k  & = \sum_{i \in \DIN k1} \langle \tfrac12 \left( \sfF(s_k^{i-1}) +  \sfF(s_k^i) \right), \sfw(s_k^{i})-\sfw(s_k^{i-1}) \rangle_{\BD(\Omega)}  
\\ & \quad 
 + \sum_{i \in \DIN k2} \sum_{j=1}^{\cardi \eta i} \langle   \tfrac12\left( \sfF(\subdiv \eta i {j-1}) +  \sfF(\subdiv \eta i {j}) \right), \sfw(\subdiv \eta i {j})-\sfw(\subdiv \eta i {j-1}) \rangle_{\BD(\Omega)}  
 \\
 & \quad 
  + \sum_{i \in \DIN k3} \sum_{j=1}^{\cardihat \eta i}  \langle  \tfrac12\left( \sfF(\subdivhat \eta i {j-1}) +  \sfF(\subdivhat \eta i {j}) \right), \sfw(\subdivhat \eta i {j})-\sfw(\subdivhat \eta i {j-1}) \rangle_{\BD(\Omega)}  \,,
 \end{aligned}
 \]
  as well as 
 \[
\begin{aligned}
\FWE 2{\mathsf{F}, \sfw}k  & = \sum_{i \in \DIN k1} \langle   \sfF(s_k^{i}) {-}  \sfF(s_k^{i-1}) , \tfrac12 ( \sfx(s_k^{i}){+}\sfx(s_k^{i-1})) \rangle_{\BD(\Omega)}  
\\ & \quad 
 + \sum_{i \in \DIN k2} \sum_{j=1}^{\cardi \eta i} \langle    \sfF(\subdiv \eta i {j}) {-}\sfF(\subdiv \eta i {j-1}) ,  \tfrac12( \sfx(\subdiv \eta i {j}){+}\sfx(\subdiv \eta i {j-1}) )\rangle_{\BD(\Omega)}  
 \\
 & \quad 
  + \sum_{i \in \DIN k3} \sum_{j=1}^{\cardihat \eta i}  \langle   \sfF(\subdivhat \eta i {j}){-} \sfF(\subdivhat \eta i {j-1}) , \tfrac12 ( \sfx(\subdivhat \eta i {j}){+}\sfx(\subdivhat \eta i {j-1}) )\rangle_{\BD(\Omega)}  \,,
 \end{aligned}
 \]  
 where for shorter notation here  we have used the place-holder $\sfx := \sfu+\sfw$. 
 Clearly, we have  
\[
\begin{aligned}
\sum_{i \in \DIN k1} \WE((\sft,\sfq); [s_k^{i-1},s_k^i])  &  + \sum_{i \in \DIN k2} \sum_{j=1}^{\cardi \eta i}  \WE((\sft,\sfq); [\subdiv \eta i {j-1},\subdiv \eta i j])
\\
&
\quad + \sum_{i \in \DIN k3} \sum_{j=1}^{\cardihat \eta i}  \WE((\sft,\sfq); [\subdivhat \eta i {j-1},\subdivhat \eta i j])
\\
&
= \FWE{} {\mathsf{w}}k  - \FWE 1{\mathsf{F}, \mathsf{w}}k   -   \FWE 2{\mathsf{F}, \mathsf{w}}k \,. 
\end{aligned}
\]
Analogously, 
we introduce a  place-holder for the terms that approximate the integral $\int_0^S \Hpp(\sfz, \sfp') \dd s$, namely
\[
\begin{aligned}
& \FHV k  (\sft, \sfq; [0,S]): =
\\
 &  \sum_{i\in \DIN k1}  
 \Hpp\left(\sfz(s_k^{i-1}), \frac{\sfp(s_k^i){-}\sfp(s_k^{i-1})}2\right)  + \Hpp\left(\sfz(s_k^i), \frac{\sfp(s_k^{i}){-}\sfp(s_k^{i-1})}{2}\right) 
 \\
 & \quad  + \sum_{i\in \DIN k2}  \sum_{j=1}^{\cardi \eta i} \Hpp\left(\sfz(\subdiv \eta i{j-1}), \frac{\sfp(\subdiv \eta i{j}){-}\sfp(\subdiv \eta i{j-1})}2\right)  + \Hpp\left(\sfz(\subdiv \eta i{j}), \frac{\sfp(\subdiv \eta i{j}){-}\sfp(\subdiv \eta i{j-1})}{2}\right) 
 \\
  & \quad  + \sum_{i\in \DIN k3}  \sum_{j=1}^{\cardihat \eta i} \Hpp\left(\sfz(\subdivhat \eta i{j-1}), \frac{\sfp(\subdivhat \eta i{j}){-}\sfp(\subdivhat \eta i{j-1})}2\right)  + \Hpp\left(\sfz(\subdivhat \eta i{j}), \frac{\sfp(\subdivhat \eta i{j}){-}\sfp(\subdivhat \eta i{j-1})}{2}\right) \,.
  \end{aligned}
\]
%
We also consider the sum of the remainder terms
\[
\FREM k ([0,S]): = \sum_{i\in \DIN k1}  \mathrm{Rem}_1([s_k^{i-1},s_k^i])+\sum_{i\in \DIN k2}  \mathrm{Rem}_2([s_k^{i-1},s_k^i])
+\sum_{i\in \DIN k3}  \mathrm{Rem}_2([a_{\beta}^i, b_{\beta}^i]) 
\]
with $  \mathrm{Rem}_1$ and $ \mathrm{Rem}_2$ from \eqref{Rem1} and  \eqref{Rem2}, respectively,
 and  where $(a_{\beta}^i)_{i\in \DIN k3}$ and $(b_{\beta}^i)_{i\in \DIN k3}$  are the points 
associated with  $(s_k^{i-1})_{i\in \DIN k3}$ and $(s_k^{i})_{i\in \DIN k3}$, respectively,
as in Lemma \ref{l:DK3}. 
Furthermore, we  observe that 
\[
\begin{aligned}
\sum_{i \in \DIN k1} \calR(\sfz(s_k^i){-}\sfz(s_k^{i-1})) +  \sum_{i \in \DIN k2} \calR(\sfz(s_k^i){-}\sfz(s_k^{i-1}))+ 
\sum_{i \in \DIN k3} \calR(\sfz(s_k^i){-}\sfz(s_k^{i-1}))  
& = \calR(\sfz(S){-}\sfz(0)) 
\\
&
= \int_0^S \calR(\sfz'(s)) \dd s 
\end{aligned}
\]
and, analogously,  
\[
\begin{aligned}
& \sum_{i \in \DIN k1} \{  \Epp(\sft(s_k^i),\sfq(s_k^i)) {-}  \Epp(\sft(s_k^{i-1}),\sfq(s_k^{i-1})) \}
\\ & + \sum_{i \in \DIN k2} \{  \Epp(\sft(s_k^i),\sfq(s_k^i)) {-}  \Epp(\sft(s_k^{i-1}),\sfq(s_k^{i-1})) \}
\\ & + \sum_{i \in \DIN k3} \{  \Epp(\sft(s_k^i),\sfq(s_k^i)) {-}  \Epp(\sft(s_k^{i-1}),\sfq(s_k^{i-1})) \}
=  \Epp(\sft(S), \sfq(S)) -  \Epp(\sft(0), \sfq(0)),
\end{aligned}
\]
while
\[
\begin{aligned}
&
 \sum_{i \in \DIN k2} \int_{s_k^{i-1}}^{s_k^i} \calD(\sfq'(s))\, \calD^*(\sft(s), \sfq(s)) \dd s 
+ \sum_{i \in \DIN k3} \int_{s_k^{i-1}}^{s_k^i} \calD(\sfq'(s))\, \calD^*(\sft(s), \sfq(s)) \dd s
\\
&   \leq  \int_{(0,S) \cap A^\circ} \calD(\sfq'(s))\, \calD^*(\sft(s), \sfq(s)) \dd s\,.
\end{aligned}
\]
\par
Adding up \eqref{cornerstone-DIN1},  \eqref{cornerstone-DIN2}, and  \eqref{cornerstone-DIN3}, we eventually obtain
\begin{equation}
\label{overall-LEDI-discrete} 
\begin{aligned}
&
\FWE {}{\mathsf{w}}k  - \FWE {1}{\mathsf{F},\mathsf{w}}k   - \FWE {2}{\mathsf{F},\mathsf{w}}k   +  \Epp(\sft(0), \sfq(0))
\\
&
 \leq   \Epp(\sft(S), \sfq(S))
+ \FHV k  (\sft, \sfq; [0,S]) 
 + \int_0^S \calR(\sfz'(s)) \dd s  
 + \int_{(0,S) \cap A^\circ} \!\!\!\!\!\!\!\!\!\!\calD(\sfq'(s))\, \calD^*(\sft(s), \sfq(s)) \dd s
\\
& \quad  +\FREM k ([0,S])
+ \eta \mathrm{M}  \sum_{i\in \DIN k2}
 \frac1{\min_{s\in [s_k^{i-1}, s_k^i]}  \mathcal{D}^*(\sft(s),\sfq(s))}  
 \\
 & \quad 
  +  \eta \mathrm{M}  \sum_{i\in \DIN k3}
 \frac1{\min_{s\in [a_{\beta}^i,b_{\beta}^i]}  \mathcal{D}^*(\sft(s),\sfq(s))}
 + \beta\, \#(\DIN k3)
 \end{aligned}
\end{equation}
where the very last term on the right-hand side derives from adding up,
for each index $ i \in \DIN k3$,
the term 
$\beta$ on the r.h.s.\ of \eqref{cornerstone-DIN3}. 
Recalling \eqref{eqs:2906191823}, set
\begin{equation}\label{0809231031}
\widetilde{M}:=\sup_{s\in [0,S]} \left( \|\sfs(s)\|_{L^2(\Omega)} + \|\sfF(s)\|_{\mathrm{BD}(\Omega)^*} + \|
\sfu(s) + \sfw(s)\|_{\mathrm{BD}(\Omega)} + \|\sfz(s)\|_{L^\infty(\Omega)} \right).
\end{equation} 

Let us take the limit in \eqref{overall-LEDI-discrete} as  $\eta \to 0$, $\beta \to 0$ and  $k\to +\infty$  in this order. 
 In fact, for any fixed $k\in \N$ it is possible to choose $\beta=\beta(k)$ and then $\eta=\eta(\beta,k)$ in such a way  to make the last three terms in the r.h.s.\ of  \eqref{overall-LEDI-discrete}  arbitrarily small.
 More precisely, for fixed $k\in \N $ we choose $0<\beta\ll1$ in such a way as to make the last term on the right-hand side of \eqref{overall-LEDI-discrete} 
arbitrarily small; then, we choose $0<\eta\ll 1$ , depending on the intervals $\DIN k2$, $\DIN k3$, and on the previously found $\beta$ so that the third-to-last term and the second-to-last term on the r.h.s.\ of \eqref{overall-LEDI-discrete} are arbitrarily small.

Moreover, in the following lines we will show   that also the (discrete) terms depending on $k$ in \eqref{overall-LEDI-discrete} may be made arbitrarily close to their continuous counterparts in the lower energy-dissipation inequality \eqref{LEDI}, as $k\to +\infty$ for suitable $\beta=\beta(k)$.

 Let us introduce the partition of $[0,S]$
\[
(\mathfrak{s}_k^j)_{j=0}^{L_k}:=\bigcup_{i \in \DIN k1} \{s_k^i\} \cup \bigcup_{i \in \DIN k2} \bigcup_{j=1}^{M_{\eta,i}} \{r^j_{\eta,i}\} \cup \bigcup_{i \in \DIN k3} \left( \bigcup_{j=1}^{L_{\eta,i}} \{\varrho^j_{\eta,i}\} \cup \{s_k^i\} \right).
\]
In fact, the number of nodes  of this partition  also depends on $\eta$, but for shorter notation we do not highlight this dependence. Alternatively, one could formally consider a suitable vanishing sequence $(\eta_k)_k$, to avoid explicit dependence on $\eta$.

 Thus, we may express 
 \begin{equation*}
 \begin{split}
 \FWE {}{\mathsf{w}}k &= \sum_{j=1}^{L_k} \langle \tfrac12 \left( \serifsigma(\mathfrak{s}_k^{j-1}) + \serifsigma(\mathfrak{s}_k^j)\right) , \EE(\sfw(\mathfrak{s}_k^{j})-\sfw(\mathfrak{s}_k^{j-1})) \rangle_{L^2(\Omega)} 
 \\  &- \tfrac12\sum_{i \in \DIN k3}  \Big( \langle   \serifsigma(a_{\beta}^i)  + \serifsigma(s_k^{i-1}), \EE(\sfw(s_k^{i-1})-\sfw(a_{\beta}^i)) \rangle_{L^2(\Omega)} \\
 & \hspace{3em}+  \langle \serifsigma(b_{\beta}^i)  + \serifsigma(s_k^{i}) , \EE(\sfw(s_k^{i})-\sfw(b_{\beta}^i)) \rangle_{L^2(\Omega)} \Big),
 \end{split}
 \end{equation*}
  so that
 \begin{equation*}
 \Big| \FWE {}{\mathsf{w}}k - \sum_{j=1}^{L_k} \langle \tfrac12 \left( \serifsigma(\mathfrak{s}_k^{j-1}) + \serifsigma(\mathfrak{s}_k^j)\right) , \EE(\sfw(\mathfrak{s}_k^{j})-\sfw(\mathfrak{s}_k^{j-1})) \rangle_{L^2(\Omega)}  \Big| \leq   \beta   \, \widetilde{M}  \,  \# (\DIN k3),
 \end{equation*}
 by \eqref{0809231026}, for $\widetilde{M}$ from \eqref{0809231031}. 

Now, it follows from 
Lemma \ref{l:approx-sigma} ahead  that 
\[
\lim_{k\to\infty}  \sum_{j=1}^{L_k} \langle \tfrac12 ( \serifsigma(\mathfrak{s}_k^{j-1}) {+}\serifsigma(\mathfrak{s}_k^j)) , \EE(\sfw(\mathfrak{s}_k^{j}){-}\sfw(\mathfrak{s}_k^{j-1})) \rangle_{L^2(\Omega)} 
=  \int_0^S \langle \serifsigma_k(s), \EE \sfw'(s) \rangle \dd s \,,
\]
and therefore we ultimately have 
\[
 \lim_{k\to\infty}   \lim_{\beta\to 0}  \FWE {}{\mathsf{w}}k=  \int_0^S \langle \serifsigma_k(s), \EE \sfw'(s) \rangle \dd s \,.
\]
Arguing similarly, we may  infer 
\[
\Big| \FWE 1{\mathsf{F},\mathsf{w}}k  -\  \sum_{j=1}^{L_k}  \langle \tfrac12 \left( \sfF(\mathfrak{s}_k^{i-1}) +  \sfF(\mathfrak{s}_k^i) \right), \sfw(\mathfrak{s}_k^{i})-\sfw(\mathfrak{s}_k^{i-1}) \rangle_{\BD(\Omega)}  \Big| \leq  \beta   \, \widetilde{M}  \# (\DIN k3),
\] 
 and  thanks to Lemma  \ref{l:approx-Fw'} conclude that 
\[
\lim_{k\to \infty}  \lim_{\beta\to 0}  \FWE 1{\mathsf{F},\mathsf{w}}k  = 
%
 \int_0^S \langle \sfF(s), \EE\sfw'(s)\rangle_{\BD(\Omega)} \dd s.
\]
Likewise,  and employing now  Lemma \ref{l:approx-F'w}, we have 
\[
\lim_{k\to \infty}  \lim_{\beta\to 0} \FWE 2{\mathsf{F},\mathsf{w}}k  = 
%
 \int_0^S \langle \sfF'(s), \EE(\sfu(s){+}\sfw(s)) \rangle_{\BD(\Omega)} \dd s.
\]
%

 The convergence
\begin{equation*}
 \lim_{k\to \infty} \lim_{\beta\to 0}  \FHV k  (\sft, \sfq; [0,S]) = \int_0^S \Hpp(\sfz(s), \sfp'(s)) \dd s
\end{equation*}
follows from \cite[Lemma~8.2]{DalDesSol11}, noticing that (similarly as before)
 we may  obtain 
\[
\begin{aligned}
\Bigg|\FHV k  (\sft, \sfq; [0,S]) & -\tfrac12 \sum_{j=1}^{L_k} \Big[
 \Hpp\left(\sfz(\mathfrak{s}_k^{j-1}), \sfp(\mathfrak{s}_k^j){-}\sfp(\mathfrak{s}_k^{j-1})\right)  
 \\
  & + \Hpp\left(\sfz(\mathfrak{s}_k^j), \sfp(\mathfrak{s}_k^{j}){-}\sfp(\mathfrak{s}_k^{j-1})\right) \Big ] \Bigg| \leq \beta \, \widetilde{M}  \, \# \DIN k3 \,.
  \end{aligned}
\]

Eventually, $\FREM k ([0,S])\to 0$ by the following Lemma~\ref{le:0504232015}.
All in all, we conclude the lower energy-dissipation inequality \eqref{LEDI}. 
\end{proof}
%
%

\noindent
 We  prove the following  technical lemma employed in the proof of Proposition~\ref{prop:LEDI}. 
\begin{lemma}\label{le:0504232015}
There exists a constant $K_{\mathrm{M}}>0$, only depending on the
toughness constant $\kappa$, on the 
 constants $K_W $ and  $K_\bbC$ from \eqref{0504232022}, and on $\mathrm{M}>0$ from \eqref{straight-fwd-estimates},  such that 
\begin{equation}
\label{est-REM}
\forall\, \delta\in (0,1] \  \exists\, \bar{k} \in \N \  \forall\, k \geq \bar k \, : \quad \FREM k ([0,S])\leq \delta K_{\mathrm{M}}\,.
\end{equation}
\end{lemma}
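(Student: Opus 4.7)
The plan is to estimate $\FREM k([0,S])$ by factoring each summand as a product of the small factor $\|\sfz(s_2)-\sfz(s_1)\|_{L^\infty(\Omega)}$ and the signed weight $\Delta_1(s_1,s_2)$, and separately controlling each factor. The first task is to show that the $L^\infty(\Omega)$-difference of $\sfz$ is uniformly below $\delta$ on every sub-interval entering $\FREM k$; the second is to bound the positive part $\sum \Delta_1^+$ by a universal constant $K_\mathrm{M}$ depending only on $\kappa,K_W,K_\bbC$ and $\mathrm{M}$.

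For the first factor I would use that $\sfz\in \rmC^0([0,S];L^\infty(\Omega))$, which follows from the weak continuity of $\sfz$ into $\Hs(\Omega)$ (cf.\ \eqref{additional-regularity}) combined with the compact embedding $\Hs(\Omega)\Subset L^\infty(\Omega)$ (valid since $\mathrm{m}>n/2$). Uniform continuity on the compact $[0,S]$ then yields, for every $\delta\in(0,1]$, a modulus $\omega_\delta>0$ such that $|s_2-s_1|<\omega_\delta$ implies $\|\sfz(s_2)-\sfz(s_1)\|_{L^\infty(\Omega)}<\delta$. By \eqref{0809231252}, for $k$ sufficiently large the fineness of $\mathscr{P}_k$ lies below $\omega_\delta$, so $\|\sfz(s_k^i)-\sfz(s_k^{i-1})\|_{L^\infty(\Omega)}<\delta$ for every $i$. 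Crucially, the pointwise monotonicity of $\sfz$ in time (encoded in $\partial\mathrm{R}$, recall \eqref{damage-pot-intro}) yields the sandwich $0\le \sfz(s_1,x)-\sfz(s_2,x)\le \sfz(s_k^{i-1},x)-\sfz(s_k^i,x)$ for $s_k^{i-1}\le s_1\le s_2\le s_k^i$, whence $\|\sfz(s_2)-\sfz(s_1)\|_{L^\infty(\Omega)}<\delta$ uniformly over every sub-interval appearing in the three sums defining $\FREM k$, irrespective of how the inner sub-partitions from Lemmas~\ref{l:DK2}--\ref{l:DK3} are chosen.

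For the second factor I split $\Delta_1^+\le \Delta_1+\Delta_1^-$. Summed over the refinement of $\mathscr{P}_k$ obtained by joining all sub-partitions, the signed quantity $\sum \Delta_1$ is controlled by telescoping of $\Phi\circ\sfz$ (which produces $K_\bbC(\Phi(\sfz(S))-\Phi(\sfz(0)))$ up to an $O(\beta)$ correction from the excluded pieces $[s_k^{i-1},a_\beta^i]\cup[b_\beta^i,s_k^i]$ for $i\in \DIN k3$, handled by Lemma~\ref{l:tech-approx-energies} with $\beta$ chosen small as a function of $k$), together with telescoping of both $\calR(\sfz(s_j)-\sfz(s_{j-1}))=\kappa\|\sfz(s_{j-1})-\sfz(s_j)\|_{L^1(\Omega)}$ and $\|\sfz(s_j)-\sfz(s_{j-1})\|_{L^1(\Omega)}$ down to $\|\sfz(0)-\sfz(S)\|_{L^1(\Omega)}$ by the same monotonicity. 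All three limiting quantities are bounded by constants depending only on $\mathrm{M}$ from \eqref{straight-fwd-estimates}, yielding $\sum \Delta_1\le C$.

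The hard part is bounding $\sum \Delta_1^-$, since $\sfz$ is only weakly continuous into $\Hs(\Omega)$, so $\Phi\circ\sfz$ is not of bounded variation a priori and a naïve triangle-inequality for $\sum |\Phi(\sfz(s_2))-\Phi(\sfz(s_1))|$ fails. The idea is to invert $\ARV\ge 0$ from Proposition~\ref{p:cornerstone} into the one-sided lower bound $\Phi(\sfz(s_2))-\Phi(\sfz(s_1))\ge -\calR(\sfz(s_2)-\sfz(s_1))-\|\sfz(s_2)-\sfz(s_1)\|_{L^2}\frac{\congdist z{\sft(s_1)}{\sfq(s_1)}+\congdist z{\sft(s_2)}{\sfq(s_2)}}{2}-K_W\|\sfz(s_2)-\sfz(s_1)\|_{L^1(\Omega)}\|\sfz(s_2)-\sfz(s_1)\|_{L^\infty(\Omega)}$, which translates into the pointwise estimate $\Delta_1^-\le K_\bbC\|\sfz(s_2)-\sfz(s_1)\|_{L^2}\frac{\congdist z{\sft(s_1)}{\sfq(s_1)}+\congdist z{\sft(s_2)}{\sfq(s_2)}}{2}$. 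The sum of the right-hand side over the sub-partitions supplied by Lemma~\ref{le:mediesci} is bounded by $K_\bbC\int_0^S\calD(\sfq'(r))\calD^*(\sft(r),\sfq(r))\dd r$ plus an $\eta$-correction, exactly thanks to estimate \eqref{est-on-subpart} of Lemma~\ref{l:estimate-fully-unstable}. Since $\int_0^S\calD\calD^*\le \mathrm{M}$ by \eqref{straight-2}, and since the $\eta$-correction is absorbed by the freedom in choosing the auxiliary sub-partitions small enough, we obtain $\sum \Delta_1^-\le K_\bbC\mathrm{M}+o(1)$. Combining with the previous step, $\sum \Delta_1^+\le K_\mathrm{M}$, and finally $\FREM k([0,S])\le \delta\sum \Delta_1^+\le \delta K_\mathrm{M}$, as required.
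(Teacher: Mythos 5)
Your proof is correct, and its first half (extracting the small factor $\delta$ via uniform continuity of $\sfz$ into $L^\infty(\Omega)$, with the monotonicity observation to control the $L^\infty$-oscillation on every sub-interval node) coincides with the paper's. The second half takes a genuinely different route. The paper bounds the three sums $\sum_{i\in\DIN k\ell}\Delta_1$, $\ell=1,2,3$, separately and relies on the estimate $\sum_{i\in\DIN k\ell}\bigl(\Phi(\sfz(s_k^i))-\Phi(\sfz(s_k^{i-1}))\bigr)\le 2\sup_{s}\Phi(\sfz(s))$, stated without further justification; since $\Phi\circ\sfz$ is only lower semicontinuous and a priori not of bounded variation, increments over a proper sub-family of indices can in principle accumulate, so this step is not automatic. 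You instead decompose $\Delta_1^+=\Delta_1+\Delta_1^-$ and sum over the whole refined partition: $\sum\Delta_1$ is controlled by the uniform bound on $\Phi\circ\sfz$ together with the telescoping of $\calR$ and $\|\cdot\|_{L^1}$ (with the $\beta$-gap correction supplied by Lemma \ref{l:tech-approx-energies}), while $\sum\Delta_1^-$ is controlled by inverting $\ARV\ge0$ to obtain $\Delta_1^-\le K_\bbC\|\sfz(s_2)-\sfz(s_1)\|_{L^2}\tfrac{\congdist z{\sft(s_1)}{\sfq(s_1)}+\congdist z{\sft(s_2)}{\sfq(s_2)}}{2}$ (which in particular vanishes on $\DIN k1$ intervals) and then summing via \eqref{est-on-subpart} and \eqref{straight-2}. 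This is arguably more transparent, as it produces an explicit negative-variation bound for $\Phi\circ\sfz$ along the refined partitions, which is precisely the ingredient the paper's terse claim presupposes. The price is additional bookkeeping of the $\beta$-gaps — note you need the full conclusion \eqref{approx-energies}, including the $\Phi$-convergence, rather than only the $\Epp$-estimate recorded in \eqref{control-of-energies}, so the choice of $a_\beta^i,b_\beta^i$ must be adjusted accordingly — and of the $\eta$-corrections from \eqref{est-on-subpart}; both are, however, absorbed for each fixed $k$ by choosing $\beta=\beta(k)$ and then $\eta=\eta(\beta,k)$ small enough, exactly as done in the proof of Proposition \ref{prop:LEDI}.
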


\begin{proof}
 Recall that  
 $\sfz \in \rmC^0([0,S];L^\infty(\Omega))$.
Therefore, 
 for every 
 $\delta \in (0,1]$
 there exists $\bar k \in \N$ such that, for $k\geq \bar  k$ the fineness $\max_{i=1,\ldots, N_k}(s_k^i{-}s_k^{i-1})$ of the partition is so small that 
\[
\sup_{i =1,\ldots, N_k} \| \sfz(s_k^i){-}\sfz(s_k^{i-1})\|_{L^\infty(\Omega)} \leq \delta\,.
\]
Then,
\begin{equation}
\label{estimate-rem1}
\begin{aligned}
&
\sum_{i\in \DIN k1}  \mathrm{Rem}_1([s_k^{i-1},s_k^i]) 
\\
 & \leq \delta \sum_{i\in \DIN k1}
\Big(  K_{\bbC} \left\{  \Phi(\sfz(s_k^i)) {-}  \Phi(\sfz(s_k^{i-1})) {+}  \calR(\sfz(s_k^i){-}\sfz(s_k^{i-1})) \right\}
\\
& \qquad \qquad \qquad 
   +K_W \|\sfz(s_k^i){-}\sfz(s_k^{i-1})\|_{L^1} \left\{ 1{+}
 K_{\bbC}  \|\sfz(s_k^i){-}\sfz(s_k^{i-1})\|_{L^\infty}  \right\}\Big)
\\
& 
\stackrel{(1)}{\leq} \delta \left(  (2C_{\mathrm{M}} {+}\mathrm{M}) K_{\bbC} +\frac1{\kappa}\mathrm{M} K_W (1+ K_{\bbC} \delta) \right) \doteq \delta \widehat{K}_{\mathrm{M}}
\end{aligned}
\end{equation}
with $\kappa>0$ the toughness constant from \eqref{damage-pot-intro}. 
Now, for {\footnotesize (1)}  we have used that 
\[
 \sum_{i\in \DIN k1}  \left( \Phi(\sfz(s_k^i)) {-}  \Phi(\sfz(s_k^{i-1}))\right) \leq 2 \sup_{s \in [0,S]} \Phi(\sfz(s)) \leq 2C_{\mathrm{M}},
 \]
 for some positive constant $C_{\mathrm{M}}$ only depending on the constant $\mathrm{M}>0$ from \eqref{straight-fwd-estimates} and on  the problem data,
 since $\sup_{s\in [0,S]} |\Epp(\sft(s),\sfq(s))|\leq \mathrm{M}$. We have also used that  
  \[
 \left.
 \begin{array}{rr}
 &
  \sum_{i\in \DIN k1} \calR(\sfz(s_k^i){-}\sfz(s_k^{i-1})) 
  \\
  &
  \kappa  \sum_{i\in \DIN k1}  \|\sfz(s_k^i){-}\sfz(s_k^{i-1})\|_{L^1} 
  \end{array}
  \right\}
  \leq \int_0^S \calR(\sfz'(s)) \dd s \leq \mathrm{M}\,.
 \]
 By adapting the above calculations we estimate for each $i\in \DIN k2$
 \[
 \begin{aligned}
 & \mathrm{Rem}_2( [s_k^{i-1},s_k^i] ) 
 \\
  & 
  =   \sum_{j=1}^{\cardi \eta i} \Delta(\subdiv \eta i {j-1},\subdiv \eta i {j})   \|\sfz(\subdiv \eta i j){-}\sfz(\subdiv \eta i {j-1})\|_{L^\infty}
  \\ & \leq \delta  \sum_{j=1}^{\cardi \eta i} \Big(  K_{\bbC} \left\{  \Phi(\sfz(\subdiv \eta ij)) {-}  \Phi(\sfz(\subdiv \eta i {j-1})) {+}  \calR(\subdiv \eta ij){-}\sfz(\subdiv \eta i {j-1})) \right\}
  \\ & \qquad \qquad \quad  +K_W \|\sfz(\subdiv \eta ij){-}\sfz(\subdiv \eta i{j-1})\|_{L^1} \left\{ 1{+}
 K_{\bbC}  \delta \right\}\Big)
 \\
 & 
\stackrel{(2)}{\leq} \delta \left( \Phi(\sfz(s_k^i){-}  \Phi(\sfz(s_k^{i-1}) {+}   \calR(\sfz(s_k^i){-}\sfz(s_k^{i-1})) {+} K_W (1{+}K_{\bbC} \delta) \int_{s_k^{i-1}}^{s_k^i} \| \sfz'(s)\|_{L^1(\Omega)} \dd s \right)
 \end{aligned}
 \]
 where for {\footnotesize (2)} we have used that the points  $(\subdiv \eta ij)_{j=1}^{\cardi \eta i}$ provide a partition of the interval $[s_k^{i-1},s_k^i]$. 
 Therefore, 
 \begin{equation}
 \label{estimate-rem2} 
 \begin{aligned}
 & \sum_{i\in \DIN k2}   \mathrm{Rem}_2( [s_k^{i-1},s_k^i] ) 
 \\
 & \leq \delta \sum_{i\in \DIN k2}   \left( \Phi(\sfz(s_k^i){-}  \Phi(\sfz(s_k^{i-1}) {+}   \calR(\sfz(s_k^i){-}\sfz(s_k^{i-1})) {+} K_W (1{+}K_{\bbC} \delta) \int_{s_k^{i-1}}^{s_k^i} \| \sfz'(s)\|_{L^1(\Omega)} \dd s \right)
 \\
 &
\leq  \delta \widehat{K}_{\mathrm{M}}
\end{aligned}
 \end{equation}
 by the very same arguments as in the proof of \eqref{estimate-rem1}.  
 In a completely analogous way we find that 
  \begin{equation}
 \label{estimate-rem3} 
 \sum_{i\in \DIN k3}   \mathrm{Rem}_2( [a_\eta^{i},b_\eta^i] ) \leq \delta  \widehat{K}_{\mathrm{M}}\,.
 \end{equation}
Eventually, adding \eqref{estimate-rem1}, \eqref{estimate-rem2}, and 
\eqref{estimate-rem3},   we conclude \eqref{est-REM} with $K_{\mathrm{M}}: = 3\widehat{K}_{\mathrm{M}}$.
\end{proof}

\bigskip

\appendix 
\section{} We collect here some technical results employed in the paper.

\subsection{Proof of Lemma \ref{le:mediesci}}
\label{ss:proofLemVito}
  We provide here the proof of Lemma \ref{le:mediesci}.  
\medskip

\noindent
\textbf{Step 1.}
Let 
\begin{equation*}
m_1:=\min_{x \in [a,b]} \psi(x)\,.
\end{equation*}
Since $\psi$ 
attains its minimum on $[a,b]$ and is finite in $a$ and $b$, we have 
that $m_1 \in (0,+\infty)$. 
Then, the set 
\[
A_1:=\{ x \in [a,b] \colon \psi(x)\leq m_1+\eta\}
\]
is not empty.
Since $\psi$ is lower semicontinuous, $A_1$ is a compact subset  of $\R$; let  
\begin{equation}\label{2012201630}
a_1:=\min A_1\,; \qquad b_1:= \max A_1\,;  \qquad J_1:=\{a_1,\, b_1\}\,.
\end{equation}
Let us distinguish different cases:
\begin{itemize}
\item[\textbf{Case 1.I:}] \textbf{$a_1=a$, $b_1=b$.} 
If $a_1=a$, $b_1=b$, we set $\subd_\eta^0:=a_1=a$ and $\subd_\eta^1:=b_1=b$. 
We have that 
\begin{equation*}
\psi(s) \geq \frac{1}{2}\Big(\psi(\subd_\eta^0) + \psi(\subd_\eta^1) \Big)-\eta\quad \text{for every } s\in (a,b)\,,
\end{equation*} 
so that the partition given by  $J_1=(\subd_\eta^j)_{j=0}^1$ satisfies \eqref{2012201629} and the proof is finished. 
\item[\textbf{Case 1.II:}] Otherwise,  three subcases may occur:
\begin{itemize}
\item[\textbf{Case 1.IIa:}]
 \textbf{$a<a_1 \leq b_1 =b$} (notice that possibly $a_1=b_1$).
  We define the function  $\widehat{\psi}_1^-\colon [a,a_1]\to (0,+\infty]$ as
 \begin{equation}\label{2012201708}
 \widehat{\psi}_1^-(x):=
 \begin{dcases}
 \psi(x)& \quad x \in [a,a_1)\,,
 \\
 \liminf_{r\to a_1^-}\psi(r) & \quad x=a_1\,.
 \end{dcases}
 \end{equation}
 \item[\textbf{Case 1.IIb:}]
\textbf{$a=a_1  \leq b_1 < b$.} 
We 
define $\widehat{\psi}_1^+\colon [b_1,b]\to (0,+\infty]$ as
 \begin{equation}\label{2012201707}
 \widehat{\psi}_1^+(x) :=
 \begin{dcases}
 \psi(x),& \quad x \in (b_1,b]\,,\\
 \liminf_{r\to b_1^+} \psi(r) & \quad x=b_1\,. 
 \end{dcases}
 \end{equation}
 \item[\textbf{Case 1.IIc:}] \textbf{$a <a_1 \leq b_1 <b$.} We  
  define $\widehat{\psi}_1^-\colon [a,a_1]\to (0,+\infty]$, $\widehat{\psi}_1^+\colon [b_1,b]\to (0,+\infty]$ as in \eqref{2012201708} and \eqref{2012201707}, respectively.
\end{itemize}
\end{itemize}
In any case, if $a_1<b_1$, we get
\begin{equation}\label{2012201856}
\psi(s) \geq \frac{1}{2} \Big(\psi(a_1) + \psi(b_1) \Big) - \eta \quad\text{for }s \in (a_1,b_1)\,.
\end{equation}
This concludes the discussion in  Step 1.
\medskip

\noindent
\textbf{Step 2.} Let
\begin{equation*}
m_2^\pm:= \min \widehat{\psi}_1^\pm\,.
\end{equation*}
We notice that $m_2^+$, $m_2^- \geq m_1 +\eta$. Then we define  
\[
A_2^-:=\{x \in [a, a_1] \colon \widehat{\psi}_1^-(x) \leq m_2^-+\eta\}\,,\quad A_2^+:=\{x \in [b_1,b] \colon \widehat{\psi}_1^+(x) \leq m_2^++\eta\}\,.
\]
Similarly as in Step~1, we consider 
\[
a_2:=\min A_2^-\,,\qquad b_2:=\max A_2^+\,,
\]
and 
%
observe that by construction
\begin{subequations}\label{eqs:2012201859}
\begin{equation}\label{2012201859}
\psi(s)\geq\frac{1}{2}\Big(\psi(a_2) + \psi(a_1) \Big) - \eta \quad\text{for }s \in (a_2, a_1)\,,
\end{equation}
\begin{equation}\label{2012201900}
\psi(s)\geq\frac{1}{2}\Big(\psi(b_1) + \psi(b_2) \Big) - \eta \quad\text{for }s \in (b_1, b_2)\,,
\end{equation}
where possibly $a_1=a_2$, $b_1=b_2$.
We set 
\[
J_2: = J_1 \cup \{a_2,\, b_2\}\,.
\]
\end{subequations}
As in Step~1, we distinguish different cases:
\begin{itemize}
\item[\textbf{Case 2.I:}]\textbf{$a_2=a$, $b_2=b$.} We have that the points of the set $J_2$
  provide a partition of $[a,b]$ satisfying 
 in view of \eqref{2012201856} and \eqref{eqs:2012201859}, and this finishes the proof.
\item[\textbf{Case 2.II:}]
We again distinguish three sub-cases:
\begin{itemize}
\item[\textbf{Case 2.IIa:}] \textbf{$a<a_2\leq b_2=b$.}
We define 
$\widehat{\psi}_2^-\colon [a,a_2]\to (0,+\infty]$ as
 \begin{equation}\label{2012201915}
 \widehat{\psi}_2^-(x):=
 \begin{dcases}
 \psi(x)& \quad x \in [a,a_2)\,,\\
 \liminf_{r\to a_2^-}\psi(r)& \quad x =a_2\,,
 \end{dcases}
 \end{equation}
\item[\textbf{Case 2.IIb:}] \textbf{$a=a_2\leq b_2<b$.}
We define
$\widehat{\psi}_2^+\colon [b_2,b]\to (0,+\infty]$ as
 \begin{equation}\label{2012201915'}
 \widehat{\psi}_2^+(x):=
 \begin{dcases}
 \psi(x)& \quad x \in (b_2,b]\,,
 \\
 \liminf_{r\to b_2^+}\psi(r) & \quad r=b_2\,.
 \end{dcases}
 \end{equation}
 \item[\textbf{Case 2.IIc:}] \textbf{$a<a_2\leq b_2<b$.} We consider both $\widehat{\psi}_2^-$ and $\widehat{\psi}_2^+$. 
 \end{itemize}
 \end{itemize}
  This concludes the discussion of Step $2$. 
 \medskip

\noindent
 \textbf{Step 3.} We consider  
$
m_3^\pm:= \min \widehat{\psi}_3^\pm\,.
$
and accordingly set 
\[
\begin{aligned}
&
A_3^-:=\{x \in [a, a_2] \colon \widehat{\psi}_2^-(x) \leq m_2^-+\eta\}\,,\quad A_3^+:=\{x \in [b_2,b] \colon \widehat{\psi}_2^+(x) \leq m_2^++\eta\}\,.
\\
&
a_3:=\min A_3^-\,,\qquad b_3:=\max A_3^+\,, \qquad J_3 = J_2 \cup \{a_3, b_3\}\,.
\end{aligned}
\]
If $a_3=a$ and $b_3=b$, then the proof is finished. Otherwise, 
%
%
similarly as in the previous steps,
 If $a_3=a$ and $b_3<b$,   in the subsequent step we have still to refine the  partition only ``from the right'' by resorting to a function $\widehat{\psi}_3^+ $defined in analogy with $\widehat{\psi}_1^+$ , $\widehat{\psi}_2^+$,  while ``from the left'' the partition is fine for our purposes. Similarly,  if $a_3>a$ and $b_3=b$,  in the next step we have still to refine the  partition only ``from the left''  by using $\widehat{\psi}_3^-$, 
   while ``from the right'' the partition is fine. In this way, we arrive at  
 \medskip
 
 \noindent
\textbf{Step $n$.} We are given the points in 
\[
J_{n-1}=\{a_{n-1},\dots, a_1, b_1, \dots, b_{n-1}\}
\]
with $a_{n-1}\leq a_{n-2}\leq \dots \leq a_1 \leq b_1 \leq \dots \leq b_{n-1}$.
Expressing $J_{n-1}$ as $\{\tilde{\subd}^1, \dots, \tilde{\subd}^{m}\}$ with $m\leq 2(n-1)$ and $\tilde{\subd}^j < \tilde{\subd}^{j+1}$ for $j \in \{1, \dots, m-1\}$, it holds that
\begin{equation}\label{2012202018}
\psi(s) \geq \frac{1}{2}\Big(\psi(\tilde{\subd}^j) + \psi(\tilde{\subd}^{j+1}) \Big) - \eta \quad \text{for every } s\in (\tilde{\subd}^j, \tilde{\subd}^{j+1}) \quad \text{and every } j \in \{1, \dots, m-1\}\,.
\end{equation} 
If $a<a_{n-1}$ we consider the function
\begin{equation*}\label{2012202023}
 \widehat{\psi}_{n-1}^-(x):=
 \begin{dcases}
 \psi(x)& \quad x \in [a,a_{n-1})\,,\\
 \liminf_{r\to a_{n-1}^-}\psi(r)& \quad x=a_{n-1}\,,
 \end{dcases}
 \end{equation*}
 and if $b_{n-1}<b$ we  also introduce  the function
 \begin{equation*}\label{2012202024}
 \widehat{\psi}_{n-1}^+(x):=
 \begin{dcases}
 \psi(x)& \quad x \in (b_{n-1},b]\\
\liminf_{r\to b_{n-1}^+}\psi(r) & \quad x=b_{n-1}. 
 \end{dcases}
 \end{equation*}
Let $m_n^\pm:=\min \widehat{\psi}_{n-1}^\pm$. We have 
 exist and 
 \begin{equation}\label{2012202055}
 m_n^\pm\geq m_{n-1}^\pm +\eta\geq \dots \geq m_1 + (n-1)\eta\,.
 \end{equation}
 We repeat the procedure performed in the previous step:
 we set  
 \begin{subequations}\label{eqs:2012202056}
\begin{equation}\label{2012202056}
A_n^-:=\{x \in [a, a_{n-1}] \colon \widehat{\psi}_{n-1}^-(x) \leq m_n^-+\eta\}\,,\quad A_{n}^+:=\{x \in [b_{n-1},b] \colon \widehat{\psi}_{n-1}^+(x) \leq m_n^++\eta\}\,,
\end{equation}
\begin{equation}\label{2012202057}
a_n:=\min A_n^-\,,\qquad b_n:=\max A_n^+\,.
\end{equation}
\end{subequations}
By construction
\begin{subequations}\label{eqs:2012201859'}
\begin{equation}\label{2012201859'}
\psi(s)\geq\frac{1}{2}\Big(\psi(a_n) + \psi(a_{n-1}) \Big) - \eta \quad\text{for }s \in (a_n, a_{n-1})\,,
\end{equation}
\begin{equation}\label{2012201900'}
\psi(s)\geq\frac{1}{2}\Big(\psi(b_{n-1}) + \psi(b_n) \Big) - \eta \quad\text{for }s \in (b_{n-1}, b_n)\,,
\end{equation}
where possibly $a_{n-1}=a_n$, $b_{n-1}=b_n$.
\end{subequations}\vspace{0.5em}\\
\textbf{Conclusion.}
The procedure can be then iterated, obtaining at each application a larger set of nodes satisfying the analogue of \eqref{2012201629}. In order to obtain exactly \eqref{2012201629} it is enough to find $n_\eta$ such that both $a_{n_\eta} =a$ and $b_{n_\eta}=b$.
In fact, this can be done in a finite number of steps: since $\psi(a)$, $\psi(b) \in \R$, then 
at most $\frac{\max\{\psi(a),\psi(b)\}}{\eta}+1$ steps are needed, in view of \eqref{2012202055} and the definition of $a_n$, $b_n$ (cf.\ \eqref{eqs:2012202056}).
\qed

\subsection{An elliptic regularity estimate}
\label{ss:appD}
\noindent
The following  proposition 
 uses elliptic regularity arguments as a way to derive an estimate 
 that will ultimately enhance
 the compactness properties of a suitable sequence. 


\begin{proposition}\label{prop:regolaritaellittica}
Let $(u_k)_k \subset H_{\Dir}^1(\Omega;\R^n)$, $(z_k)_k\subset \Hs(\Omega)$, $(p_k)_k \subset L^2(\Omega;\MD)$ fulfill
  $u_k \weaksto u$ in $\BD(\Omega)$,  $z_k \weakto z$ in $\Hs(\Omega)$, and $p_k \to  p$ in $\Mb(\Omega; \Mnn)$, 
  and suppose that $(u_k,z_k,p_k)$
 satisfy the equations
\begin{equation}\label{0206231033}
-\mathrm{div}(\C(z_k) \EE u_k)=-\mathrm{div}(\C(z_k) p_k) \quad\text{in }\Omega \quad \text{for every }k\in \N \,,
\end{equation}
where $\C(z)=\CV(z)\isoC$, with  $\CV \in \rmC^{1,1}(\R)$
 as in \eqref{C_dMdSS} 
 and $\isoC$ isotropic.
\par
Then,  $(\nabla u_k)_k$ is a Cauchy sequence w.r.t.\ the convergence in measure. 
  In particular,  there exists a not relabeled subsequence of $(u_k)_k$ such that  $\nabla u_k \to \nabla u$  a.e.\ in $\Omega$. 
\end{proposition}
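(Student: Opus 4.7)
The plan is to reduce the statement to the constant-coefficient elliptic regularity estimate of \cite[Thm.~9.1]{DalDesSol11} by exploiting the strong convergence of the multiplier $\CV(z_k)$. Since $\mathrm{m}>\tfrac n2$, the compact embedding $\Hs(\Omega)\Subset \rmC^0(\overline\Omega)$ yields $z_k\to z$ uniformly on $\overline\Omega$; combined with $\CV\in \rmC^{1,1}$ and the lower bound $\CV\ge c_{\CV}>0$, this forces $\CV(z_k)\to \CV(z)$ uniformly on $\overline\Omega$, with limit continuous and uniformly positive. This uniform (rather than merely weak) convergence of the coefficient is what makes it possible to treat the $z$-dependence as a harmless perturbation of a fixed Lam\'e operator.

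Next, I would rewrite \eqref{0206231033} in the equivalent distributional form
\[
-\mathrm{div}\bigl(\CV(z)\,\isoC(\sig{u_k} - p_k)\bigr)
= \mathrm{div}\bigl([\CV(z)-\CV(z_k)]\,\isoC(\sig{u_k} - p_k)\bigr)
\quad \text{in } H^{-1}(\Omega;\R^n).
\]
Since $u_k$ is bounded in $\BD(\Omega)$ and $p_k$ in $\Mb(\Omega;\MD)$, the tensor $\isoC(\sig{u_k}-p_k)$ is bounded in $\Mb(\Omega;\Mnn)$, while $\CV(z)-\CV(z_k)\to 0$ uniformly; hence the right-hand side converges to zero in a negative Sobolev space (for instance $W^{-1,n/(n-1)}(\Omega;\R^n)$). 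The left-hand side is now a Lam\'e-type operator with a \emph{fixed} coefficient $\CV(z)\in \rmC^0(\overline\Omega)$ bounded away from zero, acting on $u_k$; the sole difference with the setting of \cite[Thm.~9.1]{DalDesSol11} is this smooth multiplicative factor, which is handled by the standard $W^{1,q}$-regularity theory for Lam\'e systems with continuous coefficients (via freezing of coefficients and bootstrap).

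I would then retrace the argument of \cite[Thm.~9.1]{DalDesSol11}: first derive a uniform $W^{1,q}_{\mathrm{loc}}$-bound on $u_k$ for some $q>1$ dictated by the negative-norm bound on the right-hand side; second, apply Rellich to extract a subsequence along which $u_k\to u$ in $L^q_{\mathrm{loc}}(\Omega;\R^n)$; third, exploit the ellipticity of the Lam\'e operator with fixed coefficient $\CV(z)\isoC$ to upgrade this to pointwise a.e.\ convergence of $\nabla u_k$. Once such a.e.\ convergence holds along every subsequence, with the same limit $\nabla u$ (already identified by the $\BD$-weak$^*$ convergence and by uniqueness of the Lam\'e solution), the full sequence $(\nabla u_k)_k$ is necessarily Cauchy with respect to convergence in measure.

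The main obstacle I anticipate is the low summability of $p_k$: since $p_k$ is bounded only as a measure (not uniformly in $L^1$), the right-hand side of \eqref{0206231033} sits in a negative Sobolev space with exponent no better than $n/(n-1)$, so the $W^{1,q}$-regularity obtained is at the natural threshold for the Lam\'e system. The structural ingredient that saves the argument is precisely the isotropy of $\isoC$ assumed in \eqref{C_dMdSS}, which secures the sharp Lam\'e-type elliptic estimates needed at this exponent; this clarifies why the reduction to the structure $\bbC(z)=\CV(z)\isoC$ is exploited here, as already foreshadowed in the remark following Hypothesis~\ref{hyp:elast}.
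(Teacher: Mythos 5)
Your approach diverges structurally from the paper's, and it contains a genuine gap. You propose to freeze the coefficient at its uniform limit $\CV(z)$, absorb the discrepancy $[\CV(z)-\CV(z_k)]\isoC(\EE u_k - p_k)$ into a vanishing right-hand side, and then invoke regularity theory for the variable-coefficient operator $-\diver(\CV(z)\isoC\EE\,\cdot)$. Two steps are not justified. First, the claimed uniform $W^{1,q}_{\loc}$-bound on $u_k$ for some $q>1$ does not hold: the data available to you are bounded only in total variation (both $p_k$ and $\EE u_k$ are controlled only as measures), so the source is a divergence of $\Mb(\Omega;\Mnn)$-bounded fields, and the sharp singular-integral gain from such data is precisely the weak-$L^1$ estimate of type \eqref{DMDSS-reg} --- Calder\'on--Zygmund operators map $L^1$ (or $\Mb$) into weak-$L^1$, not into $L^q$ with $q>1$. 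So Rellich compactness for $\nabla u_k$ is not available, and one must argue directly in the topology of convergence in measure. Second, and more fundamentally, the estimate \eqref{DMDSS-reg} quoted from \cite[Thm.~9.1]{DalDesSol11} holds \emph{only} for a constant-coefficient isotropic operator, via an explicit representation formula; this is exactly the point of Remark~\ref{rmk:purtroppo}. Your appeal to ``standard $W^{1,q}$-regularity theory for Lam\'e systems with continuous coefficients (via freezing of coefficients and bootstrap)'' is therefore not covered by the cited result, and proving a variable-coefficient analogue at the weak-$L^1$ endpoint is a separate, nontrivial task.

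The paper sidesteps both difficulties through a different decomposition. By the Leibniz formulae \eqref{0206231429}, the factor $\CV(z_k)$ is absorbed \emph{into the unknown}, so that $\CV(z_k)u_k$ solves the \emph{genuinely constant-coefficient} Lam\'e system \eqref{0206231430}, with a source built from $\CV(z_k)p_k$ and from $\nabla(\CV(z_k))\odot u_k$; the latter is controlled in $L^1$ thanks to the compact embedding $\Hs(\Omega)\Subset W^{1,\gamma}(\Omega)$ for some $\gamma>n$, together with $u_k$ bounded in $L^{n/(n-1)}$. The estimate \eqref{DMDSS-reg} is then applied verbatim to the Cauchy differences $\CV(z_k)u_k-\CV(z_h)u_h$, showing that $\big(\nabla(\CV(z_k)u_k)\big)_k$ is Cauchy in measure; the uniform bounds $\CV(z_k)\ge c_{\CV}>0$ and $\CV(z_k)\to\CV(z)$ in $\rmC(\overline\Omega)$ then transfer the Cauchy property to $(\nabla u_k)_k$. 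If you want to preserve the spirit of your proposal, you would have to establish a variable-coefficient weak-$L^1$ estimate directly; the product-rule reduction is precisely how the paper avoids having to.
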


Proposition  \ref{prop:regolaritaellittica} is based on a corresponding elliptic regularity result from \cite[Thm.\ 9.1]{DalDesSol11}
that we recall below for  the sake of completeness. 

\begin{remark}
\label{rmk:purtroppo}
We observe that \cite[Theorem~9.1]{DalDesSol11} is based on an explicit representation formula for solutions to the problem of linear elasticity that holds  for  $\isoC$ isotropic (in particular with constant coefficients).   That is why, for the validity of Proposition \ref{prop:regolaritaellittica}
we  have  to resort to the structural condition  $\C(z) = \CV(z) \isoC$.  
\end{remark}

 \begin{theorem}{\cite[Thm.\ 9.1]{DalDesSol11}}
 Let   $ \isoC  \in  \Lin(\Mnn;\Mnn)$ be positive definite, symmetric,  and \emph{isotropic}. Then, for every open set $\Omega' \subset \subset \Omega$
 there exists a constant $C_{\isoC,\Omega'}>0$, only depending on $\isoC$, $\Omega$ and  $\Omega'$, such that if 
 $\pt  \in \Lnn$ and $\ut \in H_\loc^1 (\Omega;\R^n)$ satisfy the equation
 \[
-\mathrm{div} (\isoC \EE(\ut))=  -\mathrm{div} (\isoC \pt) \quad \text{in } \Omega\,,
 \]
then we have the estimate
\begin{equation}
\label{DMDSS-reg}
\|\nabla \ut \|_{1,w,\Omega'} \leq C_{\isoC,\Omega'} \left( \|\pt\|_{L^1(\Omega)}{+} \|\ut\|_{L^1(\Omega)}\right),
\end{equation}
 where  $\|\cdot\|_{1,w,\Omega'}$  denotes the weak $L^1$ norm $\|f\|_{1,w,\Omega'}:=\sup_{t>0} \mathcal{L}^n\big( \{ |f|>t\} \cap \Omega' \big)$. 
\end{theorem}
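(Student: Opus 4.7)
The strategy is to convert the variable-coefficient system into an equation of constant-coefficient isotropic elasticity by transferring the $z_k$-dependence into a modified source term, then to apply the weak-$L^1$ estimate of \cite[Thm.~9.1]{DalDesSol11} to the difference $u_k-u_h$, and finally to translate the Cauchy-in-measure property for the transformed gradient back to $\nabla u_k$ using the uniform positivity $\CV(z_k)\geq c_\CV>0$.

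Concretely, I would introduce the rescaled displacement $\hat u_k:=\CV(z_k)\,u_k$. Using $\isoC$ is isotropic (so commutes with scalar multiplication) and the identity
\[
\EE(\CV(z_k)u_k)=\CV(z_k)\,\EE u_k+u_k\odot\nabla\CV(z_k),\qquad u\odot v:=\tfrac12(u\otimes v+v\otimes u),
\]
together with \eqref{0206231033}, a direct computation shows that $\hat u_k$ solves the constant-coefficient equation
\[
-\mathrm{div}(\isoC\,\EE\hat u_k)=-\mathrm{div}(\isoC\,\hat p_k)\quad\text{in }\Omega,\qquad \hat p_k:=u_k\odot\nabla\CV(z_k)+\CV(z_k)p_k.
\]
I would then apply \cite[Thm.~9.1]{DalDesSol11} to $\hat u_k-\hat u_h$ on any $\Omega'\subset\subset\Omega$, obtaining
\[
\|\nabla(\hat u_k-\hat u_h)\|_{1,w,\Omega'}\le C_{\isoC,\Omega'}\bigl(\|\hat p_k-\hat p_h\|_{L^1(\Omega)}+\|\hat u_k-\hat u_h\|_{L^1(\Omega)}\bigr).
\]

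The core of the argument is showing that both right-hand-side terms vanish as $k,h\to\infty$. The hypothesis $\mathrm m>n/2$ and the fractional Sobolev embedding $\Hs(\Omega)\hookrightarrow W^{1,p}(\Omega)$ for some $p>n$ (compactly into $W^{1,q}$ for any $q\in(n,p)$) give $\nabla z_k\to\nabla z$ strongly in $L^q(\Omega)$ and, via $\CV'\in\rmC^{0,1}$, also $\nabla\CV(z_k)\to\nabla\CV(z)$ in $L^q$. Since $q>n$ forces $q'<n/(n-1)$, the compact embedding $\BD(\Omega)\Subset L^{q'}(\Omega)$ yields $u_k\to u$ in $L^{q'}$. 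H\"older's inequality then gives $u_k\odot\nabla\CV(z_k)\to u\odot\nabla\CV(z)$ in $L^1$, hence Cauchy; meanwhile $p_k\to p$ strongly in $L^1$ (strong $\Mb$-convergence of $L^1$ functions to an $L^1$ limit), combined with $\CV(z_k)\to\CV(z)$ uniformly (from $\Hs\Subset \rmC^0(\overline\Omega)$ and continuity of $\CV$), gives $\CV(z_k)p_k\to\CV(z)p$ in $L^1$. Altogether $\hat p_k-\hat p_h\to0$ in $L^1$, while $\hat u_k-\hat u_h\to 0$ in $L^1$ is immediate. Consequently $\{\nabla\hat u_k\}$ is Cauchy in measure on $\Omega'$.

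Finally, I invert the transformation: the identity $\nabla u_k=\CV(z_k)^{-1}\bigl(\nabla\hat u_k-u_k\otimes\nabla\CV(z_k)\bigr)$ together with $\CV(z_k)\ge c_\CV>0$, uniform convergence of $\CV(z_k)$, and strong $L^1$-convergence of $u_k\otimes\nabla\CV(z_k)$ (same H\"older argument as above) transfers the Cauchy-in-measure property to $\nabla u_k$ on every $\Omega'\subset\subset\Omega$; an exhaustion $\Omega'\uparrow\Omega$ plus a standard diagonal extraction then produces the almost-everywhere convergent subsequence. \emph{The main obstacle}, and the reason the naive choice $\tilde p_k:=(1-\CV(z_k))\EE u_k+\CV(z_k)p_k$ fails, is that controlling $\tilde p_k-\tilde p_h$ in $L^1$ would require strong $L^1$ convergence of $\EE u_k$, which is not available from the mere $\BD$-weak$^*$ hypothesis. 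The key insight is the algebraic identity that lets one absorb the problematic $\EE u_k$ into $\EE\hat u_k$ at the price of an $L^1$-controllable extra term $u_k\odot\nabla\CV(z_k)$; this is precisely where the strict condition $\mathrm m>n/2$ enters, via the matching of Sobolev and $\BD$-compactness exponents.
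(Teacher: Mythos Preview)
Your proposal does not address the stated theorem. The statement you were given is the weak-$L^1$ elliptic estimate quoted verbatim from \cite[Thm.~9.1]{DalDesSol11}; the present paper does not prove it at all---it is merely recalled for the reader's convenience as a black-box tool. Your argument instead \emph{invokes} this very estimate (``apply the weak-$L^1$ estimate of \cite[Thm.~9.1]{DalDesSol11}'') in order to establish a different result, namely Proposition~\ref{prop:regolaritaellittica} (that $(\nabla u_k)_k$ is Cauchy in measure). So you have targeted the wrong statement.

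If, however, the intended target was Proposition~\ref{prop:regolaritaellittica}, then your argument is essentially identical to the paper's own proof: the same substitution $\hat u_k=\CV(z_k)u_k$ via the Leibniz identity, transferring the $z$-dependence from the coefficient into the source; the same application of \cite[Thm.~9.1]{DalDesSol11} to the difference $\hat u_k-\hat u_h$; the same H\"older pairing of $\nabla z_k$ (strongly convergent in some $L^\gamma$, $\gamma>n$, via the compact embedding $\Hs\Subset W^{1,\gamma}$) against $u_k$ (strongly convergent in $L^{\gamma'}$ via $\BD\Subset L^{\gamma'}$); and the same inversion using $\CV\ge c_\CV>0$. Your closing remark on why the naive choice $\tilde p_k=(1-\CV(z_k))\EE u_k+\CV(z_k)p_k$ fails is an accurate diagnosis of the key algebraic point.
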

\par
We are now in a position to prove  our  result.

\begin{proof}[Proof of Proposition  \ref{prop:regolaritaellittica}]
First of all, let us suitably rewrite \eqref{0206231033}:
by Leibniz Formula
\begin{equation}\label{0206231429}
\nabla(\CV(z) u)= \CV(z) \nabla u + \nabla (\CV(z)) \otimes u, \quad \EE(\CV(z) u)= \CV(z) \EE u + \nabla (\CV(z)) \odot u \,,
\end{equation}
so that  \eqref{0206231033} reformulates as 
\begin{equation}\label{0206231430}
- \diver\big( \isoC\EE(\CV(z_k)u_k)  \big)= - \diver \Big(\isoC\big(\CV(z_k)p_k\big)+\isoC\big(\nabla(\CV(z_k)\big) \odot u_k) \Big).
\end{equation}
In particular, we have 
\[
\begin{aligned}
&
- \diver\big( \isoC \EE(\CV(z_k)u_k {-}\CV(z_h)u_h )  \big)
\\
& = - \diver \Big(\isoC\big[ (\CV(z_k)p_k{-}\CV(z_h)p_h) + (\nabla(\CV(z_k)\big) {\odot} u_k {-} \nabla(\CV(z_h)\big) {\odot} u_h) \big] \Big).
\end{aligned}
\]
Let us now fix an open set $\Omega'$
 compactly contained in $\Omega$ and apply estimate \eqref{DMDSS-reg} with choices 
 \[
 \ut= (\CV(z_k)u_k{-} \CV(z_h)u_h), \qquad \pt= \big[ (\CV(z_k)p_k{-}\CV(z_h)p_h) + (\nabla(\CV(z_k)\big) {\odot} u_k {-} \nabla(\CV(z_h)\big) {\odot} u_h) \big]\,.
 \]
 for any two indices $h,k \in \N$ with $h\leq k$. 
  Thus, we obtain 
\begin{equation}\label{0206231710}
\begin{aligned}
&
\|\nabla \big(\CV(z_k)u_k- \CV(z_h)u_h \big)\|_{1,w,\Omega'}
\\ & \leq C_{\isoC,\Omega'}  \Big(\|\CV(z_k)p_k-\CV(z_h) p_h\|_{L^1} 
+ \|\nabla(\CV(z_k)) \odot u_k 
 -\nabla(\CV(z_h)) \odot u_h\|_{L^1} 
 \\
  & \qquad + \|\CV(z_k) u_k- \CV(z_h) u_h\|_{L^1}\Big)
 \\& \leq  C_{\isoC,\Omega'} S   \Big(  \|\CV\|_{\mathrm{Lip}} \|z_k-z_h\|_{L^\infty} +  \|p_k-p_h\|_{L^1} + \|u_h-u_k\|_{L^1} 
+   \|\CV'\|_{\mathrm{Lip}}   \|z_k-z_h\|_{L^\infty} 
\\
 & \qquad + \|\nabla(z_k-z_h)\|_{L^\gamma} +  \|u_k-u_h\|_{L^{\gamma'}}  \Big)\,.
\end{aligned}
\end{equation}
Here, $\gamma >n$ is such that $\Hs(\Omega)$, $m>n/2$ is   \emph{compactly}  embedded into $W^{1,\gamma}(\Omega)$ (that is, $\mathrm{m}-n/2 > 1-n/\gamma$),
and 
\[
S: =  \sup_k\left(\|p_k\|_{L^1}+ \|\nabla z_k\|_{L^\gamma} +   \|u_k\|_{L^{n/(n{-}1)}}   + \|\CV\|_{\mathrm{Lip}}   + \|\CV'\|_{\mathrm{Lip}}\right) 
\]
  (notice that $(u_k)_k$ is bounded in  $L^{n/(n{-}1)}(\Omega;\R^n)$ as it is weakly$^*$ converging in $\BD(\Omega)$).  
  \par
Since $z_k \weakto z$ in $\Hs(\Omega)$,  we have $ \|z_k-z_h\|_{L^\infty}\to 0$ and $ \|\nabla(z_k-z_h)\|_{L^\gamma} \to 0$ as $h,k\to\infty$; likewise, 
the weak$^*$ convergence of $(u_k)_k$ in  $\BD(\Omega)$ yields strong convergence in $L^p(\Omega;\R^n)$ for every $1\leq p<\frac{n}{n-1}$, and thus $\|u_k-u_h\|_{L^{\gamma'}} \to 0$. Finally, observe that, since  $(p_k)_k \subset L^2(\Omega;\MD)$, 
$\|p_k {-}p_h\|_{\calM_{\mathrm{b}}(\Omega)} = \|p_k{-}p_h\|_{L^1(\Omega)}  $ and thus $\|p_k{-}p_h\|_{L^1(\Omega)}  \to 0 $ as $k,h\to\infty$. All in all, 
%
  from \eqref{0206231710} we conclude that  
  $\Big(\nabla\big( \CV(z_k) u_k \big) \Big)_k$ is a Cauchy sequence with respect to the convergence in measure in $\Omega$ and, by \eqref{0206231430} and the uniform convergence of $z_k$, we get the same for the sequence $\Big(\CV(z_k)\nabla  u_k \Big)_k$. 
     Now, from $z_k \weakto z$ in $\Hs(\Omega)$ we deduce that 
  $\CV(z_k) \to \CV(z)$ in $\rmC(\overline\Omega)$. Exploiting the fact that $\CV(z_k(x))\geq c_{\CV}>0$ and $\CV(z(x)) \geq c_{\CV}>0$ for all $x\in \overline\Omega$
  thanks to \eqref{C_dMdSS}, 
  from the property that  $\Big(\CV(z_k)\nabla  u_k \Big)_k$ is a Cauchy sequence w.r.t.\ convergence in measure in $\Omega$ we deduce that
 also $(\nabla u_k)_k$ is.  
\end{proof}

\subsection{An auxiliary duality result}
\label{ss:gen-duality}
Let $(\gBa,\gHi,\gBa^*)$ be a Hilbert triple (i.e.,  the Hilbert space $\gBa$ fulfills $\gBa \subset \gHi$ densely and continuously) and consider the extended $\gHi$-norm on $\gBa^*$
\[
\mathfrak{f}_{\gHi}(\xi): = \begin{cases}
\| \xi \|_{\gHi} & \text{if } \xi \in \gHi,
\\
+\infty & \text{otherwise}\,. 
\end{cases}
\]
Let $\gR: \gHi \to [0,+\infty]$ be a positively $1$-homogeneous functional, with $\gR(0)=0$. 
Consider the convex conjugate and the subdifferential of $\gR$ in the $\gBa^*$-$\gBa$ duality, i.e.
\[
\begin{aligned}
& 
\gR^* : \gBa^* \to [0,+\infty], && \gR^*(w): = \sup_{v \in \gBa} \left( \langle w, v \rangle_{\gBa} {-} \gR(v) \right),
\\
& 
\partial \gR : \gBa \rightrightarrows \gBa^* && \partial \gR(v) = \{ w\in \gR^* \, : \gR(\eta) - \gR(v) \geq  \langle w, \eta{-} v \rangle_{\gBa}  \ \forall\, \eta \in \gBa\}\,.
\end{aligned}
\] 
\begin{lemma}
\label{l:gen-duality}
In the above setup, there holds
\begin{equation*}
\inf_{w \in \partial \gR(0)} \mathfrak{f}_{\gHi}(\xi{-}w) = \sup \left\{ \pairing{}{\gBa}{\xi}{\eta} - \gR(\eta)\, : \ \eta \in \gBa\,, \  \| \eta \|_{\gHi} \leq 1 \right\}\,.
\end{equation*}
\end{lemma}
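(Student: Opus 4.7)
The plan is to recognize the identity as a standard Fenchel--Rockafellar duality on the Banach space $\gBa$. Set $f := \gR|_{\gBa}$ and $g := I_{B}$, where $B := \{\eta \in \gBa : \|\eta\|_\gHi \leq 1\}$. Both are proper convex and lower semicontinuous on $\gBa$ (for $g$, this uses the continuity of the embedding $\gBa \hookrightarrow \gHi$, which makes $B$ closed in $\gBa$). I will observe that the right-hand side of the identity is precisely the Fenchel conjugate $(f+g)^*(\xi)$ computed in the $\gBa^*$--$\gBa$ duality, and the left-hand side coincides with the infimal convolution $(f^* \,\Box\, g^*)(\xi) = \inf_{w \in \gBa^*}(f^*(w) + g^*(\xi-w))$. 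The desired equality is then the Fenchel--Rockafellar identity $(f+g)^* = f^* \,\Box\, g^*$, to be established under a suitable qualification condition.

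The two conjugates are computed as follows. Since $\gR$ is positively $1$-homogeneous with $\gR(0)=0$, a standard sublinearity argument yields $f^*(w) = 0$ for $w \in \partial \gR(0)$ and $f^*(w) = +\infty$ otherwise; that is, $f^* = I_{\partial \gR(0)}$. To show $g^* = \mathfrak{f}_\gHi$, I will argue in two steps: for $w \in \gHi$ (identified with a subset of $\gBa^*$ via the $\gHi$ pairing), Cauchy--Schwarz together with the density of $\gBa$ in $\gHi$ gives $g^*(w) = \sup_{\eta \in B}(w,\eta)_\gHi = \|w\|_\gHi$; conversely, if $w \in \gBa^*$ satisfies $g^*(w) < +\infty$, then $w$ acts as a $\gHi$-continuous linear functional on the dense subspace $\gBa$, hence extends to an element of $\gHi^* \simeq \gHi$, forcing $w \in \gHi$. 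Consequently $(f^* \,\Box\, g^*)(\xi) = \inf_{w \in \partial \gR(0)} \mathfrak{f}_\gHi(\xi - w)$, as claimed.

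The inequality $\geq$ is direct: for any $w \in \partial \gR(0)$ with $\xi - w \in \gHi$ and any $\eta \in B$, the subgradient inequality $\langle w, \eta \rangle_\gBa \leq \gR(\eta)$ combined with Cauchy--Schwarz in $\gHi$ gives
\[
\langle \xi, \eta \rangle_\gBa - \gR(\eta) \leq \langle \xi - w, \eta \rangle_\gBa = (\xi - w, \eta)_\gHi \leq \|\xi - w\|_\gHi = \mathfrak{f}_\gHi(\xi - w),
\]
whence the supremum over $\eta$ is bounded by the infimum over $w$.

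The main obstacle is the reverse inequality, which is the substantive part of the Fenchel--Rockafellar identity. Here $\gR$ is generally not continuous at any point of $\gBa$ (it may be $+\infty$ on a dense cone, as happens for the unilateral damage dissipation), and $B$ has empty $\gBa$-interior, so the classical versions of Fenchel--Rockafellar based on interior-point conditions do not apply. I will therefore invoke the Attouch--Brezis refinement, whose qualification condition only requires $\bigcup_{\lambda>0} \lambda(\mathrm{dom}(f) - \mathrm{dom}(g))$ to be a closed linear subspace of $\gBa$. By continuity of the embedding $\gBa \hookrightarrow \gHi$, there exists $C>0$ with $\{\eta \in \gBa : \|\eta\|_\gBa \leq 1/C\} \subset B$, so $\bigcup_{\lambda>0} \lambda B = \gBa$; since $0 \in \mathrm{dom}(f)$ by $\gR(0)=0$, it follows that $\bigcup_{\lambda>0} \lambda(\mathrm{dom}(f) - B) \supseteq \bigcup_{\lambda>0} \lambda(-B) = \gBa$, which is trivially a closed subspace. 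Attouch--Brezis therefore delivers $(f+g)^* = f^* \,\Box\, g^*$ with attainment of the infimum, concluding the proof.
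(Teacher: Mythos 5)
Your decomposition and the identification of both conjugates ($f^* = I_{\partial\gR(0)}$ via positive one-homogeneity, $g^* = \mathfrak{f}_{\gHi}$ via density of $\gBa$ in $\gHi$) match the paper's proof exactly, as does the reduction to the Fenchel--Rockafellar identity $(f+g)^* = f^*\,\Box\,g^*$. The difference is the constraint-qualification you invoke: the paper applies the classical duality theorem (Ioffe--Tihomirov, \cite[Thm.\ 1, p.\ 178]{IofTih79TEPe}) on the grounds that $0 \in \mathrm{dom}(\gR) \cap \mathrm{dom}(I_B)$ and $I_B$ is \emph{continuous at $0$} in $\gBa$, whereas you appeal to the Attouch--Brezis refinement.

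There is, however, an internal inconsistency in your justification for preferring Attouch--Brezis. You assert that ``$B$ has empty $\gBa$-interior, so the classical versions of Fenchel--Rockafellar based on interior-point conditions do not apply,'' and two sentences later you use the continuity of $\gBa \hookrightarrow \gHi$ to produce $C>0$ with $\{\eta \in \gBa : \|\eta\|_{\gBa} \le 1/C\} \subset B$. These two statements contradict each other: the second shows precisely that $0$ is a $\gBa$-interior point of $B$, so $I_B$ is continuous at $0$ on $\gBa$, and the classical qualification $\mathrm{dom}(f) \cap \mathrm{int}_{\gBa}(\mathrm{dom}(g)) \ne \emptyset$ already holds. (Perhaps you were conflating the $\gBa$-interior of $B$ with its interior in $\gHi$, which would indeed be empty when $\gBa \subsetneq \gHi$; but the duality is set on $\gBa$.) The Attouch--Brezis argument you then run through is itself correct and delivers the result, so the proof stands, but the passage to a heavier tool is unnecessary and motivated by a false premise. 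The paper's route is the cleaner one: it directly exploits the very ball-containment that you later use, at the point where it is first needed.
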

\begin{proof}
By $1$-homogeneity of $\gR$  we have that 
\[
\gR^*(w) = I_{\partial \gR(0)}(w)  = \begin{cases} 0  & \text{if } w \in \partial \gR(0),
\\
+\infty & \text{otherwise}
\end{cases}
\qquad \text{for all } w \in \gBa^*\,.
\]
Analogously, the convex conjugate $\mathfrak{f}_{\gHi}^* : \gBa \to [0,+\infty]$ is given in terms of the indicator function of the unit ball $B_{1}^{\gHi}(0)$ in $\gHi$
\[
\mathfrak{f}_{\gHi}^*(\eta) = I_{B_{1}^{\gHi}(0)}(\eta)  = \begin{cases} 0  & \text{if } \|\eta\|_{\gHi} \leq 1,
\\
+\infty & \text{otherwise}
\end{cases}
\qquad \text{for all } \eta \in \gBa\,.
\]
Then,
\[
\begin{aligned}
\sup \left\{ \pairing{}{\gBa}{\xi}{\eta} - \gR(\eta)\, : \ \eta \in \gBa\,,\ \| \eta \|_{\gHi} \leq 1 \right\}  & = \sup_{\eta \in \gBa}
\left\{ \pairing{}{\gBa}{\xi}{\eta} - \gR(\eta) -  I_{B_{1}^{\gHi}(0)}(\eta) \right\} 
\\
 & (\gR{+}I_{B_{1}^{\gHi}(0)})^*(\xi)  
 \\
 & \stackrel{(1)}{=}  \left( \gR^*{\stackrel{\inf}{\circ}} I_{B_{1}^{\gHi}(0)}^*\right)(\xi) 
 \\
 & 
 \stackrel{(2)}{=}  \inf_{w \in \gR^*} \left( I_{B_{1}^{\gHi}(0)}^*(\xi{-}w) + \gR^*(w) \right) 
 \\
 & 
 \stackrel{(3)}{=}  \inf_{w \in \gR^*} \left( \mathfrak{f}_{\gHi}(\xi{-}w) + I_{\partial \gR(0)}(w) \right)  
\end{aligned}
\]
where {\footnotesize (1)} follows from, e.g., the duality result from \cite[Theorem 1, p.\ 178]{IofTih79TEPe}, which is applicable since 
$0 \in \mathrm{dom}(\gR) \cap \mathrm{dom}(I_{B_{1}^{\gHi}(0)})  $ and $I_{B_{1}^{\gHi}(0)}$ is continuous at $0$.  Furthermore, 
 {\footnotesize (2)} follows  the definition of the $\inf$-convolution $\gR^* {\stackrel{\inf}{\circ}} I_{B_{1}^{\gHi}(0)}^*$, while  {\footnotesize (3)} 
 ensues from the fact that $ I_{B_{1}^{\gHi}(0)}^*= \mathfrak{f}_{\gHi}^{**} = \mathfrak{f}_{\gHi}$. 
The proof is concluded. 
\end{proof}
\begin{remark}
\label{rmk:proof-duality}
Clearly,  the representation formula \eqref{duality-formulae-1} 
for  $\congdistname p$
immediately follows with 
$\gBa=\gHi = L^2(\Omega;\MD) $, $\mathfrak{f}_{\gHi}= \| \cdot\|_{L^2(\Omega;\MD)}$, and $\gR = \mathcal{H}(z,\cdot)$.
Analogously, \eqref{duality-formulae-2} for  $\congdistname z$
 ensues with 
$\gBa = \Hs(\Omega)$, $\gHi = L^2(\Omega)$, $\mathfrak{f}_{\gHi}= \mathfrak{f}_{L^2(\Omega)}$ and $\gR= \mathcal{R}$. 
\end{remark}

\subsection{A result on the convergence of the stresses}
\label{ss:appE}
The following lemma is inspired by \cite[Proposition~6.1]{FraGia2012}.
\begin{lemma}\label{0404232220}
Let $\Omega$ be a Lipschitz domain and $(\sigma_k )_k \in \Lnn$ be such that $\sigma_k \weakto \sigma$ in $\Lnn$, $\diver \sigma_k \to \diver \sigma$ in $L^2(\Omega; \Rn)$, and $(\sigma_k)_{\dev} \to \sigma_{\dev}$ in $L^2(\Omega;\MD)$. Then $\sigma_k \to \sigma$ in $\Lnn$.
\end{lemma}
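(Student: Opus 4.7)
The strategy is to split $\sigma_k-\sigma$ into its deviatoric and hydrostatic parts and handle each separately. Set $p_k:=\tr\sigma_k$ and $p:=\tr\sigma$, so that
\[
\sigma_k-\sigma = \big[(\sigma_k)_{\dev}-\sigma_{\dev}\big] + \tfrac{1}{n}(p_k-p)\,I.
\]
The deviatoric part converges to $0$ strongly in $\Lnn$ by assumption. The whole task therefore reduces to showing $p_k\to p$ strongly in $L^2(\Omega)$.

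First I would collect what we know about $p_k$. From $\sigma_k\weakto\sigma$ in $\Lnn$ we immediately get $p_k\weakto p$ in $L^2(\Omega)$. From the decomposition above,
\[
\tfrac{1}{n}\nabla p_k = \diver\sigma_k - \diver(\sigma_k)_{\dev} \quad\text{in }\mathcal{D}'(\Omega;\Rn).
\]
Since $\diver\sigma_k\to\diver\sigma$ in $L^2(\Omega;\Rn)\hookrightarrow H^{-1}(\Omega;\Rn)$ and $(\sigma_k)_{\dev}\to\sigma_{\dev}$ in $L^2(\Omega;\MD)$, the second term converges in $H^{-1}(\Omega;\Rn)$, and therefore
\[
\nabla p_k \longrightarrow \nabla p \qquad\text{strongly in }H^{-1}(\Omega;\Rn).
\]

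Next, the weak convergence $p_k\weakto p$ in $L^2(\Omega)$, combined with the compactness of the embedding $L^2(\Omega)\hookrightarrow H^{-1}(\Omega)$ (which holds for $\Omega$ bounded Lipschitz, since it is dual to the compact embedding $H^1_0(\Omega)\hookrightarrow L^2(\Omega)$), yields $p_k\to p$ strongly in $H^{-1}(\Omega)$.

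The final step is the Ne\v{c}as/Lions inequality: for every bounded Lipschitz domain $\Omega$ there exists $C>0$ such that
\[
\|q\|_{L^2(\Omega)} \le C\Big(\|q\|_{H^{-1}(\Omega)} + \|\nabla q\|_{H^{-1}(\Omega;\Rn)}\Big) \qquad\text{for all }q\in L^2(\Omega).
\]
Applied to $q=p_k-p$, the right-hand side vanishes in the limit, hence $p_k\to p$ in $L^2(\Omega)$. Substituting this together with the strong convergence of the deviatoric parts into the decomposition yields $\sigma_k\to\sigma$ in $\Lnn$.

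The only delicate point is the appeal to the Ne\v{c}as inequality, which is the standard tool for recovering strong $L^2$ control of a function from $H^{-1}$ control of both the function and its gradient on a Lipschitz domain; everything else is a routine combination of weak/strong convergences and the Rellich-type compactness $L^2\hookrightarrow H^{-1}$.
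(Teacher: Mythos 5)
Your proof is correct and follows essentially the same route as the paper's: both split $\sigma_k-\sigma$ into deviatoric and trace (hydrostatic) parts, show the gradient of the trace part converges in $H^{-1}$ via the identity $\nabla\widehat\sigma_k=\diver\sigma_k-\diver(\sigma_k)_\dev$, obtain strong $H^{-1}$ convergence of the trace part from weak $L^2$ convergence together with the compact embedding $L^2(\Omega)\Subset H^{-1}(\Omega)$, and conclude with the Ne\v{c}as inequality. The only cosmetic difference is that you work with $p_k=\tr\sigma_k$ while the paper works with the normalized $\widehat\sigma_k=\tfrac1n\tr\sigma_k$.
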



\begin{proof}
We recall (cf.\ \cite{Necas66}) 
that for a given Lipschitz domain $\Omega$ and every $p\in (1,\infty)$, if $u\in W^{-1,p}(\Omega)$ with $\nabla u \in W^{-1,p}(\Omega; \Rn)$, then $u\in L^p(\Omega)$ with
\begin{equation}\label{0404232236}
\|u\|_p \leq C(\Omega) (\|u\|_{W^{-1,p}} + \|\nabla u\|_{W^{-1,p}}).
\end{equation}
Decomposing $\sigma_k$, $\sigma$ as
\begin{equation}\label{04042249}
\sigma_k= \widehat{\sigma}_k \mathrm{I} + (\sigma_k)_{\dev}, \quad
\sigma=\widehat{\sigma} \mathrm{I} +\sigma_{\dev}
\end{equation}
we have that
\begin{equation}\label{04040232241}
\nabla \widehat{\sigma}_k= \diver(\widehat{\sigma}_k \mathrm{I})=\diver \sigma_k-\diver((\sigma_k)_{\dev})\to \nabla \widehat{\sigma} \qquad\text{in } H^{-1}(\Omega;\Rn),
\end{equation}
in view of the embedding of $L^2(\Omega;\R^n)$ into $H^{-1}(\Omega;\R^n)$.
Moreover, since $L^2(\Omega;\Mnn)$ is compactly embedded into $H^{-1}(\Omega;\Mnn)$, it holds that $\sigma_k \to \sigma$ in $H^{-1}(\Omega;\Mnn)$. We have also $\widehat{\sigma}_k \to \widehat{\sigma}$ in $H^{-1}(\Omega)$ 
 by \eqref{04042249} and since  $(\sigma_k)_{\dev} \to \sigma_{\dev}$ in $L^2(\Omega;\MD)$.
Therefore, using \eqref{0404232236} with $p=2$, $u=\widehat{\sigma}_k-\widehat{\sigma}$ and employing \eqref{04040232241},  we deduce that
\begin{equation*}\label{0404232251}
\widehat{\sigma}_k \to \widehat{\sigma} \quad \text{in }L^2(\Omega).
\end{equation*}
We conclude with \eqref{04042249} and since $(\sigma_k)_{\dev} \to \sigma_{\dev}$ in $L^2(\Omega;\MD)$.
\end{proof}

\subsection{Approximation of the work of the external forces}
\label{ss:5-prelim}
\noindent Throughout this section,  we let $ (\partg k\ell)_{\ell=0}^{\cardipart k} $, with 
\[
0=\partg k 0<\partg k 1 <\ldots <\partg k {\cardipart k} =S \quad \text{ and } \quad \max_{\ell =1,\ldots, \cardipart k} (\partg k\ell {-}\partg k {\ell-1}) \to 0 \text{ as } k\to\infty,
\]
 be a generic partition of the interval $[0,S]$. 
\par
Our first result concerns the approximation of the integral 
$\int_0^S \langle \serifsigma, \EE( \sfw') \rangle_{L^2(\Omega)} \dd s$
that contributes to the work of the external forces. 
\begin{lemma}
\label{l:approx-sigma}
There holds  
\begin{equation}\label{2112200941}
\begin{aligned}
& \lim_{k\to \infty} \sum_{\ell=1}^{\cardipart k} \langle \serifsigma(\partg k{\ell-1}), \EE (\sfw(\partg k{\ell})-\sfw(\partg k{\ell-1})) \rangle_{L^2(\Omega)} 
\\
&  = \int_0^S \langle \serifsigma(s), \EE {\sfw}'(s) \rangle_{L^2(\Omega)} \ds 
= \lim_{k\to \infty} \sum_{\ell=1}^{\cardipart k} \langle \serifsigma(\partg k{\ell}), \EE (\sfw(\partg k{\ell})-\sfw(\partg k{\ell-1})) \rangle_{L^2(\Omega)}\,.
\end{aligned}
\end{equation}
In particular,
\begin{equation*}
\lim_{k\to \infty} \sum_{\ell=1}^{\cardipart k} \tfrac12  \langle \serifsigma(\partg k{\ell-1}){+}  \serifsigma(\partg k{\ell}), \EE (\sfw(\partg k{\ell})-\sfw(\partg k{\ell-1})) \rangle_{L^2(\Omega)} = \int_0^S \langle \serifsigma(s), \EE {\sfw}'(s) \rangle_{L^2(\Omega)} \ds \,.
\end{equation*}
\end{lemma}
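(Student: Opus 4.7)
The plan is to rewrite each Riemann sum as the Bochner integral of a piecewise constant stress against the measurable integrand $\EE \sfw'$, and then invoke dominated convergence, using the weak continuity of $\serifsigma$ already established in Section \ref{s:4}.

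First, I would define the piecewise constant interpolants
\[
\serifsigma_k^{-}(s):= \serifsigma(\partg k{\ell-1}), \qquad \serifsigma_k^{+}(s):= \serifsigma(\partg k{\ell}) \quad \text{for } s\in [\partg k{\ell-1}, \partg k{\ell}),
\]
for $\ell=1,\dots,\cardipart k$, so that the two Riemann sums in \eqref{2112200941} rewrite as
\[
\sum_{\ell=1}^{\cardipart k} \langle \serifsigma(\partg k{\ell-1}), \EE (\sfw(\partg k{\ell}){-}\sfw(\partg k{\ell-1})) \rangle_{L^2(\Omega)}  = \int_0^S \langle \serifsigma_k^{-}(s), \EE \sfw'(s) \rangle_{L^2(\Omega)} \dd s,
\]
and analogously for $\serifsigma_k^{+}$, since $\sfw\in H^1(0,S; H^1(\R^n;\R^n))$ (which follows from $w\in H^1(0,T;H^1(\R^n;\R^n))$ and $\sft \in W^{1,\infty}(0,S)$, via $\sfw'(s)=w'(\sft(s))\sft'(s)$, using a change of variable to show $\EE\sfw'\in L^2(0,S;L^2(\Omega;\Mnn))$). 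The identity above is just the fundamental theorem of calculus applied to the absolutely continuous $\BD$-valued curve $\sfw$ on each subinterval.

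Next, I would verify pointwise weak $L^2$-convergence $\serifsigma_k^{\pm}(s)\weakto \serifsigma(s)$ in $\Lnn$ for every $s\in [0,S]$. Indeed, by \eqref{additional-regularity} the map $\sfe\colon [0,S]\to \Lnn$ is weakly continuous and $\sfz\colon [0,S]\to \Hs(\Omega)$ is weakly continuous, hence strongly continuous with values in $\rmC(\overline\Omega)$ by the compact embedding $\Hs(\Omega)\Subset\rmC(\overline\Omega)$. Therefore, writing $\serifsigma(s)=\bbC(\sfz(s))\sfe(s)$, as $s'\to s$ we have $\bbC(\sfz(s'))\to\bbC(\sfz(s))$ in $L^\infty(\Omega;\Lin(\Mnn;\Mnn))$ and $\sfe(s')\weakto\sfe(s)$ in $\Lnn$, and the combination yields $\serifsigma(s')\weakto\serifsigma(s)$ in $\Lnn$. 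Since $|\partg k{\ell-1}-s|$ and $|\partg k\ell-s|$ tend to $0$ as $k\to\infty$ uniformly in $s$, this gives the desired pointwise weak convergence of $\serifsigma_k^{\pm}$.

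Finally, I would conclude by dominated convergence. Thanks to the uniform bound $\sup_{s\in[0,S]}\|\serifsigma(s)\|_{L^2}<\infty$ (which follows from estimates \eqref{straight-fwd-estimates} together with Hypothesis \ref{hyp:elast}), we have the $s$-pointwise estimate
\[
|\langle \serifsigma_k^{\pm}(s), \EE \sfw'(s)\rangle_{L^2(\Omega)}|\leq C\,\|\EE \sfw'(s)\|_{L^2(\Omega)} \in L^1(0,S),
\]
while pointwise weak convergence in $\Lnn$ tested against the fixed $L^2$-element $\EE\sfw'(s)$ yields $\langle \serifsigma_k^{\pm}(s), \EE \sfw'(s)\rangle_{L^2(\Omega)} \to \langle \serifsigma(s), \EE \sfw'(s)\rangle_{L^2(\Omega)}$ for every $s$. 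Lebesgue's theorem then gives both limits in \eqref{2112200941}, and the averaged form in the ``In particular'' clause is immediate.

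The main obstacle is genuinely mild: it is the identification, at each fixed $s$, of the weak $L^2$-limit of $\serifsigma_k^{\pm}(s)$, which rests on the combined weak continuity of $\sfe$ and strong continuity of $\bbC(\sfz)$ established in Section \ref{s:4}; no integration-by-parts or chain-rule argument is needed here.
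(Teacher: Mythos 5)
Your proposal is correct and takes essentially the same route as the paper: both rewrite the Riemann sums as $\int_0^S \langle \serifsigma_k^{\pm}(s), \EE\sfw'(s)\rangle_{L^2}\,\mathrm{d}s$ with piecewise-constant interpolants of the stress, and then pass to the limit via dominated convergence using the weak $\Lnn$-continuity of $s\mapsto\serifsigma(s)$. If anything, you spell out one step that the paper leaves implicit, namely deriving the weak continuity of $\serifsigma=\bbC(\sfz)\sfe$ from the weak continuity of $\sfe$ in $\Lnn$ together with the strong $\rmC(\overline\Omega)$-continuity of $\sfz$ (via $\Hs(\Omega)\Subset\rmC(\overline\Omega)$).
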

\begin{proof}
We define $\ul \serifsigma_k : [0,S]\to \Lnn$ via $\ul \serifsigma_k(0) = \serifsigma(0)$, $\ul \serifsigma_k(\partg k{\cardipart k}) = \serifsigma(S)$, and  
\begin{equation*}
\ul \serifsigma_k(s) \equiv \serifsigma(\partg k{\ell-1}) \quad\text{in }(\partg k{\ell-1}, \partg k{\ell})\,.
\end{equation*}
Clearly, we have that 
 \begin{equation*}
 \sum_{\ell =1}^{\cardipart k} \langle \serifsigma(\partg k{\ell-1}), \EE (\sfw(\partg k{\ell})-\sfw(\partg k{\ell-1})) \rangle_{L^2(\Omega)} = \int_0^S  \langle\ul \serifsigma_k(s), \EE {\sfw}'(s) \rangle_{L^2(\Omega)} \ds\,.
 \end{equation*}
 Now,  since the mapping  $[0,S] \ni s\mapsto\serifsigma(s) \in \Lnn $ is weakly continuous 
 thanks to
  \eqref{additional-regularity}, it is immediate to check that, as $k\to\infty$, 
 \[
 \langle\ul \serifsigma_k(s), \EE {\sfw}'(s) \rangle_{L^2(\Omega)} \longrightarrow \langle \serifsigma(s), \EE {\sfw}'(s) \rangle_{L^2(\Omega)} \qquad\foraa\, s \in (0,S)\,.
 \]
 Then, taking into account that $\serifsigma \in L^\infty(0,S;\Lnn)$ and $\sfw \in H^1(0,S;H^1(\Omega;\R^n))$ thanks to \eqref{dir-load}, 
  by the dominated convergence theorem  we immediately conclude that 
 \[
 \lim_{k\to\infty}  \int_0^S  \langle\ul \serifsigma_k(s),  \EE  {\sfw}'(s) \rangle_{L^2(\Omega)} \ds = \int_0^S \langle \serifsigma(s), \EE {\sfw}'(s) \rangle_{L^2(\Omega)} \ds\,,
 \]
 which gives the first of   \eqref{2112200941}. 
Similarly, with   the sequence $\ol \serifsigma_k: [0,S]\to \Lnn$
defined by  $\ol \serifsigma_k(s) \equiv \serifsigma(\partg k{\ell})  $ in  $ (\partg k{\ell-1}, \partg k{\ell})$
 in place of $\ul \serifsigma_k$ we obtain the second of  \eqref{2112200941}.
\end{proof}
Mimicking the very same arguments as in the proof of Lemma  \ref{l:approx-sigma}  we also obtain the following results.
\begin{lemma}
\label{l:approx-Fw'}
There holds  \begin{equation*}
\lim_{k\to \infty} \sum_{\ell=1}^{\cardipart k} \tfrac12  \langle \sfF(\partg k{\ell-1}){+}  \sfF(\partg k{\ell}), \sfw(\partg k{\ell})-\sfw(\partg k{\ell-1}) \rangle_{\BD(\Omega)} = \int_0^S \langle \sfF(s), {\sfw}'(s) \rangle_{\BD(\Omega)} \ds \,.
\end{equation*}
\end{lemma}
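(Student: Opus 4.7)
The proof mirrors that of Lemma~\ref{l:approx-sigma} \emph{mutatis mutandis}, with $\sfF$ in place of $\serifsigma$ and $\sfw$ in place of itself, the roles of dual and predual being swapped. The plan is the following. First, introduce the piecewise constant interpolants $\underline{\sfF}_k, \overline{\sfF}_k \colon [0,S] \to \BD(\Omega)^*$ defined by
\[
\underline{\sfF}_k(s) := \sfF(\partg k{\ell-1}), \qquad \overline{\sfF}_k(s) := \sfF(\partg k{\ell}) \qquad \text{for } s \in (\partg k{\ell-1}, \partg k{\ell}),
\]
and observe that
\[
\sum_{\ell=1}^{\cardipart k} \tfrac12 \langle \sfF(\partg k{\ell-1}){+}\sfF(\partg k{\ell}), \sfw(\partg k{\ell}){-}\sfw(\partg k{\ell-1}) \rangle_{\BD(\Omega)} = \int_0^S \langle \tfrac12(\underline{\sfF}_k(s){+}\overline{\sfF}_k(s)), \sfw'(s)\rangle_{\BD(\Omega)} \dd s,
\]
since $\sfw \in H^1(0,S;H^1(\R^n;\R^n))$ by \eqref{dir-load} combined with $\sft \in W^{1,\infty}(0,S)$.

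Next, I would establish the pointwise convergence $\underline{\sfF}_k(s), \overline{\sfF}_k(s) \to \sfF(s)$ in $\BD(\Omega)^*$ for every $s \in [0,S]$. Indeed, by \eqref{propsF} we have $F \in H^1(0,T; \BD(\Omega)^*) \hookrightarrow \rmC^0([0,T]; \BD(\Omega)^*)$, and $\sft \in \rmC^0([0,S]; [0,T])$, so that $\sfF = F \circ \sft$ is itself continuous from $[0,S]$ to $\BD(\Omega)^*$. Since the partition fineness vanishes as $k \to \infty$, strong convergence of the interpolants follows for every $s$.

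Finally, I would pass to the limit by dominated convergence. The integrand is bounded by $\|\tfrac12(\underline{\sfF}_k(s){+}\overline{\sfF}_k(s))\|_{\BD(\Omega)^*} \|\sfw'(s)\|_{\BD(\Omega)}$, and since $\sfF$ is continuous on the compact interval $[0,S]$ with values in $\BD(\Omega)^*$, the factor involving $\sfF$ is uniformly bounded in $k$ and $s$, while $s \mapsto \|\sfw'(s)\|_{H^1(\Omega)}$ belongs to $L^2(0,S) \subset L^1(0,S)$, with $H^1(\Omega;\R^n) \hookrightarrow \BD(\Omega)$ providing the required continuous embedding. This concludes the proof. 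No step is expected to pose a real obstacle, since the required continuity of $\sfF$ and the integrability of $\sfw'$ are already built into the hypotheses via \eqref{propsF} and \eqref{dir-load}; the argument is essentially a verbatim adaptation of Lemma~\ref{l:approx-sigma}.
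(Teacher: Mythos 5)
Your proof is correct and matches the paper's intended argument, which simply says these lemmas follow by ``mimicking the very same arguments as in the proof of Lemma~\ref{l:approx-sigma}''. You correctly note the one simplification here: since $F \in H^1(0,T;\BD(\Omega)^*)$ and $\sft$ is continuous, $\sfF$ is \emph{strongly} continuous into $\BD(\Omega)^*$, so the pointwise convergence of the interpolants is strong (whereas in Lemma~\ref{l:approx-sigma} it is merely weak), and the dominated convergence step is routine.
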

\begin{lemma}
\label{l:approx-F'w}
There holds  \begin{equation*}
\begin{aligned}
&
\lim_{k\to \infty} \sum_{\ell=1}^{\cardipart k}  \langle \sfF(\partg k{\ell}){-}  \sfF(\partg k{\ell}-1),  \frac12 [  (\sfu(\partg k{\ell-1}){+}\sfw(\partg k{\ell-1}))  {+}
(\sfu(\partg k{\ell}){+}\sfw(\partg k{\ell}))]  
 \rangle_{\BD(\Omega)} 
 \\
 & = \int_0^S \langle \sfF'(s), \sfu(s) {+}\sfw(s) \rangle_{\BD(\Omega)} \ds \,.
 \end{aligned}
\end{equation*}
\end{lemma}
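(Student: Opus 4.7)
The strategy parallels that of Lemmas~\ref{l:approx-sigma} and \ref{l:approx-Fw'}: I will rewrite the Riemann sum as a Bochner integral involving a piecewise constant approximation of $\sfu{+}\sfw$ and then conclude by dominated convergence. First, since $F\in H^1(0,T;\BD(\Omega)^*)$ by \eqref{propsF} and $\sft\in W^{1,\infty}(0,S;[0,T])$, one has $\sfF=F\circ\sft\in H^1(0,S;\BD(\Omega)^*)$, so that
\[
\sfF(\partg k\ell)-\sfF(\partg k{\ell-1}) = \int_{\partg k{\ell-1}}^{\partg k\ell} \sfF'(r)\dd r \qquad \text{in } \BD(\Omega)^*.
\]
Introducing the piecewise constant function $\phi_k\colon[0,S]\to\BD(\Omega)$ given by
\[
\phi_k(r):=\tfrac12\bigl[(\sfu{+}\sfw)(\partg k{\ell-1})+(\sfu{+}\sfw)(\partg k\ell)\bigr] \qquad\text{for } r\in(\partg k{\ell-1},\partg k\ell),
\]
the Riemann sum in the statement rewrites as $\int_0^S \langle \sfF'(r),\phi_k(r)\rangle_{\BD(\Omega)}\dd r$.

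Second, for each $r\in(0,S)$ the weak$^*$ continuity of $\sfu\colon[0,S]\to\BD(\Omega)$ from \eqref{additional-regularity}, together with the norm continuity of $\sfw\colon[0,S]\to H^1(\R^n;\R^n)$ inherited from \eqref{dir-load} through the Lipschitz rescaling $\sft$, yields $\phi_k(r)\weaksto (\sfu{+}\sfw)(r)$ in $\BD(\Omega)$ as $k\to\infty$, by the same fineness argument as in Lemma~\ref{l:approx-sigma}. Exploiting the explicit structure of $\sfF'(r)$ as the sum of an $L^n(\Omega;\R^n)$ bulk contribution and an $L^\infty(\Gneu;\R^n)$ boundary contribution (cf.\ \eqref{forces-u}), and recalling that weak$^*$ convergence in $\BD(\Omega)$ entails strong $L^p(\Omega;\R^n)$ convergence for $1\leq p<\tfrac{n}{n-1}$ together with weak convergence of the boundary traces in $L^1(\partial\Omega;\R^n)$, one obtains the pointwise convergence
\[
\langle\sfF'(r),\phi_k(r)\rangle_{\BD(\Omega)} \,\longrightarrow\, \langle\sfF'(r),(\sfu{+}\sfw)(r)\rangle_{\BD(\Omega)} \qquad\text{for every } r\in(0,S).
\]

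Third, combining the uniform bound $\sup_{k,r}\|\phi_k(r)\|_{\BD(\Omega)}\leq C$ (which follows from \eqref{2906191813} and Lemma~\ref{l:aprio}) with $\sfF'\in L^2(0,S;\BD(\Omega)^*)$ produces the dominating function $C\|\sfF'(\cdot)\|_{\BD(\Omega)^*}\in L^2(0,S)\subset L^1(0,S)$, and Lebesgue's dominated convergence theorem then yields the desired identity. The only delicate point is the passage to the limit in the pairing, since weak$^*$ convergence in $\BD(\Omega)$ does not automatically preserve duality pairings with generic elements of $\BD(\Omega)^*$; this is precisely why one must invoke the concrete representation of the external loading rather than a purely abstract duality argument. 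With this in hand, the proof is otherwise entirely parallel to that of Lemma~\ref{l:approx-sigma}.
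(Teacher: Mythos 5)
Your overall strategy — rewriting the Riemann sum as $\int_0^S \langle \sfF'(r),\phi_k(r)\rangle_{\BD(\Omega)}\dd r$ with $\phi_k$ piecewise constant and concluding by dominated convergence — matches the approach of Lemma~\ref{l:approx-sigma} and is indeed what the paper has in mind. The gap is in the pointwise convergence step: the assertion that weak$^*$ convergence in $\BD(\Omega)$ (i.e.\ weak convergence in $L^1(\Omega;\R^n)$ plus weak$^*$ convergence of the symmetric gradients in $\Mb(\Omega;\Mnn)$) yields weak convergence of the boundary traces in $L^1(\partial\Omega;\R^n)$ is false. The trace operator $\BD(\Omega)\to L^1(\partial\Omega;\R^n)$ is continuous for the strict, but not for the weak$^*$, topology: the boundary layers $u_k(x)=\max(0,1-kx_n)\,e_1$ on $(0,1)^n$ satisfy $u_k\weaksto 0$ in $\BD$ yet have trace $e_1$ on $\{x_n=0\}$. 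Since $\langle\sfF'(r),v\rangle_{\BD(\Omega)}$ carries the boundary term $\sft'(r)\langle g'(\sft(r)),v\rangle_{L^1(\Gneu)}$ with $g'\in L^\infty(\Gneu;\R^n)$, the weak$^*$ continuity of $\sfu$ alone does not give $\langle\sfF'(r),\phi_k(r)\rangle\to\langle\sfF'(r),(\sfu{+}\sfw)(r)\rangle$. You correctly sense that one must ``invoke the concrete representation of the external loading,'' but trace convergence is not the mechanism that makes it work.

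The mechanism that does — consistent with how these pairings are manipulated in the proof of Proposition~\ref{p:cornerstone}, cf.\ \eqref{even-elsewhere-quoted} — is the integration by parts formula \eqref{integr-by-parts-PP}, which removes the trace altogether. Since $\rho'(\sft(r))\in\Sigma(\Omega)$ by Hypothesis~\ref{hyp:data} and $(\sfu(s')+\sfw(s'),\sfe(s'),\sfp(s'))\in A(\sfw(s'))$, one has
\[
\langle\sfF'(r),\sfu(s')\rangle_{\BD(\Omega)} = \sft'(r)\Big[\langle\rho'_\dev(\sft(r))\,|\,\sfp(s')\rangle + \langle\rho'(\sft(r)),\,\sfe(s')-\sig{\sfw(s')}\rangle_{\Lnn}\Big],
\]
while $\langle\sfF'(r),\sfw(s')\rangle_{\BD(\Omega)}$ is handled directly through the strong $H^1$-continuity of $\sfw$. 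In this form the $s'$-dependence enters only through $\sfp(s')$ — which is $1$-Lipschitz into $\MbD$, and the duality $\langle\sigma_\dev\,|\,\cdot\rangle$ from \eqref{duality-product} is $\|\cdot\|_{\Mb}$-Lipschitz in $p$ because $\|[\sigma_\dev{:}p]\|_1\leq\|\sigma_\dev\|_{L^\infty}\|p\|_{L^1}$ — through $\sfe(s')$ (weakly continuous in $\Lnn$), and through $\sfw(s')$ (strongly continuous in $H^1$). This gives the pointwise convergence you need, and your uniform bound and dominated convergence steps then carry through unchanged.
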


\bigskip

{\small
\section*{Acknowledgments} 
The authors have been funded by the Italian Ministry of University and Research through different projects:
MIUR - PRIN project 2017BTM7SN 
\emph{Variational Methods for stationary and evolution problems with singularities and interfaces},
MIUR - PRIN project 2020F3NCPX \emph{Mathematics for Industry 4.0}, 
MIUR - PRIN project 2022HKBF5C 
\emph{Variational Analysis of complex systems in Materials Science, Physics and Biology}.
G.L.\ and R.R.\ have been partially supported by the  Gruppo Nazionale per  l'Analisi Matematica, la  Probabilit\`a  e le loro Applicazioni (GNAMPA) of the Istituto Nazionale di Alta Matematica (INdAM).
}

\bigskip

\end{document}